\newcommand{\todo}[1]{{\color{red}#1}}
\newcommand{\pow}{\mathcal{P}}
\newcommand{\fillstep}{\rightarrowtriangle}
\newcommand{\cyc}[1]{\langle #1 \rangle}
\newcommand\xqed[1]{%
  \leavevmode\unskip\penalty9999 \hbox{}\nobreak\hfill
  \quad\hbox{#1}}
\newcommand\qee{\xqed{\fullmoon}}
\newcommand\Lshape[2]{
\draw[#1] #2 -- ++(-1, 0) -- ++(0, 1) -- ++(2,0) -- ++(0, -2) -- ++(-1, 0) -- cycle
}
\newcommand{\move}[3]{
\draw[->, >=stealth, color = #1, very thick] #2 -- #3
}
\newcommand{\step}[2]{
\draw[->, >=stealth, thick, #1] #2-- ++(1,0)
}
\newcommand{\trigrid}[1]{
    \foreach \k in {1, ..., #1}
    \draw (\k-1,#1-\k) grid (#1,#1-\k+1);
    }
\newcommand{\TriangShape}[2]{
    \draw[#2] (0, #1) -- (#1, #1) -- (#1, 0);
    \foreach \k in {1, ..., #1}
    \draw[#2] (#1 - \k +1, \k -1) -- (#1 - \k, \k -1) -- (#1 - \k, \k);
}
\newcommand\cross[2]{
\draw[#1] #2 -- ++(0, -1) -- ++(1, 0) -- ++(0,1) -- ++(1, 0) -- ++(0,1) -- ++(-1, 0) --++ (0,1) --++ (-1, 0)--++(0,-1)--++ (-1,0) --++ (0,-1)-- cycle
}
\tikzstyle{here}=[ellipse,draw, fill=black, text=white, inner sep = 0.01cm, font = \large]
\tikzstyle{empty}=[ellipse,draw, fill=white, text=black, inner sep = 0.01cm, font = \large]
\tikzstyle{round}=[circle,draw,text=black, inner sep = 0.01cm, minimum width = 0.5cm]
\newcommand{\cylgrid}[2]{
\draw (0,0) grid (#1, #2);
    \foreach \k in {0, ..., #1}{
    \draw[dashed] (\k-0.5,0) -- (\k-0.5, #2);
    \draw[dashed] (\k,#2) arc (0:180:0.25);
    \draw[dashed] (\k,0) arc (360:180:0.25);};
    \foreach \l in {0, ..., #2} \draw[dashed] (#1, \l) --++ (0.5, 0);
}
\newcommand{\Tshape}[3]{\draw[#1] (#2-0.25, #3-1.4) -- (#2-0.25, #3+1.4) -- (#2+1.4, #3) -- cycle;}
\definecolor{paletteA}{HTML}{000000}
\definecolor{paletteB}{HTML}{000000}
\definecolor{points}{HTML}{FFFFFF}
\newcommand{\caleyfree}[7]{
\ifnumless{0}{#1}{%
  \pgfmathtruncatemacro\newlev{#1-1}
  \pgfmathtruncatemacro\len{#3}
  \ifnumless{1}{#1}{%
    \draw[draw=palette#2,very thick] (2 pt,0) -- (\len-2 pt,0) node[midway,auto,text=palette#2]{\footnotesize #4};
  }{%
    \draw[draw=palette#2,very thick] (2 pt,0) -- (\len-2 pt,0);
  }
  \node at (\len pt,0) (O) {};
  \begin{scope}[shift={(O)}]
    \ifstrequal{#2}{E}{%
      \begin{scope}[rotate=90] \caleyfree{\newlev}{A}{\len/2}{#5}{#6}{#7}{#4} \end{scope}
      \begin{scope}[rotate=0]  \caleyfree{\newlev}{B}{\len/2}{#4}{#5}{#6}{#7} \end{scope}
      \begin{scope}[rotate=-90]\caleyfree{\newlev}{A}{\len/2}{#7}{#4}{#5}{#6} \end{scope}
    }{%
      \begin{scope}[rotate=90] \caleyfree{\newlev}{B}{\len/2}{#5}{#6}{#7}{#4} \end{scope}
      \begin{scope}[rotate=0]  \caleyfree{\newlev}{A}{\len/2}{#4}{#5}{#6}{#7} \end{scope}
      \begin{scope}[rotate=-90]\caleyfree{\newlev}{B}{\len/2}{#7}{#4}{#5}{#6} \end{scope}
    }
  \end{scope}}{\fill[color = points] circle(2pt);}}
\definecolor{paletteGA}{HTML}{942C00}
\definecolor{paletteGB}{HTML}{004C99}
\newcommand{\caleygrid}[2]{
  \begin{scope}[rotate=-90] \caleyfree{#1}{B}{#2}{\large$b^{-1}$}{\large$a$}{\large$b$}{\large$a^{-1}$} \end{scope}
  \begin{scope}[rotate=0]   \caleyfree{#1}{A}{#2}{\large$a$}{\large$b$}{\large$a^{-1}$}{\large$b^{-1}$} \end{scope}
  \begin{scope}[rotate=90]  \caleyfree{#1}{B}{#2}{\large$b$}{\large$a^{-1}$}{\large$b^{-1}$}{\large$a$} \end{scope}
  \begin{scope}[rotate=180] \caleyfree{#1}{A}{#2}{\large$a^{-1}$}{\large$b^{-1}$}{\large$a$}{\large$b$} \end{scope}
}
\newcommand{\Z}{\mathbb{Z}}
\newcommand{\C}{\mathcal{C}}
\newcommand{\TI}{\mathrm{TI}}
\newcommand{\I}{\mathrm{I}}
\newcommand{\N}{\mathbb{N}}
\newcommand{\R}{\mathbb{R}}
\newcommand{\orb}{\mathcal{O}}
\newtheorem{alg}[algocf]{Algorithm}
\newcommand{\spacc}{\vspace{-0.1cm}}
\newcommand{\rank}{\mathrm{rank}}
\begin{document}
\title{Solitaire of Independence}

\author{Ville Salo\inst{1}\orcidID{0000-0002-2059-194X} \and
Juliette Schabanel\inst{2}\orcidID{0009-0002-3647-4892}}
\authorrunning{V. Salo \and J. Schabanel}
\institute{University of Turku, Turku, Finland \and
École Normale Supérieure PSL, Paris, France}
\maketitle

\begin{abstract}
In this paper, we study a reversible process (more precisely, a groupoid/group action) resembling the classical 15-puzzle, where the legal moves are to ``move the unique hole inside a translate of a shape $S$''. Such a process can be defined for any finite subset $S$ of a group, and we refer to such a process as simply ``solitaire''. We develop a general theory of solitaire, and then concentrate on the simplest possible example, solitaire for the plane $\Z^2$, and $S$ the triangle shape (equivalently, any three-element set in general position). In this case, we give a polynomial time algorithm that puts any finite subset of the plane in normal form using solitaire moves, and show that the solitaire orbit of a line of consecutive ones -- the line orbit -- is completely characterised by the notion of a so-called fill matrix. We show that the diameter of the line orbit, as a graph with edges the solitaire moves, is cubic. We show that analogous results hold for the square shape, but indicate some shapes (still on the group $\Z^2$) where this is less immediate. We then explain in detail the connection of the solitaire to TEP and more generally permutive subshifts. Namely, the solitaire is a closure property of various sets of subsets of the group that can be associated to such a subshift, such as the independence, spanning and filling sets.
\end{abstract}

\section{Introduction}

In this paper, we introduce the \emph{solitaire of independence}, which is a game played on subsets of a group, alternatively an abstract rewriting system, groupoid or group action. Specifically, if $G$ is a group, and $S \Subset G$ is finite, then on subsets of $G$ we can play a variant of the classical ``15-puzzle'' by, for each $g \in G$, allowing a move from $X \subset G$ to $Y \subset G$ if for some $g \in G$,  $X$ and $Y$ have the same elements outside $gS$, and intersect $gS$ in distinct sets of size $|S| - 1$. Or in words, we take $G$ as a playing grid and consider sets of marbles (subsets of $G$) on it which we can move as follows: if there is a translate $gS$ of the shape $S \Subset G$ such that exactly one cell inside it has no marble, then any marble inside can be moved to the hole.  

For example, if $G = \Z^2$, and we consider the triangle shape $T = \{(0,1), (0, 0), (1, 0)\}$, then the action consists in arbitrarily permuting the three subpatterns depicted in Figure \ref{fig:triangle_action} at some coordinates where one of them appears in the pattern.
\begin{figure}[ht!]
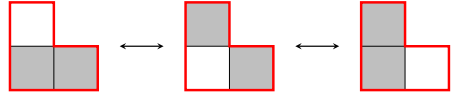

    \centering
    \includestandalone[height = 1.2cm]{Figures/Triangle/fig_triang_action}
    \caption{The action of the triangle shape. Grey denotes $1$ (or a marble), white denotes $0$ (no marble).}
    \label{fig:triangle_action}
\end{figure}

A simple illustration of a valid sequence of moves for this shape on subsets of the set $\{(a, b) \;|\; a, b \geqslant 0, a+b \leqslant 3\} \subset \Z^2$ is shown in Figure~\ref{fig:valid_moves}.

On the website \cite{SaSolitaire}, one can play the solitaire on $\Z^2$ with any small shape.

\begin{figure}[ht!]
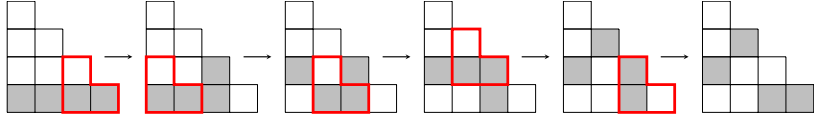

    \centering
    \includestandalone[height = 1.5cm]{Figures/Triangle/example_moves}
    \caption{A simple sequence of valid moves of triangle solitaire starting from the line $L_4 = [0, 3]\times\{0\}$.}
    \label{fig:valid_moves}
\end{figure}

In the present paper, we develop a basic theory of solitaire processes, and illustrate it with a detailed look at the triangle shape, for which we can give a full description of the family of sets reachable from a given set by playing the solitaire. In particular, for this example our main theorem is the following:

\begin{theorem}
For the triangle solitaire process, for any finite set $P \Subset \Z^2$, there is a sequence of solitaire moves with cubic length in $|P|$ which puts $P$ in normal form, and such a sequence can be found in polynomial time.
\end{theorem}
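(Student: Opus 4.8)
The plan is to realise the normal form by transporting marbles one at a time to their canonical positions, in the spirit of solving the 15-puzzle tile by tile, while keeping tight control of the total number of moves. First I would pin down the target: the normal form of $P$ should be a canonical compact configuration (a packed triangular block, or a segment of a line together with whatever fixed ``defect'' is forced by the conserved quantities of the orbit), placed inside or immediately next to the region currently occupied by $P$, so that no marble ever travels further than the span of its cluster. The crucial structural remark is that a solitaire move requires \emph{two} marbles and one hole inside a single translate of $T$: an isolated marble cannot move at all, and the marbles that can interact fall into ``mobile'' clusters in which consecutive marbles share a common triangle. In such a cluster of $k$ marbles the spatial span is $O(k)$, and I would argue this stays $O(k)$ throughout the procedure; this is exactly what lets everything be bounded in terms of $|P|$ rather than in terms of the actual coordinates of $P$.

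The technical heart is a transport gadget: a spatially bounded sequence of solitaire moves whose net effect is to displace one chosen marble by one elementary step — one of $(0,\pm 1)$, $(\pm 1,0)$, $(\pm 1,\mp 1)$ — while returning every other marble to its original cell. Here I would exploit the basic observation that a single triangle move already displaces one marble diagonally while leaving its partner fixed: with marbles at $g+(0,1)$ and $g+(0,0)$ and a hole at $g+(1,0)$, moving the first marble sends it by $(1,-1)$ and fixes the second, relocating only the hole. The gadget then consists of pairing the travelling marble with a neighbouring ``helper'', rotating the pair one notch by such moves, and shuttling the hole and helper back to their starts; composing these yields a macro that carries a marble along an arbitrary monotone lattice path of length $\ell$ using $O(\ell)$ moves, confined to an $O(1)$-width tube around the path so that nothing outside is disturbed. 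Constructing this gadget cleanly — in particular handling the boundary of the cluster, and the case where the needed helper or hole is itself a marble we must not permanently move — is the step I expect to be the main obstacle.

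With the transport macro in hand, the algorithm processes the marbles in an order that freezes the already-placed ones: peel them off from one extreme in a sweep order adapted to the triangle geometry, and route each to its normal-form slot along a path avoiding the marbles already set. For the count, each of the $O(|P|)$ marbles travels a distance $O(|P|)$ inside its cluster, and each unit step of the macro may require routing the nearest hole across a span of $O(|P|)$ (and back) to bring it into the required local configuration; this gives $O(|P|)\cdot O(|P|)\cdot O(|P|)=O(|P|^{3})$ moves in total, which is consistent with the cubic diameter announced for the line orbit and suggests the exponent is essentially tight. For disconnected $P$ one runs the procedure on each maximal mobile cluster independently, summing to the same cubic bound.

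Finally, for the complexity claim I would observe that every decision above is a routine polynomial-time computation on the list of $O(|P|)$ coordinates: identifying the mobile clusters, computing the canonical target positions from the invariants of $P$, fixing the sweep order, and laying out each transport path and hole-shuttle. Since the emitted move sequence has length $O(|P|^{3})$ and each move is produced by such a computation, the whole normal-form procedure runs in polynomial time, as claimed.
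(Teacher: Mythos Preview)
Your approach has a genuine gap: the transport gadget you describe cannot exist in the form you claim. You are modelling the problem on the 15-puzzle, where the board is dense and a single hole can be shuttled around freely. Triangle solitaire is the opposite regime: a pattern in the line orbit of $L_n$ has only $n$ marbles inside the triangle $T_n$ of area $\Theta(n^2)$, so almost every cell is empty and almost no local move is available. A marble can move only if it has a partner in the same translate of $T$; there is no ``hole'' to route. Concretely, you cannot move the marble at $(0,0)$ in $L_n$ to $(0,2)$ while returning the others to $(1,0),\dots,(n-1,0)$: the set $\{(0,2),(1,0),\dots,(n-1,0)\}$ has fill decomposition $\{(0,2)\}\sqcup((1,0)+T_{n-1})$, hence lies in a different orbit from $L_n$. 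So carrying a marble ``along an arbitrary monotone lattice path'' while fixing the rest is impossible in general, and your cubic count, which rests on this gadget, does not go through. Relatedly, your ``mobile clusters'' (marbles sharing a triangle) are not the right decomposition; two marbles that do not touch can still interact via the solitaire once intermediate moves reshape the pattern.

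The paper proceeds quite differently. The invariants of an orbit are the \emph{fill decomposition} $\varphi(P)=\bigsqcup_i(\vec v_i+T_{k_i})$ into non-touching triangles together with the excess $|P\cap(\vec v_i+T_{k_i})|-k_i$ in each; the normal form is the union of lines $\vec v_i+L_{k_i}$ with the excess stacked on top in a fixed way. To reach it, one does not transport individual marbles but manipulates whole lines: any edge of $T_k$ can be turned into any other in $O(k^2)$ moves, a new point touching a line can be absorbed to form a longer line in $O(k)$ moves, and iterating this merges everything into non-touching ``superlines'' in $O(n^3)$ moves. A separate lemma (``the line transports excess'') then sorts the extra points inside each triangle. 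The polynomial-time claim follows because every step is an explicit geometric recipe on the fill decomposition. If you want to rescue a gadget-style argument, the unit you should be transporting is a \emph{line} (rotating between edges of its triangle), not a single marble.
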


By normal form, we mean that we can find a canonical representative of the orbit, such that we can compare orbits by comparing the representatives. The normal form we use is a disjoint (and ``non-touching'') union of lines with additional elements lined up on top.

We have implemented this algorithm, and it is available in \cite{SolitaireGitHub}.

The general theory of solitaire is tightly connected with another process called the filling process described in Section~\ref{sec:Filling}, which is a simpler (confluent) process where we inductively construct the largest set that could possibly be involved in the solitaire game, by filling in the missing holes in sets $gS$ (i.e.\ if $|gS \cap P| = |S|-1$, then $gS$ may be added to $P$).

Due to confluence, the filling process gives rise to a closure operator, and our primary interest is in understanding the connection between patterns that fill a particular set, and orbits in the solitaire process. Of specific interest is the case where in fact all sets that are minimal in cardinality, and fill a given set, are in the same solitaire orbit. We refer to this case as a \emph{nice solitaire theory}. We do not know, at present, general conditions under which solitaire processes are nice. However, we obtain that the triangle shape, as well as the square shape $\{0,1\}^2$, have nice solitaire theories on the group $\Z^2$.

We also explain the connection with \emph{TEP subshifts} from \cite{Sa22}, which are a generalization of spacetime subshifts of bipermutive cellular automata, and more generally \emph{permutive\footnote{Some authors write ``permutative''.} subshifts} (which also generalize the polygonal subshifts of \cite{FrKr20}).

In particular, to any subshift we associate its ``independent sets'' (sets that can have arbitrary contents) and the sets ``spanning'' (uniquely determining) the contents of a given area. These are meant as analogs of notions from linear algebra and matroid theory with the same names. There are also the ``filling'' sets, which are the sets that are spanning ``by applying the filling process''. We show that the solitaire process preserves these sets in the case of TEP and permutive subshifts. (The fact it acts on the independent sets gives the process its full name ``solitaire of independence''.)

The present paper is an extended version of the conference paper \cite{SaSc23} where we presented the results about the triangle shape in isolation. The result here about the diameter of orbits is slightly sharper.

In Section~\ref{sec:Def} we state our notations and remind the reader of some basic definitions. In Section~\ref{sec:General}, we define the solitaire game and the filling process for general groups and pattern, and give a few general properties on them. Section~\ref{sec:Excess} introduces different notions to measure the difference between a pattern and a minimal pattern having the same filling, and study what this difference means for orbits. Sections~\ref{sec:Triangle}, \ref{sec:Plane} and \ref{sec:Free} study some particular case of solitaires, respectively the triangle shape on $\Z^2$, general convex shapes on $\Z^2$ and more particularly the square; and convex shapes on the free group, with a deeper study of the triangle shapes.
In Section~\ref{sec:Algo}, we study the algorithmic aspects of convex solitaire on the plane, giving algorithms and their complexity to recognise the orbit of a pattern or find a sequence of move from one pattern to another. Finally, in Section~\ref{sec:Permutive}, we link the solitaire back to the TEP-subshift theory it originates from, detailing the correspondence between the two and the consequences of one theory on the other.

\section{Conventions and basic definitions}
\label{sec:Def}

For the most part, we give definitions as we go, but we list some general conventions here, and also collect some definitions that are used throughout.

We have $0 \in \N$. An \emph{alphabet} is just a finite set $A$ whose elements are called \emph{letters} or \emph{symbols}. Groups $G$ are always countable, with the main interest being in finitely-generated groups. 

By $\subset$ we mean $\subseteq$ and by $\Subset$ we mean "is a finite subset of". If $X$ is a set, we denote by $\mathcal{P}(X)$ the set of subsets of $X$ and for $k$ an integer, $\mathcal{P}_k(X)$ the set of subsets of size $k$. If $X$ and $Y$ are sets, we denote by $X\triangle Y$ their symmetric difference $X\triangle Y = (X\cup Y)\setminus (X\cap Y)$. The set of functions from $A$ to $B$ is denoted $B^A$.

For a function $f :A \to B$ and $C \subset A$, write $f|_C$ for the restriction of $f$ to $C$. For functions that are thought of as ``patterns'' (see below), we also write $f_c$ for $f(c)$. If $F$ is a family of functions (or patterns), then $F|_A = \{f|_A \;|\; f\in F\}$. 

We use some conventions from formal language theory and combinatorics on words, in particular if $A$ is a finite alphabet, $A^*$ denotes the words on the alphabet (or elements of the free monoid), and concatenation of $u, v \in A^*$ is written simply as $uv$.

We use standard big-O notation: for $f, g : \N \to \N$, we have $g \in O(f)$ or ``$g$ is $O(f)$'' (resp.\ $\Omega(f)$) if $g(n) \leqslant Mf(n)$ (resp.\ $\geqslant$) for large $n$ and a constant $M$. We have $\Theta(f) = O(f) \cap \Omega(f)$.

If $X$ is a set, a \emph{closure operator} is a map $\tau : \pow(X) \to \pow(X)$ which is \emph{extensive} meaning $\forall A \subset X: A \subset \tau(A)$, \emph{monotone} meaning $\forall A, B \subset X: A \subset B \implies \tau(A) \subset \tau(B)$, and \emph{idempotent} meaning $\forall A \subset X: \tau(\tau(A)) = \tau(A)$.

The fixed points or \emph{closed sets} of a closure operator form a \emph{closure system}, i.e.\ $X$ is closed and an arbitrary intersection of closed sets is closed.

A set of sets $\mathcal{S} \subset \pow(X)$ is a \emph{down set} if $A \in \mathcal{S}$ and $B \subset A$ imply $B \in \mathcal{S}$. \emph{Up sets} are defined symmetrically.

The \emph{support} of a permutation is the set of elements that it moves. Note that this set is closed under the permutation.

If $G$ is a group and $g \in G$, we denote by $\langle g \rangle$ the (cyclic) group generated by $g$, $\langle g \rangle = \{g^k \mid k \in \Z\} \subset G$. The \emph{order} of $g$ is the cardinality of this group. 

If $A$ is an alphabet, $G$ a group a \emph{pattern} is $p : P \to A$ with $P \subset G$ (not necessarily finite). When playing the solitaire, we also talk about patterns as subsets of $G$ (subsets are of course in bijection with patterns over the alphabet $A = \{0,1\}$). If $a \in A$ and $g \in G$, write $a^g$ for the unique pattern $p$ of type $p : \{g\} \to \{a\}$, 

If $B, D \subset G$ for a group $G$, we say a pattern $p \in A^B$ \emph{appears} in another pattern $q \in A^D$ if there exists $g \in G$ such that $gB \subset D$ and $q(gb) = p(b)$ for all $b \in B$.

\section{The general theory of solitaire}
\label{sec:General}


\subsection{Tilings and (TEP)-subshifts}

Let $A$ be a finite alphabet and $G$ a group. Elements of $A$ are called \emph{symbols} and those of $G$ \emph{cells}. A configuration is a mapping $\gamma : G \to A$. When $A = \{0, 1\}$, configurations are equivalent to subsets of $G$ with the conversion $\gamma \leftrightarrow \{g \in G \mid \gamma(g) = 1\}$. In this case, cells assigned the value $1$ may be referred as \emph{points} and those assigned to $0$ as \emph{empty} cells. 

The \emph{full shift} on $G$ with alphabet $A$ is the set of configurations $A^G$ with the product topology and the $G$-action given by $g\gamma_h = \gamma_{g^{-1}h}$. A \emph{subshift} is a closed $G$-invariant subset of $A^G$. A subshift is of \emph{finite type}, shortened as \emph{SFT}, if it is of the form $X = \{\gamma \in A^G \mid \forall g \in G, g\gamma \in U\}$ with $U \subset A^G$ a clopen. Equivalently, $X$ is an SFT is there is a finite \emph{domain} $S \Subset G$ and a family $\mathcal{T} \subset A^S$ such that $\gamma \in X \iff \forall g \in G, X_{\mid gS} \in \mathcal{T}$. $\mathcal{T}$ is called the set of \emph{allowed patterns} and we say that $\mathcal{T}$ \emph{generates} $X$. 

Let $C \Subset S \Subset G$. A SFT with domain $S$ is \emph{$(C, S)$-totally extremally permutive} (TEP) if for each $c \in C$, for each $\pi : S \setminus\{c\} \to A$, there is a unique $a \in A$ such that the extension of $\pi$ with $\pi(c)=a$ is an allowed pattern. In other words, whatever the symbols chosen for the cells $S\setminus\{c\}$, there is a unique choice for $c$ that gives an allowed pattern.

In \cite[Example~5.15]{Sa22}, a very rich source of TEP subshifts is provided. We explain here a slightly simplified version of this example, using groups instead of quasigroups. 

\begin{example}
\label{ex:BasicTEP}
    Let $G$ be a group, $S \Subset G$ be arbitrary and $A$ be any abelian group. Then requiring that the entries of $S$ sums to zero (or indeed another constant) gives an $S$-TEP subshift. The Ledrappier subshift example discussed in more detail in Section~\ref{sec:Ledrappier} is of this nature. More generally, we can take $A$ a non-abelian group, as long as we specify the order in which its elements are multiplied. \qee
\end{example}

The Ledrappier example studied in Section~\ref{sec:Ledrappier} is the case $S = \{(0,0), (0,1), (1,0)\}$ of this construction with $A = \Z/2\Z$.



Examples of TEP-subshifts are shown in Figure~\ref{fig:TEP ex}. Those images come from \cite{Sa22}, and show samples of TEP subshift on $\Z^2$ and the free group $F_2$ with alphabet either $\{0,1\}$ and the rule as in Example~\ref{ex:BasicTEP}, with alphabet either $\Z/2\Z$ or $S_3$ (the symmetric group on three points). 

\begin{figure}
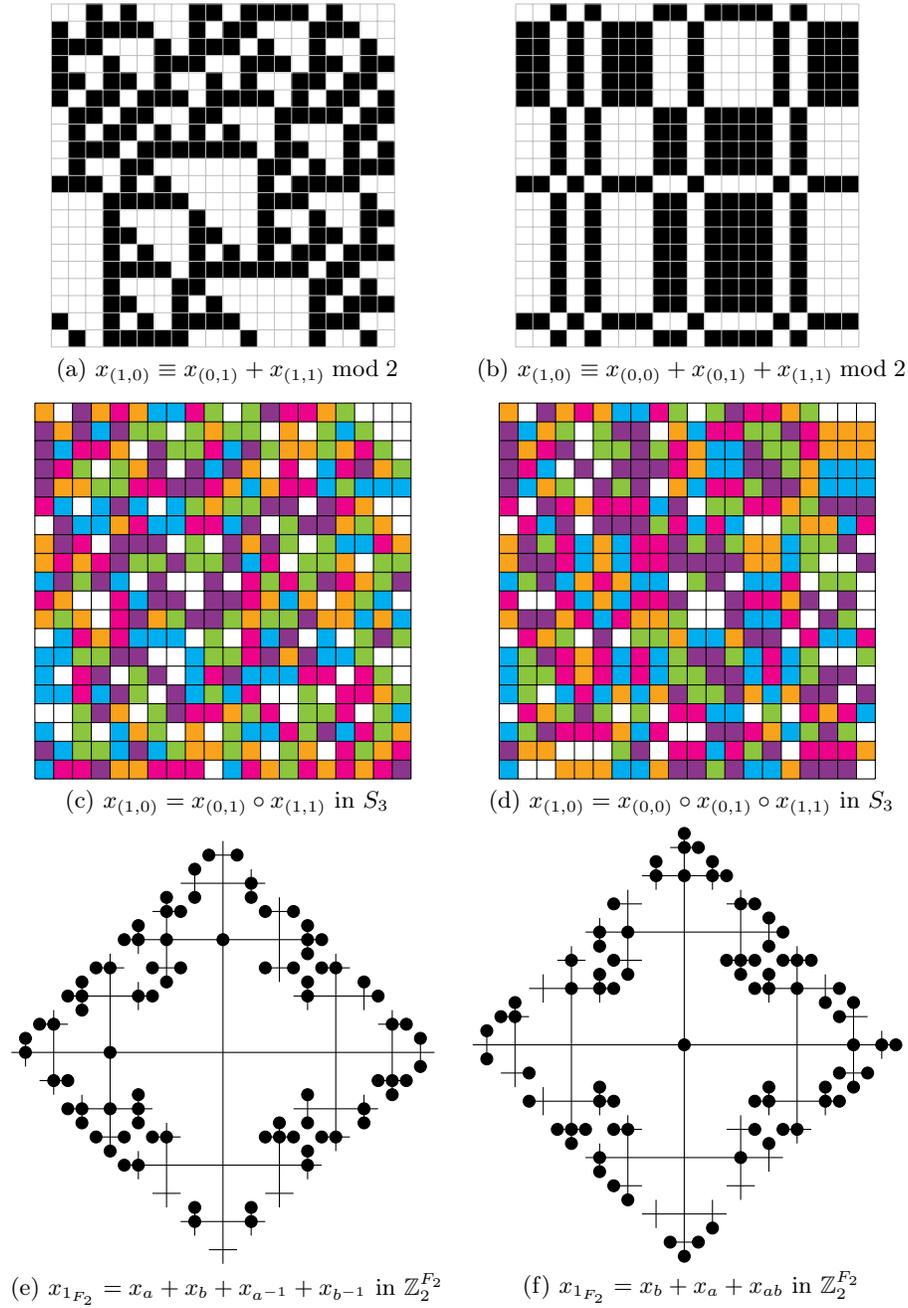

\begin{minipage}[t]{0.5\textwidth}
    \centering
    \includestandalone[width=0.75\textwidth]{Figures/TEP/bintriangle}
    \vspace*{-1mm}
    \subcaption{$x_{(1,0)} \equiv x_{(0,1)} + x_{(1,1)} \bmod 2$}
    \vspace*{2mm}
\end{minipage}
\begin{minipage}[t]{0.5\textwidth}
    \centering
    \includestandalone[width=0.75\textwidth]{Figures/TEP/binsquare}
    \vspace*{-1mm}
    \subcaption{$x_{(1,0)} \equiv x_{(0,0)} + x_{(0,1)} + x_{(1,1)} \bmod 2$}
    \vspace*{2mm}
\end{minipage}
\begin{minipage}[t]{0.5\textwidth}
    \centering
    \includestandalone[]{Figures/TEP/S3triangle}
    \vspace*{-1mm}
    \subcaption{$x_{(1,0)} = x_{(0,1)} \circ x_{(1,1)}$ in $S_3$}
    \label{fig:S3triangle}
    \vspace*{2mm}
\end{minipage}
\begin{minipage}[t]{0.5\textwidth}
    \centering
    \includestandalone[]{Figures/TEP/S3square}
    \vspace*{-1mm}
    \subcaption{$x_{(1,0)} = x_{(0,0)} \circ x_{(0,1)} \circ x_{(1,1)}$ in $S_3$}
    \vspace*{2mm}
\end{minipage}
\begin{minipage}[t]{0.5\textwidth}
    \centering
    \vspace*{0.8mm}
    \includestandalone[]{Figures/TEP/FreeLedra}
    \vspace*{-1mm}
    \subcaption{$x_{1_{F_2}} = x_a + x_b + x_{a^{-1}} + x_{b^{-1}}$ in $\Z_2^{F_2}$ }
    \label{fig:FreeLedra}
\end{minipage}
\begin{minipage}[t]{0.5\textwidth}
    \vspace*{-1mm}
    \centering
    \includestandalone[]{Figures/TEP/FreeU}
    \vspace*{-1mm}
    \subcaption{$x_{1_{F_2}} = x_b + x_a + x_{ab}$ in $\Z_2^{F_2}$ }
    \label{fig:FreeU}
\end{minipage}
\vspace*{-1mm}
\caption{Uniform samples from some TEP subshifts. In (a), (b), (e) and (f), black points are $1$s and white ones $0$. In (c) and (d), white is the identity, blue, green and orange the transpositions and purple and pink the $3$-cycles. Example (a) is the Ledrappier example.}
\label{fig:TEP ex}
\end{figure}

In all the examples above, $C = S$, but we note that there are also interesting cases where this does not happen. The case $C \subsetneq S$ is in particular interesting because it allows us to in a sense inject arbitrary non-permutive functions in the definition:

\begin{example}
\label{ex:TEPsWithf}
Let $G$ be a group, let $C \Subset S \Subset G$ be arbitrary. Let $A$ be an abelian group. Let $f : A^{S \setminus C} \to A$ be an arbitrary function. Let $P$ be the set of patterns $p \in A^S$ such that $f(p|_{S \setminus C}) + \sum p|_C = 0$. Then $P$ is $(C, S)$-TEP: given any values for $p \in A^{S \setminus \{c\}}$ with $c \in C$, there is a unique value we can put in $c$ that makes the sum equal to $0$. Again, $A$ need not necessarily be abelian, as long as we must pick an order of multiplication. \qee
\end{example}

TEP-subshift were introduced by the first author in \cite{Sa22}. They are our main motivation for studying the solitaire. Let us provide some context and historical motivation for why we are interested in TEP subshifts here. Subshifts of finite type (equivalently, tilings systems) have been studied since the 60's as logical \cite{Wang61} and dynamical systems \cite{Mozes89}, and since the very beginning, there has been a lot of emphasis on computability issues. Berger showed in 1969 \cite{Be66} that even the emptiness of a given SFT is an undecidable problem.

Thus it is of major interest in symbolic dynamics to find classes of SFTs where some dynamical problems can actually be solved in practical instances. Algebraic subshifts are the best-known source \cite{Schmidt,BeKa24}. TEP subshifts are an interesting non-algebraic class of SFTs (still including many interesting algebraic examples like the Ledrappier subshift), which also has some good computability properties.

In TEP subshifts, not only is emptiness decidable, but we can find concrete sets (only depending on the geometry of the defining shape and the group) where we can put arbitrary contents (these will be called independent sets). The solitaire is the natural closure property of these sets, and helps us to understand the valid patterns in a TEP subshift.

This TEP motivation is also part of the reason we want to study the solitaire on general groups rather than $\Z^2$. In recent decades, symbolic dynamics has moved increasingly to Cayley graphs of groups, and it is now known that similar phenomena to those on $\Z^2$ can be seen in a wide variety of groups. In particular, the undecidability of the emptiness problem (and the related problem of strongly aperiodic SFTs) has been exhibited on a large class of groups. For example this was shown for the lamplighter group (discussed in Example~\ref{ex:lamplighter}) in \cite{BaSa24}.

\subsection{Solitaire moves}

Natural objects to consider when studying a TEP subshift are its independent sets, namely the sets $I \subset G$ such that for all $\pi : I \to A$, there is a valid configuration $\gamma$ such that $\gamma_{|I} = \pi$. Those sets were introduced by the first author in ~\cite{Sa22}, who noticed that it is easy to exhibit some independent set but much harder to find them all. To produce more independent set from a known one, the \emph{solitaire of independence} was introduced, but only a preliminary discussion of its properties was given. 

We begin by defining the solitaire process on a group. We give a slightly more general definition than what was discussed in the introduction, where the unique hole can only be moved if it is one of the elements in a particular subset $C \Subset S$.

While our main interest is indeed in the case $C = S$, cases $C \subsetneq S$ also arise naturally in applications (see \cite{Sa22} and Section~\ref{sec:Ledrappier}) and do not add any difficulties in proofs.

\begin{definition}
Let $G$ be a countable group. If $C \Subset S \Subset G$ are finite sets, then a \emph{$(C, S)$-solitaire move at $g \in G$} is a pair $(P, Q) \in \pow(G)^2$ such that
\[ |P \cap gS| = |Q \cap gS| = |S| - 1 \wedge P \triangle Q \in \pow_2(gC). \]
In general, a \emph{$(C, S)$-solitaire move} is a $(C, S)$-solitaire move at any $g \in G$, and $(S, S)$-solitaire moves are called simply $S$-solitaire moves. A $(C, S)$-solitaire move $(P, Q)$ is usually written as $P \rightarrow_{C, S} Q$ or even $P \rightarrow Q$, and $P \rightarrow^* Q$ denotes the transitive closure of this relation.
\end{definition}

Intuitively, if there is a way to position an $S$-shape window on a pattern such that only one empty cell is visible, than one can do an $S$-solitaire move at this position, which consists in moving one of the visible points to the empty cell. For a $(C,S)$-solitaire move, elements in $S\setminus C$ are ``pivot only'' elements, i.e.\ they are needed to make a move but cannot be moved.

An example of solitaire on $\Z^2$ is given in Figures~\ref{fig:triangle_action} and \ref{fig:valid_moves}, the first one stating the rules and the second showing a few valid moves. An example on the free group on two generators is shown in Figure~\ref{fig:free move}.

\begin{figure}[ht!]
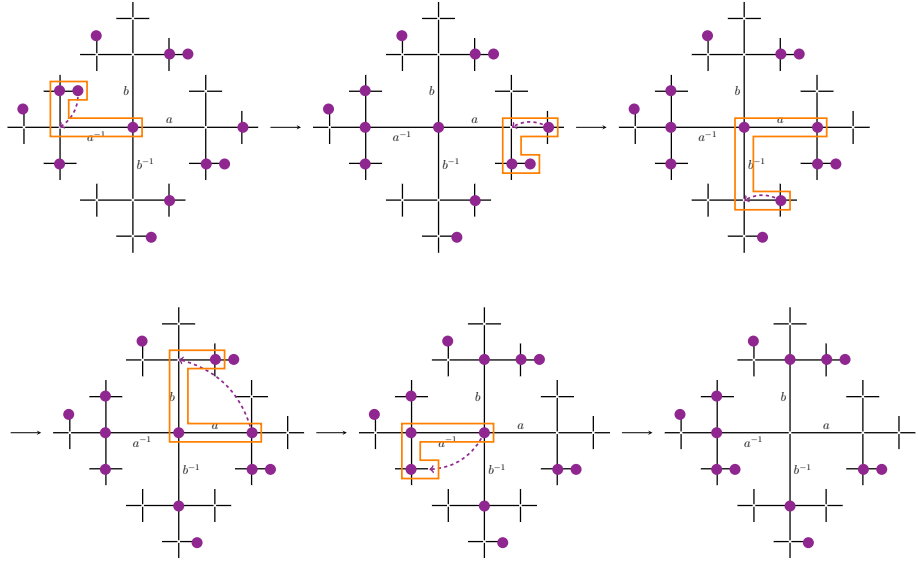

    \centering
    \includestandalone[width =\textwidth]{Figures/General/ex_free}
    \caption{A simple sequence of valid moves of the solitaire on the free group $F_2$ with $S = \{e, a^{-1}, a^{-1}b, a^{-1}ba$\}.}
    \label{fig:free move}
\end{figure}

An example with $C \neq S$ on $\Z^2$ is given in Figure~\ref{fig:C move}. One may check that the allowed moves and the orbits are very different, the moves being more and more restricted the smaller $C$ is.  

\begin{figure}[ht!]
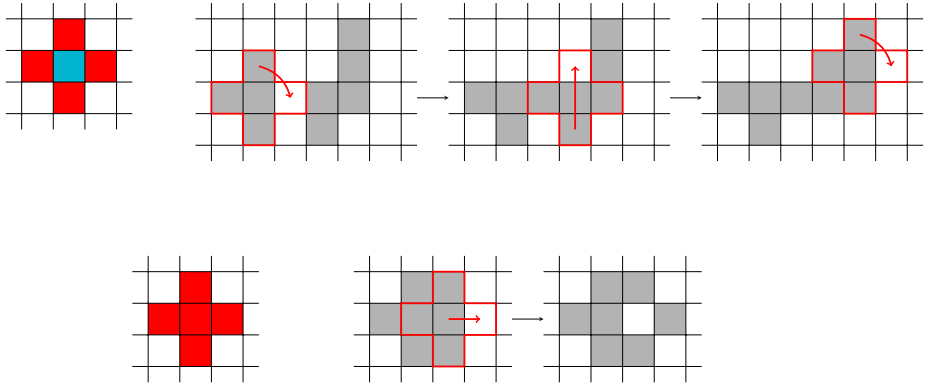

    \centering
    \includestandalone[width =\textwidth]{Figures/General/C_ex_Z2}
    \caption{First line: A simple sequence of $(C,S)$-solitaire moves on $\Z^2$ with $S =\{(0,0), (0,1), (1,0), (0,-1), (-1,0)\}$ and $C = S \setminus \{(0,0)\}$ depicted on the left ($C$ in blue and $S\setminus C$ in red). \\ Second line: a valid $(S,S)$-solitaire move that links two patterns that are not in the same $(C,S)$-orbit, with $S$ depicted on the left.}
    \label{fig:C move}
\end{figure}

Note that despite the arrow being directed, the moves are invertible. One can think of the $(C, S)$-solitaire moves in many ways. Of course, one can think of them as a relation, or as forming the edges of an undirected graph with vertices $\pow(G)$, or as the objects of an abstract rewriting system (with the edges giving the legal rewrite rules).

A slightly less obvious, but useful, point of view, one can think of solitaire moves as a group action. At each $g \in G$, we have an natural action of the group of permutations on $C$, $\mathrm{Sym}(C)$, which permutes the points in $gC$. Solitaire moves are a restriction of this action. Let $\sigma \in \mathrm{Sym}(gC)$, in configurations $P \in \pow(G)$ with $|P \cap gS| = |S| - 1 \wedge |P \cap gC| = |C| - 1$, its action moves each point in $x \in gC\cap P$ to the position $\sigma(x)$ and in other configuration, its action is trivial. Then the application of several solitaire moves translates as an action of the free product of the symmetric groups $\mathrm{Sym}(gC)$ over $g \in G$ on $\pow(G)$, which simply applies all actions sequentially. This action is by homeomorphisms when $\pow(G)$ is given the product topology.




The classical 15-puzzle roughly corresponds to the case $G = \Z^2, C = S = \{0,1\}^2$ of the solitaire (we study this case in Section~\ref{sec:Square}). It is standard to see the 15-puzzle as a groupoid, and the solitaire can also be seen as a groupoid in an obvious way. Though this point of view will not be used, we briefly recall that a \emph{groupoid} is a small category where every morphism invertible. The objects of the solitaire groupoid are then the configurations, and there is a unique morphism from each configuration to any other configuration that can be reached by solitaire moves.

If $P \in \pow(G)$, its \emph{orbit}, denoted as $\orb(P)$ is its equivalence class for the relation $\rightarrow^*$, i.e. the set of patterns that can be obtained from $P$ using solitaire moves. We say a set $X \subset \pow(G)$ is \emph{closed under solitaire} if $\forall P \in X: \forall Q \in \pow(G): P \rightarrow^* Q \implies Q \in X$. The following will be generalized in Theorem~\ref{thm:TISolitaire}.

\begin{proposition}[\cite{Sa22}]
    For all $C \Subset S \Subset G$, the set of $(C,S)$-independent sets is closed under solitaire.
\end{proposition}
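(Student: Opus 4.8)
The plan is to fix an arbitrary $(C,S)$-TEP subshift $X \subset A^G$ and to show that its family of independent sets is preserved by a \emph{single} solitaire move; since $\rightarrow^*$ is the transitive closure of $\rightarrow_{C,S}$, an easy induction on the length of a chain of moves then yields the full statement. So I would assume $I \subset G$ is independent and $I \rightarrow_{C,S} J$ is a move at some $g \in G$. By definition $I \triangle J = \{x,y\} \subset gC$, say with $x \in I \setminus J$ and $y \in J \setminus I$. Because $|I \cap gS| = |S|-1$ and $y \in gS$ is the unique point of $gS$ absent from $I$, we have $I \cap gS = gS \setminus \{y\}$, and symmetrically $J \cap gS = gS \setminus \{x\}$. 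In particular $I \setminus \{x\} = J \setminus \{y\}$, so $I$ and $J$ agree away from the swapped pair.

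To prove $J$ independent, I would take an arbitrary $\pi : J \to A$ and produce a configuration $\gamma \in X$ with $\gamma|_J = \pi$, the idea being to transport $\pi$ to a prescription on $I$ using the TEP rule. Since $x \in gC$ and $\pi$ is defined on all of $gS \setminus \{x\} \subset J$, the TEP property at the window $gS$ with pivot $x$ furnishes a unique $a \in A$ for which the pattern $p \in A^{gS}$ agreeing with $\pi$ on $gS \setminus \{x\}$ and taking value $a$ at $x$ is allowed. I then define $\pi' : I \to A$ by setting $\pi'|_{I \setminus \{x\}} = \pi|_{J \setminus \{y\}}$ (legitimate since $I \setminus \{x\} = J \setminus \{y\}$) and $\pi'(x) = a$, and I invoke independence of $I$ to obtain $\gamma \in X$ with $\gamma|_I = \pi'$.

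It then remains to verify $\gamma|_J = \pi$, and this is the crux. On $J \setminus \{y\} = I \setminus \{x\}$ the equality is immediate from $\gamma|_I = \pi'$, so the only genuine point is the newly occupied cell $y$. Here I would compare the two allowed patterns $\gamma|_{gS}$ and $p$ on the window $gS$: they coincide on $gS \setminus \{x,y\}$ (both equal $\pi$ there) and both equal $a$ at $x$, hence they agree on all of $gS \setminus \{y\}$. Since $y \in gC$, the uniqueness clause of the TEP property at pivot $y$ forces $\gamma(y) = p(y) = \pi(y)$, which closes the argument. I expect this last step to be the main obstacle: the naive transport to $I$ loses all control of the value at $y$, and it is recovered only by using TEP a second time — now as a uniqueness statement at the pivot $y$ rather than an existence statement at $x$. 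The fact that a solitaire move places \emph{both} $x$ and $y$ inside $gC$ is exactly what makes both of these appeals to the TEP property legal.
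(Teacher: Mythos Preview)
Your proof is correct and follows essentially the same approach as the paper. The proposition itself carries no proof in the paper (it is cited from \cite{Sa22} and deferred to the more general Theorem~\ref{thm:TISolitaire} on permutive subshifts), and your argument is precisely the TEP-specific instance of that proof: fix the common part $I\setminus\{x\}=J\setminus\{y\}$, use permutivity/TEP at the corner $x\in gC$ to transport any prescription on $J$ to one on $I$, realize it via independence of $I$, and then invoke uniqueness at the other corner $y\in gC$ to recover the value at $y$. The paper phrases this as a bijection $X|_I\to X|_{I\cup J}\to X|_J$ that fixes the intersection, but the content is identical.
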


This allows to use the solitaire moves to generate new independent sets from a known one. The question is then, starting from a given independent set, which ones can we reach with solitaire moves? In the following section, we answer this question for some cases of solitaire, such as the triangle solitaire on $\Z^2$ (Section~\ref{sec:Triangle}).

\begin{remark}
\label{rem:fg}
We note that the theory of solitaire might as well be built on finitely-generated groups. Namely, let $H = \langle S^{-1}S \rangle$, i.e.\ the subgroup of $G$ generated by all $a^{-1} b$ such that $a, b \in S$. Then it is easy to see that each solitaire move only touches elements in a single coset. Thus, the cosets behave completely independently, i.e.\ applying solitaire moves on points in one coset does not change the content of the other cosets, and no solitaire move relies on points from different cosets.
\end{remark}

\subsection{Filling process}
\label{sec:Filling}

Another very natural question with TEP-subshifts is ``given a partial assignment of symbols to a subset $P \subset G$, in which additional positions in $G$ can we deduce the contents?''. In this section, we present a natural process that aims to describe this domain. 

\begin{definition}
\label{def:Filling}
Let $G$ be a countable group. If $C \Subset S \Subset G$ are finite sets, then a \emph{$(C, S)$-filling move at $g \in G$} is a pair $(P, Q) \in \pow(G)^2$ such that
\[ P \varsubsetneq Q \wedge |P \cap gS| = |S| - 1 \wedge Q = P \cup gC. \]
In general, a \emph{$(C, S)$-filling move} is a $(C, S)$-filling move at any $g \in G$, and $(S, S)$-filling moves are called simply $S$-filling moves. A $(C, S)$-filling move $(P, Q)$ is usually written as $P \fillstep_{C, S} Q$ or even $P \fillstep Q$, and $P \fillstep^* Q$ denotes the transitive closure of this relation.
\end{definition}

\begin{figure}[ht]
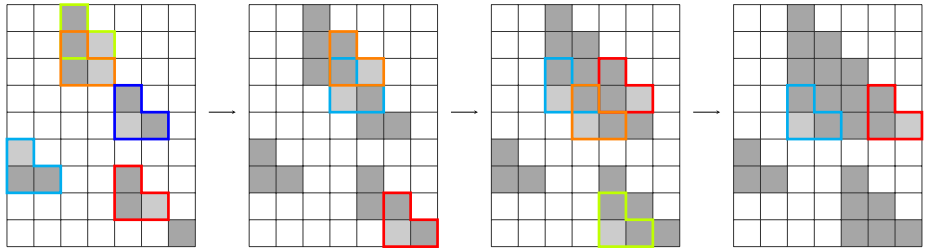

    \centering
    \includestandalone[width =\textwidth]{Figures/Triangle/filling}
    \caption{An example of the filling process for the triangle shape on $\Z^2$. Each colored triangle corresponds to a filling step, adding a point, and the final pattern is the filling closure. One can check that the order in which the filling steps are performed has no impact on the final result.}
    \label{fig:fill process}
\end{figure}

Intuitively, assuming $P$ is the set of cells of $G$ whose content is known, then performing a filling step on $P$ consists in using the TEP property of the subshift to deduce the content of other cells and extend the ``known'' domain. 

Note that the filling process is not reversible. However, we have another desirable property:

\begin{lemma}
The filling process has the \emph{diamond property}, meaning whenever $P \fillstep Q, P \fillstep R$ with $Q \neq R$, there exists $T$ such that $Q \fillstep T, R \fillstep T$. In particular, the system is \emph{confluent}, meaning whenever $P \fillstep^* Q, P \fillstep^* R$, there exists $T$ such that $Q \fillstep^* T, R \fillstep^* T$.
\end{lemma}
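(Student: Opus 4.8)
The plan is to first unwind what a single filling move actually does, then exhibit an explicit common successor $T$, and finally deduce confluence from the diamond property by the standard tiling argument.

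First I would observe that a $(C,S)$-filling move is really just the act of filling a single hole. Suppose $P \fillstep Q$ is a move at $g$, so $|P \cap gS| = |S|-1$ and $Q = P \cup gC$ with $P \varsubsetneq Q$. Since $|P \cap gS| = |S|-1$, there is a unique $h \in gS \setminus P$. As $gC \subseteq gS$, we get $gC \setminus P \subseteq \{h\}$, and the condition $P \varsubsetneq Q$ forces $gC \setminus P = \{h\}$; in particular $h \in gC$ and $Q = P \cup \{h\}$. Thus each move is determined by its \emph{hole} $h$ together with a witnessing translate $gS$, and it simply adjoins $h$ to $P$.

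Next I would set up the diamond. Let $P \fillstep Q$ be a move at $g_1$ with hole $h_1$, and $P \fillstep R$ a move at $g_2$ with hole $h_2$, so $Q = P \cup \{h_1\}$ and $R = P \cup \{h_2\}$. Since $Q \neq R$ we have $h_1 \neq h_2$. The natural candidate for the common successor is
\[ T = P \cup \{h_1, h_2\} = Q \cup \{h_2\} = R \cup \{h_1\}. \]
The crux of the argument — and the only place anything needs to be checked — is a non-interference claim: \emph{the point filled by one move does not lie in the window of the other}, i.e.\ $h_1 \notin g_2 S$ and $h_2 \notin g_1 S$. For the first, the only element of $g_2 S$ outside $P$ is $h_2$; so if $h_1 \in g_2 S$, then from $h_1 \neq h_2$ we would get $h_1 \in P$, contradicting $h_1 \in g_1 S \setminus P$. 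The second is symmetric.

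With non-interference in hand the rest is bookkeeping. Since $h_1 \notin g_2 S$, we have $Q \cap g_2 S = P \cap g_2 S$, so $|Q \cap g_2 S| = |S|-1$ and $h_2$ is still the unique hole of $g_2 S$ relative to $Q$; moreover $Q \cup g_2 C = (P \cup g_2 C) \cup \{h_1\} = R \cup \{h_1\} = T$ with $h_2 \notin Q$, so $Q \fillstep T$ is a valid move at $g_2$. Symmetrically $R \fillstep T$ is a valid move at $g_1$, which establishes the diamond property. Finally, confluence follows formally: the reflexive closure $\fillstep^{=}$ inherits the diamond property (take $T = Q = R$ in the degenerate case $Q = R$), and a relation whose reflexive closure has the diamond property has confluent reflexive-transitive closure by the usual induction that tiles a rectangle of diamonds; since $(\fillstep^{=})^{*} = \fillstep^{*}$, the filling process is confluent. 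The main obstacle is purely conceptual — recognising that filling is local and that two simultaneously applicable moves touch disjoint windows; once this is seen, no genuine computation remains.
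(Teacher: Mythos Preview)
Your proof is correct and follows essentially the same route as the paper: take $T = Q \cup R$ and verify that each of $Q$, $R$ admits a filling move to $T$ using the other's witnessing translate, then deduce confluence by the standard rectangle-tiling argument. Your version is slightly more explicit in first isolating that a filling move adjoins a single element and in phrasing the key step as a non-interference claim ($h_1 \notin g_2 S$), whereas the paper argues the equivalent fact via the contrapositive ($Q \cap g'S = g'S \Rightarrow Q = R$), but the substance is identical.
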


\begin{proof}
Suppose $P \fillstep Q, P \fillstep R$ with $Q \neq R$. Then $P \varsubsetneq Q$, $P \varsubsetneq R$ and there exist $g \in G$ such that 
\[ |P \cap gS| = |S| - 1 \wedge Q = P \cup gC \]
and $g' \in G$ such that
\[ |P \cap g'S| = |S| - 1 \wedge R = P \cup g'C. \]

Take $T = Q \cup R$. Observe that if $Q \cap g'S = g'S$, then $Q \supset R$ so in fact $Q = T = R$ (which we assumed does not hold). Thus we must have $|Q \cap g'S| = |P \cap g'S| = |S| - 1$ and thus $Q \fillstep Q \cup g'C = P \cup gC \cup g'C = T$ as desired. Symmetrically, $R \fillstep T$.

The fact that confluence follows from the diamond property is well-known, and follows from completing the paths $P \fillstep^m Q, P \fillstep^n R$ to an $m$-by-$n$ rectangle, by successive applications of the diamond property. \qed
\end{proof}

\begin{definition}
Let $G$ be a countable group. If $C \Subset S \Subset G$ are finite sets, then the \emph{(C, S)-filling closure} of $P \in \pow(G)$ is defined as
\[ \varphi_{C, S}(P) = \bigcup \{Q \;|\; P \fillstep_{C, S}^* Q\}. \]
\end{definition}

\begin{remark}
Recall that a \emph{directed set} is a set together with a partial order such that any two elements have a common upper bound. The previous lemma shows that for $P \subset G$, the set $F_P = \{Q \subset G \mid P \fillstep_{C, S}^* Q\}$ is a directed set under $\fillstep^*$ and thus under inclusion, since the filling process can only increase sets. The set $F_P$ defined above is precisely the limit of this set  (seen as a net indexed by itself) in the product topology of $\pow(G)$. (For the definition of a limit of a net see any standard reference \cite{Ke17}.)
\end{remark}

\begin{lemma}
The $(C, S)$-filling closure operator $\varphi$ is a closure operator.
\end{lemma}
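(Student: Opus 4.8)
The plan is to verify the three defining properties of a closure operator—extensivity, monotonicity, and idempotence—for the map $\varphi = \varphi_{C,S}$, leaning heavily on the confluence established in the previous lemma.

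\emph{Extensivity.} For any $P \in \pow(G)$, note that $P \fillstep^* P$ holds trivially (the reflexive part of the transitive closure), so $P$ appears in the union defining $\varphi(P)$. Hence $P \subset \varphi(P)$. This step is immediate.

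\emph{Monotonicity.} Suppose $P \subset P'$. The key observation is that any filling move available at $P$ remains available at $P'$, \emph{or} its effect is already subsumed. More carefully, I would argue that $\varphi(P) \subset \varphi(P')$ by showing that whenever $P \fillstep_{C,S}^* Q$, one has $Q \subset \varphi(P')$. I would do this by induction on the length of the filling sequence $P = P_0 \fillstep P_1 \fillstep \cdots \fillstep P_n = Q$. The subtlety is that a filling move legal at $P_i$ (requiring $|P_i \cap gS| = |S|-1$) need not be legal at the corresponding enlarged set, since adding points can destroy the ``exactly one hole'' condition. The clean way around this is to track, inductively, that $P_i \subset \varphi(P')$, and then argue that the next point $gC \setminus P_i$ to be added already lies in $\varphi(P')$: either the move $P' \cup (P_i \setminus P) \fillstep \cdots$ can still fire, or the target cell is already filled. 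I expect the cleanest formulation uses the fact that $\varphi(P')$ is itself \emph{fill-closed}, i.e. closed under available filling moves, so that once $P_i \subset \varphi(P')$ and $|\varphi(P') \cap gS| = |S|-1$ we get $gC \subset \varphi(P')$, while if $|\varphi(P') \cap gS| = |S|$ the new cell is already present. Establishing that $\varphi(P')$ is fill-closed is essentially a restatement of the directedness/limit description in the preceding remark.

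\emph{Idempotence.} Since $\varphi$ is extensive, $\varphi(P) \subset \varphi(\varphi(P))$ is automatic, so only $\varphi(\varphi(P)) \subset \varphi(P)$ needs proof. For this I would show that $\varphi(P)$ admits no further filling moves, i.e. it is a fixed point of the filling process: if $|\varphi(P) \cap gS| = |S|-1$ for some $g$, then by the directed/limit characterization of $\varphi(P)$ as $\bigcup\{Q \mid P \fillstep^* Q\}$, the finitely many points of $\varphi(P) \cap gS$ already appear in some $Q$ with $P \fillstep^* Q$, whence $Q \fillstep Q \cup gC$ and so $gC \subset \varphi(P)$, contradicting that a cell of $gC$ is a hole. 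Thus no filling move is available at $\varphi(P)$, giving $\varphi(\varphi(P)) = \varphi(P)$.

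The main obstacle is the monotonicity argument: the filling moves are not monotone move-by-move, because enlarging a set can invalidate the ``unique hole'' precondition of a move. The right fix is to stop trying to transport individual moves and instead reason through the global object $\varphi(P')$, using its fill-closedness (which the confluence lemma and the limit description supply) so that any cell reachable from $P$ is forced to already lie in $\varphi(P')$. Extensivity and idempotence, by contrast, follow quickly from confluence and the finiteness of $S$ (which guarantees any precondition involving $gS$ is witnessed at a finite stage of the directed union).
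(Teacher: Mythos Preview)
Your proposal is correct and follows the same approach as the paper—verifying extensivity, monotonicity, and idempotence directly—though the paper's proof consists of a single sentence declaring all three properties ``obvious'' without further argument. Your care in handling monotonicity (noting that individual filling moves are not preserved under enlargement, and instead reasoning via fill-closedness of $\varphi(P')$) and idempotence (using directedness and finiteness of $S$) supplies the details the paper omits.
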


\begin{proof}
By definition, we need to show $P \subset \varphi(P)$ (extensivity), $P \subset Q \implies \varphi(P) \subset \varphi(Q)$ (monotonicity) and $\varphi(\varphi(P)) = \varphi(P)$ (idempotency) for all sets $P, Q$. But in fact, all of these properties are obvious. \qed
\end{proof}

It follows from the previous lemma that the set of sets $P \in \pow(G)$ which are their own filling closure form a closure system, and we call the fixed points \emph{filling closed sets}. In particular, the following is a well-known consequence:

\begin{lemma}
\label{lem:IntersectionOfAll}
The filling closure $\varphi(P)$ is the intersection of all filling closed sets that contain $P$. 
\end{lemma}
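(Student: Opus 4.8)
The plan is to prove this purely from the abstract closure-operator axioms — extensivity, monotonicity and idempotency — established for $\varphi = \varphi_{C,S}$ in the previous lemma, since the identity is the standard characterization of a closure in terms of its family of closed sets and makes no further use of the combinatorial definition of filling moves. Write $\mathcal{F} = \{F \subset G \mid \varphi(F) = F \text{ and } P \subset F\}$ for the family of filling closed sets containing $P$, and let $M = \bigcap \mathcal{F}$. I would prove $\varphi(P) = M$ by showing the two inclusions separately.

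For $\varphi(P) \subset M$, I would fix an arbitrary $F \in \mathcal{F}$ and argue $\varphi(P) \subset F$; since this holds for every member of $\mathcal{F}$, it gives $\varphi(P) \subset \bigcap \mathcal{F} = M$. The argument is a one-liner: from $P \subset F$ and monotonicity we get $\varphi(P) \subset \varphi(F)$, and since $F$ is filling closed we have $\varphi(F) = F$, so $\varphi(P) \subset F$.

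For the reverse inclusion $M \subset \varphi(P)$, the key observation is that $\varphi(P)$ is \emph{itself} a member of $\mathcal{F}$. Indeed, extensivity gives $P \subset \varphi(P)$, and idempotency gives $\varphi(\varphi(P)) = \varphi(P)$, so $\varphi(P)$ is a filling closed set containing $P$. Being one of the sets whose intersection defines $M$, it satisfies $M = \bigcap \mathcal{F} \subset \varphi(P)$. Combining the two inclusions yields $\varphi(P) = M$, as claimed.

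There is no real obstacle here: the statement is the classical correspondence between closure operators and closure systems, and each direction uses exactly one of the closure axioms beyond monotonicity (idempotency to witness $\varphi(P) \in \mathcal{F}$, closedness of the $F$'s for the other inclusion). The only point worth stating carefully is that $\mathcal{F}$ is nonempty so that $M$ is well defined — but this is immediate, as $\varphi(P) \in \mathcal{F}$ by the very argument above (alternatively, $G$ itself is filling closed and contains $P$, using that $G$ is the top element of the closure system).
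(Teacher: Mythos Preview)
Your proof is correct and is precisely the standard closure-operator argument the paper has in mind; the paper itself omits the proof, merely noting that the lemma is a well-known consequence of $\varphi$ being a closure operator.
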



The connection between the solitaire and the filling process is now the following:

\begin{lemma}
\label{lem:ContainedInFilling}
Let $G$ be a countable group, and let $C \Subset S \Subset G$ be finite sets. Let $P \in \pow(G)$ be arbitrary. Then for all $Q \in \orb(P)$, we have $\varphi_{C,S}(Q) = \varphi_{C,S}(P)$.
\end{lemma}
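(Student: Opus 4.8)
The plan is to reduce the statement to the case of a single solitaire move and then show that a single move leaves the filling closure unchanged. Since $\orb(P)$ is the equivalence class of $P$ under $\rightarrow^*$, every $Q \in \orb(P)$ is joined to $P$ by a finite chain of solitaire moves $P = P_0 \rightarrow P_1 \rightarrow \cdots \rightarrow P_k = Q$ (possibly with $k = 0$). Because solitaire moves are invertible, each step is symmetric, so by induction on $k$ it suffices to prove that $\varphi_{C,S}(P) = \varphi_{C,S}(Q)$ whenever $P \rightarrow_{C,S} Q$ is a single move.

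The core step is to show $Q \subset \varphi_{C,S}(P)$ for a single move $P \rightarrow_{C,S} Q$ at some $g \in G$. By definition of a move we have $|P \cap gS| = |Q \cap gS| = |S|-1$ and $P \triangle Q = \{x, y\} \in \pow_2(gC)$. Since $P$ and $Q$ agree outside these two cells and both meet $gS$ in $|S|-1$ elements, exactly one of $x, y$ lies in $P$; say $x \in P$ and $y \notin P$. Then $y$ is the unique element of $gS$ missing from $P$, so $P$ admits the filling move $P \fillstep_{C,S} P \cup gC$ at $g$, giving $P \cup gC \subset \varphi_{C,S}(P)$. Now $Q = (P \setminus \{x\}) \cup \{y\}$, and since $y \in gC$ we have $Q \subset P \cup gC \subset \varphi_{C,S}(P)$, as wanted.

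By the symmetry afforded by invertibility of the move, the same argument applied to $Q \rightarrow_{C,S} P$ yields $P \subset \varphi_{C,S}(Q)$. Finally I would convert these two containments into the desired equality using that $\varphi_{C,S}$ is a closure operator: from $Q \subset \varphi_{C,S}(P)$, monotonicity and idempotency give $\varphi_{C,S}(Q) \subset \varphi_{C,S}(\varphi_{C,S}(P)) = \varphi_{C,S}(P)$, and symmetrically $\varphi_{C,S}(P) \subset \varphi_{C,S}(Q)$, whence equality.

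I do not expect a serious obstacle here: the argument is essentially bookkeeping. The one point requiring a little care is identifying that the hole of $P$ inside $gS$ is exactly the cell $y$ into which the marble moves, so that the single filling move at $g$ adds all of $gC$ and thereby absorbs both $x$ and $y$; this is what makes $Q$ a subset of a single filling-successor of $P$. The reduction to one move relies on invertibility (to get symmetry of the chain), and the passage from containment to equality relies on the closure-operator properties already established.
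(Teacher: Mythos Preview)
Your proof is correct and follows essentially the same approach as the paper: reduce to a single move, observe that the missing element of $gS$ in $P$ is precisely the cell $y \in gC$ into which the marble moves, apply one filling step at $g$ to obtain $Q \subset P \cup gC \subset \varphi_{C,S}(P)$, and conclude via the closure-operator properties and the symmetry of solitaire moves. The paper's version is slightly terser (it shows $P \cup Q = P \cup gC$ directly and leaves the symmetric inclusion implicit), but the content is the same.
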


\begin{proof}
It suffices to show that if $P \in \pow(G)$ and $P \rightarrow_{C, S} Q$, then $Q \subset \varphi(P)$. Namely then by idempotency $\varphi(Q) \subset \varphi(P)$ as well.

Suppose thus $P \rightarrow_{C, S} Q$, meaning
\[ |P \cap gS| = |Q \cap gS| = |S| - 1 \wedge P \triangle Q \in \pow_2(gC). \]

It suffices to show that $P \fillstep P \cup Q$ by a move at $g$, i.e.\
\[ |P \cap gS| = |S| - 1 \wedge P \cup Q = P \cup gC, \]
because then $Q \subset P \cup Q \subset \varphi(P)$. Of course $|P \cap gS| = |S| - 1$ is immediate. 
We have $P \cup gC = P \cup Q$ because the only difference between $P$ and $Q$ is that they lack a different element of $gC$. \qed
\end{proof}

We note a useful property of the filling operator (which is a general property of closure operators).

\begin{lemma}
\label{lem:ChangeSubset}
Suppose $\varphi(P) = \varphi(Q)$. Then $\varphi(P \cup R) = \varphi(Q \cup R)$ for any $R$.
\end{lemma}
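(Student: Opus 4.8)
The plan is to prove the two inclusions $\varphi(P \cup R) \subset \varphi(Q \cup R)$ and $\varphi(Q \cup R) \subset \varphi(P \cup R)$ separately, noting that the hypothesis $\varphi(P) = \varphi(Q)$ is symmetric in $P$ and $Q$, so one direction suffices and the other follows by swapping roles. Throughout I will use only the three defining properties of a closure operator from Section~\ref{sec:Def}: extensivity, monotonicity, and idempotency.

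For the forward inclusion, the key intermediate claim I would establish is that $P \cup R \subset \varphi(Q \cup R)$. To see the $R$ part, extensivity gives $R \subset Q \cup R \subset \varphi(Q \cup R)$. To see the $P$ part, I chain together the hypothesis and monotonicity: extensivity gives $P \subset \varphi(P)$, the hypothesis rewrites this as $P \subset \varphi(Q)$, and since $Q \subset Q \cup R$, monotonicity yields $\varphi(Q) \subset \varphi(Q \cup R)$; hence $P \subset \varphi(Q \cup R)$. Combining the two parts gives $P \cup R \subset \varphi(Q \cup R)$.

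Once this containment is in hand, the conclusion is immediate: applying $\varphi$ to both sides and using monotonicity gives $\varphi(P \cup R) \subset \varphi(\varphi(Q \cup R))$, and idempotency collapses the right-hand side to $\varphi(Q \cup R)$. This establishes $\varphi(P \cup R) \subset \varphi(Q \cup R)$. Interchanging $P$ and $Q$ throughout the argument (legitimate because $\varphi(P) = \varphi(Q)$ is symmetric) gives the reverse inclusion, and the two together yield the claimed equality.

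I do not anticipate any real obstacle, as the statement is explicitly flagged as a general fact about closure operators and the argument is purely formal manipulation of the three axioms. The only point requiring a small amount of care is getting the chain $P \subset \varphi(P) = \varphi(Q) \subset \varphi(Q \cup R)$ in the right order, so that monotonicity is applied to the genuine subset relation $Q \subset Q \cup R$ rather than being misapplied directly to $P$.
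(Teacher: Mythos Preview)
Your proof is correct and takes essentially the same approach as the paper: both arguments show that one of $P \cup R$, $Q \cup R$ is contained in the closure of the other (using extensivity, the hypothesis, and monotonicity), then apply monotonicity and idempotency to conclude one inclusion, with the reverse following by symmetry. The paper's writeup is slightly more compressed but the logical content is identical.
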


\begin{proof}
\[ \varphi(P \cup R) = \varphi(\varphi(P \cup R)) \supset \varphi(Q \cup R) \]
since $R \subset P \cup R \subset \varphi(P \cup R)$ and $Q \subset  \varphi(P) \subset \varphi(P \cup R)$.
\qed
\end{proof}

\subsection{Linear sets, finite fillings and finite solitaires}

Our interest is specifically on the solitaire orbits of finite sets. More precisely, we would like to be able to answer quickly to the question ``are these two patterns in the same orbit?''. A first step toward this is to answer the question ``given a shape $S$ and a pattern $P$, is the $S$-orbit of $P$ finite?''. The goal of this section is to introduce definitions and state initial results toward answering this second question.

For the following definitions, we concentrate on the case $C = S$. 


\begin{definition}
Let $G$ be a group. A subset $S \Subset G$ is \emph{linear} if there exists $a, b, c \in G$ with $b$ of infinite order, such that $S \subset a \langle b \rangle c$ where $\langle b \rangle = \{b^n \;|\; n\in \Z\}$.
\end{definition}

Note that $\langle b \rangle$ is the (cyclic) group generated by the element $b$, and $b$ being of infinite order means precisely that this group is infinite. Recall that a \emph{left coset} of a subgroup $H \leqslant G$ is a set of the form $aH$.

\begin{lemma}
\label{lem:Linear}
If $S$ is a linear shape, then there exists a shape $S'$ which is contained in a cyclic group, such that the set of $S$-solitaire moves coincides with the set of $S'$-solitaire moves.
\end{lemma}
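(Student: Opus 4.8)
The plan is to reduce everything to the observation that an $S$-solitaire move is completely determined by its \emph{window}, i.e.\ by the set $gS$ in which the move takes place. Indeed, for $C = S$ a move at $g$ is a pair $(P,Q)$ satisfying $|P \cap gS| = |Q \cap gS| = |S|-1$ and $P \triangle Q \in \pow_2(gS)$, and every condition here refers only to the single set $gS$. Hence the set of all $S$-solitaire moves is a function of the collection of windows $\mathcal{W}_S = \{gS \mid g \in G\}$, the orbit of $S$ under left translation. My first step is therefore to record that two shapes $S, S'$ define the same move set precisely when $\mathcal{W}_S = \mathcal{W}_{S'}$, and that this holds exactly when $S'$ is a left translate of $S$: if $S' = g_0 S$ then $\{gS' \mid g\} = \{g g_0 S \mid g\} = \{hS \mid h\}$, and conversely $S' = e\cdot S' \in \mathcal{W}_{S'} = \mathcal{W}_S$ forces $S' = g_0 S$ for some $g_0$.

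With this reduction in hand, it suffices to produce a left translate of $S$ contained in a cyclic group. Here I use linearity: by hypothesis $S \subset a\langle b\rangle c$ with $b$ of infinite order, so every $s \in S$ has the form $s = ab^{n}c$ for some $n \in \Z$. I then fix one element $s_0 = a b^{m} c \in S$ and left-translate $S$ by $s_0^{-1}$. The key computation is that for $s = a b^{n} c$,
\[ s_0^{-1} s = (a b^{m} c)^{-1}(a b^{n} c) = c^{-1} b^{-m} a^{-1} a b^{n} c = c^{-1} b^{\,n-m} c = (c^{-1} b c)^{\,n-m}. \]
Writing $d = c^{-1} b c$, this shows $S' := s_0^{-1} S \subset \langle d \rangle$, so $S'$ is contained in a cyclic group (indeed an infinite one, since $d$ is conjugate to $b$ and hence also of infinite order). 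As $S'$ is a left translate of $S$, the first step gives that the $S'$-solitaire moves coincide with the $S$-solitaire moves, completing the proof.

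The conceptual crux --- and the only point that needs care --- is the asymmetry between the two sides: left translation leaves the window collection (and hence the move set) unchanged, whereas right translation generally does not. This is exactly why I cannot simply strip off the factors $a$ and $c$ to land in $\langle b\rangle$. The conjugation identity above is what converts the two-sided coset $a\langle b\rangle c$ into an honest cyclic subgroup using only a single admissible (left) translation; normalising by the inverse of a chosen element of $S$ is the trick that makes the $a$ cancel and the $c$ turn into a conjugation.
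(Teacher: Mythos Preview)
Your proof is correct and follows essentially the same approach as the paper: left-translate $S$ so that it lands inside the cyclic group $\langle c^{-1}bc\rangle$, using that the move set depends only on the collection of left translates of the shape. The only cosmetic difference is that the paper translates by $(ac)^{-1}$ directly (via the identity $a\langle b\rangle c = ac\langle c^{-1}bc\rangle$), whereas you translate by $s_0^{-1}$ for a chosen $s_0\in S$; these differ by an element of $\langle c^{-1}bc\rangle$, so they yield the same conclusion.
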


\begin{proof}
We have
\[ S \subset a \langle b \rangle c = ac \langle  c^{-1}bc \rangle. \]
Then $S' = (ac)^{-1} S \subset \langle c^{-1}bc \rangle$. Clearly $S'$-solitaire moves are equivalent to $S$-solitaire moves, as the set of moves only depends on the set of left translates of the shape. \qed
\end{proof}

\begin{figure}[ht]
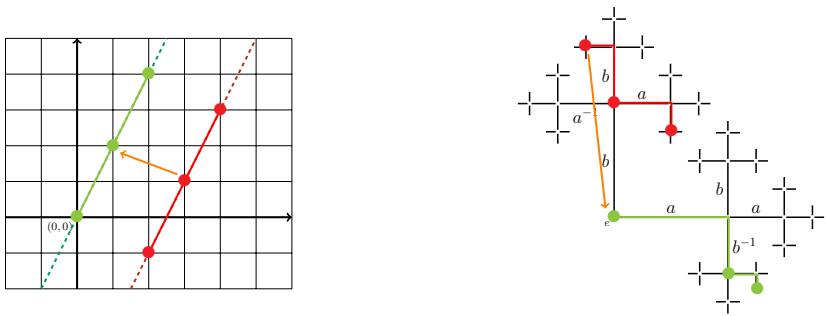

    \centering
    \includestandalone[width=0.9\textwidth]{Figures/General/translate}
    \caption{Two examples of linear shapes, on $\Z^2$ on the left side and on $F_2$ on the right side. $S$ is the red shape and the green one is a possible $S'$. \\ Left: $S \subset (2, -1)\langle (1,2) \rangle$ $S' \subset \langle (1,2) \rangle$. Right: $S \subset b^2a^{-1}\langle ab^{-1} \rangle$ $S' \subset \langle ab^{-1}\rangle$.}
    \label{fig:translate}
\end{figure}

\begin{definition}
Let $G$ be a group and $S \Subset G$. We say $G$ has the \emph{finite $S$-filling property} if the $S$-filling closure of every finite set is finite. We say $G$ has the \emph{finite filling property} if this holds for all non-linear $S \Subset G$. We say $G$ has the \emph{finite $S$-solitaire property} if the $S$-solitaire orbit of every finite set is finite, and \emph{finite solitaire property} if this holds for all non-linear $S$.
\end{definition}

The assumption of non-linearity is necessary:

\begin{lemma}
If $S \Subset G$ is linear, and $|S| \geqslant 2$, then there is a finite set with infinite $S$-filling closure and infinite $S$-solitaire orbit.
\end{lemma}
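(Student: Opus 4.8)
The plan is to reduce to the cyclic case and then exhibit a single finite ``interval'' that witnesses both claims simultaneously. First I would apply Lemma~\ref{lem:Linear}: since $S$ is linear we may replace it by a translate $S'$ contained in a cyclic group $\langle b \rangle$ with $b$ of infinite order, and since the set of solitaire moves is thereby unchanged this does not affect finiteness of orbits. The same one-line argument applies to filling: an $(S,S)$-filling move at $g$ is determined by $gS$ (the condition $|P \cap gS| = |S|-1$) and by $gC = gS$ (the update $Q = P \cup gC$), and the family $\{gS \mid g \in G\}$ is unchanged when $S$ is replaced by a left translate, so the set of filling moves coincides too and finiteness of filling closures is preserved. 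Identifying $\langle b \rangle$ with $\Z$ via $b^n \mapsto n$ (legitimate as $b$ has infinite order), I may thus assume $S \Subset \Z$ with $|S| \geqslant 2$; after a further translation normalize $S = \{0 = s_1 < s_2 < \cdots < s_k = m\}$, where $k = |S| \geqslant 2$ and $m = \max S \geqslant 1$. The witnessing set will be the interval $P = \{0, 1, \dots, m-1\}$ of $m$ consecutive integers.

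For the solitaire orbit, the key observation is that $P$ meets the window $S$ (at $g = 0$) in exactly $P \cap S = \{s_1, \dots, s_{k-1}\}$, a set of size $k-1 = |S|-1$ whose unique missing element of $S$ is the top point $m$. Hence there is a valid solitaire move sending the point at $s_1 = 0$ to the hole at $m$, with result $(P \setminus \{0\}) \cup \{m\} = \{1, \dots, m\}$, i.e.\ the translate $P + 1$. The same move translated by $n$ sends $\{n, \dots, n+m-1\}$ to $\{n+1, \dots, n+m\}$, so $\orb(P)$ contains every translate $\{n, \dots, n+m-1\}$ with $n \in \N$; these are pairwise distinct, so the orbit is infinite. (Note that for $|S| \geqslant 3$ a single point has too few neighbours in any window to move at all, which is why one must use a set of several points such as this interval; this is the one genuinely delicate point, and it is handled by the bookkeeping that the interval meets the window in precisely its bottom $|S|-1$ cells.)

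For the filling closure I would show $\varphi(P) \supseteq \N$ by induction. It contains $\{0,\dots,m-1\} = P$ by extensivity, and assuming $\{0, \dots, n\} \subseteq \varphi(P)$ for some $n \geqslant m-1$, consider the window at $g = n+1-m$, namely $g + S = \{n+1-m+s_i \mid 1 \leqslant i \leqslant k\}$. For $i < k$ one has $n+1-m+s_i \in [\,n+1-m,\, n\,] \subseteq \{0, \dots, n\} \subseteq \varphi(P)$ (using $s_i \leqslant s_{k-1} \leqslant m-1$ and $n+1-m \geqslant 0$), while the top element is $g+m = n+1$. Thus if $n+1 \notin \varphi(P)$ then $|\varphi(P) \cap (g+S)| = |S|-1$, so a filling move would adjoin $n+1$ to $\varphi(P)$, contradicting that $\varphi(P)$ is filling closed; hence $n+1 \in \varphi(P)$. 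Therefore $\varphi(P)$ is infinite, and both properties hold for the single finite set $P$. The main obstacle is thus purely combinatorial: choosing a finite witness each of whose relevant windows leaves exactly one hole, so that both a shifting solitaire move and a propagating filling move are always available; the interval $\{0, \dots, m-1\}$ is the natural such choice.
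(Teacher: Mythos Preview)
Your proof is correct and follows essentially the same approach as the paper: reduce to a cyclic group via Lemma~\ref{lem:Linear}, then take the interval of length $\max S - \min S$ as the witness and slide it along $\langle b\rangle$. Your version is considerably more detailed than the paper's (which simply asserts that the interval ``is easily seen'' to have infinite filling closure and orbit), and you also make explicit the one point the paper leaves implicit, namely that the reduction of Lemma~\ref{lem:Linear} applies equally to filling moves since both depend only on the family $\{gS : g \in G\}$.
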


\begin{proof}
By Lemma~\ref{lem:Linear} above we may suppose $S \subset \langle b \rangle$ with $b$ of infinite order. Let $m$ be maximal and $n \geqslant m$ minimal such that $S \subset T = \{b^m, \ldots, b^n\}$. Then $T \setminus \{b^m\}$ is easily seen to have infinite $S$-filling orbit and infinite $S$-solitaire orbit: one can slide $S$ along $\cyc{b}$, adding points in one direction or moving $b^{k+i}$ to $b^{k+j}$ where $i$ (resp. $j$) is the minimum (resp. maximum) of $\{k\in \Z\mid b^k \in S\}$. \qed
\end{proof}

\begin{figure}[ht]
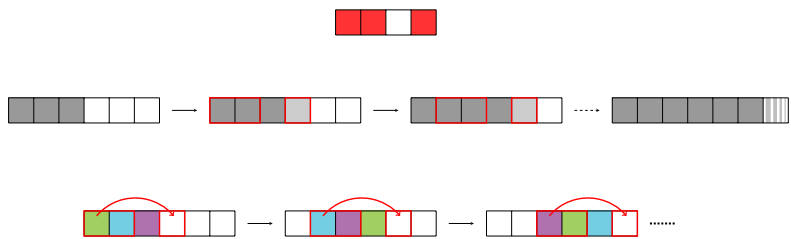

    \centering
    \includestandalone[width =0.85\textwidth]{Figures/General/linear}
    \caption{An example of linear solitaire on $\Z^2$. In red, the shape $S$, on the first line a pattern with infinite filling and on the second line an infinite run of solitaire with the same patterns (the three points having different colors).}
    \label{fig:linear}
\end{figure}

The following is immediate from Lemma~\ref{lem:ContainedInFilling}.

\begin{lemma}
If $G$ has the finite filling property, then it has the finite solitaire property.
\end{lemma}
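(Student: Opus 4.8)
The plan is to show that every set in the solitaire orbit of a finite set $P$ sits inside the single finite set $\varphi_S(P)$, so that the orbit is trapped inside a finite power set. Concretely, I would fix an arbitrary non-linear $S \Subset G$ and an arbitrary finite $P \Subset G$, and aim to bound $\orb(P)$ by $\pow(\varphi_S(P))$.

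First I would invoke Lemma~\ref{lem:ContainedInFilling} in the case $C = S$: for every $Q \in \orb(P)$ we have $\varphi_S(Q) = \varphi_S(P)$. Combining this with extensivity of the closure operator $\varphi_S$ (i.e.\ $Q \subset \varphi_S(Q)$), I obtain $Q \subset \varphi_S(Q) = \varphi_S(P)$ for every $Q$ in the orbit. Thus $\orb(P) \subset \pow(\varphi_S(P))$.

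Next I would apply the hypothesis. Since $S$ is non-linear and $P$ is finite, the finite filling property guarantees that $\varphi_S(P)$ is finite. Consequently $\pow(\varphi_S(P))$ is finite, and as $\orb(P)$ is a subset of it, $\orb(P)$ is finite as well. This establishes the finite $S$-solitaire property for this $S$, and since $S$ was an arbitrary non-linear shape, $G$ has the finite solitaire property.

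I expect there to be no real obstacle here: the statement is essentially a repackaging of Lemma~\ref{lem:ContainedInFilling}. The only points requiring a moment's care are (i) noting that the filling and solitaire properties in the definition are stated for $C = S$, so one applies Lemma~\ref{lem:ContainedInFilling} with $C = S$, and (ii) explicitly using extensivity to pass from the equality of filling closures to the inclusion $Q \subset \varphi_S(P)$, which is what makes the orbit sit inside a fixed finite ambient set.
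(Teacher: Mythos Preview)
Your proposal is correct and is exactly the paper's approach: the paper simply states that the lemma is immediate from Lemma~\ref{lem:ContainedInFilling}, and you have spelled out precisely this argument (orbit contained in $\pow(\varphi_S(P))$, which is finite by hypothesis).
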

           
\begin{remark}
    The converse is not true. For example, take $G$ the cylinder $\Z \times (\Z/d\Z)$ and $S$ the $T$ shape $\{(0,0), (0,1), (0,2), (1,1)\}$. Then for a ring pattern $P = \{0\}\times (\Z/d\Z)$, $\phi_S(P) = \N \times (\Z/d\Z)$ but the orbit of $P$ is finite. (See Figure~\ref{fig:cylinder})
\end{remark}

\begin{figure}[ht]
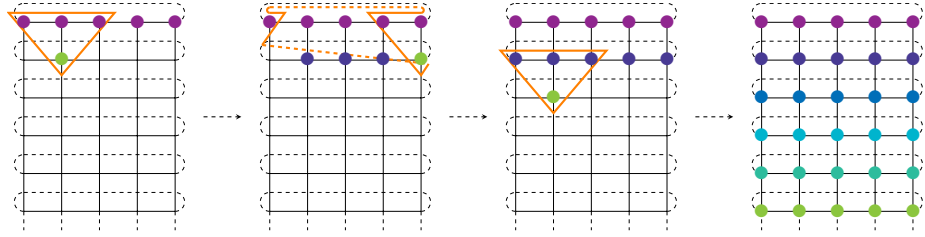

    \centering
    \includestandalone[width = \textwidth]{Figures/General/cylinder}
    \caption{An example of solitaire with finite orbit but infinite filling on $\Z \times (\Z/d\Z)$. The filling extends infinitely downward but the orbit is contained in the first three rings.}
    \label{fig:cylinder}
\end{figure}

The following is a simple geometric observation.

\begin{proposition}
\label{prop:ZdFFP}
The groups $\Z^d$ have the finite filling property.
In particular, they have the finite solitaire property.
\end{proposition}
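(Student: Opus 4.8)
The plan is to show that for non-linear $S \Subset \Z^d$ the $S$-filling closure of a finite set is contained in a bounded region, hence finite; I work additively and with $C = S$.

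First I would reduce to the full-dimensional case. A filling move at $g$ only adds points of $gS$, and for $s,s' \in S$ the difference $s'-s$ lies in $H = \langle S - S\rangle$, so $gS$ is contained in a single coset of $H$; exactly as in Remark~\ref{rem:fg}, the filling process then acts independently on each coset. Since a finite set meets only finitely many cosets of $H$, it suffices to bound the filling closure inside one coset. Identifying $H$ with $\Z^{k}$, where $k = \dim_{\R}\mathrm{span}(S-S)$, the shape $S$ becomes full-dimensional in $\R^{k}$, and non-linearity of $S$ guarantees $k \geq 2$ (a linear $S$ lies on an affine line, so $k \leq 1$).

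The key step is a half-space preservation lemma. For each facet $F$ of the polytope $K = \mathrm{conv}(S) \subset \R^{k}$, let $\phi_F$ be the associated outer-normal functional; its maximum over $S$ is attained at the vertices of $F$, which all lie in $S$ and number at least $k \geq 2$, so this maximum is \emph{non-unique}. I claim the half-space $\{x : \phi_F(x) \leq M_F\}$ with $M_F = \max_{p \in P}\phi_F(p)$ is preserved by filling. Indeed, if a move at $g$ adds the unique missing corner $p = g + s_i$, then $g + s_j$ is already present for every $j \neq i$, and $\phi_F(p) - \phi_F(g+s_j) = \phi_F(s_i) - \phi_F(s_j)$. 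Since $\phi_F$ attains its maximum on $S$ at two or more points, there is always some $j \neq i$ with $\phi_F(s_j) \geq \phi_F(s_i)$, giving $\phi_F(p) \leq \phi_F(g+s_j) \leq M_F$. Inducting over a filling sequence shows every point of $\varphi(P)$ in this coset satisfies $\phi_F \leq M_F$. To conclude, I would use that the outer normals of the facets of a bounded full-dimensional polytope positively span $\R^{k}$ (its recession cone is $\{0\}$); hence $\bigcap_F \{x : \phi_F(x) \leq M_F\}$ is a compact polytope, so the filling closure in this coset meets only finitely many lattice points. Summing over the finitely many cosets yields the finite filling property, and the finite solitaire property follows from the earlier lemma that finite filling implies finite solitaire.

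The main obstacle is conceptual rather than computational: filling can genuinely push the convex hull outward, namely in directions where $S$ has a \emph{unique} maximizer (a vertex of $K$), so the naive claim that $\varphi(P)$ stays inside $\mathrm{conv}(P)$ is false. The crux is to notice that growth is blocked precisely in the facet-normal directions, where the maximizer is forced to be non-unique, and that these directions already pin the region down to a compact set. This is exactly why the reduction to a full-dimensional shape (so that facet normals positively span the ambient space) is needed, and why non-linearity, which supplies $k \geq 2$, cannot be dropped.
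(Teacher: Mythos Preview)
Your proposal is correct and follows essentially the same approach as the paper: reduce to a full-dimensional shape via cosets, observe that each facet of $\mathrm{conv}(S)$ contains at least two points of $S$ so that the corresponding outer-normal half-spaces are invariant under filling, and conclude boundedness from the fact that these facet normals pin down a compact region. The presentations differ only in minor technicalities---the paper argues invariance by counting that a translate $\vec v + S$ meets a half-space in either $|S|$ or at most $|S|-2$ points, and handles boundedness via a scaling trick to place $\vec 0$ in the interior of $\mathrm{conv}(S)$, whereas your direct ``some other $s_j$ is at least as far out'' inequality and recession-cone argument are slightly cleaner---but the core idea is identical.
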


\begin{proof}
The case $d = 1$ is trivial (and rather meaningless) since there are no non-linear subsets.

We briefly explain the geometric intuition. Suppose $S \Subset \Z^d$ is non-linear. As we explain below, this implies $d \geqslant 2$, or we can reduce the problem to one with smaller $d$. Next we observe that a hyperplane with the same normal direction as a face of the convex hull of $S$ cannot be crossed by the filling process, i.e.\ if there is no point on the other side, the filling process cannot add one. The proof amounts to eliminating various boundary cases (like indeed shapes that fit into a proper affine subspace) and then using basic facts from convex geometry. 

Without loss of generality, assume that $\vec 0 \in S$ (this does not change the $S$-filling nor the $S$-solitaire), and that $S$ generates $\Z^d$ as a group. Otherwise, since all subgroups of $\Z^d$ are of the form $\Z^k$ by the fundamental theorem of abelian groups, we can reduce $d$ and apply the solitaire process separately in each coset. (See Remark~\ref{rem:fg}.)

Now, since $S$ generates $\Z^d$, its convex hull $H$ in $\R^d$ has nonempty interior, as it is well-known \cite{ConOpt} that a convex set without interior lies in a proper affine set $F$, and then $S$ then sits inside the proper subgroup $\Z^d \cap F$, so we could have again taken a smaller $d$. Now replace $S$ by $kS$ for large $k \in \N$ (i.e.\ dilate the shape; note that this preserves the finite filling property because $kS$-solitaire consists of independent $S$-solitaires on cosets of $k\Z^d$), and then translate it, after which we may actually assume that $\vec 0 \in \overset{\circ}{H}$ (after this, we may no longer have $\vec 0 \in S$).

By basic convex geometry (see \cite{ConAna}, Theorem 19.1), such a set is a bounded intersection of a finite number of closed half-spaces, i.e.\ we have $H = \bigcap_i \ell_i^{-1}((-\infty, r_i])$ where $\ell_i : \R^d \to \R$ are linear and nontrivial. Note that $r_i > 0$ since $\vec 0$ is in the interior of $C$. Furthermore, the half-spaces $\ell_i((-\infty, r_i])$ can be taken to satisfy that each $\ell^{-1}(r_i) \cap H$ is a face (i.e.\ a set spanned by vertices) of $H$ of dimension $d-1$.

In particular, the faces are spanned by $d$ vertices of $H$, which must be elements of $S$, thus, $|\ell_i^{-1}(r_i) \cap S| \geqslant d \geqslant 2$ for all $i$. In particular, it follows that $P_i = \ell_i^{-1}((-\infty, r])$ is closed under $S$-filling for all $r \in R$. Namely, we have $\varphi(S) \subset (-\infty, r_i]$ and $\varphi(s) = r_i$ for at least two distinct $s \in S$. Thus the intersection $\vec v + S \cap P_i$ is always of cardinality $|S|$ (when $r < r_i$) or of cardinality at most $|S|-2$ (when $r > r_i$), and thus Definition~\ref{def:Filling} does not apply for any $\vec v \in \Z^d$.

Now let $T \Subset \Z^d$ be arbitrary. Since $T$ is finite, it is bounded and thus $\ell_i(T)$ is bounded for any $i$, say $\forall t \in T: \ell_i(t) \leq t_i$. Then $T \subset D = \bigcap_i \ell_i^{-1}((-\infty, t_i])$. As we showed, each $\ell_i^{-1}((-\infty, t_i])$ is closed under $S$-filling, thus so is their intersection.

It suffices to show that $D$ is bounded. If it is not, then there exist arbitrarily large (in norm) $\vec v$ such that $\ell_i(\vec v) \leqslant t_i$ for all $i$. By scaling, we obtain that there exist arbitrarily large $\vec v$ such that $\ell_i(\vec v) \leqslant r_i$ for all $i$. Such vectors $\vec v$ are then in $H$, so in fact $S$ itself is unbounded, a contradiction. \qed
\end{proof}

\begin{figure}[ht]
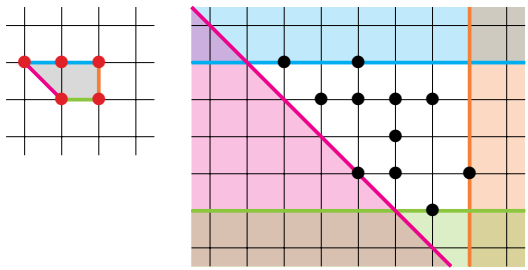

    \centering
    \includestandalone[height = 3.5cm]{Figures/General/boundary}
    \caption{With $S$ in red on the left, colored lines are parallel to the edges of $S$. For each of them, the boundary contains two points or more, so no point can enter the colored half-planes.}
    \label{fig:boundary}
\end{figure}

\subsection{Solitaire on exotic groups}

We mostly concentrate on the free group and the group $\Z^2$, since these groups are easy to draw, and should be familiar (or easy to learn) for most readers. However, there are many other interesting groups where the solitaire can be played. The following example illustrates solitaire on the lamplighter group, which is a group of great symbolic dynamical interest \cite{Cohen17,BaSa22,BaSa24}.

\begin{example}
\label{ex:lamplighter}
The lamplighter group is the wreath product $\Z_2 \wr \Z$, in other words it is the semidirect product $\bigoplus_{i \in \Z} \Z_2 \rtimes \Z$ where $\Z$ acts on the infinite direct sum $V = \bigoplus_{i \in \Z} \Z_2$ by shifting. We take as generators $a, b$ where $a$ is the generators of $\Z$, and $b = a \circ c$ where $c \in V$ is the element with $1$ at only the origin.

As in \cite{BaSa24} we prefer to shift the indices defining $V$, and think of $V$ as the set of configurations in $\Z_2^{\Z + \frac12}$ with finitely many non-zero coordinates, which we think of as lamps on an infinite vertical road. Then the nodes of the Cayley graph (i.e.\ group elements) can be taken to be pairs $(v, n)$ where $v \in V$ and $n \in \Z$, meaning a configuration of lamps, and the lamplighter between two of the lamps. Then generator $a$ adds $1$ to $n$ ("the lamplighter moves upward") and $b$ adds $1$ to $n$ and toggles the value of $v_{n+1/2}$ ("the lamplighter moves upward and toggles the lamp it moves over").

The typical finite subgraphs of the Cayley graph that one considers are the ``tetrahedra'', which correspond to the subsets of the group where the head is in a particular interval $[m, n]$, and only lamps inside $(m, n)$ have been flipped. These graphs can be relatively nicely illustrated on the plane, see Figure~\ref{fig:lamplighter}. This is a so-called horocyclic product of two binary trees, meaning upward-growing binary trees are joined with downward growing binary trees. It is useful to think of this as a three-dimensional embedding of the group, with the $a$ and $b$ edges moving ``upward'', and the elements colored black as being on the bottom, forming a front-to-back column.

The simplest interesting shape for the solitaire is $S = \{e, a, c\}$ where $c = ab^{-1}$. Note that $c$ has order $2$ and we have $cS = \{e, b, c\}$, see Figure~\ref{fig:lamplighter} (top). One can check that the filling closure of the bottom elements is the entire tetrahedron. Figure~\ref{fig:lamplighter} (bottom) shows another pair of $S$-solitaire-equivalent subset. \qee
\end{example}

\begin{figure}[ht]
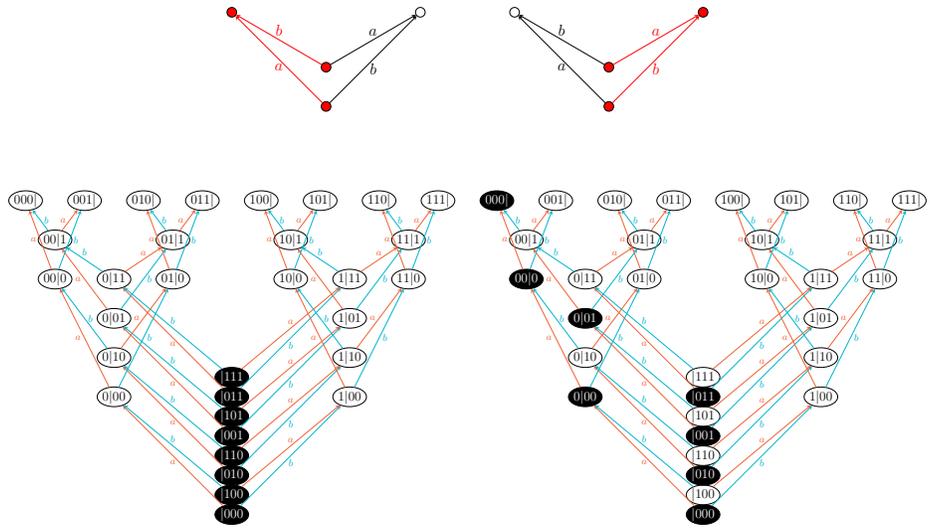

    \centering
    \includestandalone[width = \textwidth]{Figures/General/lamplighter}
    \caption{Top: The shape $S = \{e, a, c\}$ (left) and its translation $cS$ (right). Bottom: Two $S$-solitaire equivalent patterns on the lamplighter group.}
    \label{fig:lamplighter}
\end{figure}

There are many other groups where solitaire would be interesting to study in detail. One source are nilpotent and solvable groups, which are intuitively close to abelian (and more precisely, built by group extensions from abelian groups). The lamplighter group from the previous example is solvable (in fact metabelian). Another interesting group in this class is the Heisenberg group (the upper triangular integer matrices with unit diagonal). Another interesting class of groups to look at would be free products like $\Z * \Z^2$ (intuitively, this group combines the abelian case $\Z^2$ with the free group $F_2 = \Z * \Z$) and more generally graph groups.

\section{Excess}
\label{sec:Excess}

Much of our interest is in understanding the connection between the filling process and the solitaire process. We measure the difference between a pattern and a minimal pattern with the same filling closure by the notions of excess and  excess sets.

In the case of the triangle, we will see that for each filling closed set, all patterns with minimal cardinality that generate it lie in the same solitaire orbit.

\subsection{Excess in a global sense}

Fix $C \Subset S \Subset G$ for this section, and suppose $G$ has finite $S$-fillings.

If $P \Subset G$, then the \emph{rank}, $\rank(P)$, of $P$ is the minimal cardinality of $R \Subset G$ such that $\varphi(R) = \varphi(P)$. Note that the rank of $P$ is at most $|P|$. Note also that the only  candidate subsets $R \Subset G$ we need to consider are subsets of $\varphi(P)$.

\begin{definition}
The \emph{excess} of $P \Subset G$ is defined as $e(P) = |P| - \rank(P)$.
\end{definition}

This is excess in a global sense, since we are counting how much smaller we could make $P$ by replacing it by another (possible quite different-looking) set, without changing its filling closure.

Note that every filling closed set $F$ has by definition at least one subset with numerical excess $0$.

\begin{figure}[ht]
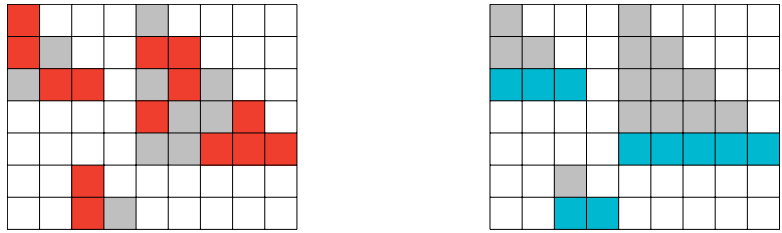

    \centering
    \includestandalone[height = 3cm]{Figures/Triangle/ex_excess}
    \caption{Left: A pattern $P$ on $\Z^2$ in red with its filling for the triangle shape $\{(0,0), (0,1), (1,0)\}$ drawn in gray. $P$ has size $14$, rank $10$ and excess $4$. Right: A minimal set $R$ in blue with the same filling.}
    \label{fig:ex excess triang}
\end{figure}

\begin{figure}[ht]
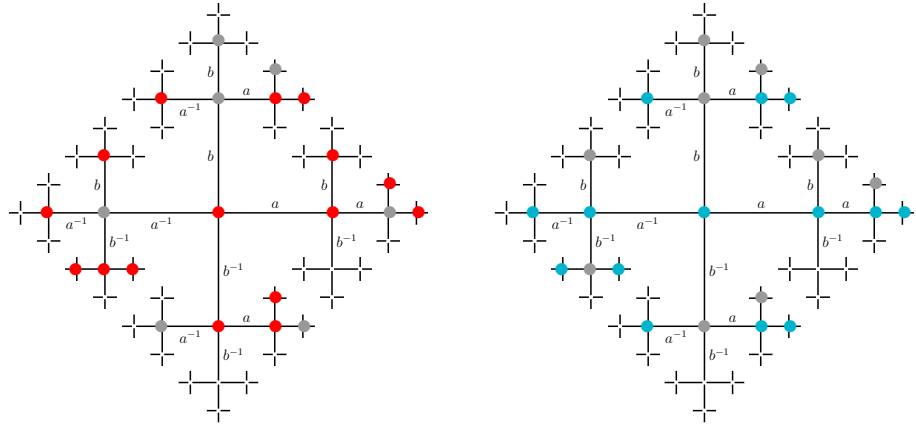

    \centering
    \includestandalone[width=\textwidth]{Figures/FreeGroup/excess}
    \caption{Left: A pattern $P$ on $F_2$ in red with its filling for the pyramid shape $\{e, a, b, a^{-1}\}$ drawn in gray. $P$ has size $16$, rank $14$ and excess $4$. Right: A minimal set $R$ in blue with the same filling.}
    \label{fig:ex excess free}
\end{figure}

\begin{lemma}
\label{lem:ExcessMonotone}
Excess is monotone: if $P \subset Q \Subset G$, then $e(P) \leqslant e(Q)$.
\end{lemma}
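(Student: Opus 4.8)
The plan is to reduce the inequality $e(P) \leqslant e(Q)$ to a statement purely about ranks. Unfolding the definition, $e(P) \leqslant e(Q)$ means $|P| - \rank(P) \leqslant |Q| - \rank(Q)$, which (using $P \subset Q$, so $|Q| - |P| = |Q \setminus P|$) rearranges to the claim
\[ \rank(Q) \leqslant \rank(P) + |Q \setminus P|. \]
So it suffices to exhibit a single set that generates $\varphi(Q)$ under filling and has cardinality at most $\rank(P) + |Q \setminus P|$.

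The natural candidate is built from a minimal witness for $P$ together with the ``new'' elements of $Q$. First I would fix a set $R_0 \Subset G$ achieving the rank of $P$, i.e.\ $\varphi(R_0) = \varphi(P)$ and $|R_0| = \rank(P)$; such a set exists by definition of rank and because $G$ has finite $S$-fillings. Writing $D = Q \setminus P$, so that $P \cup D = Q$ and $|D| = |Q| - |P|$, the main step is to apply Lemma~\ref{lem:ChangeSubset} to the equality $\varphi(R_0) = \varphi(P)$ with the extra set $D$. This yields
\[ \varphi(R_0 \cup D) = \varphi(P \cup D) = \varphi(Q). \]
Thus $R_0 \cup D$ is a legitimate competitor in the minimization defining $\rank(Q)$.

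It follows immediately that $\rank(Q) \leqslant |R_0 \cup D| \leqslant |R_0| + |D| = \rank(P) + |Q \setminus P|$, which is exactly the reformulated inequality, and hence $e(P) \leqslant e(Q)$. I do not expect a genuine obstacle here: the argument is the standard monotonicity-of-excess computation familiar from matroid and closure-operator rank functions, and the only point requiring care is bookkeeping with variable names, since the set playing the role of ``$P$'' in Lemma~\ref{lem:ChangeSubset} is the witness $R_0$ rather than $P$ itself. One should also note that the final inequality $|R_0 \cup D| \leqslant |R_0| + |D|$ is just subadditivity of cardinality and needs no disjointness of $R_0$ and $D$.
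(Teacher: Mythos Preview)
Your proof is correct and essentially identical to the paper's: both take a minimal filling witness $R$ for $P$, apply Lemma~\ref{lem:ChangeSubset} with the extra set $Q\setminus P$ to obtain $\varphi(R\cup(Q\setminus P))=\varphi(Q)$, and conclude $\rank(Q)\leqslant\rank(P)+|Q\setminus P|$. The only difference is cosmetic---you rearrange the target inequality first rather than last.
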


\begin{proof}
Let $R$ be a set of size $\rank(P)$ with filling $\varphi(P)$. 
Then $\varphi(Q) = \varphi(P \cup (Q \setminus P)) = \varphi(R \cup (Q \setminus P))$ by Lemma~\ref{lem:ChangeSubset}. Thus $\rank(Q) \leqslant \rank(P) + |Q \setminus P|$ so
\[ e(Q) = |Q| - \rank(Q) \geqslant |P| + |Q \setminus P| - \rank(P) - |Q \setminus P| = e(P).  \]
\qed
\end{proof}

\subsection{Excess in a local sense}

\begin{definition}
Let $P \Subset G$. We say $Q \subset P$ is an \emph{excess set} if $\varphi(P \setminus Q) = \varphi(P)$. The set of excess sets of $P$ is denoted $E(P)$. The \emph{visible excess} of $P$ is the maximal cardinality of an excess set in $P$, denoted $\hat E(P)$. The \emph{phantom excess} of $P$ is $e(P) - \hat{E}(P)$.
\end{definition}

\begin{figure}[ht]
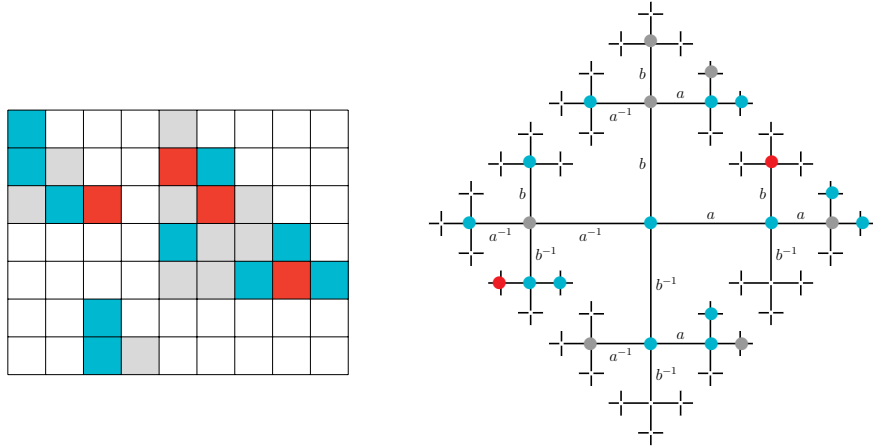

    \centering
    \includestandalone[width=0.95\textwidth]{Figures/General/excess_sets}
    \caption{Two examples of patterns with an excess set highlighted. Left: On $\Z^2$ with $S = \{(0,0), (0,1), (1,0)\}$. Right: On $F_2$ with $S = \{e, a, b, a^{-1}\}$. The pattern is composed of the blue and red points, the red points form an excess set and the grey points are the filling of the pattern.}
    \label{fig:ex excess set}
\end{figure}

Note that excess sets are defined for a given set $P$ (and of course they depend on the choice of $C, S$). Clearly if $P$ contains an excess set of cardinality $k$, then the $e(P) \geqslant k$. Thus, excess is always at least as large as visible excess, and phantom excess is always nonnegative.

It is tempting to try to prove that phantom excess is always zero, or even that all maximal excess sets have the same cardinality, which equals excess. As we will see in Section~\ref{sec:TriangleExcess}, this fails badly even on the group $\Z^2$, and for the simplest possible non-linear shape.

\begin{lemma}
The excess sets of $P$ form a down set.
\end{lemma}

\begin{proof}
If $Q$ is an excess set of $P$ and $Q' \subset Q$, then $\varphi(P) = \varphi(P \setminus Q) \subset \varphi(P \setminus Q') \subset \varphi(P)$. \qed
\end{proof}

\begin{lemma}
Visible excess is monotone: if $P \subset Q \Subset G$, then $\hat{E}(P) \leqslant \hat{E}(Q)$.
\end{lemma}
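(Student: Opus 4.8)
The plan is to take a maximal excess set of the smaller set $P$ and show that the \emph{same} set witnesses at least as large a visible excess for $Q$. Concretely, let $R \subset P$ be an excess set of $P$ of maximal cardinality, so that $|R| = \hat{E}(P)$ and $\varphi(P \setminus R) = \varphi(P)$ by definition. Since $R \subset P \subset Q$, the set $R$ is a legitimate candidate subset of $Q$, and the only thing to verify is that it is actually an excess set of $Q$, i.e.\ that $\varphi(Q \setminus R) = \varphi(Q)$.

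The key step is to invoke Lemma~\ref{lem:ChangeSubset}, exactly as in the proof that numerical excess is monotone. Starting from the equality $\varphi(P \setminus R) = \varphi(P)$, I would apply Lemma~\ref{lem:ChangeSubset} with the common extra set $Q \setminus P$, obtaining
\[ \varphi\bigl((P \setminus R) \cup (Q \setminus P)\bigr) = \varphi\bigl(P \cup (Q \setminus P)\bigr) = \varphi(Q). \]
Then a small set-theoretic identity finishes the job: because $R \subset P$, removing $R$ from the disjoint decomposition $Q = P \cup (Q \setminus P)$ only affects the $P$-part, so $(P \setminus R) \cup (Q \setminus P) = Q \setminus R$. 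Substituting this into the displayed equation gives $\varphi(Q \setminus R) = \varphi(Q)$, which is precisely the statement that $R$ is an excess set of $Q$.

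Having shown $R$ is an excess set of $Q$ of cardinality $\hat{E}(P)$, I conclude $\hat{E}(Q) \geqslant |R| = \hat{E}(P)$, as desired. I expect there to be no real obstacle here: the argument is a direct transfer along $P \subset Q$ powered entirely by the closure-operator invariance of Lemma~\ref{lem:ChangeSubset}, and it mirrors the earlier proof of Lemma~\ref{lem:ExcessMonotone}. The only point requiring a moment of care is the clash of notation (the ambient lemma uses $P \subset Q$ while the definition of excess set uses the letter $Q$), so I would deliberately name the witnessing excess set $R$ to keep the two roles distinct, and verify the identity $(P\setminus R)\cup(Q\setminus P) = Q\setminus R$ explicitly using $R \subset P$.
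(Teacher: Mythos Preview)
Your proposal is correct and follows essentially the same approach as the paper: take an excess set $R$ of $P$, use Lemma~\ref{lem:ChangeSubset} with the extra set $Q \setminus P$ and the identity $(P\setminus R)\cup(Q\setminus P)=Q\setminus R$ to conclude that $R$ is also an excess set of $Q$, whence $\hat E(Q)\geqslant\hat E(P)$. The paper's version is just a terser rendition of exactly this chain of equalities.
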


\begin{proof}
If $R\subset P$ is an excess set then $\varphi(P) = \varphi(P \setminus R)$, so 
\[ \varphi(Q) = \varphi(P \cup (Q \setminus P)) = \varphi((P \setminus R) \cup (Q \setminus P)) = \varphi(Q \setminus R) \]
so $R$ is an excess set for $Q$, implying $\hat{E}(Q) \geqslant \hat{E}(P)$. \qed
\end{proof}

\begin{lemma}
\label{lem:ContainsNoPhantom}
If a pattern contains a subpattern with no excess and with the same filling closure, then it has no phantom excess.
\end{lemma}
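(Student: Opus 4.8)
The plan is to prove the reverse of the inequality that always holds. Recall from the remarks preceding the lemma that any excess set of cardinality $k$ forces $e(P) \geqslant k$, so $\hat E(P) \leqslant e(P)$ and phantom excess $e(P) - \hat E(P)$ is automatically nonnegative. Thus it suffices to produce a single excess set of $P$ whose cardinality is exactly $e(P)$; this will give $\hat E(P) \geqslant e(P)$ and hence equality.

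Let $R \subset P$ be the hypothesized subpattern, so that $e(R) = 0$ and $\varphi(R) = \varphi(P)$. The key observation I would isolate first is that $\rank$ is an invariant of the filling closure rather than of the particular set: by definition $\rank(P)$ is the minimal cardinality of a set $R'$ with $\varphi(R') = \varphi(P)$, so any two sets with the same filling closure have the same rank. In particular $\rank(P) = \rank(R)$, and since $R$ has no excess this common value equals $|R|$. Consequently $e(P) = |P| - \rank(P) = |P| - |R| = |P \setminus R|$, where the last step uses $R \subset P$.

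It then remains to verify that $Q := P \setminus R$ is an excess set. This is immediate: $P \setminus Q = R$, so $\varphi(P \setminus Q) = \varphi(R) = \varphi(P)$, which is exactly the defining condition for $Q$ to be an excess set. Its cardinality is $|P \setminus R| = e(P)$, so $\hat E(P) \geqslant e(P)$, and combined with the always-true inequality $\hat E(P) \leqslant e(P)$ we conclude $\hat E(P) = e(P)$, i.e.\ $P$ has no phantom excess.

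I do not expect a genuine obstacle here; the argument is short once the right viewpoint is taken. The only subtle point — worth stating explicitly so the reader sees why the hypothesis is exactly what is needed — is that $\rank$ depends solely on $\varphi(P)$, which is what allows the no-excess subpattern $R$ to pin down $\rank(P) = |R|$ and thereby identify $P \setminus R$ as an excess set realizing the full numerical excess.
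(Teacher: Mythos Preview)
Your proof is correct and follows essentially the same approach as the paper's: both identify the complement $P \setminus R$ as an excess set whose cardinality equals $e(P)$, using that $\rank$ depends only on the filling closure so that $\rank(P) = |R|$. Your version is just a more explicit unpacking of what the paper compresses into two sentences.
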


\begin{proof}
If $P \subset Q$ where $P$ has no excess and $\varphi(P) = \varphi(Q)$, then $P$ can be used to calculate excess for $Q$, and we have $e(Q) = |Q \setminus P|$. On the other hand, $Q \setminus P$ is clearly an excess set of this cardinality, thus visible excess matches excess. \qed
\end{proof}

Interestingly, visible excess is \emph{not} preserved under the solitaire process. We will see an example in Section~\ref{sec:TriangleExcess}.

\subsection{Monotonicity of solitaire, and ``ignoring elements''}
\label{sec:Pretending}

Fix $C \Subset S \Subset G$ for this section.

We show that the solitaire process is monotone in the sense that after adding elements to a set, we can still apply all moves we could previously. Intuitively, we can always pretend not to see some of the elements in a pattern, and play solitaire on the subpattern. The elements we pretended not to see are then permuted in some (possibly nontrivial) way.

Recall that we may think of a solitaire move as applying a permutation of $C$ in $gC$ (through the natural identification $c \leftrightarrow gc$), and we are allowed to apply this permutation to a pattern $P$ if $g(S \setminus C) \subset P$ and $|gC \cap P| = |C| - 1$. We note that it does not hurt if we also allow applying the permutation when $gS \subset P$, as in this case the permutation fixes the pattern anyway.

\begin{definition}
We say that \emph{solitaire applies} at $g$ to pattern $P$ if $g(S \setminus C) \subset P$ and $|gC \cap P| \geqslant |C| - 1$. Then for any permutation $\pi : C \to C$ we have the \emph{associated solitaire move} $g\pi$ defined by
\[ P \rightarrow P' = (P \setminus gC) \cup g\pi(g^{-1}P \cap C). \]
We also use function notation $g\pi(P) = P'$.
\end{definition}

Note that $g^{-1}P \cap C$ is in bijection with $P \cap gC$, and this bijection is the natural one, $c \leftrightarrow gc$. Thus we are indeed applying $\pi$ to the subset of $P$ intersecting $gC$, and finally moving it back to a subset of $gC$.

\begin{figure}[ht]
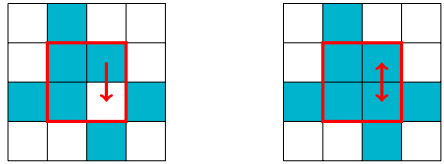

    \centering
    \includestandalone[height=2.1cm]{Figures/General/monotonicity}
    \caption{Left: A standard solitaire move for the square shape on $\Z^2$. Right: A trivial application of the solitaire. (Here, $\pi$ is the transposition exchanging $(1,0)$ and $(1,1)$.)}
    \label{fig:monotonicity}
\end{figure}

\begin{lemma}
Let $P \subset G$. If solitaire applies at $g$ to pattern $P$, then it applies to any $P \cup R$. Furthermore, $g\pi(P \cup R) \supset g\pi(P)$.
\end{lemma}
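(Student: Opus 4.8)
The plan is to unpack the definition of ``solitaire applies at $g$'' and verify that each of its two conditions is preserved under enlarging the pattern from $P$ to $P \cup R$, and then to compare the resulting moves. The definition requires two things: that the pivot cells are all present, $g(S \setminus C) \subset P$, and that the movable window is almost full, $|gC \cap P| \geqslant |C| - 1$. First I would observe that adding points can only help with both conditions. Since $P \subset P \cup R$, we immediately get $g(S \setminus C) \subset P \subset P \cup R$, so the pivot condition carries over verbatim. For the second condition, monotonicity of cardinality under inclusion gives $|gC \cap (P \cup R)| \geqslant |gC \cap P| \geqslant |C| - 1$, so the window is still at least almost full. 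Hence solitaire applies at $g$ to $P \cup R$ as well, and the associated move $g\pi$ is defined on $P \cup R$ for any permutation $\pi$.

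Next I would prove the inclusion $g\pi(P \cup R) \supset g\pi(P)$ by directly manipulating the defining formula
\[ g\pi(X) = (X \setminus gC) \cup g\pi(g^{-1}X \cap C). \]
The idea is to split any point of $g\pi(P)$ according to whether it came from the ``untouched'' part $P \setminus gC$ or from the permuted part $g\pi(g^{-1}P \cap C)$. For the untouched part, $P \setminus gC \subset (P \cup R) \setminus gC$ is clear. For the permuted part, the key step is that $g^{-1}P \cap C \subset g^{-1}(P \cup R) \cap C$, and since $\pi$ is a fixed permutation (hence monotone as a set map), applying $g\pi$ to both sides preserves the inclusion, giving $g\pi(g^{-1}P \cap C) \subset g\pi(g^{-1}(P \cup R) \cap C)$. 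Taking the union of the two inclusions yields $g\pi(P) \subset g\pi(P \cup R)$, as desired.

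There is no real obstacle here: the statement is essentially a formal monotonicity fact that follows from the monotonicity of the three operations building $g\pi$ (set difference on the left argument, intersection, and applying a bijection), once one is careful that enlarging $P$ only enlarges the relevant intersection $g^{-1}X \cap C$ and only enlarges the leftover set $X \setminus gC$. The one point worth stating explicitly, to avoid a subtle trap, is that the map $X \mapsto X \setminus gC$ is monotone in $X$, so that the ``untouched'' parts also behave correctly. Since both the applicability conditions and both pieces of the formula are monotone in the pattern, the whole move-map $X \mapsto g\pi(X)$ is monotone, which is exactly the content of the lemma.
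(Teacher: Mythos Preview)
Your proof is correct and follows essentially the same approach as the paper: verify that both applicability conditions are monotone under enlarging $P$, then observe that the formula for $g\pi(X)$ is built from operations monotone in $X$. The paper's own argument is terser (it simply notes that adding points can at most fill in the missing element of $gS$, and that ``the same permutation in the same set of cells'' is applied to both patterns), but the content is identical.
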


\begin{proof}
Permutations at $g$ stay applicable when the set increases: the only possible change in the intersection with $gS$ is that $(P \cup R) \cap gS$ becomes $gS$ (because already $P \cap gS$ misses at most one element). Both $g\pi(P \cup R)$ and $g\pi(P)$ are produced by applying the same permutation in the same set of cells. \qed
\end{proof}

\begin{lemma}
\label{lem:MonotonicityUpToPermutation}
Suppose $P \rightarrow^* P'$. Then there exists a bijection $\pi : G \setminus P \to G \setminus P'$ with support contained in $\varphi(P)$, such that $P \cup Q \rightarrow^* P' \cup \pi(Q)$ for all $Q \subset G \setminus P$.
\end{lemma}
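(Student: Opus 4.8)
The plan is to realize each solitaire move as an honest permutation of the group $G$ and then run the \emph{same} sequence of permutations on the enlarged set $P \cup Q$; the desired $\pi$ will be the restriction to $G \setminus P$ of the resulting composite permutation, and crucially this single permutation will work uniformly for every $Q$.

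First I would expand $P \rightarrow^* P'$ into a finite chain of single moves $P = P_0 \rightarrow P_1 \rightarrow \cdots \rightarrow P_n = P'$. Because $P_i \triangle P_{i+1} \in \pow_2(g_iC)$, each move is a \emph{transposition}: letting $\tau_i \in \mathrm{Sym}(G)$ swap the two elements of $P_i \triangle P_{i+1}$ and fix everything else, we have $P_{i+1} = \tau_i(P_i)$. I then set $\Pi = \tau_{n-1}\circ\cdots\circ\tau_0$, so that $\Pi(P) = P'$, and since $\Pi$ is a bijection of $G$ it restricts to a bijection $\pi := \Pi|_{G\setminus P} : G \setminus P \to G\setminus P'$.

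The core step is to prove $P\cup Q \rightarrow^* \Pi(P\cup Q)$ by applying the same transpositions to $R_0 := P \cup Q$, i.e.\ $R_{i+1} := \tau_i(R_i)$. The key invariant is $R_i \supset P_i$, proved by induction: the inductive step is exactly the single-move monotonicity lemma proved just above, which says a move applicable to $P_i$ remains applicable to the larger $R_i$ and satisfies $\tau_i(R_i) \supset \tau_i(P_i) = P_{i+1}$. Granting $R_i \supset P_i$, the move at $g_i$ ``applies'' to $R_i$ in the sense of the preceding definition: $g_i(S\setminus C)\subset P_i \subset R_i$ since the hole of the move lies in $g_iC$, and $|g_iC\cap R_i|\geqslant|g_iC\cap P_i| = |C|-1$. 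Two cases remain: if the target hole is still empty in $R_i$ then $R_i \rightarrow R_{i+1}$ is a genuine solitaire transposition, while if some element of $Q$ has meanwhile filled it then $g_iC\subset R_i$ and $\tau_i$ acts trivially, so $R_{i+1}=R_i$. In both cases $R_i \rightarrow^* R_{i+1}$, and chaining yields $P\cup Q \rightarrow^* R_n = \Pi(P\cup Q) = P' \cup \pi(Q)$.

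It remains to verify the support condition. Each $\tau_i$ is supported on $g_iC$, and $g_iC\subset\varphi(P)$: the occupied cells of $g_iC$ lie in $P_i$, and its single hole becomes occupied in $P_{i+1}$, where $P_i,P_{i+1}\in\orb(P)$ give $\varphi(P_i)=\varphi(P_{i+1})=\varphi(P)$ by Lemma~\ref{lem:ContainedInFilling}; hence $g_iC \subset \varphi(P)$ and $\Pi$, so also $\pi$, fixes everything outside $\varphi(P)$. The only genuinely delicate point is the bookkeeping of this induction — ensuring that ``applying $\tau_i$'' always corresponds to a legal move or a harmless trivial one even after $Q$ may have filled the target hole — and this is precisely what the ``solitaire applies'' viewpoint and the monotonicity lemma are designed to handle.
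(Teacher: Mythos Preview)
Your proof is correct and takes essentially the same approach as the paper: realize each move as a permutation of $G$, apply the identical sequence to $P\cup Q$, and maintain the invariant $R_i \supset P_i$ via the single-move monotonicity lemma, with the support claim following from $g_iC \subset P_i \cup P_{i+1} \subset \varphi(P)$. The only cosmetic difference is that you work directly with transpositions on $G$, whereas the paper phrases the same induction through the ``associated solitaire move $g\pi$'' formalism (permutations of $C$); the content is identical.
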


\begin{proof}
Since $P \rightarrow^* P'$, there is a sequence of patterns $P = P_0 \rightarrow P_1 \rightarrow \cdots \rightarrow P_m = P'$, thus there is a sequence of permutations $g_1, g_2, \ldots, g_m$ and a sequence of permutations $\pi_1, \ldots, \pi_m$ of $C$ such that solitaire applies at $g_i$ in $P_{i-1}$, and $P_i = g_i\pi_i(P_{i - 1})$.

Let $R \cap P = \varnothing$, and define $Q = Q_0 = P \cup R$. Induction and the previous lemma show that the sequence $Q_i = g_i\pi_i(Q_{i - 1})$ is well-defined (because solitaire applies at $g_i$ to pattern $Q_{i - 1}$), and $Q_i$ contains $P_i$.

The entire evolution of the pattern comes from a sequence of permutations applied to $G$, proving the claim about there existing of a single permutation of $\pi$.

For the claim about the support, observe that each $P_i$ is contained in the filling closure $\varphi(P)$. The only way solitaire could apply at $g_i$ in $P_{i-1}$, without $g_iS$ being contained in $\varphi(P)$, is that actually $g_i\pi_i$ moves an element outside of $\varphi(P)$, which is impossible. \qed
\end{proof}

In particular:

\begin{lemma}
\label{lem:MonotoneUpToPermutation}
Suppose $P \rightarrow^* P'$. Then for any $Q$, we have $P \cup Q \rightarrow^* R$ for some pattern $R$ containing $P'$.
\end{lemma}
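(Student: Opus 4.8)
The plan is to derive this directly from Lemma~\ref{lem:MonotonicityUpToPermutation}, whose only restriction is that the added set be disjoint from $P$. The single point that needs attention is that here $Q$ is allowed to meet $P$. First I would discard the overlap: set $Q' = Q \setminus P$, so that $Q' \cap P = \varnothing$ while $P \cup Q = P \cup Q'$. This reduces the arbitrary $Q$ to one lying in $G \setminus P$ without altering the starting pattern, which is exactly the form required by the previous lemma.

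With $Q' \subset G \setminus P$, Lemma~\ref{lem:MonotonicityUpToPermutation} supplies a bijection $\pi : G \setminus P \to G \setminus P'$ (with support contained in $\varphi(P)$) such that $P \cup Q' \rightarrow^* P' \cup \pi(Q')$. Taking $R = P' \cup \pi(Q')$ then finishes the argument: since $\pi$ maps into $G \setminus P'$, the sets $P'$ and $\pi(Q')$ are disjoint, so $R \supset P'$ by construction; and since $P \cup Q = P \cup Q'$, the run $P \cup Q' \rightarrow^* R$ is literally a run $P \cup Q \rightarrow^* R$. I expect no real obstacle here, as the entire content of the statement is carried by the preceding lemma; the proof is just the book-keeping needed to drop its disjointness hypothesis.
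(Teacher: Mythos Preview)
Your proof is correct and follows the same approach as the paper: both derive the result directly from Lemma~\ref{lem:MonotonicityUpToPermutation} by setting $R = P' \cup \pi(Q)$. You are in fact slightly more careful than the paper, which does not explicitly address the case where $Q$ meets $P$; your reduction via $Q' = Q \setminus P$ handles this cleanly.
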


\begin{proof}
Take $R = P' \cup \pi(Q)$ in the previous lemma. \qed
\end{proof}

\subsection{Transporting excess}
\label{sec:TransportingExcess}

Fix again $C \Subset S \Subset G$ for solitaire purposes.

Sometimes a filling closed set $F$ contains a filling subset $P$ (i.e.\ $F = \varphi(P)$), which allows us to freely perform transformations on the excess sets. Intuitively, $P$ can be moved around $F$ by the solitaire freely enough that it can ``drag other elements along'' anywhere in $F$, and ultimately permute them arbitrarily.

\begin{definition}
Suppose $P, R \Subset G$. The \emph{$R$-restricted orbit} of $P$ is the set of patterns $Q \Subset G$ obtained from $P$ by applying only solitaire moves only at $g \in G$ such that $gS \subset R$.
\end{definition}

\begin{definition}
Suppose $P \Subset G$ and $F = \varphi(P)$. We say $P$ \emph{transports $k$-excess inside $R$} if all of the patterns $P \sqcup Q \subset R$ where $|Q| \leqslant k$ are in the same $R$-restricted $S$-orbit. We say $P$ \emph{transports $k$-excess} if it transports $k$-excess inside $F$. We say $P$ transports excess (inside $R$) if it transports $k$-excess (inside $R$) for all $k$.
\end{definition}

A \emph{super-$P$} pattern is $Q \supset P$ such that $\varphi(Q) = \varphi(P)$. In terms of this notion, $P$ transports $k$-excess if and only if every for every $\ell \leq |P|+k$, every super-$P$ pattern with cardinality $\ell$ is in the same orbit, and $P$ transports excess if and only if every super-$P$ pattern with the same cardinality is in the same orbit. Note that in this definition $P$ itself is allowed to have excess, although in our applications it will not. To explain the terminology, it is the excess \emph{on top of} the excess of $P$ itself that its transported.

\begin{figure}[ht]
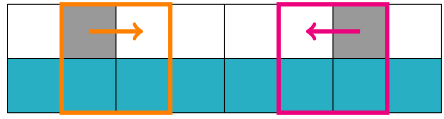

    \centering
    \includestandalone[height=1.5cm]{Figures/General/transport}
    \caption{A horizontal line transports excess on the lines above and under it for the square solitaire on $\Z^2$.}
    \label{fig:transport}
\end{figure}

By definition, if $P$ transports $k$-excess inside $F$, then it transports $(k-1)$-excess. We show that in at least very special situations, the converse is true.

\begin{lemma}
\label{lem:Ordering}
Suppose $P \Subset G$ and $F = \varphi(P)$. Suppose $F \setminus P = \{p_1, \ldots, p_n\}$ and $P$ transports $1$-excess in $R_k = P \cup \{p_1, \ldots, p_k\}$ for all $k$. Then $P$ transports excess.
\end{lemma}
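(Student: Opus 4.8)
The plan is to prove, by induction on $k \in \{0, 1, \ldots, n\}$, the stronger statement that \emph{$P$ transports excess inside $R_k$}. Since $R_n = P \cup (F \setminus P) = F$, the case $k = n$ is exactly the desired conclusion. The case $k = 0$ is trivial, as $R_0 = P$ contains no super-$P$ pattern besides $P$. So assume $k \geq 1$ and that $P$ transports excess inside $R_{k-1}$, fix a cardinality $|P| + m$, and write an arbitrary super-$P$ pattern of that cardinality inside $R_k$ as $P \sqcup Q$ with $Q \subseteq \{p_1, \ldots, p_k\}$ and $|Q| = m$. Call it \emph{lower} if $p_k \notin Q$ and \emph{upper} if $p_k \in Q$. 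If $m = 0$ or $m = k$ there is a single such pattern and nothing to prove, so assume $1 \leq m \leq k-1$. I must show that all lower and upper patterns lie in a common $R_k$-restricted orbit.

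First I would establish connectivity within each class. Every lower pattern is contained in $R_{k-1}$, so by the induction hypothesis all lower patterns of cardinality $|P|+m$ share one $R_{k-1}$-restricted, hence one $R_k$-restricted, orbit. For the upper patterns, write them as $P \cup \{p_k\} \cup Q''$ with $Q'' \subseteq \{p_1, \ldots, p_{k-1}\}$ and $|Q''| = m - 1$; by the induction hypothesis the patterns $P \cup Q''$ are all $R_{k-1}$-restricted-orbit equivalent. Since every $R_{k-1}$-restricted move only touches cells of $R_{k-1}$ and $p_k \notin R_{k-1}$, the dragging bijection supplied by Lemma~\ref{lem:MonotonicityUpToPermutation} for such a move sequence has support inside $R_{k-1}$ and therefore fixes $p_k$. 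Hence the same sequences carry $P \cup \{p_k\} \cup Q''_1$ to $P \cup \{p_k\} \cup Q''_2$, so the upper patterns also form a single $R_k$-restricted orbit.

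It remains to join the two classes by exhibiting one upper pattern and one lower pattern in the same $R_k$-restricted orbit. Here I would invoke the hypothesis that $P$ transports $1$-excess inside $R_k$: there is an $R_k$-restricted sequence taking $P \cup \{p_k\}$ to $P \cup \{p_1\}$. Lemma~\ref{lem:MonotonicityUpToPermutation} converts this into a bijection $\pi : G \setminus (P \cup \{p_k\}) \to G \setminus (P \cup \{p_1\})$ with support inside $R_k$, satisfying $P \cup \{p_k\} \cup Q'' \rightarrow^* P \cup \{p_1\} \cup \pi(Q'')$ for every $Q''$ disjoint from $P \cup \{p_k\}$. Because the support is closed under $\pi$ and $Q'' \subseteq R_k$, the target pattern $P \cup \{p_1\} \cup \pi(Q'')$ lies in $R_k$ and has cardinality $|P| + m$; it is a lower pattern exactly when $p_k \notin \pi(Q'')$.

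The single genuine difficulty is precisely this last condition: the dragging permutation might move one parked marble onto the just-vacated cell $p_k$. However, it can do so for at most one marble, the one occupying the cell $x_0 := \pi^{-1}(p_k)$, and $x_0 \notin P \cup \{p_k\}$ since it lies in the domain of $\pi$. It therefore suffices to choose the parking set $Q'' \subseteq \{p_1, \ldots, p_{k-1}\}$ of size $m-1$ avoiding the single cell $x_0$; this is possible because $m - 1 \leq k - 2$ throughout the range $1 \leq m \leq k-1$. For such $Q''$ the target is a genuine lower pattern, giving the crossing edge. Together with the two within-class connectivity statements this shows that all super-$P$ patterns of cardinality $|P|+m$ inside $R_k$ form one $R_k$-restricted orbit, completing the induction and hence the proof.
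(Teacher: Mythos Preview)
Your proof is correct and follows essentially the same strategy as the paper: induct on $k$ to show $P$ transports excess inside $R_k$, handle the lower patterns via the induction hypothesis on $R_{k-1}$, and use the $1$-excess hypothesis together with Lemma~\ref{lem:MonotonicityUpToPermutation} to bridge between lower and upper patterns. Your organization is somewhat cleaner than the paper's: you run a single induction on $k$ and obtain upper-pattern connectivity directly from the observation that $R_{k-1}$-restricted moves cannot touch $p_k$ (so the dragging bijection fixes it), whereas the paper uses a double induction on $(k,n)$ and instead argues that every upper pattern is reached from \emph{some} lower one; you also make explicit the one subtle point---that the dragged excess could land back on $p_k$---and dispose of it by choosing $Q''$ to avoid the single preimage $x_0 = \pi^{-1}(p_k)$, which is possible since $m-1 \le k-2$.
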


\begin{proof}
By induction on $k$ and $n$, we show that $P$ transports $n$-excess in $R_k$ for all $k, n$. For $k = 0$, this is trivial, and for a fixed $k$, it is trivially true for $n = 0$, since in these cases $P$ is the only pattern to consider.

Consider then a pattern $Q$ of size $n + 1$ inside $R_k$, such that the claim holds for patterns up to size $n$, and holds for all smaller values of $k$. If $n+1 = k$, then $Q$ is the only pattern of its size, and the claim holds trivially, so suppose $n+1 < k$. If $Q$ does not contain $p_k$, we may think of it as being contained in $R_{k-1}$, inside which we can transport $(n+1)$-excess freely. Thus, it suffices to show that $p_k$ can be moved inside $R_{k-1}$ without involving elements outside $R_k$.

Actually, it is easier to show that we can move an element into $p_k$. Namely, since $n + 1 \geqslant 1$, we know that $Q$ contains some $p_i$. Since $P$ transports $1$-excess, we have $P \cup \{p_i\} \rightarrow^* P \cup \{p_n\}$.

By Lemma~\ref{lem:MonotoneUpToPermutation}, we then have $P \cup Q \rightarrow^* P \cup R$ for some $R \ni p_k$. In fact, by Lemma~\ref{lem:MonotonicityUpToPermutation}, we can choose $R \setminus \{p_k\}$ freely by choosing $Q$ freely, and the latter is possible since $P$ transports $(n+1)$-excess inside $R_{k-1}$. \qed
\end{proof}

\begin{lemma}
\label{lem:TransportsExcessSolitaire}
The solitaire preserves the set of patterns that transport excess.
\end{lemma}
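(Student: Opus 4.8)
The plan is to show that if $P$ transports excess and $P \rightarrow^* P'$, then $P'$ transports excess, by transporting the problem from $P'$ back to $P$ via the permutation produced by Lemma~\ref{lem:MonotonicityUpToPermutation}, solving it there, and transporting the solution forward again. First I would record that $\varphi(P') = \varphi(P) =: F$ by Lemma~\ref{lem:ContainedInFilling}, so $P$ and $P'$ share the same filling closure $F$, and that $|P| = |P'|$ since solitaire moves preserve cardinality. Recall that the super-$P$ patterns are exactly the sets $P \sqcup Q$ with $Q \subset F \setminus P$, and that $P$ transports excess if and only if any two such patterns of equal cardinality lie in the same $F$-restricted orbit; the same characterisation holds verbatim for $P'$.

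The core of the argument uses the bijection $\pi : G \setminus P \to G \setminus P'$ from Lemma~\ref{lem:MonotonicityUpToPermutation}, whose support lies in $F$ and which satisfies $P \cup Q \rightarrow^* P' \cup \pi(Q)$ for every $Q \subset G \setminus P$. Since $\pi$ fixes everything outside $F$, it restricts to a bijection $F \setminus P \to F \setminus P'$, and these sets have equal cardinality. Given two super-$P'$ patterns $P' \sqcup Q_1'$ and $P' \sqcup Q_2'$ of equal cardinality with $Q_i' \subset F \setminus P'$, I would pull them back by setting $Q_i = \pi^{-1}(Q_i') \subset F \setminus P$; then $P \sqcup Q_1$ and $P \sqcup Q_2$ are super-$P$ patterns of equal cardinality. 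Since $P$ transports excess, $P \sqcup Q_1 \rightarrow^* P \sqcup Q_2$. Combining this with $P \sqcup Q_i \rightarrow^* P' \sqcup Q_i'$ (from $\pi$, noting $\pi(Q_i) = Q_i'$) and the reversibility of solitaire moves, I chain $P' \sqcup Q_1' \rightarrow^* P \sqcup Q_1 \rightarrow^* P \sqcup Q_2 \rightarrow^* P' \sqcup Q_2'$, which is exactly what transporting excess for $P'$ requires.

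The main obstacle is the bookkeeping around the $F$-restricted orbit rather than any deep difficulty: transport of excess is phrased in terms of moves at $g$ with $gS \subset F$, so I must check that all the moves invoked above can genuinely be taken inside $F$. This follows from the observation that whenever a nontrivial solitaire move applies to a pattern $X \subset F$ (so $|X \cap gS| = |S|-1$), the resulting pattern again lies in $F$ by Lemma~\ref{lem:ContainedInFilling}, and since $gS$ is the union of $X \cap gS$ (contained in $F$) with the single filled hole (now in $F$), we get $gS \subset F$ automatically. Hence, for patterns with filling closure $F$, the $F$-restricted orbit coincides with the full orbit, so the equivalences produced by Lemma~\ref{lem:MonotonicityUpToPermutation} and by the transport hypothesis for $P$ are all $F$-restricted, and the chained equivalence for $P'$ is too. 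The remaining checks — that $\pi^{-1}(Q_i')$ is disjoint from $P$ and that $\pi$ preserves cardinalities — are immediate from $\pi$ being a bijection supported in $F$.
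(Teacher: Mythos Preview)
Your proof is correct and follows essentially the same approach as the paper: both use the monotonicity lemma to push super-$P'$ patterns forward to super-$P$ patterns, invoke the transport property at $P$, and conclude by reversibility. The paper's version is terser, using only the weaker Lemma~\ref{lem:MonotoneUpToPermutation} (some superset of $P$ exists in the orbit) rather than the explicit bijection $\pi$, and it does not spell out the $F$-restricted orbit bookkeeping that you carefully address; but the underlying argument is the same.
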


\begin{proof}
Suppose $P$ transports excess, and suppose $Q \rightarrow^* P$. Let $Q \subset R \sqcup Q \subset \varphi(Q)$. By Lemma~\ref{lem:MonotoneUpToPermutation}, we have $Q \cup R \rightarrow^* P'$ for some pattern $P_R$ containing $P$. Since $P$ transports excess, all such $P_R$ with equal cardinality are in the same orbit, therefore all patterns $Q \cup R$ with the same cardinality are in the same orbit. \qed
\end{proof}

\subsection{Nice solitaire theories}

\begin{definition}
Let $G$ be a group, $C \Subset S \Subset G$. We say $(C, S, G)$ has a \emph{nice solitaire theory} if the following holds: if $P, Q$ have the same filling closure, and neither of them has excess, then $P \rightarrow_{C, S}^* Q$.
\end{definition}

In our case studies below, we will show that $G = \Z^2$ has nice solitaire theory for $C = S$ either the triangle shape $\{(0,0), (1,0), (0,1)\}$ or the square shape $\{0,1\}^2$. In both cases, excess can also be fully understood.

Note that, at least if $G$ has finite fillings, then it is never true that all patterns with the same filling closure and the same excess are in the same orbit. Namely, let $Q$ be any pattern with excess, and $P$ any smaller pattern with $\varphi(P) = \varphi(Q)$ (for example, $P = S \setminus \{c\}$, $Q = S$ for some $c \in C$). Then it is easy to see that for $g \in G$ ``large'' enough, the filling closures of $gQ \cup P$ and $gP \cup Q$ are equal, and both have the same excess, but they are not in the same solitaire orbit.

\begin{example}
    Let $k>1$ and $l>2k+2$ be integers. Let $G$ be a group with a subgroup $H$ isomorphic to $\Z/k\Z \times \Z/l\Z$, i.e.\ a torus with one side sufficiently larger than the other. Denote $a$ and $b$ a pair of generators of $H$. Then $S = \{e, a, b\}$ does not have a nice solitaire theory.

    Indeed, consider $P = \langle a \rangle$ and $Q = b^{\lfloor\frac{l}{2}\rfloor}\langle a \rangle$. Moving $S$ along $\langle a \rangle$, one can perform filling steps to add $b\langle a \rangle$ to $P$. By induction, $H \subset \varphi(P)$, and in fact $H = \varphi(P)$. Similarly, $\varphi(Q) = H$.
    
    Neither of the patterns has excess. This can be seen as follows: If we have any set $R \subset H$ with less than $k$ points, then one can see analogously to the arguments we give for $\Z^2$ that the filling closure will also involve less than $k$ cosets of $\langle b \rangle$. In particular, the filling closure cannot be $H$.
    
    However, $\mathcal{O}(P) \subset \langle a \rangle(e + b + b^2 + \ldots b^{k-1})$ since to move a point along $b$, there need to be another point to ``push'' it, and similarly $\mathcal{O}(Q) \subset \langle a \rangle b^{\frac{l}2}(e + b + b^2 + \ldots b^{k-1})$ so $P$ and $Q$ cannot be in the same orbit.\qee
\end{example}



\section{Triangle solitaire on the plane}
\label{sec:Triangle}

The \emph{triangle} is the set $T = \{(0,0), (1,0), (0,1)\}$. A $T$-solitaire move is called a \emph{triangle move}, and the $T$-solitaire process is referred to as \emph{triangle solitaire}.

We have $T = T_2$ where $T_n = \{(a, b) \in \{0,\ldots,n-1\}^2 \;|\; a + b \leqslant n-1\}$ is the more general \emph{$n$-triangle}. Sometimes, we refer to a generic $n$-triangle as simply a triangle, and ``the triangle'' or ``triangle shape'' to refer to $T$.

In this section, we completely characterize the orbits of the triangle solitaire, and prove various things about our favorite orbit, namely \emph{line orbit}, which is the orbit of the \emph{line} $L_n = \{0, \ldots, n-1\} \times \{0\}$. Sometimes ``a line'' can also refer to a translate of the line. By the \emph{edges} of the $n$-triangle we refer to the intersections of the edges of its convex hull with the lattice $\Z^2$; the line is one of the three edges of the $n$-triangle.

The set $L_n$ has the $n$-triangle as its filling closure. One of our main results is that all sets of cardinality at most $n$ whose filling closure is $T_n$ are in the solitaire orbit of $L_n$.  

The results of this section were already stated in our previous paper \cite{SaSc23}, with elementary and triangle solitaire specific proofs. Here, we rely on the general theory developed above to deduce immediately some results and make other proofs faster.

\subsection{Definitions and first properties of the triangle solitaire}

A useful graph structure to consider on $\Z^2$ is the triangular lattice. Specifically, the \emph{neighborhood} of a point $x \in \Z^2$ is the one depicted in Figure \ref{fig:nbh}, it corresponds to the points which can be involved in a triangle move with $x$. The neighborhood of a pattern $A$, denoted $N(A)$ is the union of the neighborhoods of its points. Two patterns A and B \emph{touch} if $A \cap N(B) \neq \varnothing$ or equivalently $B\cap N(A) \neq \varnothing$. 
\begin{figure}[ht]
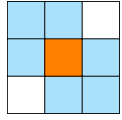

    \centering
    \includestandalone[height = 1.5cm]{Figures/Triangle/fig_neighborhood}
    \caption{The neighbors of the orange cell are the blue ones.}
    \label{fig:nbh}
\end{figure}
\spacc{}

Note that by squishing vertically by a factor of $1/\sqrt{2}$, and then shearing horizontal lines by $0.5$, the triangular lattice becomes invariant under rotation by $120$ degrees, and triangles become equilateral. Although we find it more convenient to work with coordinates in $\Z^2$, we apply this observation in symmetry considerations.

Note that when expressed in coordinates $\Z^2$, the $120$-degree rotation counterclockwise corresponds to the automorphism of $\Z^2$ given by the matrix $M = \left(\begin{smallmatrix} -1 & -1 \\ 1 & 0\end{smallmatrix}\right)$ (when multiplying column vectors from the left). Formally, we may take the phrase ``up to rotation'' to refer to an applications of $M$ or $M^{-1}$.

We begin with the observation that the choice of orientation of the line does not matter:

\begin{proposition}
\label{prop:lines}
    For every $n$, the three edges of $T_n$ are in the same orbit.
\end{proposition}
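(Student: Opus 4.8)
The plan is to combine the $120^\circ$ rotational symmetry $M$ (already set up just before the statement) with a single explicit sequence of moves connecting one pair of edges. First I would record a general principle: any affine automorphism $\phi(x) = Ax + t$ of $\Z^2$ with $A \in GL_2(\Z)$ for which $AT$ is a translate of $T$ is a symmetry of the entire $T$-solitaire, since it maps the family of translates $\{g + T \mid g \in \Z^2\}$ bijectively onto itself and hence preserves the move relation $\rightarrow$, therefore also $\rightarrow^*$ and the partition into orbits. Since $MT = T + (-1,0)$, the affine map $\rho(x) = Mx + (n-1,0)$ is such a symmetry; one checks it fixes $T_n$ setwise, cyclically permuting its three vertices $(0,0) \mapsto (n-1,0) \mapsto (0,n-1) \mapsto (0,0)$ and thus cyclically permuting its three edges. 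Consequently it suffices to connect a \emph{single} pair of edges by actual solitaire moves: if $L_n$ (the bottom edge) lies in the same orbit as the left edge $\{0\}\times\{0,\dots,n-1\}$, then applying $\rho$ to this equivalence places the hypotenuse in the same orbit as $L_n$, and transitivity of $\rightarrow^*$ collapses all three into one orbit. I would emphasise that the symmetry alone is \emph{not} enough, as it a priori only sends the orbit of one edge to the (isomorphic but possibly distinct) orbit of another; an explicit connection cannot be avoided.

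To connect $L_n$ to the left edge I would induct on $n$, performing all moves inside $T_n = \varphi(L_n)$ (compatible with Lemma~\ref{lem:ContainedInFilling}, which forbids moving points outside the filling closure). The base cases $n \in \{1,2\}$ are immediate (for $n=2$ a single triangle move at the origin suffices). For the inductive step, in Phase~1 I would ignore the corner point $(0,0)$ and view $\{(1,0),\dots,(n-1,0)\}$ as the bottom edge of the translated sub-triangle $T_{n-1} + (1,0) \subset T_n$; by the induction hypothesis (and translation-invariance of the solitaire) these points can be rotated to the left edge $\{(1,j) \mid 0 \le j \le n-2\}$ of that sub-triangle, with all moves confined to $T_{n-1}+(1,0)$ and hence never touching $(0,0)$. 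In Phase~2 I would run a ``zipper'': for $j = 0,1,\dots,n-2$ in order, a triangle move at $(0,j)$ on the cells $(0,j),(1,j),(0,j+1)$ pushes $(1,j)$ to $(0,j+1)$, using the point at $(0,j)$ (placed on the previous step, or the corner for $j=0$) as the pivot. This produces exactly the left edge $\{(0,j) \mid 0 \le j \le n-1\}$.

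The main obstacle is the same one that makes such puzzles subtle: a lone point cannot simply walk across the triangle, since every triangle move requires a second occupied cell as a pivot. The recursion above is arranged precisely to guarantee a pivot at every step — Phase~1 keeps the mobile points clustered as the left edge of a sub-triangle, and the zipper in Phase~2 always has the just-placed point available as helper. Thus the essential work is the bookkeeping of Phase~2 (checking at each index $j$ that $(0,j)$ is occupied, $(1,j)$ is occupied, and $(0,j+1)$ is empty, so the move is legal) together with the verification that Phase~1 genuinely leaves the parked corner point undisturbed; both are routine once the two-phase structure is in place.
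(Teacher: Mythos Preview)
Your proof is correct. The paper's own proof is purely pictorial: it displays (for $n=5$) an explicit sequence of moves taking the horizontal edge to the diagonal, and then remarks that the diagonal-to-vertical transformation is just the rotated inverse of the first. Your approach differs in two respects: you reduce via the $120^\circ$ symmetry $\rho$ to a \emph{single} pair of edges (horizontal to vertical) rather than exhibiting two transformations, and you organize the explicit connection as an induction on $n$ with a two-phase structure (recursive rotation inside $T_{n-1}+(1,0)$, then a zipper along column $0$) rather than a direct one-shot description. Both routes yield $O(n^2)$ moves and are equivalent in content; yours is more formal and self-contained (in particular your care about the parked corner point $(0,0)$ not interfering, justified via Lemma~\ref{lem:ContainedInFilling}, is a detail the paper leaves to the picture), while the paper's is terser. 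The symmetry reduction you spell out is essentially the paper's parenthetical ``rotated inverse'' remark made explicit.
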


\begin{proof}
    The first line of Figure \ref{fig:lines_transformation} explains by example how to transform the horizontal edge into the diagonal, and the second one how to transform the diagonal into the vertical edge (of course this is just a rotated inverse of the first transformation). 
\begin{figure}[ht]
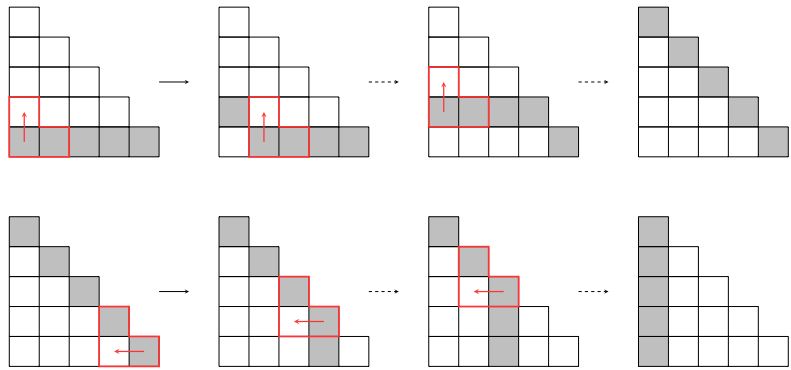

    \centering
    \includestandalone[width=0.85\textwidth]{Figures/Triangle/fig_lines}
    \caption{How to transform one edge into another for $n=5$.}
    \label{fig:lines_transformation}
\end{figure}
\spacc{}
\qed\end{proof}

This proof shows that the number of solitaire moves needed to go from one line to another is at most $O(n^2)$. We will see later that $\Omega(n^2)$ steps are necessary, and that the diameter of the orbit of the line, i.e. the maximal number of steps needed to go from one pattern of the orbit to another, is $\Omega(n^3)$.

\subsection{The filling process for the triangle}

We now specialize the filling process for the triangle. This specific process has been studied previously, in \cite{OEISfillmatrices}. An example of triangle filling was given in Figure~\ref{fig:fill process}


Since $\Z^2$ has the finite filling property and $T$ is non-linear, the filling process is terminating, and thus the filling closure $\varphi(P)$ is reached in finitely many steps. 

Let us say that $P$ \emph{fills} if $\varphi(P) = T_n$, where $n = |P|$. In \cite{OEISfillmatrices}, filling sets $P$ were called \emph{fill matrices}. We will show that they in fact correspond to the elements of the line orbit.



\begin{lemma}
\label{lem:fill_shape}
    For any pattern $P$, there are unique integers $k_1, \ldots, k_r$ and vectors $\vec{v_1}, \ldots, \vec{v_r}$ such that $\varphi(P) = \bigcup_{i = 1}^r \vec{v_i} + T_{k_i}$, $\sum_{i = 1}^r k_i \leqslant |P|$ and $N(\vec{v_i}+T_{k_i}) \cap (\vec{v_j}+T_{k_j}) = \varnothing$ for each $i \neq j$.
\end{lemma}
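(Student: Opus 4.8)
The plan is to show that the filling closure $\varphi(P)$ of any finite $P$ decomposes canonically into a disjoint, pairwise non-touching union of translated triangles, and to prove this by exploiting the confluence and termination of the filling process together with the geometry of how triangle filling moves interact.

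First I would set up the structure by considering the filling closure $F = \varphi(P)$ as a fixed finite set (finite by Proposition~\ref{prop:ZdFFP}, since $T$ is non-linear on $\Z^2$). I would partition $F$ into its \emph{touching-connected components}: declare two points of $F$ equivalent if they are joined by a chain of points of $F$ each consecutive pair of which touches (in the sense of the triangular-lattice neighbourhood $N$ from Figure~\ref{fig:nbh}), and let the components be $F_1, \ldots, F_r$. By construction distinct components do not touch, i.e.\ $N(F_i) \cap F_j = \varnothing$ for $i \neq j$, which already gives the non-touching condition demanded in the statement. The key claim then reduces to showing that \emph{each component $F_i$ is itself a translated triangle} $\vec{v_i} + T_{k_i}$.

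The heart of the argument is this claim about a single component, and I expect it to be the main obstacle. The natural approach is to show that every filling-closed, touching-connected finite subset of $\Z^2$ is a translated $n$-triangle. One direction — that a translated triangle is filling-closed — follows from the boundary argument already used in the proof of Proposition~\ref{prop:ZdFFP}: the three edges of $T_{k}$ each have at least two lattice points on the relevant supporting line, so no filling move can cross any of the three edge-directions, and one checks that no interior hole can survive in a filling-closed set. For the converse, that a filling-closed touching-connected set \emph{must} be a triangle, I would argue by looking at the three extreme supporting lines of the convex hull in the three edge directions of $T$ (horizontal, vertical, and anti-diagonal $a+b = \text{const}$). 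If the component were not exactly the triangle cut out by these three lines, there would be a hole — a cell inside the bounding triangle missing from $F$ — and I would show that a minimal such hole (say, lexicographically extremal) is surrounded by enough points of $F$ that a triangle move applies and fills it, contradicting that $F$ is filling-closed. Managing the boundary cases of this hole-filling argument (corners, edges, degenerate small components) is where the real work lies.

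Finally I would assemble the pieces and verify the remaining quantitative and uniqueness assertions. Writing $k_i$ for the size parameter of the triangle $F_i = \vec{v_i} + T_{k_i}$, the bound $\sum_i k_i \leqslant |P|$ follows because each edge $L_{k_i}$ of $F_i$ has $k_i$ points that are already forced to lie in $\varphi(P)$ as an edge of a filling-closed component, and a rank/excess count (using that the rank of each component is at least $k_i$, the cardinality of its edge being a minimal filling set) shows $\sum_i k_i \leqslant \rank(P) \leqslant |P|$; alternatively one tracks that filling moves never decrease the combined edge-count and $P$ must already meet each component in at least $k_i$ points. Uniqueness of the decomposition is immediate: the components $F_i$ are determined by the touching-connectedness equivalence relation on the canonically-defined set $\varphi(P)$, and a translated triangle determines its translate $\vec{v_i}$ and size $k_i$ uniquely. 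This completes the plan.
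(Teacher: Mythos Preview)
Your approach is genuinely different from the paper's. The paper proves the lemma by induction on $|P|$: remove a point $x$, apply the induction hypothesis to get a triangle decomposition of $\varphi(P\setminus\{x\})$, and then do a case analysis on where $x$ sits (inside a triangle, far from all triangles, or touching one), extending a triangle and handling any cascading merges in the third case. Your route instead works directly on the closed set $F=\varphi(P)$, partitions it into touching-connected components, and argues each component must be a translated triangle. That structural viewpoint is perfectly viable, and your sketched ``maximal sub-triangle plus one touching point forces a larger triangle'' argument can be made rigorous for step~3. What your approach buys is a cleaner separation between the geometric fact (closed touching-connected sets are triangles) and the counting; what the paper's induction buys is that the inequality $\sum k_i\le |P|$ falls out automatically from tracking merges.

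There is, however, a real gap in your treatment of the inequality. You invoke ``the rank of each component is at least $k_i$'', but at this point in the paper rank has not been shown to satisfy $\rank(T_k)\ge k$; in fact that inequality is precisely a consequence of the present lemma (the paper derives it immediately afterwards when identifying triangle excess with excess). So your argument is circular. Saying the edge $L_{k_i}$ is ``a minimal filling set'' only gives $\rank(T_{k_i})\le k_i$, not the direction you need. A non-circular fix: observe that a triangle filling move never introduces a new $x$-coordinate (the three points of any translate of $T$ have only two distinct $x$-coordinates, and the added point shares one of them); hence if $P\cap F_i$ fills $\vec v_i + T_{k_i}$, it must already contain points with all $k_i$ distinct $x$-coordinates of that triangle, giving $|P\cap F_i|\ge k_i$ and thus $\sum k_i\le |P|$.
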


We refer to $\varphi(P) = \bigcup_{i = 1}^r \vec{v_i} + T_{k_i}$ as the \emph{fill decomposition} of $P$. (Note that this refers to the formal union rather than the set, but it is easy to see that the set determines the values $v_i$ and $k_i$ up to a permutation.)

\begin{proof}
    We prove this by induction on $|P|$. The case $|P| = 1$ is trivial.
    
    Now assume the result is true for patterns of size at most $n$ and let $P$ be a pattern of size $n +1$. Then if $x \in P$, $P\setminus\{x\}$ satisfies the induction hypothesis so we can write $\varphi(P \setminus \{x\}) = \bigcup_{i= 1}^r \vec{v_i} + T_{k_i}$. We now have three cases to consider. 
    
    First, if $x \in \varphi(P\setminus \{x\})$ then $\varphi(P) = \varphi(P\setminus \{x\})$, and the inequality on the size of $P$ clearly continues to hold, as the right-hand size increase, but the left-hand side does not. 
    
    Second, if $x$ is not in the neighborhood of $\varphi(P \setminus \{x\})$ then no additional filling can be done with it, therefore $\varphi(P) = \varphi(P \setminus \{x\}) \cup \{x\}$ and, as $\{x\}$ is a triangle (translate of $T_1$), we have the appropriate decomposition, and both sides of the inequality are increased by $1$.
    
    Finally, assume $x \in N(\vec{v_1} + T_{k_1})$. Then we can extend $\vec{v_1} + T_{k_1}$ as in Figure \ref{fig:fill_triangle}. By doing so we may lose the property that the triangles do not touch, but if some do so we can merge them by repeating the extension process. Notice that if two triangles are merged, then the new triangle cannot be larger than the sum of the sizes of the initial triangle, so the inequality on the triangles' sizes is still satisfied. (Merges may be triggered recursively, but nevertheless no merge can increase the sum of triangle sizes.)
    \begin{figure}[ht]
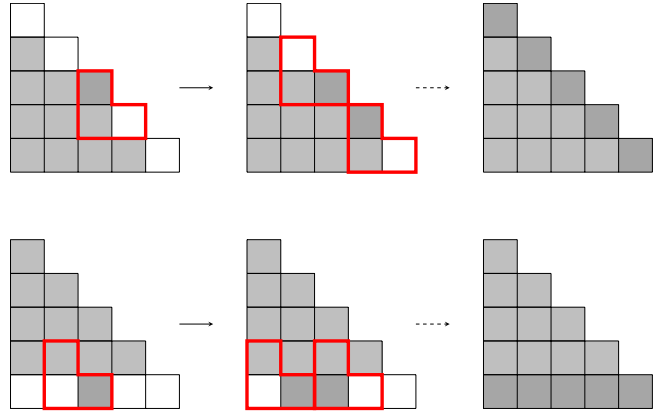

        \centering
        \includestandalone[width = 0.7\textwidth]{Figures/Triangle/fig_filling_shape}
        \caption{How to extend a triangle with a top neighbor or a subdiagonal neighbor. The right neighbor case is symmetric to the top neighbor case.}
        \label{fig:fill_triangle}
    \end{figure}
    \spacc{}
\qed\end{proof}

Define the \emph{triangle excess} of $P$ as the difference $e_T(P) = |P| - \sum_{i=1}^r k_i$.

\begin{lemma}
Triangle excess is equal to excess.
\end{lemma}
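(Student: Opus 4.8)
The plan is to rewrite the claimed identity $e_T(P) = e(P)$ as the equality $\rank(P) = \sum_{i=1}^r k_i$, where $\bigcup_i \vec{v_i} + T_{k_i}$ is the fill decomposition from Lemma~\ref{lem:fill_shape}, and to prove the two inequalities separately. Both directions will be reduced to the single-triangle statement $\rank(T_k) = k$ using the fact that the components of the fill decomposition do not touch. Concretely, I would first record that each $\vec{v_i} + T_{k_i}$ is filling closed (it equals $\varphi(\vec{v_i} + L_{k_i})$ by translation-equivariance of $\varphi$ and $\varphi(L_{k_i}) = T_{k_i}$), and that, because $N(\vec{v_i}+T_{k_i}) \cap (\vec{v_j}+T_{k_j}) = \varnothing$, no single translate $gT$ can meet two different components (its three cells are pairwise neighbours). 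Hence for any $R \subset \varphi(P)$ the filling process acts independently on the pieces $R_i = R \cap (\vec{v_i}+T_{k_i})$, giving $\varphi(R) = \bigsqcup_i \varphi(R_i)$ with each $\varphi(R_i) \subset \vec{v_i}+T_{k_i}$.

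For the upper bound $\rank(P) \le \sum_i k_i$, I would take $R = \bigcup_i (\vec{v_i} + L_{k_i})$. Since $\varphi(L_{k_i}) = T_{k_i}$ and the pieces fill independently, $\varphi(R) = \bigsqcup_i (\vec{v_i}+T_{k_i}) = \varphi(P)$, while $|R| = \sum_i k_i$; thus $\rank(P) \le \sum_i k_i$, i.e.\ $e(P) \ge e_T(P)$. For the lower bound $\rank(P) \ge \sum_i k_i$, recall (as noted after the definition of rank) that a minimal generating set may be taken inside $\varphi(P)$. Splitting such an $R$ into its components $R_i$, the independence of filling forces $\varphi(R_i) = \vec{v_i}+T_{k_i}$ for every $i$, so $|R_i| \ge \rank(T_{k_i})$ and $|R| \ge \sum_i \rank(T_{k_i})$. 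It therefore suffices to prove $\rank(T_k) \ge k$.

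For $\rank(T_k) \ge k$ I would argue by linear algebra over $\Z/2\Z$. Let $W$ be the space of $f : T_k \to \Z/2\Z$ satisfying $f(a,b) + f(a+1,b) + f(a,b+1) = 0$ whenever the three cells lie in $T_k$. Reading this relation as $f(a,b+1) = f(a,b)+f(a+1,b)$ shows that $f$ is freely and consistently determined by its values on the bottom edge $L_k$, so $\dim_{\Z/2\Z} W = k$. On the other hand, a single filling move deduces the value of one cell of a triangle from the other two exactly via this relation; hence if $\varphi(R) = T_k$, then every $f \in W$ is determined by $f|_R$, i.e.\ the restriction $W \to (\Z/2\Z)^{R}$ is injective. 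Comparing dimensions gives $k = \dim W \le |R|$, so $\rank(T_k) \ge k$. Combining the two bounds yields $\rank(P) = \sum_i k_i$ and hence $e_T(P) = e(P)$.

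The main obstacle is the lower bound $\rank(T_k) \ge k$: all the naive combinatorial monovariants (number of occupied rows, columns, or anti-diagonals) can increase under a filling move and so do not yield the bound, so the argument genuinely needs the linear-algebraic witness provided by the Ledrappier relation, together with the observation that the set-theoretic filling process is faithfully mirrored by the single-triangle deductions $f(a,b+1)=f(a,b)+f(a+1,b)$. The component-independence of filling across non-touching triangles is routine but must be stated carefully, since it is used in both inequalities.
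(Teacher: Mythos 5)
Your proof is correct, and the upper bound $\rank(\varphi(P)) \leqslant \sum_i k_i$ via the union of lines is exactly the paper's argument. The lower bound, however, takes a genuinely different route. The paper gets $|R| \geqslant \sum_i k_i$ for free from Lemma~\ref{lem:fill_shape} itself: that lemma's statement already includes the cardinality inequality $\sum_i k_i \leqslant |R|$ for the fill decomposition of \emph{any} pattern $R$, proved by induction on $|R|$ with the observation that merging triangles never increases the sum of their sizes. So the paper's proof of the lower bound is one line. You instead reduce to the single-triangle bound $\rank(T_k) \geqslant k$ (via the component-independence of filling across non-touching triangles, which is indeed routine but correctly handled) and certify it with a $\Z/2\Z$-linear witness: the solution space $W$ of the Ledrappier relation on $T_k$ has dimension $k$, and filling moves are mirrored by linear deductions, so any $R$ with $\varphi(R) = T_k$ restricts $W$ injectively. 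This is sound, and it is essentially an instance of the machinery the paper develops later in Section~\ref{sec:Permutive} (the inequality $\rank_{\mathrm{indep}}(P) \leqslant \rank(P)$ for filling-closed sets, applied to the Ledrappier subshift with the independent set $L_k$). What your approach buys is a conceptual explanation of \emph{why} the bound holds and a template that generalizes to any shape admitting a TEP structure; what it costs is importing an external algebraic object into a purely combinatorial lemma. One small correction: your closing remark that the lower bound ``genuinely needs'' a linear-algebraic witness is not accurate --- the paper's inductive merging argument is a purely combinatorial proof of the same fact, so the algebraic detour, while elegant, is avoidable.
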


\begin{proof}
Excess is defined as
\[ e(P) = |P| - \rank(\varphi(P)), \]
and triangle excess is defined as
\[ e_T(P) = |P| - \sum_{i=1}^r k_i. \]
where $k_i$ come from the fill decomposition of $P$. To show these equivalent is the same as showing $\rank(\varphi(P)) = \sum_{i=1}^r k_i$.

Since $\varphi(\bigcup_{i=1}^r \vec v_i + L_{k_i}) = \varphi(P)$, we have $\rank(\varphi(P)) \leqslant \sum_{i=1}^r k_i$. On the other hand, if we have fill decomposition $\varphi(R) = \varphi(P) = \bigcup_i \vec{v}_i + T_{k_i}$ then $|R| \geqslant \sum_i k_i$ by Lemma~\ref{lem:fill_shape}, so indeed $\rank(\varphi(P)) \geqslant \sum_i k_i$ as desired. \qed
\end{proof}




\subsection{Characterisation of the solitaire orbit through the filling}

We can mimic the filling process using the solitaire, by using lines in place of triangles, and pretending the excess elements are not there, as in Section~\ref{sec:Pretending}.

In line with the terminology in Section~\ref{sec:TransportingExcess}, we say a pattern $P$ is a \emph{superline} if $\varphi(P) = \vec v + T_n$ for some $n$, and $P \supset \vec v + L_n$. 

\begin{lemma}
\label{lem:ExtendSuperline}
Let $P$ be a superline, and suppose $x$ touches $P$. Then $P \cup \{ x\}$ contains a superline in its solitaire orbit.
\end{lemma}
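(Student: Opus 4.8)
The plan is to imitate a single filling step with solitaire moves, using the monotonicity results of Section~\ref{sec:Pretending} to carry the excess of the superline along at no cost, and the rotational symmetry of the triangle to collapse the three possible directions of extension into one.

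First I dispose of the trivial case: if $x\in\varphi(P)$, then $P\cup\{x\}$ still contains $\vec v+L_n$ and has unchanged filling closure, hence is itself a superline. So assume $x\notin\varphi(P)=\vec v+T_n$. Since $x$ touches $P$, it touches $\vec v+T_n$ from the outside, so by Lemma~\ref{lem:fill_shape} the fill decomposition of $P\cup\{x\}$ is forced to be a \emph{single} triangle $\Delta=\varphi(P\cup\{x\})=\vec{v}'+T_{n+1}$, obtained from $\vec v+T_n$ by extending along exactly one of its three edges. I write $\mathcal E_0$ for that edge of $\vec v+T_n$ and $\mathcal E$ for the parallel adjacent edge of $\Delta$ just outside it.

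The key reduction is a flexibility argument. I claim it is enough to reach, somewhere in $\orb(P\cup\{x\})$, a pattern containing one full edge of $\Delta$: by Proposition~\ref{prop:lines} the three edges of $\Delta$ lie in one orbit, so such an edge can be slid to the bottom edge $\vec{v}'+L_{n+1}$, and Lemma~\ref{lem:MonotoneUpToPermutation} lets the remaining points ride along, turning the pattern into a genuine superline. The same principle lets me assume at the outset that $P\supset\mathcal E_0$: from $P\supset\vec v+L_n$ I can route to some $\tilde P\supset\mathcal E_0$ with $\varphi(\tilde P)=\varphi(P)$, and since the bijection furnished by Lemma~\ref{lem:MonotonicityUpToPermutation} has support inside $\varphi(P)$ it fixes the exterior point $x$; thus $P\cup\{x\}\rightarrow^*\tilde P\cup\{x\}$ with $x$ still extending $\mathcal E_0$.

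Now the configuration is symmetric: $P$ contains the edge that is being extended and $x$ is the lone exterior seed just beyond it. Applying the rotation $M$, I may assume $\mathcal E_0=L_n$, so that $x$ sits in the row $b=-1$ directly below the line. Discarding the excess via Lemma~\ref{lem:MonotoneUpToPermutation} one last time reduces everything to showing $L_n\cup\{(j,-1)\}\rightarrow^*\{(0,-1),\dots,(n,-1)\}$ for $0\le j\le n$. This cascade is where the genuine moves happen: using $(j,-1)$ as a seed I drop $(j-1,0),\dots,(0,0)$ into the lower row to the left and $(j,0),\dots,(n-1,0)$ into it to the right, each move pushing one point down against the neighbour placed just before; the extreme seeds $j\in\{0,n\}$, and the ``end'' and ``above'' placements of $x$, degenerate to one-move or trivial cases. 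Having produced the edge $\mathcal E$ of $\Delta$, the flexibility reduction of the previous paragraph completes the proof.

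The crux --- and essentially the only place where solitaire moves are actually performed --- is this downward cascade of a line into its neighbour. The one conceptual point requiring care in the reductions is that an exterior point touching \emph{only} the interior excess of $P$ can extend the triangle along its left or hypotenuse edge but never downward (neighbours of interior points never reach the row $b=-1$); this is exactly what guarantees that, after rotating the extension edge to the bottom, the single seed $x$ really lands in the row below $L_n$, so that every case is subsumed by the one clean cascade above.
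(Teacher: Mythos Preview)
Your proof is correct and follows essentially the same approach as the paper: reduce to the bare case $L_n\cup\{(j,-1)\}$ via rotational symmetry together with Proposition~\ref{prop:lines}, perform the cascade into the adjacent row, and lift to general superlines via Lemma~\ref{lem:MonotonicityUpToPermutation}. The paper organizes the reductions slightly more economically (it proves the base case $P=L_m$ first and lifts once, rather than rotating $P$ onto $\mathcal E_0$ and then stripping the excess), and your final paragraph about ``interior excess'' is an unnecessary digression---once you have established $x\in\Delta\setminus T_n$ after rotation, $x$ automatically lies in row $b=-1$---but these are matters of presentation, not substance.
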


\begin{proof}
Suppose $P$ contains $\vec{v} + L_m$ and $\varphi(P) = \vec{v} + T_m$. By translational symmetry we may assume $\vec v = (0,0)$. First suppose the exact equality $P = L_m$ holds. If $x \in T_m$, the claim is clear as $P$ is already a superline.

Otherwise, up to rotational symmetry (and because of Proposition~\ref{prop:lines}), we may assume $x$ is below $P$, i.e.\ $x = (k, -1)$ for $0 \leqslant k \leqslant m$. Figure~\ref{fig:merge_line} shows how we can turn $P \cup \{x\}$ into the line $(0,-1) + L_{m+1} = \{0, \ldots, m\} \times \{-1\}$ by applying solitaire moves, again proving the claim. 

    \begin{figure}[ht]
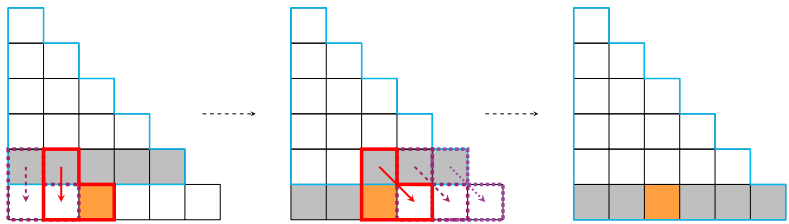

        \centering
        \includestandalone[width=0.85\textwidth]{Figures/Triangle/fig_merge_lines}
        \caption{How to extend a line with a bottom neighbor.}
        \label{fig:merge_line}
    \end{figure}
    \spacc{}

Now consider the general case $P \supset L_m$ and $\varphi(P) = T_m$, where we may again assume $x \notin T_m$. We have $\varphi(P) = \varphi(L_m)$ so $\varphi(P \cup \{x\}) = \varphi(L_m \cup \{x\})$ by Lemma~\ref{lem:ChangeSubset}. Denote $T' = \varphi(L_m \cup \{x\})$.

We have $L_m \cup \{x\} \rightarrow^* Q$ for some superline $Q$ with of course $\varphi(Q) = T'$, so by Lemma~\ref{lem:MonotonicityUpToPermutation} we have $P \cup \{x\} \rightarrow^* Q'$ for some pattern $Q' \supset Q$, and $\varphi(Q') = T'$. 
Thus $Q'$ is also a superline. \qed
\end{proof}

\begin{lemma}
\label{lem:merging}
Suppose that $P, Q$ are (not necessarily disjoint) superlines, and their filling closures touch. Then $P \cup Q$ contains a superline in its solitaire orbit.
\end{lemma}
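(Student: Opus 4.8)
The plan is to reduce the statement to the special case of merging two \emph{lines} (edges) whose filling triangles touch, and then to build a single superline by absorbing the second line one cell at a time using Lemma~\ref{lem:ExtendSuperline}.

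First I would fix the target triangle. Write $L_P \subset P$ and $L_Q \subset Q$ for the edges witnessing that $P$ and $Q$ are superlines, so $\varphi(L_P) = \varphi(P)$ and $\varphi(L_Q) = \varphi(Q)$. By Lemma~\ref{lem:ChangeSubset}, $\varphi(L_P \cup L_Q) = \varphi(P \cup Q)$, and since $\varphi(P)$ and $\varphi(Q)$ touch, the fill decomposition of Lemma~\ref{lem:fill_shape} collapses them into a single triangle $F = \vec u + T_\ell$: the components of a fill decomposition are pairwise non-touching, so the two touching triangles must lie in one and the same component. The goal thus becomes to show that $\orb(L_P \cup L_Q)$ contains a superline for $F$. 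The conclusion for $P \cup Q$ then follows by monotonicity: since $P \cup Q \supset L_P \cup L_Q$, Lemma~\ref{lem:MonotoneUpToPermutation} yields $P \cup Q \rightarrow^* R$ for some $R$ containing that superline, and as $R \subset F = \varphi(R)$ contains a set filling to $F$, the pattern $R$ is itself a superline.

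Next I would align the second line to the contact edge and absorb it. If $\varphi(Q) \subset \varphi(P)$ then $L_P \cup L_Q$ already fills to $\varphi(P)$ and is a superline, so assume otherwise; then there is a cell $b \in \varphi(Q) \setminus \varphi(P)$ adjacent to $\varphi(P)$, and since $b$ has a neighbour outside $\varphi(Q)$ it is a boundary cell of the triangle $\varphi(Q)$, hence lies on one of its three edges $E$. By Proposition~\ref{prop:lines} the edge $L_Q$ can be slid, by solitaire moves inside $\varphi(Q)$, onto $E$, so we may assume $b \in L_Q$ with $b$ touching $\varphi(P)$. I then absorb $E$ into the superline $L_P$ cell by cell along the contiguous edge, starting from $b$: the cell $b$ touches $\varphi(L_P)$, so (first rotating $L_P$ onto the touched edge via Proposition~\ref{prop:lines}, then applying Lemma~\ref{lem:ExtendSuperline}) the orbit of $L_P \cup \{b\}$ contains a superline $S_1$ whose filling closure contains $b$; each subsequent cell of $E$ is adjacent to a cell already absorbed, hence touches the current filling closure, so Lemma~\ref{lem:ExtendSuperline} applies again. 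Throughout, Lemma~\ref{lem:MonotonicityUpToPermutation} carries the not-yet-absorbed cells along, permuting them only inside the current filling closure (so cells still outside it are fixed). After the last cell of $E$ is absorbed, the filling closure equals $F$, giving the desired superline in $\orb(L_P \cup L_Q)$.

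The main obstacle I anticipate is the geometric bookkeeping of the absorption step: confirming that the contact cell $b$ genuinely sits on an edge of $\varphi(Q)$, and that the invariant ``each later cell of $E$ touches the current filling closure'' survives the permutations introduced by Lemma~\ref{lem:MonotonicityUpToPermutation}. Both are controlled by keeping $P$ and $Q$ reduced to their lines until the very last step and by exploiting that the supports of all these permutations lie inside the growing filling closure, so a cell of $E$ that has just become adjacent to an absorbed cell is never displaced before it is processed. The overlapping case $\varphi(P) \cap \varphi(Q) \neq \varnothing$ only makes the configuration more connected and presents no additional difficulty.
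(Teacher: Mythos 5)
Your proof is correct and follows essentially the same route as the paper's: reduce the second superline to a suitable edge via Proposition~\ref{prop:lines}, absorb that edge into the first superline one touching cell at a time via Lemma~\ref{lem:ExtendSuperline}, and carry the not-yet-absorbed cells along with Lemma~\ref{lem:MonotonicityUpToPermutation}. The only real difference is presentational: the paper normalizes the geometry explicitly (placing $\varphi(Q)$ below-right of $\varphi(P)$ and absorbing the vertical edge of $Q$ downward from the contact point), whereas you locate a contact cell $b$ and its edge $E$ abstractly; your one-line justification that $b$ lies on an edge of $\varphi(Q)$ is slightly loose when the two triangles overlap, but the conclusion is true and this is at the same level of informality as the paper's own ``draw a picture and argue geometrically'' step.
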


\begin{proof}
Up to rotational symmetry, translation, and exchanging the roles of $P$ and $Q$, we may assume $\varphi(P) = T_m$ and $\varphi(Q) = \vec v + T_n$ where $\vec v = (a, b)$ with $a \geqslant 0$, $b \leqslant -1$, and $n + b \leqslant m - a$. To see this, we recommend the reader draw a picture and argue geometrically: if $n+b > m-a$, then the top corner of $Q$ sticks above the diagonal of $P$, in which case the right corner of $P$ will be inside $Q$ or just next to it, and we may exchange the roles of $P$ and rotate and translate to get to the situation $n + b \leqslant m - a$. 

Since the filling closures touch, we have $n \geqslant -b$ and $a \leqslant m$. Since $Q$ is a superline and all edges of the triangle are in the same orbit, $Q' = \vec v + \{0\} \times \{0, \ldots, n-1\}$ is contained in some pattern in the orbit of $Q$. Note that the assumption on $n$ is precisely equivalent to the fact that vectors in $Q'$ with positive second coordinate are contained in $T_n$. Now we can simply apply the previous lemma successively to the vectors in $Q'$ with negative second coordinate. \qed
\end{proof}

\begin{lemma}
Let $P$ be a pattern with fill decomposition $\varphi(P) = \bigcup_i \vec{v}_i + T_{k_i}$. Then the orbit of $P$ contains a union of superlines whose filling closures do not touch, and where the corresponding lines are precisely the lines $\vec{v}_i + L_{k_i}$. 
\end{lemma}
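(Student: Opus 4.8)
The plan is to prove this by induction on $|P|$, following closely the proof of Lemma~\ref{lem:fill_shape}, but replacing each \emph{filling} operation by its \emph{solitaire} counterpart: the role of ``adding a point to a triangle and extending it'' is played by Lemma~\ref{lem:ExtendSuperline}, and the role of ``merging two touching triangles'' by Lemma~\ref{lem:merging}. The base case $|P|=1$ is immediate, since a single point is a superline (it is $\vec v + L_1$ and its filling closure is $\vec v + T_1$). For the inductive step with $|P| = n+1$, I would pick any $x \in P$ and set $P' = P \setminus \{x\}$. By induction, the orbit of $P'$ contains a non-touching union of superlines $U = \bigsqcup_j U_j$ whose lines are the lines $\vec w_j + L_{m_j}$ of the fill decomposition $\varphi(P') = \bigcup_j \vec w_j + T_{m_j}$. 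Applying Lemma~\ref{lem:MonotonicityUpToPermutation} to $P' \rightarrow^* U$ with the extra element $\{x\}$, I obtain $P = P' \cup \{x\} \rightarrow^* U \cup \{x'\}$, where $x' = \pi(x)$ lies in $\varphi(P')$ whenever $x$ does, and equals $x$ otherwise, since the support of $\pi$ is contained in $\varphi(P')$.

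Next I would split into the same three cases as Lemma~\ref{lem:fill_shape}. If $x \in \varphi(P')$, then $x' \in \varphi(P')$ lies in some triangle $\varphi(U_j)$; then $U_j \cup \{x'\}$ is again a superline with unchanged filling closure, still non-touching the others, and $\varphi(P) = \varphi(P')$ has the same fill decomposition, so the claim holds. If $x' = x \notin \varphi(P')$ and $x$ does not touch $\varphi(P')$, then $\{x\}$ is a fresh isolated superline $x + L_1$, and $U \sqcup \{x\}$ is the desired union, matching the fill decomposition $\varphi(P) = \varphi(P') \sqcup \{x\}$. The main case is when $x \notin \varphi(P')$ but $x$ touches $\varphi(P')$: here I would use Lemma~\ref{lem:ExtendSuperline} to absorb $x$ into a superline $U_j$ whose filling closure it touches, producing a strictly larger superline; this enlarged triangle may now touch neighbouring superlines, which I would then fuse using Lemma~\ref{lem:merging}, repeating the merging step until all filling closures are pairwise non-touching.

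To conclude, I would note that by Lemma~\ref{lem:ContainedInFilling} every pattern reached lies in the same filling closure $\varphi(P)$, so the final non-touching union of superlines $V = \bigsqcup_l V_l$ satisfies $\bigsqcup_l \varphi(V_l) = \varphi(P)$ with the $\varphi(V_l)$ pairwise non-touching triangles. By the uniqueness part of Lemma~\ref{lem:fill_shape}, this decomposition coincides, up to reindexing, with $\bigcup_i \vec v_i + T_{k_i}$, and since by definition a superline contains the bottom edge of its filling triangle, the lines of the $V_l$ are exactly the lines $\vec v_i + L_{k_i}$, as required.

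The step I expect to be the main obstacle is the bookkeeping in the merging case. The subtlety is that Lemmas~\ref{lem:ExtendSuperline} and \ref{lem:merging} produce orbit equivalences of \emph{sub}patterns, and one must lift these to the full pattern $U \cup \{x\}$ via Lemmas~\ref{lem:MonotonicityUpToPermutation} and \ref{lem:MonotoneUpToPermutation} while checking that the superlines not participating in the current extension or merge are left undisturbed. This holds precisely because their filling closures stay separated from the active region, so the permutations $\pi$ supplied by monotonicity have support disjoint from them. One must also argue termination, which is clear since each merge strictly decreases the finite number of superlines.
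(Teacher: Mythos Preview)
Your inductive approach is genuinely different from the paper's. The paper does not induct on $|P|$; instead it observes that \emph{any} pattern is trivially a union of superlines (each point is a $T_1$-superline), takes an element of the orbit that can be written as a union of a \emph{minimal} number $r$ of superlines, and derives a contradiction from two of them touching by merging them via Lemma~\ref{lem:merging} and Lemma~\ref{lem:MonotoneUpToPermutation}. Your route is closer in spirit to the inductive proof of Lemma~\ref{lem:fill_shape} and is perfectly reasonable; the paper's minimality trick is shorter because it never has to track what happens to the ``bystander'' superlines across a whole recursive cascade of merges.

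There is, however, a genuine gap in your resolution of the bookkeeping issue. Your claim that ``their filling closures stay separated from the active region'' is not true in general. Concretely, take $U_1=L_2$ at the origin, $U_2=\{(0,3)\}$, $U_3=L_2$ at $(2,1)$; these have pairwise non-touching filling closures. Add $x=(2,0)$: it touches $\varphi(U_1)$, and extending gives $V_1$ with $\varphi(V_1)=T_3$, still disjoint from $U_2,U_3$. But $T_3$ now touches both $\varphi(U_2)$ and $\varphi(U_3)$, and if you merge with $U_2$ first, the active region becomes $\varphi(V_1\cup U_2)=T_4$, which \emph{contains} the point $(2,1)\in U_3$. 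So the lifting permutation can move part of $U_3$, and what remains of $U_3$ outside $T_4$ is no longer a superline, so you cannot directly invoke Lemma~\ref{lem:merging} on it.

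The fix is not hard, but it needs to be stated. One clean way: observe that all the merging triggered by $x$ takes place inside the single triangle $\vec v_1+T_{k_1}$ of the fill decomposition of $P$ that contains $x$; the $U_l$ with $\varphi(U_l)$ in the \emph{other} components of $\varphi(P)$ genuinely are separated from the active region and survive untouched. Inside $\vec v_1+T_{k_1}$, instead of merging superlines, absorb single points: any point of $R$ landing in $\varphi(W)$ is already absorbed (a superline plus a point of its filling closure is still a superline), and since points outside the current $\varphi(W)$ are never moved by the lifts, the remaining outside points are exactly the original $\bigcup_l U_l\setminus\varphi(W)$, whose union with $\varphi(W)$ still fills $\vec v_1+T_{k_1}$; hence some $\varphi(U_l)$ touches $\varphi(W)$ and you can keep extending via Lemma~\ref{lem:ExtendSuperline}. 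This terminates because $\varphi(W)$ strictly grows.
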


\begin{proof}
We start by observing that every pattern $P$ can be thought of as a union of superlines $\bigcup_i \vec{v}_i + T_{k_i}$, for example by taking $k_i = 1$ for all $i$, and having the $\vec v_i$ enumerate $P$.

Let $P' = \bigcup_{i=1}^r Q_i$ be in the orbit of $P$ with $Q_i$ superlines, such that $r$ is minimal. Then the filling closures of distinct $Q_i$ do not touch: suppose $\varphi(Q_j)$ and $\varphi(Q_k)$ do touch for $j \neq k$. Then by Lemma~\ref{lem:merging}, $Q_j \cup Q_k$ contains a superline $Q$ in its solitaire orbit. By Lemma~\ref{lem:MonotoneUpToPermutation}, $\bigcup_{i=1}^r Q_i$ then contains a set $R \supset Q' = Q \cup \bigcup_{i \in [1, r] \setminus \{j, k\}} Q_i$ in it solitaire orbit. Each element of $R$ which is not in $Q'$ can be added in one of the superlines $Q_i$ or $Q$, since the union of their filling closures is $\varphi(P)$. Thus, we have found a pattern in the orbit of $P$ which can be written as a union of strictly less than $r$ superlines.

For the final claim, we observe that the filling closure of $P$ must be precisely the union of the filling closures of the superlines, which determines the lines to be those corresponding to the fill decomposition. \qed
\end{proof}

Next, we show that the non-touching superlines can be put in normal form.

\begin{lemma}
The line transports excess.
\end{lemma}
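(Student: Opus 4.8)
The plan is to apply Lemma~\ref{lem:Ordering} with $P = L_n$, which has no excess ($\rank(L_n) = n$) and filling closure $F = \varphi(L_n) = T_n$. This reduces the statement to producing an enumeration $p_1, \dots, p_N$ (with $N = \binom{n}{2}$) of the excess cells $T_n \setminus L_n$ such that $L_n$ transports $1$-excess inside each $R_k = L_n \cup \{p_1, \dots, p_k\}$; unwinding the definition, what must be shown is that for all $i, j \le k$ the single-excess patterns $L_n \cup \{p_i\}$ and $L_n \cup \{p_j\}$ lie in the same $R_k$-restricted orbit. I would enumerate $T_n \setminus L_n$ by increasing height (second coordinate), breaking ties within a row in a fixed left-to-right order chosen to match the bias of the lift gadget below, so that at every stage $R_k$ is the full line together with every cell of all rows strictly below the current ``frontier'' row, plus a prefix of the frontier row.

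The core task is then to move a single excess point to an arbitrary target cell while keeping every intermediate configuration inside $R_k$. For this I would establish two elementary gadgets, each by a short explicit run of triangle moves with the full line acting as a reservoir (in the spirit of Figures~\ref{fig:merge_line} and \ref{fig:fill_triangle}): a \emph{horizontal slide}, shifting a lone point one cell left or right within row~$1$ in two moves that touch only rows $0$ and $1$; and a \emph{lift} (with its reverse \emph{lower}) that raises a lone point from row $h$ to row $h+1$ once a single auxiliary point has been placed beside it in row $h$. Slides in both directions together with the lift/lower pair, and the rotational symmetry of the triangular lattice (the automorphism $M$, i.e.\ ``up to rotation''), already let the point roam freely through rows $0$ and $1$.

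To reach higher rows I would induct on the height $h$: to transport the lone point to a cell of row $h+1$, first transport it to the appropriate cell of row $h$ by the induction hypothesis, then apply a lift, whose required auxiliary scaffold point in row $h$ is itself produced, and afterwards dismantled back into the line, by a lower-height instance of the same transport, routed around the point already present. Because the chosen ordering guarantees that all rows below the frontier are \emph{entirely} available in $R_k$, every scaffold point built during this recursion occupies a cell of $R_k$, so the whole maneuver is genuinely $R_k$-restricted; chaining such maneuvers connects $L_n \cup \{p_i\}$ to $L_n \cup \{p_j\}$ for all $i, j \le k$. Lemma~\ref{lem:Ordering} then delivers that $L_n$ transports excess.

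The main obstacle I anticipate is precisely this containment-and-scaffolding bookkeeping for the recursive lift: checking that building and removing the auxiliary point never calls for a cell outside $R_k$, that the auxiliary point can always be routed around the point already in place (in particular in the tight boundary cases where a row of the triangle is nearly full), and that the partial frontier row is treated so that no cell to the right of those already enumerated is ever used. Once these finite geometric checks on the two gadget pictures and their recursion are settled, the remainder is the formal invocation of Lemma~\ref{lem:Ordering}.
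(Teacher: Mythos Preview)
Your framework matches the paper exactly: invoke Lemma~\ref{lem:Ordering} with the height-first order on $T_n\setminus L_n$, and reduce to showing that a single excess point can be moved freely within each $R_k$. The gap is in your recursive lift. With $1$-excess you have $n+1$ points total. After parking the excess at $(a,h)$, your lift gadget needs a second point at $(a\pm 1,h)$, and you propose to produce it ``by a lower-height instance of the same transport''. But the transport in your induction hypothesis moves \emph{one} excess point over an intact line $L_n$; once one point already sits at row $h$, placing another there is a $2$-excess problem relative to $L_n$ (equivalently a $1$-excess problem relative to a broken line), which the hypothesis does not cover. None of the natural readings of ``lower-height instance of the same transport'' yields two points in row $h$ simultaneously, so the recursion as written does not close. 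The obstacle you flag (staying inside $R_k$) is real but secondary to this.

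The paper avoids the recursion entirely. Instead of stacking an auxiliary point up to the excess, it deforms $L_n$ itself (zero excess, hence a move inside the line orbit) by the line-to-diagonal process of Figure~\ref{fig:lines_transformation}: the resulting partial staircase already contains one point in each row up to $h$, so a point of the deformed line is adjacent to the excess at row $h$, and a single triangle move drags the excess down one row. Repeating this brings the excess to row $1$ (Figure~\ref{fig:fetch excess}); crucially, every cell used lies strictly below the excess in the height-first order, so the whole manoeuvre is $R_k$-restricted. A horizontal slide along row $1$ (your gadget, Figure~\ref{fig:push excess}) finishes the job. Your ingredients are the right ones; the repair is to reach the excess by reshaping the line rather than by recursively hoisting a second point beside it.
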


\begin{proof}
By
Lemma~\ref{lem:Ordering} it suffices to show that there is a linear order $p_1, p_2, \ldots, p_m$ on $\varphi(L_n) \setminus L_n = T_n \setminus L_n$ such that $L_n$ transports $1$-excess in $R_k = P \cup \{p_1, \ldots, p_k\}$ for all $k$. Note that $m = |T_n \setminus L_n| = n(n-1)/2$. We use the order $(a, b) \leqslant (c, d)$ if $b < d$ or $b = d \wedge a < c$. Note that extending this formula to all of $T_n$, $L_n = \{0,\ldots, n-1\} \times \{0\}$ would form the minimal elements.

Figure~\ref{fig:fetch excess} now explains the procedure for retrieving an element from anywhere in the triangle to a position of the form $(k, 1)$, without involving any positions larger than the retrived element. Figure~\ref{fig:push excess} in turn explains how to move an element from position $(k, 1)$ to position $(0,1)$ (the reverse can be done by inverting this process). This concludes the proof.\qed
\end{proof}

    \begin{figure}[ht]
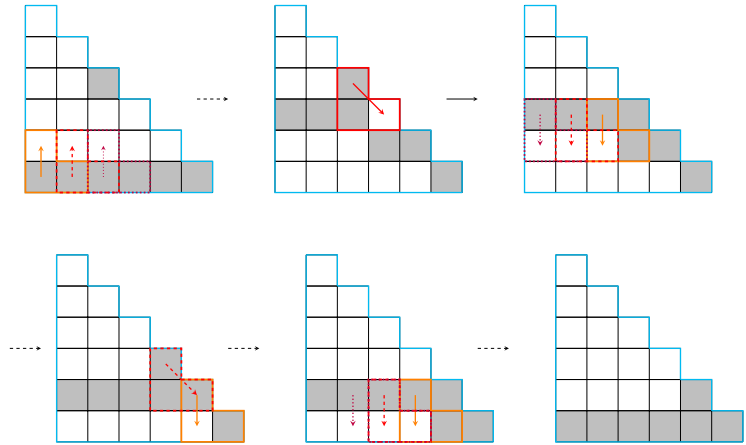

        \centering
        \includestandalone[width=0.8\textwidth]{Figures/Triangle/fig_fetch_excess}
        \caption{Fetching an excess point.}
        \label{fig:fetch excess}
    \end{figure}
    \spacc{}
    
        \begin{figure}[ht]
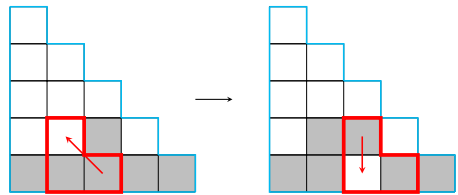

        \centering
        \includestandalone[height=2.5cm]{Figures/Triangle/fig_push_excess}
        \caption{Pushing excess to the left.} 
        \label{fig:push excess}
    \end{figure}
    \spacc{}
    
    Since the line transports excess, we obtain that all superlines with the same filling closure and same cardinality are in the same orbit. The representatives informed by the previous proof are as follows: for $0 \leqslant k \leqslant n(n-1)/2$, we denote by $P_{n, k}$ the shape composed of a line of length $n$ to which $k$ points were added by filling the triangle under the line from top to bottom, and at each height from left to right. Examples are shown in Figure~\ref{fig:Pnk}. 
\begin{figure}[ht]
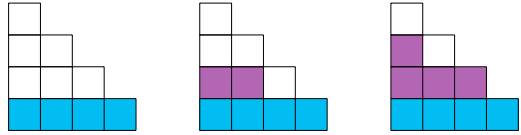

    \centering
    \includestandalone[height = 1.7cm]{Figures/Triangle/fig_Pnk}
    \caption{From left to right: $P_{4, 0}$, $P_{4, 2}$ and $P_{4, 4}$. The purple cells are the excess.}
    \label{fig:Pnk}
\end{figure}
\spacc{}

\begin{theorem}[Characterisation of the orbits]
\label{thm:orbits_charac}
    If $P$ is a finite pattern then there are integers $n_1, \ldots n_r$ and $k_1, \ldots k_r$ and vectors $\vec{v_1}, \ldots \vec{v_r}$ (uniquely determined by $P$) such that $P \rightarrow^* \bigcup_{i = 1}^r (\vec{v_i} + P_{n_i, k_i})$, and the filling closures $\varphi(P_{n_i, k_i} + \vec{v_i}) = T_{n_i}+\vec{v_i}$ do not touch each other. Furthermore, $\sum_{i=1}^r n_i = |P| - e(P)$ and $\sum_{i=1}^r k_i = e(P)$.
\end{theorem}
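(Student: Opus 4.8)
The plan is to assemble the theorem from the sequence of lemmas just established, treating it essentially as a summary statement. First I would recall that Lemma~\ref{lem:fill_shape} gives the fill decomposition $\varphi(P) = \bigcup_{i=1}^r \vec{v_i} + T_{n_i}$ (renaming $k_i$ to $n_i$ in that lemma's statement), with the triangles non-touching and uniquely determined by $P$. This fixes the integers $n_i$ and vectors $\vec{v_i}$, and their uniqueness is already part of that lemma. The remaining data to produce are the excess counts $k_i$, and the reachability claim $P \rightarrow^* \bigcup_i (\vec{v_i} + P_{n_i, k_i})$.

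Next I would invoke the lemma stating that the orbit of $P$ contains a union of non-touching superlines whose associated lines are exactly the $\vec{v_i} + L_{n_i}$. This produces a pattern $P' = \bigcup_{i=1}^r Q_i$ in $\orb(P)$, where each $Q_i$ is a superline over $\vec{v_i} + L_{n_i}$, and the filling closures $\varphi(Q_i) = \vec{v_i} + T_{n_i}$ are pairwise non-touching. Define $k_i = |Q_i| - n_i$ to be the excess sitting on top of the $i$-th line; since the filling closures do not touch, these excess counts are well-defined per component. I would then apply the lemma ``the line transports excess'' componentwise: because the filling closures of the $Q_i$ are disjoint and non-touching, solitaire moves acting within one $\varphi(Q_i)$ do not interfere with the others, so each $Q_i$ can be independently brought into the normal form $\vec{v_i} + P_{n_i, k_i}$ without disturbing the other components. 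This gives $P \rightarrow^* P' \rightarrow^* \bigcup_{i=1}^r (\vec{v_i} + P_{n_i, k_i})$.

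Finally I would verify the two numerical identities. Since the normal form has the same cardinality as $P$ (solitaire preserves cardinality) and each component $\vec{v_i} + P_{n_i,k_i}$ has $n_i + k_i$ points, we get $|P| = \sum_i (n_i + k_i)$. By the lemma identifying triangle excess with excess, $e(P) = |P| - \sum_i n_i$, so $\sum_{i=1}^r n_i = |P| - e(P)$ and consequently $\sum_{i=1}^r k_i = |P| - \sum_i n_i = e(P)$, as claimed.

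The step I expect to require the most care is the componentwise independence used in the second paragraph: I must argue that transporting excess within each $\varphi(Q_i)$ truly decouples across components. This rests on the non-touching condition together with the restricted-orbit formalism of Section~\ref{sec:TransportingExcess} — a solitaire move at $g$ only acts when $gS \subset \varphi(P)$ lies inside a single $\varphi(Q_i)$ (since a translate of the triangle cannot straddle two non-touching triangles), so the transport argument for each component is confined to its own filling closure and the applications commute. Making this disjointness precise, rather than the bookkeeping of the two summation identities, is the genuine content to nail down.
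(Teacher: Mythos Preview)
Your proposal is correct and follows essentially the same route as the paper: use the superlines lemma to reach a non-touching union, apply excess transport componentwise to obtain the normal forms $P_{n_i,k_i}$, and derive the numerical identities from cardinality preservation together with the equality of excess and triangle excess. You are actually more explicit than the paper about justifying the componentwise decoupling; the only minor point the paper adds is observing directly that $k_i = |P \cap (\vec{v_i}+T_{n_i})| - n_i$ is determined by $P$ itself (not just by the intermediate $P'$), which gives uniqueness of the $k_i$ immediately.
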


\begin{proof}
The claim $P \rightarrow^* \bigcup_{i = 1}^r (\vec{v_i} + P_{n_i, k_i})$ follows by first observing that the orbit contains a union of superlines whose filling closures do not touch, and then using the fact the line transports excess to put each superline in normal form $P_{n_i, k_i}$. For uniqueness, observe that (up to permutation) the vectors $\vec v_i$ and values $n_i$ are determined by the fill decomposition, and $k_i$ by the cardinalities of the sets $P \cap \vec{v_i} + T_{k_i}$.

For the last claim, we recall that $e(P)$ is equal to the triangle excess defined as $e_T(P) = |P| - \sum_{i=1}^r n_i$, thus $\sum_{i=1}^r n_i = |P| - e(P)$. Since solitaire preserves cardinality,
\[ |P| = |\bigcup_{i = 1}^r (\vec{v_i} + P_{n_i, k_i})| = \sum_i (n_i + k_i) \]
so $e(P) = \sum_i k_i$. \qed
\end{proof}

\begin{corollary}
\label{cor:line_orbit}
Let $P$ be a pattern with fill decomposition $ \varphi(P) = \bigcup_{i = 1}^r \vec{v_i} + T_{k_i}$.
Then $P$ has no excess if and only if $P$ is in the solitaire orbit of $\bigcup_i \vec v_i + L_i$.
\end{corollary}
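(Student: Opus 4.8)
The plan is to derive the corollary almost immediately from the characterisation theorem (Theorem~\ref{thm:orbits_charac}) together with the fact that excess is constant along solitaire orbits. Before anything else I would fix notation to avoid the symbol clash between the two statements: applying Theorem~\ref{thm:orbits_charac} to $P$ produces triangle sizes $m_1,\ldots,m_s$, per-component excesses $j_1,\ldots,j_s$, and vectors $\vec{w_1},\ldots,\vec{w_s}$ with $P \rightarrow^* \bigcup_i (\vec{w_i} + P_{m_i, j_i})$, non-touching closures $\vec{w_i} + T_{m_i}$, and $\sum_i j_i = e(P)$. Since solitaire preserves the filling closure (Lemma~\ref{lem:ContainedInFilling}) and the fill decomposition is unique (Lemma~\ref{lem:fill_shape}), I would identify these triangles with the triangles $\vec{v_i} + T_{k_i}$ of the corollary, so that after reindexing $s = r$, $\vec{w_i} = \vec{v_i}$ and $m_i = k_i$. (Here I read the line union in the statement as $\bigcup_i \vec{v_i} + L_{k_i}$, matching the sizes $k_i$ appearing in the fill decomposition.)

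For the forward direction (no excess $\Rightarrow$ orbit membership), I would use $\sum_i j_i = e(P) = 0$ together with nonnegativity of each $j_i$ to conclude $j_i = 0$ for all $i$. Since $P_{k_i,0}$ is by definition just the line $L_{k_i}$, the conclusion of Theorem~\ref{thm:orbits_charac} then reads $P \rightarrow^* \bigcup_i (\vec{v_i} + L_{k_i})$, which is exactly the desired orbit membership.

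For the reverse direction, I would first record that excess is an orbit invariant: a solitaire move preserves both cardinality (by definition) and filling closure (Lemma~\ref{lem:ContainedInFilling}), hence it preserves $\rank(\varphi(\cdot))$ and therefore $e(\cdot) = |\cdot| - \rank(\varphi(\cdot))$. Then I would check that the line union $L = \bigcup_i \vec{v_i} + L_{k_i}$ itself has excess $0$: it has cardinality $\sum_i k_i$, its filling closure equals $\varphi(P) = \bigcup_i \vec{v_i} + T_{k_i}$, and $\rank$ of this closure is $\sum_i k_i$ (the set $L$ witnesses $\leqslant$, while Lemma~\ref{lem:fill_shape} gives $\geqslant$). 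Since $P \in \orb(L)$ by hypothesis, invariance of excess yields $e(P) = e(L) = 0$.

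I do not expect a genuine obstacle here; the only real care is the bookkeeping in the first paragraph — matching the components of the two decompositions via uniqueness of the fill decomposition — and making the (easy but essential) invariance-of-excess observation explicit, since that is what drives the reverse implication.
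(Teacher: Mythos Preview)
Your proposal is correct and is exactly the intended derivation: the paper states this as an immediate corollary of Theorem~\ref{thm:orbits_charac} without a separate proof, and your argument (forward via $\sum_i j_i = e(P) = 0$ forcing each $j_i = 0$ so that $P_{m_i,0}$ is a line; backward via invariance of excess along orbits) is precisely what is implicit. Your reading of $L_i$ as $L_{k_i}$ and your renaming to avoid the clash with the $k_i$ of Theorem~\ref{thm:orbits_charac} are both correct and necessary bookkeeping.
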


\begin{corollary}
\label{cor:excess_orbit}
    If $P$ is a pattern, then $P \in \orb(P_{n, k})$ if and only if $\varphi(P) = T_n$ and $e(P) = k$.
\end{corollary}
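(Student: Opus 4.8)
The plan is to prove the two implications separately. The forward direction will follow purely from invariance properties already in hand, and the backward direction is essentially a specialization of the orbit characterization, Theorem~\ref{thm:orbits_charac}.

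For the forward implication, suppose $P \in \orb(P_{n,k})$. I would first invoke Lemma~\ref{lem:ContainedInFilling}, which gives $\varphi(P) = \varphi(P_{n,k})$; since $\varphi(P_{n,k}) = T_n$ by construction, this yields $\varphi(P) = T_n$. For the excess I would argue that excess is a solitaire invariant: we have $e(Q) = |Q| - \rank(Q)$, where $\rank(Q)$ depends only on the filling closure $\varphi(Q)$, and a single solitaire move preserves both cardinality (it relocates one point) and filling closure (again Lemma~\ref{lem:ContainedInFilling}); hence $e(P) = e(P_{n,k})$. It then remains to compute $e(P_{n,k}) = k$. Here $P_{n,k}$ has $n + k$ elements and its filling closure is the single triangle $T_n$, which has rank $n$: the line $L_n$ realizes $T_n$ with $n$ points, and by the fill-decomposition bound of Lemma~\ref{lem:fill_shape} no smaller set has filling $T_n$ (this is the same computation that identifies triangle excess with excess). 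Thus $e(P_{n,k}) = (n+k) - n = k$, and $e(P) = k$.

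For the backward implication, assume $\varphi(P) = T_n$ and $e(P) = k$. The key observation is that the hypothesis $\varphi(P) = T_n$ pins down the fill decomposition of $P$ entirely: it is already a single triangle, so in the notation of Lemma~\ref{lem:fill_shape} we have $r = 1$, $\vec v_1 = (0,0)$ and $n_1 = n$. Applying Theorem~\ref{thm:orbits_charac} to $P$ with this (unique) decomposition gives $P \rightarrow^* \vec v_1 + P_{n_1, k_1} = P_{n, k_1}$, together with the accounting $\sum_{i=1}^r k_i = e(P)$. Since the sum has a single term, $k_1 = e(P) = k$, so $P \rightarrow^* P_{n,k}$ and therefore $P \in \orb(P_{n,k})$.

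I expect no genuine obstacle here, as the corollary is really a repackaging of Theorem~\ref{thm:orbits_charac} in the special case of a one-triangle filling closure. The only points requiring a little care are confirming that excess is preserved along solitaire orbits (which reduces to the invariance of $|P|$ and of $\varphi(P)$) and the explicit evaluation $e(P_{n,k}) = k$; both are immediate from Lemma~\ref{lem:ContainedInFilling}, Lemma~\ref{lem:fill_shape}, and the already-established equality of triangle excess with excess.
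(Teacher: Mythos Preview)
Your proposal is correct and matches the paper's intent: the corollary is stated without a separate proof, as an immediate consequence of Theorem~\ref{thm:orbits_charac}, and your argument is exactly the natural unpacking of that theorem in the single-triangle case. The only minor remark is that for the forward direction you could alternatively invoke the uniqueness clause of Theorem~\ref{thm:orbits_charac} directly (the normal form of anything in $\orb(P_{n,k})$ must be $P_{n,k}$ itself), but your route via invariance of $|P|$ and $\varphi(P)$ is equally valid and arguably more self-contained.
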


Note that we now know the orbits for solitaire processes on the plane $\Z^2$ with all shapes of size 3. Indeed, such a shape is either linear (and thus easy to analyse), 
or it is a triangle shape on a finite index subgroup of $\mathbb{Z}^2$, and the orbits in different cosets of this subgroup are completely independent (Remark~\ref{rem:fg}), and are individually described by the triangle solitaire.

\subsection{Size of the line orbit}
\label{sec:size line orbit}

In what follows, we build a subset and a superset of the line orbit and compute their size so as to get bounds on the size of the line orbit.

Lets first build a subset of $L_n$. One can build an element of the orbit of the line by choosing first one of the three corners and then choosing a point on each line parallel to the edge opposed to the corner as illustrated in the left part of Figure~\ref{fig:lower bound}. Such pattern is called a horizontal, vertical or diagonal \emph{stack} depending on if the chosen corner is the top one, right one or bottom left one. One can easily check by induction on $n$ that thos patterns are in the line orbit.

\begin{figure}[ht]
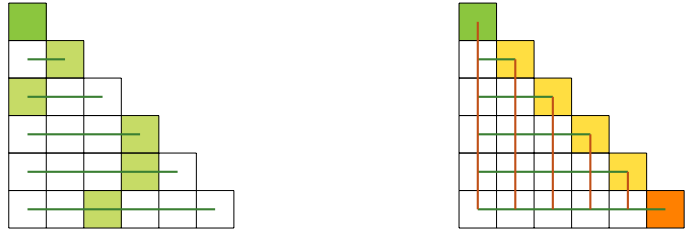

    \centering
    \includestandalone[height = 3cm]{Figures/Triangle/3nf}
    \caption{Left: An example of vertical stack. Right: A pattern that is two kind of stacks is a line.}
    \label{fig:lower bound}
\end{figure}

Assume that a pattern $P$ is for instance both a vertical and a horizontal stack, then $P$ has exactly one point in each line and in each column, so it is the diagonal line (see the right side of Figure~\ref{fig:lower bound}). Therefore, when counting all three kinds of stacks, only $3$ patterns are counted twice. Since there are $n!$ stacks of one kind, there are exactly $3n!-3$ stack patterns. Using Stirling's approximation, this gives the following lower bound on the size of the line orbit:

\begin{lemma}
    For all $n \geqslant 1$, $|\mathcal{O}(L_n)| \geqslant 3n!-3 \geqslant c_1e^{-n}n^{n + \frac12}$ for a certain constant $c_1 >0$.
\end{lemma}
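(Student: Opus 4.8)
The plan is to exhibit $3n!-3$ distinct patterns lying in $\orb(L_n)$ and then estimate this quantity with Stirling's formula; the first (combinatorial) inequality is where the real content lies, and the second is a routine asymptotic estimate. For the combinatorial part I would work with the three families of \emph{stacks}. Fixing the top corner and choosing one cell on each of the $n$ horizontal lines of $T_n$ (which have lengths $n, n-1, \dots, 1$) produces exactly $n!$ \emph{horizontal} stacks, and by the rotational symmetry recorded in Proposition~\ref{prop:lines} the vertical and diagonal families also have $n!$ members each. To see that every stack belongs to $\orb(L_n)$, I would argue that a stack $P$ has exactly $n$ cells and satisfies $\varphi(P)=T_n$; since $\rank(T_n)=n$ (witnessed by $L_n$), this forces $e(P)=|P|-\rank(\varphi(P))=0$, and Corollary~\ref{cor:line_orbit} then places $P$ in the orbit of $L_n$. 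The one fact needing verification here is $\varphi(P)=T_n$, which follows by a short filling induction (or directly from the induction indicated in the text, unfolding a stack into a line via the edge-extension move of Lemma~\ref{lem:ExtendSuperline}).

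Next I would count the union of the three families by inclusion--exclusion, the key point being that each pairwise intersection is a single pattern. A pattern that is simultaneously a horizontal and a vertical stack has exactly one cell in every row and every column of $T_n$, so it corresponds to a permutation $\sigma$ of $\{0,\dots,n-1\}$ with $\sigma(j)+j\leqslant n-1$ for all $j$. Summing this over $j$ gives $2\sum_j j \leqslant n(n-1)$ with equality, which forces $\sigma(j)=n-1-j$ for every $j$; hence the intersection is exactly the diagonal edge. The remaining two pairwise intersections are the other two edges by the same computation up to rotation, while no single pattern is all three edges for $n\geqslant 2$, so the triple intersection is empty. Inclusion--exclusion therefore yields $3\cdot n! - 3$ distinct patterns, all inside $\orb(L_n)$, giving $|\orb(L_n)|\geqslant 3n!-3$ (the case $n=1$ reducing to $1 \geqslant 0$).

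For the second inequality I would invoke the standard lower bound $n!\geqslant\sqrt{2\pi n}\,(n/e)^n=\sqrt{2\pi}\,e^{-n}n^{n+\frac12}$. For $n\geqslant 2$ one has $3\leqslant\tfrac32 n!$, so $3n!-3\geqslant\tfrac32 n!\geqslant\tfrac{3\sqrt{2\pi}}{2}\,e^{-n}n^{n+\frac12}$, establishing the bound with $c_1=\tfrac{3\sqrt{2\pi}}{2}$ (the inequality being an asymptotic statement; the degenerate value $n=1$ carries no content). I expect the only delicate step to be the bookkeeping in the previous paragraph: one must check that the three families overlap in \emph{exactly} the three edges and nowhere else, so that the count is the exact value $3n!-3$ and not a mere overestimate. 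The identification of each stack with an excess-free pattern (hence a line-orbit element via Corollary~\ref{cor:line_orbit}) and the Stirling estimate are both routine.
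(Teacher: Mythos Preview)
Your proposal is correct and follows essentially the same approach as the paper: exhibit the three $n!$-element families of stacks, show that any pattern lying in two of the families must be the ``third'' edge (so exactly three patterns are double-counted and none triple-counted), and finish with Stirling. The only stylistic difference is that you invoke Corollary~\ref{cor:line_orbit} (zero excess) to place stacks in $\orb(L_n)$, whereas the paper appeals to a direct induction on $n$; both are valid and yours arguably reuses the existing machinery more cleanly.
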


Now let us build a superset of $L_n$. Let $A_n$ be the set of patterns $P$ with size $n$ contained in the triangle $T_n$ such that for each $1 \leqslant j \leqslant n$, the number of points of $P$ in the $j$ right-most columns of $T_n$ is at most $j$. $L_n \subset A_n$ since if a pattern $P$ is not in $A_n$, then there is a $j$ such that the subpattern of $P$ composed of the $j$ right-most columns of $T_n$ has excess, so so does $P$ (see Figure~\ref{fig:upper bound}). 

\begin{figure}[ht]
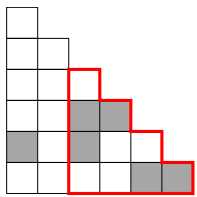

    \centering
    \includestandalone[height = 2.5cm]{Figures/Triangle/upper_bound}
    \caption{A pattern with $5$ points in the $4$ rightmost columns. The subpattern delimited by the red line has excess so so does the whole pattern.}
    \label{fig:upper bound}
\end{figure}

The sets $A_n$ have been studied by \cite{OEISupperbound}, who proved that their size is
\[ c\left(\frac{e}2\right)^n(n-1)^{n-\frac52} \]
with $c = \frac{4 + 2W(-2e^{-2})}{e^3\sqrt{2\pi}}$ where $W$ is the inverse function of $f : z \mapsto ze^z$ (Lambert's W function). Since the line orbit is a subset of $A_n$, this equivalent translates to an upper bound on its size up to increasing the constant:

\begin{lemma}
    For all $n \geqslant 1$, $|\mathcal{O}(L_n)| \leqslant c_2 \left(\frac{e}2\right)^n(n-1)^{n-\frac52}$ for a certain constant $c_2 >0$.
\end{lemma}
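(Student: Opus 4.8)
The plan is to read off the bound directly from the two facts established in the paragraph immediately preceding the lemma, namely the inclusion $\mathcal{O}(L_n) \subseteq A_n$ and the closed-form enumeration of $A_n$, and then to do nothing more than convert an asymptotic equivalence into a uniform inequality. Since $\mathcal{O}(L_n) \subseteq A_n$, we have $|\mathcal{O}(L_n)| \leqslant |A_n|$, so it suffices to bound $|A_n|$. For that I would feed in the enumeration of \cite{OEISupperbound}, which gives $|A_n| = c\,(e/2)^n(n-1)^{n-\frac52}(1+o(1))$ with the explicit positive constant $c$ recalled above.

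The only remaining step is to absorb the $(1+o(1))$ factor into the constant. Set $f(n) = (e/2)^n (n-1)^{n-\frac52}$, which is finite and positive for every $n \geqslant 2$. The cited equivalence says that $|A_n|/f(n) \to c$, so the sequence $(|A_n|/f(n))_{n \geqslant 2}$ is convergent and therefore bounded, say by a constant $C$; hence $|A_n| \leqslant C f(n)$ for all $n \geqslant 2$. The single degenerate index is $n=1$, where $f(1) = 0^{-3/2} = +\infty$ makes the asserted inequality hold vacuously (and in any event $|\mathcal{O}(L_1)| = 1$). Taking $c_2 = C$ then yields $|\mathcal{O}(L_n)| \leqslant |A_n| \leqslant c_2 f(n)$ for every $n \geqslant 1$, which is precisely the claim.

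At this level the argument has no real obstacle: both substantive ingredients are assumed, and the asymptotic-to-uniform passage is routine. The genuine difficulty sits upstream, in the inclusion $\mathcal{O}(L_n) \subseteq A_n$, and I would flag that it is subtler than a one-line monotonicity argument suggests. Because the triangle-filling closure is \emph{not} a matroid closure, a subpattern of more than $j$ points sitting inside a rank-$j$ copy of $T_j$ need \emph{not} have positive excess — for instance a large family of pairwise non-touching points is filling-closed and hence independent — so the inclusion cannot follow from $\varphi(P')\subseteq R_j$ plus Lemma~\ref{lem:ExcessMonotone} alone. To make it rigorous one must additionally exploit that an element of $\mathcal{O}(L_n)$ fills all of $T_n$ using only $n$ points; I would encode this global constraint as a Hall-/staircase-type condition read off the fill decomposition column by column, which is what forces at most $j$ points into the $j$ rightmost columns.
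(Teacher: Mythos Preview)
Your proof is correct and is exactly the paper's argument: use the inclusion $\mathcal{O}(L_n)\subset A_n$ established just before the lemma together with the asymptotic enumeration of $A_n$ from \cite{OEISupperbound}, and absorb the $1+o(1)$ into the constant. Your closing remark that the paper's one-line excess justification of $\mathcal{O}(L_n)\subset A_n$ is incomplete is well taken (indeed a set of more than $j$ pairwise non-touching points inside $T_j$ has zero excess), but that issue lies upstream of the present lemma and can be repaired, e.g.\ via the independence/counting argument from Section~\ref{sec:Permutive}.
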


Combining these, we get the following bounds on the size of the line orbit.

\begin{theorem}
\label{thm:orbit size}
    There are constants $c_1, c_2 >0$ such that $c_1e^{-n}n^{n + \frac12} \leqslant |\mathcal{O}(L_n)| \leqslant c_2 \left(\frac{e}2\right)^n(n-1)^{n-\frac52}$.
\end{theorem}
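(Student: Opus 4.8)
The plan is to read the theorem off directly from the two lemmas immediately preceding it, which already supply the two halves of the claimed inequality, and which I am entitled to assume. The lower bound is exactly the content of the stack-counting lemma: the horizontal, vertical, and diagonal stacks all lie in $\mathcal{O}(L_n)$, there are precisely $3n! - 3$ of them (only the three degenerate lines being double-counted), and Stirling's approximation converts $3n! - 3$ into a quantity bounded below by $c_1 e^{-n} n^{n + \frac12}$. The upper bound is the content of the $A_n$-superset lemma: since every element of the line orbit has no excess, it satisfies the column-count constraint defining $A_n$, so $\mathcal{O}(L_n) \subset A_n$, and the asymptotic enumeration of $A_n$ (via the Lambert $W$ function) yields $|A_n| \leqslant c_2 \left(\frac{e}{2}\right)^n (n-1)^{n - \frac52}$.

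Thus the only real step is to concatenate the two chains of inequalities, writing
\[ c_1 e^{-n} n^{n + \frac12} \leqslant 3n! - 3 \leqslant |\mathcal{O}(L_n)| \leqslant |A_n| \leqslant c_2 \left(\frac{e}{2}\right)^n (n-1)^{n - \frac52}, \]
from which the stated two-sided bound is immediate.

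The one point deserving a word of care — and the closest thing to an obstacle here — is uniformity in $n$. The enumeration results are naturally asymptotic (statements of the form $\sim$, valid only for large $n$), whereas the theorem asserts the inequalities for \emph{all} $n \geqslant 1$. This is harmless: there are only finitely many small values of $n$ to check, and since $|\mathcal{O}(L_n)| \geqslant 1$ and all the comparison functions are positive on this range, one may shrink $c_1$ and enlarge $c_2$ so as to absorb these finitely many cases while preserving the bounds for every $n \geqslant 1$. With that adjustment the theorem follows at once, and all the genuine work resides in the two preceding lemmas rather than in their combination.
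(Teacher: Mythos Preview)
Your proposal is correct and matches the paper's approach exactly: the paper simply states that the theorem follows by combining the two preceding lemmas, with the upper bound requiring an increase of the constant to pass from the asymptotic for $|A_n|$ to a bound valid for all $n$. Your remark about adjusting $c_1$ and $c_2$ to absorb finitely many small cases makes explicit the one step the paper leaves implicit.
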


\subsection{Excess sets for the triangle}
\label{sec:TriangleExcess}

We show that excess sets do not behave as intuitively as one might hope, even for our simple triangle example. Namely, it is tempting to think that if a set has excess, then we can remove some of its points to remove the excess, i.e.\ that the size of a maximal excess set in $P$ always matches the excess $e(P)$. This is not true.




\begin{theorem}
For the triangle solitaire,
\begin{enumerate}[label={(\roman*)}]
\item there exist patterns with arbitrarily large excess, which contain no excess sets (in particular, phantom excess is unbounded),
\item visible excess (equivalently phantom excess) can vary by an unbounded amount within one solitaire orbit,
\item there exist patterns where there are maximal excess set of different cardinality.
\end{enumerate}
\end{theorem}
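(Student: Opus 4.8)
\emph{Proof strategy.} All three statements reduce to one structural fact: the filling closure $\varphi$ is \emph{not} a matroid closure, and consequently some triangle $T_n$ admits a minimal filling set strictly larger than its rank. I would first isolate this ``inefficient gadget'', and then derive (i)--(iii) from it, using the orbit characterisation (Theorem~\ref{thm:orbits_charac}) together with the additivity of excess and visible excess over non-touching components.

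\emph{Step 1: $\varphi$ fails the exchange axiom.} A closure operator is a matroid closure iff it satisfies the Steinitz--MacLane exchange property: $p \in \varphi(A \cup \{q\}) \setminus \varphi(A)$ implies $q \in \varphi(A \cup \{p\})$. I would exhibit a direct counterexample with $A = \{(0,0),(2,0)\}$, $q = (1,0)$, $p = (0,2)$. Here $A \cup \{q\} = L_3$, so $\varphi(A \cup \{q\}) = T_3 \ni p$, while $p \notin \varphi(A) = \{(0,0),(2,0)\}$; on the other hand $\varphi(A \cup \{p\}) = \{(0,0),(2,0),(0,2)\}$ (no two of these three points lie in a common translate of $T$), which does not contain $q$. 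Hence exchange fails and $\varphi$ is not matroidal.

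\emph{Step 2: an inefficient minimal filling set.} By the standard equivalence between the exchange property and the statement ``all maximal $\varphi$-independent subsets of every set are equicardinal'', failure of exchange produces a closed set whose minimal generating sets have different cardinalities. Since every closed set is a non-touching union of triangles (Lemma~\ref{lem:fill_shape}) and filling acts locally, a minimal generating set splits as a union of minimal generating sets of the individual triangles; comparing total cardinalities then forces some triangle $T_n$ to possess a minimal filling set $M$ with $|M| > n = \rank(T_n)$ (the line $L_n$ realising the rank). Being minimal, $M$ has no nonempty excess set, so $\hat E(M) = 0$ while $e(M) = |M| - n \geqslant 1$; note $M \subseteq \varphi(M) = T_n$.

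\emph{Step 3: deriving the three claims.} For (i) and (ii) I would use that excess and visible excess are additive over non-touching components (immediate from Lemma~\ref{lem:fill_shape} and locality of filling). Taking $k$ far-apart translated copies $P_k = \bigsqcup_i (\vec w_i + M)$ gives $e(P_k) = k\,e(M)$ and $\hat E(P_k) = 0$, so the phantom excess $k\,e(M)$ is unbounded, proving (i). For (ii), Theorem~\ref{thm:orbits_charac} places $P_k$ in the same orbit as its normal form $N_k = \bigsqcup_i (\vec w_i + P_{n, e(M)})$, and each $P_{n,e(M)}$ contains the line $L_n$ as a filling subset, so its added points form an excess set and $\hat E(N_k) = k\,e(M)$; thus within the single orbit $\orb(P_k)$ the visible excess takes the values $0$ and $k\,e(M)$, an unbounded variation. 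For (iii), set $P = M \cup L_n \subseteq T_n$, so $\varphi(P) = T_n$. Then $Q_1 = P \setminus L_n$ and $Q_2 = P \setminus M$ are excess sets whose complements $L_n$ and $M$ are minimal filling sets; hence no further point can be removed, i.e.\ both $Q_i$ are \emph{maximal} excess sets, and their cardinalities differ by $|M| - n = e(M) \geqslant 1$.

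\emph{Main obstacle.} The only non-formal ingredient is Step 2: converting the concrete exchange failure into an explicit triangle carrying an oversized minimal filling set. The matroid equivalence guarantees existence, but identifying the smallest such $T_n$ and an explicit $M$ — for instance by embedding the three-corner obstruction $\{(0,0),(2,0),(0,2)\}$ of Step 1 so that a filling-redundant point becomes combinatorially unavoidable — requires a careful run of the filling process, and this is where the genuine work lies; the passage from $M$ to (i)--(iii) is bookkeeping with the already-established fill-decomposition and orbit machinery.
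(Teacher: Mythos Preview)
Your Step~2 contains a genuine gap: the implication ``failure of exchange $\implies$ some closed set has minimal generating sets of different cardinalities'' is false for closure operators in general. A counterexample on $\{a,b,c\}$: take closed sets $\emptyset,\{a\},\{b\},\{c\},\{a,b\},\{a,b,c\}$. Exchange fails (with $A=\{a\}$, $q=c$, $p=b$ one has $b\in\varphi(\{a,c\})\setminus\varphi(\{a\})$ but $c\notin\varphi(\{a,b\})=\{a,b\}$), yet every closed set has all minimal generating sets of the same size (for $\{a,b,c\}$ they are $\{a,c\}$ and $\{b,c\}$, both of size $2$). The standard matroid equivalence gives a set $X$ with maximal independent subsets of different sizes, but these need not be generating sets of $\varphi(X)$; in your Step~1 example the candidate $\{(0,0),(2,0),(0,2)\}$ is indeed maximal independent in $T_3$, but of the \emph{same} size $3$ as $L_3$, and its own closure is just itself. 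So the existence of an ``inefficient gadget'' $M\subset T_n$ with $\varphi(M)=T_n$, $|M|>n$, and no removable point is \emph{not} guaranteed by abstract matroid reasoning --- you really have to build one.

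This is precisely what the paper does: it exhibits an explicit $8$-point pattern filling $T_7$ whose structure (four pairs merging to two $T_2$'s, then to two $T_4$'s, then to $T_7$) makes every point indispensable, and then iterates the construction inside a single larger triangle to get unbounded phantom excess. Your Step~3, by contrast, is clean and would work once $M$ is in hand: the disjoint-copies argument for (i) is simpler than the paper's inductive nesting (the theorem does not require the pattern to fill a single triangle), and your construction $P=M\cup L_n$ for (iii) is a nice uniform recipe compared to the paper's ad~hoc picture. But the whole edifice rests on producing $M$ concretely, and that cannot be outsourced to matroid theory.
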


\begin{proof}
For the first claim, consider the pattern $P$ in Figure~\ref{fig:no excess point}. It is clear that its filling closure is the triangle, so $e(P) = 1$. However, this pattern has no nonempty excess sets. To see this, observe that the process of joining triangles always merges two existing triangles to a larger one, namely two $T_1$s to $T_2$, then two $T_2$s to $T_4$, and then two $T_4$s to $T_7$. If any element is removed, then one of these pairs cannot be joined.

\begin{figure}[ht]
    \centering
    \includestandalone[height=2.5cm]{Figures/Triangle/excess_no_excess_point}
    \caption{Here, $e(P) = 1$ but $E(P) = \{ \varnothing \}$ so $\hat E(P) = 0$.}
    \label{fig:no excess point}
\end{figure}

We see that in fact when any element is removed, the entire bottom left corner of the triangle is left empty. This allows us to continue the construction inductively: We can add another triangle below whose \emph{top} does not become filled if any element is removed. Then together these triangles will fill a large triangle to the bottom right from $P$, but if any element is removed, then this triangle will stay empty in the filling closure. We obtain more and more excess at each step, but still have no nonempty excess sets (see Figure~\ref{fig:still no excess point}).

\begin{figure}[ht]
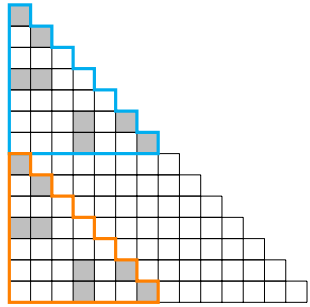

    \centering
    \vspace*{-0.3cm}
    \includestandalone[height=4cm]{Figures/Triangle/more_excess}
    \caption{The pattern obtained after one induction step. The patterns highlighted are the two smaller patterns combined. $e(P) = 2$ but we still have $E(P) = \{ \varnothing \}$.}
    \label{fig:still no excess point}
\end{figure}

For the second claim, observe that the pattern we constructed (with arbitrarily large phantom excess) has in its orbit a pattern containing the corresponding line, and by Lemma~\ref{lem:ContainsNoPhantom} any such pattern has excess equal to its visible excess, since the line itself has no excess.

For the third claim, consider Figure~\ref{fig:max excess set}.
\begin{figure}[ht]
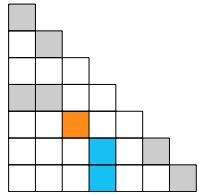

    \centering
    \includestandalone[height=2.5cm]{Figures/Triangle/max_excess_set}
    \caption{The blue and orange sets are both maximal excess sets but do not have the same cardinality.}
    \label{fig:max excess set}
\end{figure}
\qed
\end{proof}


\section{A brief look at other shapes on the plane}
\label{sec:Plane}

\subsection{The square shape}
\label{sec:Square}

In this section we study the solitaire action induced by the $2 \times 2$ square shape $\{0,1\}^2$. We show that the theory can be developed entirely analogously as for the triangle. 

The action of the square rotates a square with a single empty cell as depicted in Figure \ref{fig:square action}.
\begin{figure}[ht]
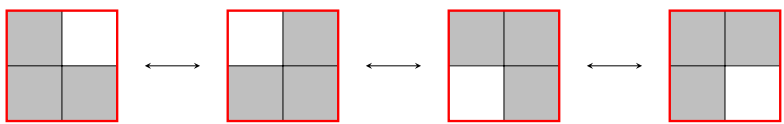

    \centering
    \includestandalone[height = 1.5cm]{Figures/Square/square_action}
    \caption{The action of the square shape.}
    \label{fig:square action}
\end{figure}

The filling process for the square shape of course completes squares which are missing a single point. By analogy with the triangle shape, we expect the orbit of a finite pattern to be contained in non touching rectangles which can be computed by filling the initial pattern, and those orbits to be recognisable with only the shape of the filling and by counting the amount of excess elements in each rectangle.

With respect to the square, a good notion of \emph{neighbors} of a point are the eight points surrounding it, and the neighborhood of a pattern is the set of points which have two adjacent points of the pattern as neighbors. (Note that this is precisely the condition for being able to apply a square solitaire move.)

Examples are given in Figure \ref{fig:square nbh}.
\begin{figure}[ht]
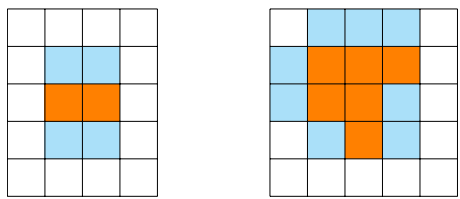

    \centering
    \includestandalone[height = 2.5cm]{Figures/Square/square_neighbors}
    \caption{The neighbors of the orange pattern are the blue cells.}
    \label{fig:square nbh}
\end{figure}
The notion of touching is then again defined as $A$ touches $B$ if $A \cap (B\cup N(B)) \neq \varnothing$.

\subsubsection{Rectangles and crosses.}

As already indicated, the natural analog of the triangle $T_n$ for the square solitaire is a \emph{rectangle} $R_{m,n} = \{0, \ldots, m-1\} \times \{0, \ldots, n-1\}$. A natural analog of a line $L_n$ for square solitaire is in turn a horizontal line intersecting a vertical line. We call such a pattern a \emph{cross}. Notice that the square-shape filling of a cross is a rectangle of width the length of the horizontal line and height the length of the vertical line. 

\begin{lemma}
\label{lem:movecross}
    All crosses that fill the same rectangle are in the same orbit.
\end{lemma}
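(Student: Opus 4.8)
The plan is to mimic the proof of Proposition~\ref{prop:lines}: rather than describing the orbit abstractly, I will exhibit explicit solitaire moves that carry one cross to another, and then invoke the dihedral symmetry of the square to cover all cases. Throughout, I parametrise a cross filling $R_{m,n}$ by the coordinates $(i,j)$ of its crossing point, so that it consists of the full row $\{0,\dots,m-1\}\times\{j\}$ together with the full column $\{i\}\times\{0,\dots,n-1\}$; these $mn$ crosses are exactly the ones to be connected.

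First I would set up the reduction. The $2\times 2$ square is invariant, up to translation, under the order-$4$ rotation $(x,y)\mapsto(-y,x)$ and under reflections, and these lattice automorphisms conjugate the square solitaire to itself, sending crosses to crosses and $R_{m,n}$ to a translate of $R_{n,m}$; hence they preserve solitaire-equivalence. Consequently it suffices to produce, for a single cross, a sequence of moves that shifts its vertical line one column to the right while leaving the horizontal line in place, i.e.\ that sends the crossing $(i,j)$ to $(i+1,j)$. The inverse sequence shifts it left, the rotated and reflected versions shift the horizontal line up and down, and iterating all of these reaches every crossing $(i',j')$ from $(i,j)$.

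The heart of the argument, and the step I expect to be the main obstacle, is this single ``shove''. A full line cannot be translated sideways by moving its cells one at a time: as soon as one cell of the column moves into column $i+1$, the cells beyond the resulting gap lose the support needed for a square move and become stranded. The remedy is to route the column cells diagonally into column $i+1$ in a cascade seeded by the horizontal-line point $(i+1,j)$, refilling each gap from its lower neighbour before advancing, so that support is never lost; the cascade runs outward from row $j$ both upward and downward, and is arranged so that the displaced horizontal-line point ends exactly at $(i,j)$, leaving the horizontal line intact and producing precisely the cross at $(i+1,j)$. I would present this cascade as an explicit picture, in the spirit of Figure~\ref{fig:merge_line}, and justify it by induction on the height of the column, the inductive bookkeeping being exactly the claim that each gap is refilled before it can strand the cells above it.

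Finally, once the shove and its symmetric variants are in hand, transitivity of $\rightarrow^*$ shows that every cross filling $R_{m,n}$ can be transported to a fixed canonical cross, say the corner cross with $i=j=0$, and hence all such crosses lie in one orbit. As a sanity check one verifies the base cases directly: for $R_{2,2}$ every cross is an $L$-tromino and a single square move connects any two, while whenever $m=1$ or $n=1$ there is only one cross. I would also confirm that the cascade never leaves $R_{m,n}$: by Lemma~\ref{lem:ContainedInFilling} every intermediate pattern has filling closure $R_{m,n}$, so each moved point stays inside $R_{m,n}$, and the monotonicity machinery of Lemma~\ref{lem:MonotoneUpToPermutation} guarantees that the auxiliary cells used in the cascade are available throughout.
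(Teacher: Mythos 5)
Your proposal is correct and follows essentially the same route as the paper: the paper's proof likewise exhibits an explicit move sequence (Figure~\ref{fig:move cross}) that shifts the vertical line of the cross by one column, then obtains the remaining three directions by reversing and rotating the moves, and composes these elementary shifts. Your ``cascade'' (seed from the horizontal-line point, climb the column refilling each gap from below, then return the support point to the crossing) is a correct verbal rendering of the move sequence the paper draws in that figure, so the two arguments coincide in substance.
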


\begin{proof}
    Using the moves described in Figure \ref{fig:move cross}, one can shift the vertical line to the left. By reversing the moves, the line shifts to the right. By rotating them, the horizontal line can be shifted up or down.
    \begin{figure}[ht]
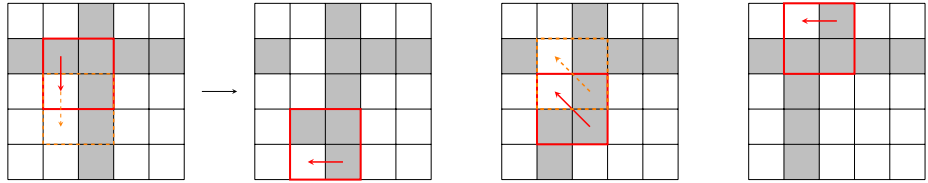

    \centering
    \includestandalone[width = \textwidth]{Figures/Square/move_cross}
    \caption{How to shift the vertical line of a cross to the left. All the other movements are symmetrical.}
    \label{fig:move cross}
    \end{figure}
\qed\end{proof}

\subsubsection{Shape of the square filling}

\begin{lemma}
    With respect to the square shape, the filling of any pattern $P$ is a set of non touching rectangles $R_1, \ldots, R_k$ whose sizes satisfy $\sum_{i=1}^k (w(R_i) + h(R_i) -1) \leqslant |P|$, where $w(R_i)$ and $h(R_i)$ are respectively the width and height of $R_i$.
\end{lemma}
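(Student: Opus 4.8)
The plan is to mimic the structure of the triangle case (Lemma~\ref{lem:fill_shape}), proving the statement by induction on $|P|$ while carrying along the inductive hypothesis that the filling is a union of pairwise non-touching rectangles whose ``cross-sizes'' $w(R_i)+h(R_i)-1$ sum to at most $|P|$. The base case $|P|=1$ is trivial: a single point is the rectangle $R_{1,1}$, with $w+h-1 = 1$. For the inductive step I would remove a point $x$ from a size-$(n+1)$ pattern $P$, apply the hypothesis to $P\setminus\{x\}$ to get a non-touching rectangle decomposition $\varphi(P\setminus\{x\}) = \bigcup_i R_i$, and then analyze how re-adding $x$ changes the filling closure, splitting into the same three cases as in the triangle proof.

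The three cases would be: (i) if $x \in \varphi(P\setminus\{x\})$, then $\varphi(P) = \varphi(P\setminus\{x\})$ and the left-hand side of the inequality grows while the right-hand side stays fixed, so it still holds; (ii) if $x$ does not touch $\varphi(P\setminus\{x\})$, then no new filling is possible, so $\varphi(P) = \varphi(P\setminus\{x\}) \sqcup \{x\}$, which adds a fresh $R_{1,1}$ (non-touching by assumption), increasing both sides by exactly $1$; (iii) if $x$ touches some rectangle $R_j$, I would show that adding $x$ extends $R_j$ into a larger rectangle, and that this enlargement may trigger merges with neighbouring rectangles. The key geometric claim for case (iii) is that when $x$ lands adjacent to (or just off the corner/edge of) a rectangle, the square-filling process completes $x$ into a full new row or column attached to $R_j$ — this is the square analog of Figure~\ref{fig:fill_triangle}, and I would supply a picture showing the possible attachment positions (edge-adjacent versus corner/diagonal-adjacent). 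The essential bookkeeping is that enlarging a rectangle from $R_{w,h}$ to, say, $R_{w+1,h}$ increases $w+h-1$ by exactly $1$, matching the single added point.

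The main obstacle, as in the triangle case, is the merging sub-argument: extending $R_j$ may cause it to touch another rectangle $R_k$, and I must verify (a) that the merge of two touching filled rectangles is again a rectangle, and (b) that the merged rectangle's cross-size $w+h-1$ does not exceed the sum of the two original cross-sizes, so that the running inequality $\sum_i (w(R_i)+h(R_i)-1) \leqslant |P|$ is preserved even though only one point was added. For (b) I would argue geometrically: two non-touching rectangles whose fillings merge after a single extension must align so that the merged bounding rectangle $R_{W,H}$ satisfies $W \leqslant w_j + w_k$ and $H = \max(h_j,h_k)$ (or the transposed statement), whence $W + H - 1 \leqslant (w_j+h_j-1) + (w_k+h_k-1)$; I expect the cleanest route is to observe that a merge can only occur along a shared edge direction, so one dimension adds while the other is dominated, exactly as the triangle merges never increase the total side-length sum. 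I would also note that merges may cascade recursively, but since each merge weakly decreases the cross-size total, finitely many merges preserve the bound.

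Finally, I would remark that confluence of the filling process (already established) guarantees the decomposition is independent of the order of moves and of the choice of removed point $x$, so the resulting collection of non-touching rectangles is well-defined; uniqueness of the rectangles then follows from the fact that the filling closure as a set determines its maximal rectangular non-touching pieces. This gives the stated inequality $\sum_{i=1}^k (w(R_i)+h(R_i)-1) \leqslant |P|$, completing the induction.
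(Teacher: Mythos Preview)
Your overall inductive strategy matches the paper's: remove a point, apply the hypothesis, split into the same three cases, and handle merges. Two remarks.

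First, a structural difference worth noting: the paper does \emph{not} carry the ``filling is a union of rectangles'' claim through the induction. It proves this directly and separately, before the induction begins: any maximal connected component of $\varphi(P)$ that is not a rectangle has an inner corner, and an inner corner always has two adjacent filled neighbours among its eight neighbours, hence is itself fillable --- contradicting filling-closedness. The induction is then used only for the cross-size inequality. This spares you from having to verify in case~(iii) that single-point extensions and subsequent merges always preserve rectangularity.

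Second, and more importantly, your merge bound is wrong. You claim that when two touching rectangles merge, ``one dimension adds while the other is dominated'', i.e.\ $W \leqslant w_j + w_k$ and $H = \max(h_j, h_k)$, or the transposed statement. This fails. Take $R_1 = [0,3]\times[0,4]$ and $R_2 = [2,5]\times[5,8]$: they touch (the point $(2,5)\in R_2$ has the adjacent pair $(2,4),(3,4)\in R_1$ among its eight neighbours), and their square-filling closure is $[0,5]\times[0,8]$, so $W=6$, $H=9$. Neither $H=\max(5,4)=5$ nor $W=\max(4,4)=4$ holds. The bound the paper actually uses is the weaker $W \leqslant w_j+w_k$ and $H \leqslant h_j+h_k$ with \emph{at least one inequality strict}; strictness holds because if both were equalities the two rectangles would be placed diagonally corner-to-corner, and a single diagonal neighbour does not count as touching under the square neighbourhood. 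That one strict inequality is precisely what recovers the missing $-1$ in $(W+H-1) \leqslant (w_j+h_j-1)+(w_k+h_k-1)$. With this correction your argument goes through.
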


Again, we refer to $\bigcup R_i$ as the fill decomposition of $P$, and its uniqueness is clear.

\begin{proof}
    Let $P$ be a pattern. First, notice that a maximal connected subpattern $Q$ of $\varphi(P)$ has no neighbor because otherwise a cell could be filled.
    
    Now consider a maximal connected subpattern $Q$ of $\varphi(P)$ that is not a rectangle. Then $Q$ has an inner corner, thus $Q$ has a neighbor. Hence the result on the shape of the filling.
    
    We now prove the inequality on the size of the rectangles by induction on $|P|$.
    
    If $P$ is a single point, then $\varphi(P) = P$ is a rectangle of length and height 1 so the inequality holds.
    
    Now assume the inequality holds for all patterns of size $n$, and let $P$ be a pattern of size $n+1$. Let $x \in P$, $|P \setminus \{x\}| = n$, so the rectangles that compose $\varphi(P \setminus \{x\})$ satisfy the inequality. If $x$ is in one of those rectangles, then $\varphi(P) = \varphi(P \setminus \{x\})$, and we have the inequality. If $x$ is not in the neighborhood of a rectangle then $\varphi(P) = \varphi(P\setminus\{x\}) \cup \{x\}$, and again the inequality holds.
    
    Lastly, if $x$ is in the neighborhood of rectangle $R_i$, for example $x$ is adjacent to the right size of $R_i$, then the filling adds a column to rectangle $R_i$, augmenting its length by 1.
    
    The rectangles may intersect, triggering a sequence of merges. But if $R_i$ intersects the neighborhood of another rectangle $R_j$, then the filling merges them into a rectangle $R$ that satisfies $w(R) \leqslant w(R_i) + w(R_j)$ and $h(R) \leqslant h(R_i) + h(R_j)$, and at least one of these inequalities is strict. From this, it follows that the inequality continues to hold when rectangles are merged.
\qed\end{proof}


\subsubsection{Characterisation of the orbits.}

If the filling of pattern $P$ is $\bigsqcup_{i=1}^r R_i$ then its square excess is $e_S(P) = |P| - \sum_{i=1}^r (w(R_i) + h(R_i) - 1)$. We can see as in the case of the triangle, that square excess is equal to excess in the abstract sense.

For $a, b, k$ integers, let $L_{a, b, k}$ be the L-shape of length $a$ and height $b$ with $k$ points in the rectangle in generates, filled from left to right and bottom to top. Examples are given Figure \ref{fig:Rabk}.

\begin{figure}[ht]
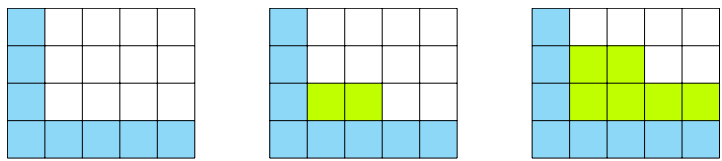

    \centering
    \includestandalone[height = 2cm]{Figures/Square/Rabk}
    \caption{From left to right, $L_{5, 4, 0}, L_{5, 4, 2}$ and $L_{5, 4, 6}$.}
    \label{fig:Rabk}
\end{figure}

\begin{lemma}
Every cross transports excess.
\end{lemma}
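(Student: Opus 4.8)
The plan is to mirror the proof that the line transports excess, invoking the ordering device of Lemma~\ref{lem:Ordering}. First, by Lemma~\ref{lem:movecross} every cross filling a given rectangle $R_{m,n}$ lies in a single orbit, so it suffices to prove the statement for the ``corner cross'' $P$ whose horizontal arm is the bottom row $\{0,\dots,m-1\}\times\{0\}$ and whose vertical arm is the left column $\{0\}\times\{0,\dots,n-1\}$; then $\varphi(P)=R_{m,n}$ and $\varphi(P)\setminus P$ is the interior rectangle $E=\{1,\dots,m-1\}\times\{1,\dots,n-1\}$. By Lemma~\ref{lem:Ordering} it then remains to linearly order $E=\{p_1,\dots,p_M\}$ so that $P$ transports $1$-excess in each prefix $R_k=P\cup\{p_1,\dots,p_k\}$; that is, for every $i\le k$ the super-$P$ pattern $P\cup\{p_i\}$ can be brought to the fixed pattern $P\cup\{p_1\}$ using only moves whose $2\times2$ window lies inside $R_k$.

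I would use the \emph{column-major} order, $(a,b)<(a',b')$ iff $a<a'$, or $a=a'$ and $b<b'$, with canonical position $p_1=(1,1)$. Its point is that the prefix region attached to $p_i=(a,b)$ is exactly the full rectangle $\{0,\dots,a-1\}\times\{0,\dots,n-1\}$ together with the partial column $\{a\}\times\{0,\dots,b\}$. The fetch of a single excess point at $(a,b)$ then proceeds in the spirit of the triangle Figures~\ref{fig:fetch excess}--\ref{fig:push excess}: using Lemma~\ref{lem:movecross} I slide the vertical arm of the cross rightwards until it sits in column $a-1$ (this reshaping touches only columns $0,\dots,a-1$, hence stays inside the rectangular part of the prefix); now the excess point has the vertical arm as an adjacent rail, and the window $\{a-1,a\}\times\{b-1,b\}$ lets me slide it down column $a$ to $(a,1)$. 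I then slide the vertical arm back to column $0$ and push the excess point left along the bottom rail, via the windows $\{j-1,j\}\times\{0,1\}$, until it reaches $(1,1)=p_1$. Every window used lies inside $R_i\subseteq R_k$, so the whole fetch is legal in the restricted orbit; since each $P\cup\{p_i\}$ reaches the common pattern $P\cup\{p_1\}$, $P$ transports $1$-excess in every $R_k$, and Lemma~\ref{lem:Ordering} upgrades this to transporting excess.

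The main obstacle is precisely the coupling of two constraints that are trivial in the one-dimensional (line) setting but interact here: a lone interior point cannot move at all until a full rail is brought next to it, yet reshaping a rail via Lemma~\ref{lem:movecross} naively sweeps an entire row or column and may leave the prefix. The column-major order is chosen exactly to resolve this — shifting the vertical arm sideways sweeps only columns $\le a-1$, all lying in the rectangular part of the prefix, whereas a row-major order would force the sweep through rows above $b$ that are absent from the prefix, and the symmetric strategy of shifting a horizontal rail would fail for the same reason. The delicate point to check carefully is therefore that the arm-shifting moves supplied by Lemma~\ref{lem:movecross} (Figure~\ref{fig:move cross}) are genuinely realizable inside the cramped region $R_i$, and not merely inside the ambient rectangle $R_{m,n}$, and that the excess point sitting in column $a$ never obstructs them; this is the square analogue of verifying in the triangle case that the fetch in Figure~\ref{fig:fetch excess} never touches a position larger than the retrieved one, and it is where the genuinely two-dimensional geometry requires attention.
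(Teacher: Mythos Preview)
Your proposal is correct and follows essentially the same route as the paper: reduce to the corner cross (the paper additionally cites Lemma~\ref{lem:TransportsExcessSolitaire} for this reduction, which you use implicitly), then apply Lemma~\ref{lem:Ordering} with the column-major (lexicographic) order, fetching each excess point by sliding the vertical arm over, dragging the point down, sliding the arm back, and pushing the point left along the bottom row. You are more explicit than the paper about why the moves stay inside the prefix region $R_k$, but the underlying argument is identical.
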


\begin{proof}
By Lemma~\ref{lem:TransportsExcessSolitaire} and Lemma~\ref{lem:movecross}, it suffices to show this for one cross. We use the cross with the left and bottom edges of the rectangle, i.e.\ $L_{m, n, 0}$.

Excess transportation can be now shown using Lemma~\ref{lem:Ordering} by using the lexicographic order, and is even easier than the corresponding process for the triangle. Namely, by the procedure in Lemma~\ref{lem:movecross} we can move the vertical line of a cross so that it is next to an excess point, then move it down, then move the vertical line back to the left, and transport the excess point to get the final pattern $L_{m,n,1}$.

This process only uses elements below the excess point in lexicographic order, and we conclude from Lemma~\ref{lem:Ordering} that the cross transports excess. \qed
\end{proof}

\begin{theorem}[Characterisation of the orbits of the square]
\label{thm : square orbit}
    If $P$ is a pattern, then it is in the orbit of the pattern composed of $L_{a, b, k}$-shapes 
    corresponding to the fill decomposition of $\varphi(P)$ and adding the excess present in the rectangle. I.e., if $\varphi(P) = \bigcup_{i=1}^k [(x_i,y_i), (x_i + a_i,y_i)]\times [(x_i,y_i), (x_i,y_i + b_i)]$ and $k_i = |P \cap R_i| - (a_i + b_i -1)$ then $P \in \orb\left(\bigcup_{i=1}^k {(x_i, y_i)} + L_{a_i, b_i, k_i}\right)$.
\end{theorem}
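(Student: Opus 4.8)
The plan is to transport the triangle argument (Theorem~\ref{thm:orbits_charac}) wholesale, replacing lines by crosses and triangles by rectangles. Throughout, call a pattern $Q$ a \emph{supercross} if $\varphi(Q)$ is a single rectangle $R$ and $Q$ contains a cross filling $R$; this is the exact square analog of a superline. The three ingredients I would reprove in this language are an \emph{extension} lemma, a \emph{merging} lemma, and a minimality argument producing non-touching supercrosses, after which the already-established fact that every cross transports excess finishes the normalization.

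First I would establish the extension lemma: if $Q$ is a supercross with $\varphi(Q) = R$ and a point $x$ touches $Q$, then $Q \cup \{x\}$ contains a supercross in its orbit. For $Q$ a bare cross this is a direct geometric construction: using Lemma~\ref{lem:movecross} I slide the relevant arm of the cross next to $x$, then absorb $x$ by extending that arm by one cell (a single square move), enlarging $R$ by one row or column toward $x$, mirroring Figure~\ref{fig:merge_line}. The general case reduces to this one exactly as in Lemma~\ref{lem:ExtendSuperline}: since $\varphi(Q) = \varphi(\text{cross})$, Lemma~\ref{lem:ChangeSubset} gives $\varphi(Q \cup \{x\}) = \varphi(\text{cross} \cup \{x\})$, and Lemma~\ref{lem:MonotonicityUpToPermutation} lifts the cross-level derivation to $Q$, producing a supercross $Q'$ containing the supercross built at cross level. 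The merging lemma then follows: if two supercrosses have touching filling closures, I move one of them (via Lemma~\ref{lem:movecross} and the rotational symmetry of the square) to a cross whose arm faces the other, then absorb the exposed arm points one at a time by extension.

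Next I would run the triangle minimality argument unchanged. Every pattern is trivially a union of single-point supercrosses, so I pick a representative in $\orb(P)$ that is a union of the fewest supercrosses; were two of their fillings to touch, merging together with Lemma~\ref{lem:MonotoneUpToPermutation} would strictly reduce the count, a contradiction. Hence $\orb(P)$ contains a union of non-touching supercrosses, whose filling closures must jointly equal $\varphi(P)$, forcing the rectangles to be exactly the $R_i$ of the fill decomposition. Finally, since each cross transports excess, Lemma~\ref{lem:Ordering} shows every supercross with filling $R_i$ and fixed cardinality lies in the orbit of the canonical $(x_i,y_i) + L_{a_i, b_i, k_i}$; as the pieces are non-touching they evolve independently (the $R$-restricted orbits of Section~\ref{sec:TransportingExcess}), so each is normalized separately, and cardinality preservation pins down $k_i = |P \cap R_i| - (a_i + b_i - 1)$.

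The main obstacle I anticipate is the extension/merging geometry. A cross has two arms rather than a single line, so one must verify that sliding an arm via Lemma~\ref{lem:movecross} and extending it genuinely leaves a supercross spanning the enlarged rectangle, and that merging two crosses along a shared boundary neither creates nor destroys excess (i.e.\ the merged rectangle's arm count matches the sum, as in the filling-shape lemma). Everything past these geometric steps is bookkeeping that transfers directly from the triangle proof via the monotonicity lemmas of Section~\ref{sec:Pretending}.
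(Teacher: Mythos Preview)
Your proposal is correct and follows essentially the same route as the paper: define supercrosses, prove extension and merging lemmas analogous to Lemmas~\ref{lem:ExtendSuperline} and~\ref{lem:merging}, take a minimal supercross decomposition to force the pieces to be non-touching, then normalize each piece via the already-established excess-transportation property of crosses. The paper's own proof is in fact considerably terser than yours---it just says ``analogously to the case of the triangle'' and sketches the three stages---so your write-up is a faithful and more fully spelled-out version of the intended argument.
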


\begin{proof}
This is proved analogously as for the triangle: we first show that we can apply the solitaire process to turn $P$ into a disjoint union of supercrosses (patterns containing a cross, whose filling closure is the same as that of the cross) whose filling closures do not touch. The union of the filling closures of these super-crosses form the filling closure of $P$.

For this, we observe analogously to the case of the triangle, that two touching supercrosses can be merged into a supercross. Thus, a pattern in the orbit of $P$ which is a union of a minimal number of supercrosses will in fact be a union of non-touching disjoint supercrosses.

Finally, we can use the previous lemma (the excess transportation property of crosses) to put the super-crosses in the stated normal form.
\qed\end{proof}

\subsubsection{Size of a cross orbit}

One may be able to build subsets and supersets of a cross orbit with known cardinality in the same fashion as for the triangle shape. 

For the subset, a good candidate is the set of patterns built recursively by adding a point on the column to the right or the line to the top. More precisely, define $\mathcal{R}_{1,1} = \{(0,0)\}$ and then for $a,b>0$, patterns of $\mathcal{R}_{a+1,b}$ are obtained by adding a point in $\{(a, j) \mid 0 \leqslant j < b\}$ to a pattern of  $\mathcal{R}_{a,b}$ and those of $\mathcal{R}_{a,b+1}$ are obtained by adding a point in $\{(i, b) \mid 0 \leqslant i < a\}$ to a pattern of  $\mathcal{R}_{a,b}$. One may check that these patterns are in the orbit of $L_{a,b,0}$. We however do not know how to count them. 

For the superset, one may consider the patterns such that no rectangle subpattern with bottom left corner $(0,0)$ has excess, similarly to the overset for the triangle. It is clear that this is an overset of the cross orbit but again, we do not know how to count them.

\subsection{General shapes $S$ admit interaction without merging}


\subsubsection{Polygons and fillings.}


Consider a general non-linear shape $S \Subset \Z^2$, i.e.\ not contained in any affine line. We say $S \subset \Z^2$ is \emph{discrete-convex} if it is the intersection of a geometrically convex subset of $\mathbb{R}^2$ and $\Z^2$. 

Recall that $P \rightarrow Q$ is an $S$-solitaire move if there is a vector $v \in \Z^2$ such that $|P \cap (\vec{v}+S)| = |Q \cap (\vec{v}+S)| = |S| - 1$ and $P \triangle Q \subset \mathcal{P}_2(\vec{v}+S)$. An $S$-filling step consists in adding to a pattern such that $|P \cap (\vec{v}+S)| = |S|-1$ the missing point of $\vec{v} + S$. This process is confluent and when it exists, we denote its limit $\varphi_S(P)$.

Previously, the filling of a pattern had the ``same shape'' as the shape used for the solitaire moves. The following definition aims at generalising this idea for arbitrary $S$. 

\begin{definition}
Given a shape $S$, an \emph{$S$-polygon} is a pattern obtained by intersecting $\mathbb{Z}^2$ with a geometric polygon in $\mathbb{R}^2$ with the same edge directions as the convex hull of $S$, each edge being at least as long as the corresponding edge in $S$'s convex hull (in terms of the number of lattice points). 
We define the \emph{$S$-hull} of $P$ as $H_S(P) = Q \cap \Z^2$ where $Q \subset \mathbb{R}^2$ is the smallest polygon with (at most) the same edge directions as $S$ containing $P$.
\end{definition}

Notice that an $S$-hull is not necessarily an $S$-polygon since some edges might be too short.

\begin{figure}[ht]
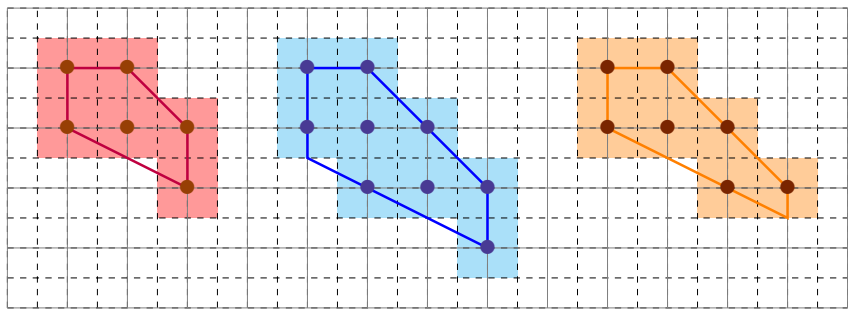

    \centering
    \includestandalone[height = 4cm]{Figures/Convex/polygon}
    \caption{With $S$ the red shape, the blue pattern is an $S$-polygon but not the orange one because the right side is too short. Here the solid lines correspond to standard generators of $\Z^2$, and dotted lines show the dual lattice of $\Z^2$-centered unit squares.}
    \label{fig:polygon}
\end{figure}

The following two lemmas are essentially the two-dimensional version of Proposition~\ref{prop:ZdFFP}. We give direct (slightly informal) proofs.

\begin{lemma}
The $S$-hull of every set is well-defined, and is finite.
\end{lemma}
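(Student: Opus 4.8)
The plan is to pass to the outer-normal (half-plane) description of the admissible polygons and reduce the statement to two facts: that taking the least admissible offset in each normal direction produces a unique smallest polygon, and that the edge normals of a non-linear shape positively span $\R^2$, which forces boundedness. Throughout I treat $P$ as a finite pattern, as finiteness of $H_S(P)$ can only hold in that case.

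First I would fix notation. Since $S$ is non-linear, its convex hull $\mathrm{conv}(S) \subset \R^2$ is a genuinely two-dimensional convex polygon; let $n_1, \ldots, n_m$ (with $m \geqslant 3$) be the outward unit normals to its edges. A convex polygon all of whose edges have directions among those of $\mathrm{conv}(S)$ is exactly an intersection of half-planes $H_i(r) = \{x \in \R^2 : \langle n_i, x\rangle \leqslant r\}$, where allowing a direction to be absent corresponds to the value $r = +\infty$. Thus any candidate polygon containing $P$ has the form $Q' = \bigcap_{i=1}^m H_i(r_i')$ with $r_i' \in \R \cup \{+\infty\}$, and the question of the \emph{smallest} such polygon becomes a question about the offsets $r_i'$.

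Next I would settle well-definedness. Set $\rho_i = \max_{p \in P}\langle n_i, p\rangle$, a finite real since $P$ is finite, and let $Q = \bigcap_{i=1}^m H_i(\rho_i)$. By construction $P \subset Q$ and $Q$ has only allowed edge directions. For any competing admissible polygon $Q' = \bigcap_i H_i(r_i')$ containing $P$, each bounding half-plane must satisfy $r_i' \geqslant \max_{p\in P}\langle n_i, p\rangle = \rho_i$, whence $H_i(\rho_i) \subset H_i(r_i')$ for every $i$ and therefore $Q \subset Q'$. So $Q$ is contained in every admissible polygon containing $P$, i.e.\ it is the \emph{unique} smallest one, and $H_S(P) = Q \cap \Z^2$ is well-defined.

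The main obstacle, and the only place non-linearity is genuinely used, is finiteness; everything above is bookkeeping with minimal offsets. Here I would show $Q$ is bounded. Because $\mathrm{conv}(S)$ is bounded it has no nonzero recession direction, which is exactly to say there is no $u \neq 0$ with $\langle n_i, u\rangle \leqslant 0$ for all $i$; equivalently, the normals $n_1, \ldots, n_m$ positively span $\R^2$. The recession cone of $Q = \bigcap_i H_i(\rho_i)$ is precisely $\{u : \langle n_i, u\rangle \leqslant 0 \text{ for all } i\}$, which is therefore $\{0\}$, and a nonempty closed convex set with trivial recession cone is bounded. Hence $Q$ is bounded and $H_S(P) = Q \cap \Z^2$ is finite. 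The only degenerate case is $P = \varnothing$, handled by the convention $H_S(\varnothing) = \varnothing$. The delicate step to write carefully is the translation of ``$S$ non-linear'' into ``the edge normals positively span the plane'', together with the recession-cone characterization of boundedness; both are standard convex geometry (cf.\ the use of \cite{ConAna} in Proposition~\ref{prop:ZdFFP}), so the argument should remain short.
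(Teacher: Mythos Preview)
Your argument is correct and follows the same approach as the paper: describe the admissible polygons via the outward normals of the edges of $\mathrm{conv}(S)$ and take the tightest offsets. The paper's own proof is a two-sentence sketch (``slide copies of all these half-planes as close to $P$ as possible'') and does not spell out the boundedness step; your recession-cone argument, and your explicit identification of non-linearity with the normals positively spanning $\R^2$, are exactly the details that make the sketch rigorous.
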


\begin{proof}
For each edge $e$ of $S$, there is a corresponding normal vector $\vec v$ (pointing outside), and points with positive dot product with $\vec v$ define a half-space $H_e$.

Given $P \subset \mathbb{R}^2$, we can see $P$ as a subset of $\mathbb{R}^2$ and slide copies of all these half-planes  $H_e$ as close to $P$ as possible, and finally take the convex polygon spanned by their edges.
\qed\end{proof}

Unlike in the case of the triangle and square, there nevertheless does not always exist a smallest $S$-polygon containing a given finite set $P \subset \mathbb{Z}^2$, because the edges in the $S$-hull can be too short. This is in particular the case when $P$ is smaller than $S$, an example is given in Figure ~\ref{fig:hull}.

\begin{figure}[ht]
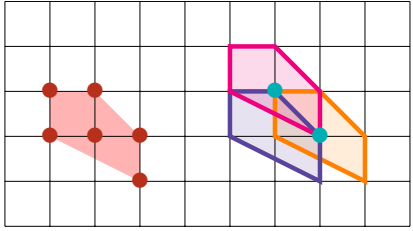

    \centering
    \includestandalone[height = 3cm]{Figures/Convex/S-hulls}
    \caption{With $S$ the red shape, the blue pattern has 3 smallest $S$-polygons containing it.}
    \label{fig:hull}
\end{figure}

\begin{lemma}
\label{lem:fill in hull}
    The $S$-filling of pattern $P$ is contained in its $S$-hull.
\end{lemma}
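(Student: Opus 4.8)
The plan is to reuse the half-plane argument from the proof of Proposition~\ref{prop:ZdFFP}, now carried out in the plane. First I would write the $S$-hull as an explicit intersection of half-planes. For each edge $e_i$ of the convex hull of $S$, let $\ell_i : \R^2 \to \R$ be a linear functional that is an outward normal to $e_i$, so that $r_i := \max_{s \in S} \ell_i(s)$ is attained exactly on the points of $S$ lying on $e_i$. Setting $c_i = \max_{p \in P} \ell_i(p)$, the previous lemma gives
\[ H_S(P) = \Big(\bigcap_i \{x \in \R^2 : \ell_i(x) \leqslant c_i\}\Big) \cap \Z^2 = \bigcap_i H_i, \qquad H_i := \{x \in \Z^2 : \ell_i(x) \leqslant c_i\}, \]
and this set is finite.

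The key step is to show that each lattice half-plane $\{x \in \Z^2 : \ell_i(x) \leqslant c\}$ (for any real $c$) is closed under $S$-filling. The crucial geometric fact is that an edge of a two-dimensional convex polygon is spanned by two of its vertices, and the vertices of the convex hull of a finite set belong to that set; since $S$ is non-linear, $\mathrm{conv}(S)$ is two-dimensional, so at least two points of $S$ attain the maximum value $r_i$ of $\ell_i$. Consequently, for every translate $\vec v + S$, at least two of its points attain the value $\ell_i(\vec v) + r_i$, which by linearity is the maximum of $\ell_i$ over $\vec v + S$. Now suppose a filling move at $\vec v + S$ adds the unique missing point $p$, with the remaining $|S|-1$ points already inside the half-plane, and suppose for contradiction that $\ell_i(p) > c$. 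Since $p$ is a single point, at most one of the $\geqslant 2$ maximizers can equal $p$, so some maximizer $q \neq p$ is present, giving $\ell_i(\vec v) + r_i = \ell_i(q) \leqslant c$; but then $\ell_i(p) \leqslant \ell_i(\vec v) + r_i \leqslant c$, contradicting $\ell_i(p) > c$. Hence no filling step can leave the half-plane.

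Finally, I would conclude by invoking that filling-closed sets form a closure system: $H_S(P)$ is the intersection $\bigcap_i H_i$ of the filling-closed half-planes $H_i$, hence itself filling-closed, and it contains $P$; therefore $\varphi_S(P) \subseteq H_S(P)$ by Lemma~\ref{lem:IntersectionOfAll}. The main obstacle is making the ``two maximizers'' argument robust across \emph{all} supporting half-planes, including degenerate configurations (for instance when $P$ is smaller than $S$, so that the $S$-hull is not a genuine $S$-polygon). The argument above sidesteps this, since it only uses that each $\ell_i$ attains its maximum over $S$ on a true edge of $\mathrm{conv}(S)$ — a local property of $S$ alone — while the position $c_i$ of the half-plane relative to $P$ plays no role in the computation.
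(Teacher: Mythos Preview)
Your proposal is correct and follows essentially the same approach as the paper: both argue that half-planes bounded by lines parallel to an edge of $S$ are filling-closed (so the intersection defining the $S$-hull is filling-closed and contains $P$). The paper's proof is a two-sentence sketch of this half-plane argument, while you spell out the ``two maximizers'' step and invoke Lemma~\ref{lem:IntersectionOfAll} explicitly; your version is more detailed but not a different route.
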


\begin{proof}
     Let $L$ be a line parallel to an edge on $S$, then the filling process can add points on the outer side of the line only if there is already some point on the outer side. Therefore the filling process cannot add points outside the $S$-hull.
\qed\end{proof}

\begin{remark}
    The notion of $S$-hull can be generalised to higher dimensions, using faces instead of edges. Lemma~\ref{lem:fill in hull} still holds for higher dimension, simply replace lines by hyperplanes in the proof. 
\end{remark}

In the case of the triangle shape and the square shape, we were able to identify a set of domains (triangles and rectangles) and a notion of touching, such that patterns contained in domains that do not touch evolve entirely independently in the solitaire and patterns filling domains that touch can merge into a pattern that fill a bigger domain.



In fact, these sets were precisely $S$-polygons: for a triangle, the sets $T_n$ are precisely the triangle-polygons, and rectangles $\{0, \ldots, m-1\} \times \{0, \ldots, n-1\}$ are precisely the square-polygons.

The following example illustrates that, unlike in the case of the triangle and the square. However, for a general shape $S$ we can have two $S$-polygons sets which do not ``merge'' (in that their union is closed under filling, and is not an $S$-polygon), yet the solitaire process can in a nontrivial way share a hole between the two sets.

\begin{example}
Consider the red shape $S$ in Figure~\ref{fig:weird}, and the pattern $P$ formed by the orange, green and blue points. One can apply solitaire moves to the left (orange and green) of right (blue and green) part separately, moving the hole arbitrarily.

\begin{figure}[ht]
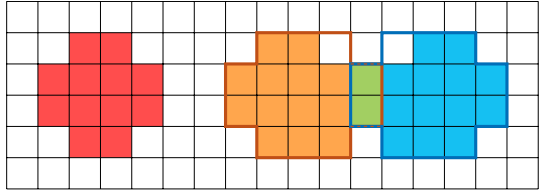

    \centering
    \includestandalone[height = 2.5cm]{Figures/Convex/Weird_merging}
    \caption{An example of ``non-merging shapes'' with non-trivial interaction.}
    \label{fig:weird}
\end{figure}

If we move the hole in the orange part to the left, then we can borrow a green point from the middle. This can freeze the process in the blue part, until the element is returned. 

The filling closure of the colored area is the area with the orange and blue outlines, i.e.\ the filling process adds only two points. This limit does not fit our notion of an $S$-polygon. \qee
\end{example}

\begin{remark}
For large $S$, one can form long chains out of such non-merging $S$-polygons, and even cycles and other planar graphs, where elements can be exchanged between the various parts, but which do not merge into a discrete-convex pattern. 
\end{remark}

We do not yet have clean descriptions of the processes described in the previous example and remark, but we hope to find a good way of cutting fillings into understandable components.

\begin{question}
    Is there a notion of $S$-components such that the maximal $S$-components of a pattern closed under filling are either $S$-polygons or single points (those component being not necessarily disjoint), and the solitaire can be understood as in the triangle and square case, except that distinct $S$-components may share points?
\end{question}

\subsubsection{Contours and Orbits.}

One thing that does work with general shapes is the notion of a contour from \cite{Sa22}. It provides a natural generalization of the line in triangle solitaire, and the cross (or an L-shape) in the case of square solitaire.

Here we find it helpful to work with more general groups $G$, to avoid getting caught up in geometric details, although our particular emphasis will be on the groups $\Z^d$.

Recall that a total order $<$ on (the elements of) a group $G$ is \emph{bi-invariant} if $a < b \iff ca < cb \iff ac < bc$ for all $a, b, c \in G$. Such orders exists for example for free abelian groups and free groups.

\begin{definition}
Let us call $s \in S$ a \emph{corner} if there is a bi-invariant order on $G$ such that $s$ is the maximal element of $S$ (it is then minimal for the inverted order, which is also bi-invariant).\footnote{It seems that here we need a bi-invariant order, while \cite{Sa22} only needs a left-invariant order.}
\end{definition}

The following fact is well-known:

\begin{lemma}
\label{lem:Orders}
The (bi-)invariant orders on $\Z^d$ are fully described as follows: First order vectors according to their dot product with a unit vector $\vec v$. If the kernel of the projection to the line spanned by $\vec v$ has non-trivial intersection with $\Z^d$, then this intersection is a finitely-generated abelian group of smaller rank. Order it inductively.
\end{lemma}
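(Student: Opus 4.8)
The plan is to verify both directions of the characterization: that any order described by this recursive recipe \emph{is} a bi-invariant total order (the easy direction), and that \emph{every} bi-invariant total order arises this way (the substance). Since $\Z^d$ is abelian, left-, right- and bi-invariance all coincide, so an invariant order is the same as a translation-invariant total order, which is encoded by its \emph{positive cone} $P = \{x \in \Z^d : x > 0\}$, a subsemigroup with $\Z^d = P \sqcup \{0\} \sqcup (-P)$. For the easy direction I would simply note that comparing first by $\vec v \cdot x$ and breaking ties inside the sublattice $\vec v^\perp \cap \Z^d$ (recursively) is a lexicographic order assembled from linear functionals, hence manifestly translation-invariant; it is total once the recursion terminates, which it does since the rank strictly decreases.

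For the main direction, the central object is the convex cone $C \subset \R^d$ generated by $P$. First I would show $C \neq \R^d$: if $\vec 0$ were an interior point, one could write $\vec 0$ as a positive rational combination of elements of $P$, clear denominators, and obtain $\sum_i n_i p_i = \vec 0$ with $n_i > 0$ and $p_i \in P$; but the left-hand side is a sum of strictly positive elements, hence itself positive, a contradiction. Next, since every nonzero $q \in \Z^d$ lies in $P$ or $-P$, the rational directions --- which are dense --- are covered by $C \cup (-C)$, so the closure satisfies $\overline{C} \cup (-\overline{C}) = \R^d$. A proper closed convex cone $D$ with $D \cup (-D) = \R^d$ must be a closed half-space: passing to the quotient by its lineality space $L = D \cap (-D)$ yields a pointed closed convex cone in $\R^d/L$ still covering everything together with its negative, which forces $\dim(\R^d/L) \leq 1$, and properness forces equality; hence $L$ is a hyperplane and $\overline{C}$ is the half-space it bounds. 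Let $\vec v$ be the unit normal, oriented so that $\overline{C} = \{x : \vec v \cdot x \geq 0\}$.

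It then follows directly that the order refines dot-product comparison: if $\vec v \cdot x > 0$ then $x \notin -P$ and $x \neq \vec 0$, so $x \in P$, and symmetrically for $\vec v \cdot x < 0$. Thus $x < y$ whenever $\vec v \cdot x < \vec v \cdot y$, and the only comparisons not yet determined are those between $x, y$ with $\vec v \cdot (x - y) = 0$, i.e.\ $x - y \in K := \vec v^\perp \cap \Z^d$; by invariance these are governed exactly by the restriction of $<$ to $K$. Being a subgroup of $\Z^d$ contained in a hyperplane, $K$ is free abelian of rank $\leq d-1 < d$ (possibly trivial, when $\vec v$ is ``sufficiently irrational'', in which case the order is already Archimedean and determined by $\vec v$ alone). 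The restriction of $<$ to $K$ is again an invariant total order on a free abelian group of strictly smaller rank, so the stated description follows by induction on $d$, the base case $d = 0$ being trivial (and $d = 1$ giving the two orders $\vec v = \pm 1$).

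The main obstacle is the convex-geometry step that produces $\vec v$, namely identifying the top Archimedean direction; this is exactly the finitely generated instance of the H\"older/Hahn embedding theorem for ordered abelian groups, and the cleanest self-contained route is the cone argument above. The two delicate points there are verifying $\overline{C} \cup (-\overline{C}) = \R^d$ (which needs density of rational directions) and the lemma that a proper closed cone covering $\R^d$ with its negative is a half-space (which needs the lineality-space reduction and the fact that a pointed closed cone lies in an open half-space). Everything after $\vec v$ has been produced reduces cleanly by induction on the rank.
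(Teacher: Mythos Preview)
The paper does not actually prove this lemma; it is stated as a well-known fact with no argument given. Your proposal therefore supplies a complete proof where the paper provides none, and the argument you outline is correct and standard: encode the order by its positive cone $P$, show the real cone $C$ it generates is proper (else a positive integral relation among elements of $P$ yields $0\in P$), show $\overline C\cup(-\overline C)=\R^d$ via density of rational directions, deduce $\overline C$ is a closed half-space via the lineality-space reduction, extract the normal $\vec v$, and recurse on $\vec v^\perp\cap\Z^d$.

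One small point worth a sentence in a write-up: when you pass from ``$0$ is a positive real combination of elements of $P$'' to ``$0$ is a positive \emph{rational} combination'', you are implicitly using that the solution set of $\sum_i\lambda_i p_i=0$ is a rational linear subspace (since $p_i\in\Z^d$), so a nonnegative nonzero real solution can be perturbed to a nearby rational one and then cleared to integers. Likewise, the claim that a pointed closed convex cone lies in an open half-space through the origin is exactly the fact that its dual cone is full-dimensional; you invoke this correctly. With these two justifications spelled out, the proof is airtight.
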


In particular, our notion of corner agrees with the usual notion of a corner in convex geometry. Figure~\ref{fig:border} presents few a examples on $\Z^2$.

\begin{definition}
Suppose $G$ is a group, $S \Subset G$ and suppose $c \in S$ is a corner of $S$. If $P$ is a pattern, its $S$-\emph{contour} with respect to $c$ is
\[ C_{S,c}(P) := \{x \in P \;|\; xc^{-1}S  \not\subset P \}. \]
\end{definition}

Note that the set of $x$ such that $xc^{-1}S \not\subset P$ can be expressed as ``those $x$ such that if $g$ is the translation such that $gc = x$, we have $gS \not\subset P$'', since $gc = x \iff g = xc^{-1}$.

Note also that if $c = e_G$ (which we may assume without affecting the solitaire process by translating the shape), then this simplifies to $\{x \in P \;|\; xS \not\subset P\}$.

Examples of contours on the plane are shown in Figure~\ref{fig:border}.

\begin{figure}[ht]
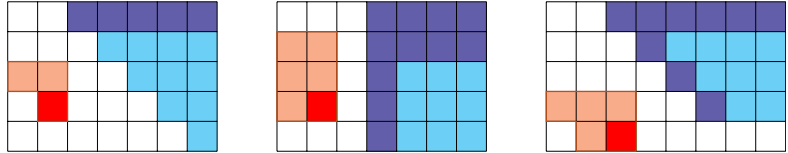

    \centering
    \includestandalone[height = 2cm]{Figures/Convex/border}
    \caption{The dark blue points are the contour of the light blue polygon for shape $S$ in orange with respect to the red corner.}
    \label{fig:border}
\end{figure}

The following result is essentially from \cite{Sa22}. 

\begin{proposition}
\label{prop:AnyTwo}
    Any two contours of a given pattern with the same shape have the same cardinality.
\end{proposition}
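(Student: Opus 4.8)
The plan is to compute $|C_{S,c}(P)|$ in a way that visibly does not depend on the corner $c$, by identifying the complement of the contour inside $P$ with a set that is intrinsic to the pair $(P,S)$. Throughout I would take $P \Subset G$ finite; this hypothesis is essential, since for infinite patterns the statement is false (for the triangle shape, the upper half-plane in $\Z^2$ has an empty contour with respect to its bottom corner but an infinite contour with respect to its top corner). I let $c, c'$ be two corners of $S$ and note in advance that the argument will use nothing about $c$ beyond $c \in S$, so the corner hypothesis plays no role in the cardinality count.

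First I would introduce the set of \emph{full placements}
\[ F = \{g \in G \mid gS \subset P\}, \]
the translations carrying an entire copy of $S$ into $P$. Since $P$ is finite and $S$ nonempty, $F$ is finite: fixing any $s_0 \in S$, membership $g \in F$ forces $gs_0 \in P$, so $F \subset P s_0^{-1}$.

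The key step is the identity $P \setminus C_{S,c}(P) = Fc$. For one inclusion, an element $x \in P$ lies outside the contour exactly when $x c^{-1} S \subset P$; writing $g = xc^{-1}$, this says $g \in F$ and $x = gc \in Fc$. Conversely, if $g \in F$ then $gc \in gS \subset P$ and $(gc)c^{-1}S = gS \subset P$, so $gc \in P \setminus C_{S,c}(P)$. Since right multiplication by $c$ is a bijection of $G$, we get $|Fc| = |F|$, and therefore
\[ |C_{S,c}(P)| = |P| - |F|. \]
As the right-hand side makes no reference to $c$, I conclude $|C_{S,c}(P)| = |P| - |F| = |C_{S,c'}(P)|$, which is the claim.

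I do not expect any real obstacle once the correct complement is isolated; the only points demanding care are the finiteness of $P$ (needed so that subtracting cardinalities is meaningful) and the elementary verification of the two inclusions behind $P \setminus C_{S,c}(P) = Fc$. It is worth recording that this argument proves slightly more than stated: the common value $|P| - |F|$ is independent of \emph{any} chosen reference point of $S$, corner or not, so the corner hypothesis in the definition of a contour is relevant only to its other (non-cardinality) properties, such as its being spanning.
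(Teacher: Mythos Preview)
Your proof is correct and takes a genuinely different, more elementary route than the paper. The paper argues indirectly via TEP subshifts: it invokes \cite[Lemma~28]{Sa22} to say that for any TEP subshift with shape $S$, each contour of $P$ is simultaneously independent and $P$-spanning, so $|A|^{|C_{S,c}(P)|}$ equals the number of legal $P$-patterns, a quantity independent of $c$. The paper itself flags this as ``rather indirect, as it involves the choice of an arbitrary TEP subshift.''

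Your argument bypasses subshifts entirely by exhibiting the bijection $g \mapsto gc$ from $F = \{g : gS \subset P\}$ onto $P \setminus C_{S,c}(P)$, yielding $|C_{S,c}(P)| = |P| - |F|$. This is shorter, self-contained, and as you observe, uses nothing about $c$ beyond $c \in S$, so the corner hypothesis is irrelevant for the cardinality claim. You are also right to insist on $P \Subset G$ finite and to give the half-plane counterexample; the paper's statement is silent on this, but its proof (a counting argument) implicitly needs it too. What the paper's approach buys in exchange is the conceptual link to independence and spanning that becomes important in Section~\ref{sec:Permutive}; your proof gives the cardinality identity cleanly but does not by itself explain why contours are filling bases.
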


\begin{proof}
By \cite[Lemma 28]{Sa22}, if we pick any TEP subshift with the given shape $S\Subset G$, then all contours of a domain $P \subset G$ can be filled arbitrarily and their contents uniquely determine a valid $P$-pattern. (In the terminology of Section~\ref{sec:Permutive}, the contours of $P \Subset G$ are independent and $P$-spanning sets.)

The number of patterns with shape $C_{S,c}(P)$ is then precisely the logarithm (with base the size of the alphabet) of the number of legal patterns on the domain $P$. In particular, it does not depend on the choice of $c$.
\qed\end{proof}

The above proof is rather indirect, as it involves the choice of an arbitrary TEP subshift. We show that under some conditions, one can apply the solitaire process to move between two contours.

\begin{definition}
 Suppose $c, c' \in S$ are two distinct corners of $S$. We say $c$ and $c'$ are \emph{sweep swappable} if there exists a bi-invariant total order on $G$ such that $c = \min S$ and $c' = \max S$ with respect to this order. 
\end{definition}

\begin{lemma}
\label{lem:swap}
Let $c, c' \in S$ be sweep swappable. Then for any $P \Subset G$, the contours $C_{S, c}$ and $C_{S, c'}$ are in the same solitaire orbit.
\end{lemma}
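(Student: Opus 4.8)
The plan is to realize a monotone \emph{sweep} of the contour across $P$ in the direction prescribed by the order. Fix a bi-invariant total order $<$ on $G$ witnessing sweep swappability, so that $c = \min S$ and $c' = \max S$; by bi-invariance, for every $g \in G$ the translate $gS$ has minimum $gc$ and maximum $gc'$. Geometrically, $C_{S,c}(P)$ is the part of $P$ ``facing the $c$-side'': a point $x \in P$ lies in it exactly when the copy $xc^{-1}S$ of $S$, anchored so that its minimum sits at $x$, pokes out of $P$ above $x$; dually $C_{S,c'}(P)$ faces the $c'$-side. Before moving anything, I would record that both contours have the same filling closure, namely $\varphi(C_{S,c}(P)) = \varphi(P) = \varphi(C_{S,c'}(P))$: the inclusion $\subseteq$ is immediate since a contour is a subset of $P$, and $\supseteq$ holds because a contour is $P$-spanning (this is exactly the content invoked in the proof of Proposition~\ref{prop:AnyTwo}). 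This matches the necessary condition of Lemma~\ref{lem:ContainedInFilling} and tells us the sweep is allowed to pass through cells of $\varphi(P) \setminus P$, just as the line-to-diagonal transformation in Proposition~\ref{prop:lines} passes through the interior of $T_n$.

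Next I would set up the sweep itself. Only finitely many window positions are relevant, namely the $g \in G$ with $gS \subseteq \varphi(P)$ and $gS$ meeting the region between the two contours; enumerate them as $g_1 < g_2 < \cdots < g_N$ according to the position of $g_i$ (equivalently $g_ic$ or $g_ic'$, which agree up to a global shift) in the order $<$. The intermediate patterns interpolate between the two contours: informally, after the $i$-th step the cells lying ``below'' $g_i$ in the order already carry their final $C_{S,c'}(P)$-configuration, while those ``above'' still carry the initial $C_{S,c}(P)$-configuration. A single step takes the current extreme active window $g_iS$, in which the pattern meets $g_iS$ in exactly $|S|-1$ cells with the unique empty cell located at the extreme corner of the window, and applies the solitaire move (or short sequence of moves) that slides the boundary point across the window, thereby moving the contour one window closer to the $c'$-face. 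After all $N$ windows have been processed, the resulting pattern is exactly $C_{S,c'}(P)$, which proves the lemma.

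The heart of the argument, and the step I expect to be the main obstacle, is the local move: one must show that at each stage of the sweep the active window really does present exactly one hole, and that this hole sits at the extreme corner, so that a genuine solitaire move applies and the sweep never stalls. This is where bi-invariance and the corner hypotheses are used essentially: because $gc$ and $gc'$ are the order-extreme elements of $gS$, the cells of a window are linearly ordered compatibly with the global sweep, so processing windows in the order $<$ guarantees that when $g_iS$ becomes active, all of its strictly smaller cells are already in $C_{S,c'}$-position and all strictly larger cells are still in $C_{S,c}$-position, forcing precisely the ``full minus the extreme corner'' configuration needed for a move. Verifying this invariant, and checking that a single window-crossing can always be decomposed into legal solitaire moves (possibly sliding a whole boundary segment, as in Figures~\ref{fig:lines_transformation} and \ref{fig:merge_line}), is the technical crux.

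Finally, I would dispatch the general-group bookkeeping. For $G = \Z^d$ the order $<$ is described explicitly by Lemma~\ref{lem:Orders}, so the ``smaller/larger than $g_i$'' slabs and the extreme-corner condition are literal half-space comparisons; for an arbitrary group one argues abstractly from bi-invariance in the same way, and termination is automatic since only finitely many windows lie in the relevant finite region. I expect no additional difficulty here beyond a careful translation of the $\Z^d$ picture into order-theoretic language.
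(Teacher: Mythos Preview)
Your sweep is exactly the paper's approach, and your identification of the crux---bi-invariance forces each window to show a single hole at its extreme corner---is correct. Two simplifications will let you close it cleanly.

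First, drop $\varphi(P)$ entirely: the sweep stays inside $P$. After normalising to $c'=e_G$, the relevant windows are precisely those $g$ with $gS\subset P$, i.e.\ the set $P\setminus C_{S,c'}(P)$; every element ever touched lies in some such $gS\subset P$. Your analogy with Proposition~\ref{prop:lines} is misleading, since there $P=T_n$ is already filling-closed, so there is no $\varphi(P)\setminus P$ to pass through. If you instead enumerate all $g$ with $gS\subset\varphi(P)$, you will meet windows whose intersection with the current pattern has fewer than $|S|-1$ points, and the move does not apply.

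Second, each window needs exactly \emph{one} solitaire move, not a ``short sequence'': enumerate $P\setminus C_{S,c'}(P)$ in increasing order as $g_1<\cdots<g_n$, set $D_0=C_{S,c'}(P)$, and let $D_i=(D_{i-1}\cup\{g_i\})\setminus\{g_ic\}$. The single inequality that does all the work is: for $j<i$ and any $s\in S$, bi-invariance gives $g_jc\le g_js\le g_is$ with equality only when $j=i$ and $s=c$; hence $g_jc<g_is$ strictly. This says no previously removed element lies in $g_iS$, while every $g_is$ with $s\neq c'$ is $<g_i$ and hence lies either in $C_{S,c'}(P)$ or among $g_1,\dots,g_{i-1}$, so $D_{i-1}\cap g_iS=g_iS\setminus\{g_i\}$ as required. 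A short bijection at the end shows $D_n=C_{S,c}(P)$.
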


\begin{proof}
Let $<$ denote a total bi-invariant order on $G$ such that $c = \min S$ and $c' = \max S$. By translating $S$ on the left (which preserves solitaire moves and contours) we may assume $c' = e_G$. Let $C, C' \Subset P$ be the contours corresponding to $c, c'$ respectively. Order $S \setminus C$ in increasing order as $g_1, \ldots, g_n$. Now define $C_0 = C$ and inductively $C_i = (C_{i-1} \cup \{g_i\}) \setminus \{g_i c\}$. We claim that this is a valid solitaire move, namely the solitaire move at $g_i$ that moves the hole from $g_ic$ to $g_i$.

For this, we need to show that $|g_i S \cap C_{i-1}| = |S| - 1$, and that $g_i \notin C_i$. The latter fact is clear since $g_i$ is larger than any $g_j$ added previously. For the former, observe first that the filling process following this solitaire process (i.e.\ the result of applying filling moves at each $g_jS$ for $j < i$, which can be proven well-defined by induction) would certainly have added the remaining $|S| - 1$ elements, since they are smaller, thus either on the list of the $g_j$, or in the initial contour $C$.

Thus it suffices to show that the application of a previous solitaire move at $g_j S$ taking us from $C_{j-1} \to C_j$ did not remove any of the elements in $g_iS$. But the removed element when applying a move at $g_j$ is always $g_j c$. Since $c = \min S$, and the order is bi-invariant, $g_j c \leq g_j s \leq g_i s$ for any $s \in S$, with equality only if $j = i$ and $c = s$.

We claim that we now have $C_n \supset C'$, equivalently $P \setminus C_n \subset P \setminus C'$. To see this, suppose that $gc \in P$ but $gc \notin C_n$. From $gc \in P$ we have that $gc \in C_k$ for some $k$. From $gc \notin C_n$ we have that $gc$ is of the form $g_ic$ for some $i$ (since it was removed at some point). The fact it was indeed removed means that $g S \subset P$ so $gc \notin C'$. \qed
\end{proof}

In particular, by the previous proposition all contours are in the same solitaire orbit whenever all the corners are in the same equivalence class under the transitive closure of sweep swappability.

The interested reader can try to order $\Z^2$ so as to swap any two borders in Figure~\ref{fig:border} by a sweep swap. We give a more interesting example of this phenomenon on the free group.

\begin{lemma}
\label{lem:DErp}
    If $S \subset \Z^2$ is a shape whose convex hull does not have two pairs of parallel edges, then all its contours are in the same solitaire orbit. 
\end{lemma}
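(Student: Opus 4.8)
The plan is to reduce the statement to a purely combinatorial connectivity claim about the vertices of the convex hull of $S$, and then to prove that claim by a rotating‑direction argument in which each generic direction contributes one sweep‑swap edge.

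First I would identify the corners of $S$ with the vertices of its convex hull $H$. By Lemma~\ref{lem:Orders} every bi‑invariant order on $\Z^2$ refines ordering by the dot product with some direction $\vec v$; taking an \emph{irrational} $\vec v$ in the interior of the normal cone of a hull vertex $s$ makes $s$ the unique maximiser of $\langle\,\cdot\,,\vec v\rangle$ on $S$, so $s$ is a corner, while a non‑extreme point of $S$ is never a strict maximiser (for the edge‑normal direction the lexicographic tiebreak selects an endpoint of the edge, not an interior point), so the corners are exactly the vertices of $H$. By Lemma~\ref{lem:swap} and the remark following it, it then suffices to show that the graph $\Gamma$ on the hull vertices, with an edge joining each sweep‑swappable pair, is connected.

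Next I would set up the rotation. For irrational $\vec v(\theta)=(\cos\theta,\sin\theta)$ the order by $\langle\,\cdot\,,\vec v(\theta)\rangle$ is total and bi‑invariant with a unique minimum and maximum on $S$; write $f(\theta)$ for the maximising vertex and $g(\theta)=f(\theta+\pi)$ for the minimising one. For each generic $\theta$ the pair $\{f(\theta),g(\theta)\}$ is sweep‑swappable, hence an edge of $\Gamma$. As $\theta$ increases, $f$ jumps exactly when $\theta$ crosses an outward edge‑normal direction $\beta_\ell$, and $g$ jumps exactly when $\theta$ crosses $\beta_m-\pi$. The crucial observation is that $f$ and $g$ jump simultaneously precisely when $\beta_\ell=\beta_m-\pi$ for two edges $E_\ell,E_m$, i.e.\ exactly when those edges are parallel. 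I would then use the hypothesis: since $H$ has at most one pair of parallel edges, simultaneous jumps occur at most at one antipodal pair of angles $\{\beta_\ell,\beta_\ell+\pi\}$, which cut the circle into at most two open arcs. On a single such open arc only single jumps occur, and whenever exactly one of $f,g$ moves, the ``pivot'' vertex (the one that stayed fixed) is swappable with both the old and the new value, so old and new lie in the same component of $\Gamma$. Chaining this pivoting across the whole arc places every vertex visited by $f$ or by $g$ on that arc into one component; and because $\beta_\ell,\beta_\ell+\pi$ are themselves normal directions, no vertex's normal cone straddles them, so the $f$‑vertices and the $g$‑vertices on the arc partition the entire vertex set. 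Hence $\Gamma$ is connected and all contours lie in one orbit.

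I expect the delicate point to be the bookkeeping in the last step: checking that the single‑jump pivoting genuinely chains without gaps and that the two families of swept vertices on one arc exhaust all of $H$'s vertices — this is exactly where the ``no two pairs of parallel edges'' assumption is needed, to rule out a second simultaneous jump that would break a single arc into pieces $\Gamma$ cannot bridge. I would dispose of the degenerate cases first (for instance $S$ contained in a line, or $H$ having fewer than three vertices), since the lemma is implicitly about genuinely two‑dimensional shapes, and note for contrast that a parallelogram or rectangle has two parallel pairs, consistent with the square being treated separately.
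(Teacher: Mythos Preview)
Your proposal is correct and follows essentially the same approach as the paper. The paper phrases it via a static ``circle representation'' (edge normals as points on a circle, corners as the arcs between them, sweep-swappability as a diameter joining two arcs), while you parametrise by a rotating direction $\theta$ and track the maximising/minimising vertices $f(\theta),g(\theta)$; these are two presentations of the same normal-fan picture, and your pivoting argument across single jumps is exactly the paper's observation that an edge-normal point can be ``crossed'' via the antipodal arc, with one pair of parallel edges handled by going around the other way.
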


\begin{proof}
    Consider the following representation of polygon $S$: let $\vec{u_1}, \ldots, \vec{u_k}$ be the vectors normal to the edges of $S$, oriented toward the exterior of $S$. Take a circle $C$ with centre $O$ and represent edge $e_i$ by the intersection between $C$ and the half line with origin $O$ and direction $\vec{u_i}$, and each corner by the arc between the two edges. We claim that corners $a$ and $b$ are sweep swappable if a diameter of $C$ joins there arcs.
    
    \begin{figure}[ht]
        \centering
        \includegraphics[height = 4cm]{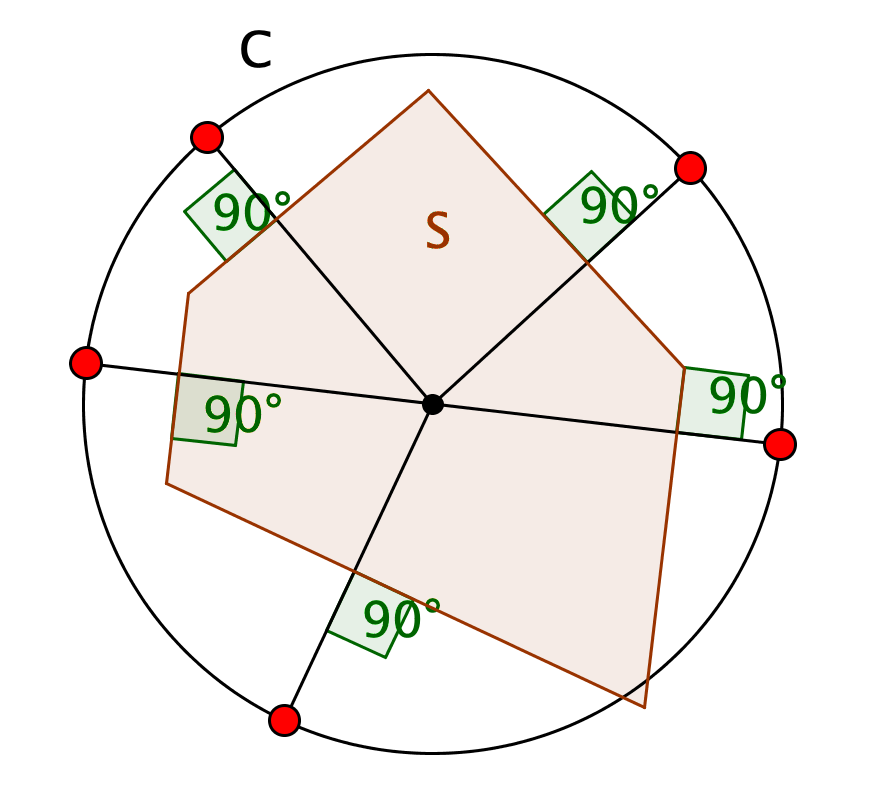}
        \includegraphics[height = 4cm]{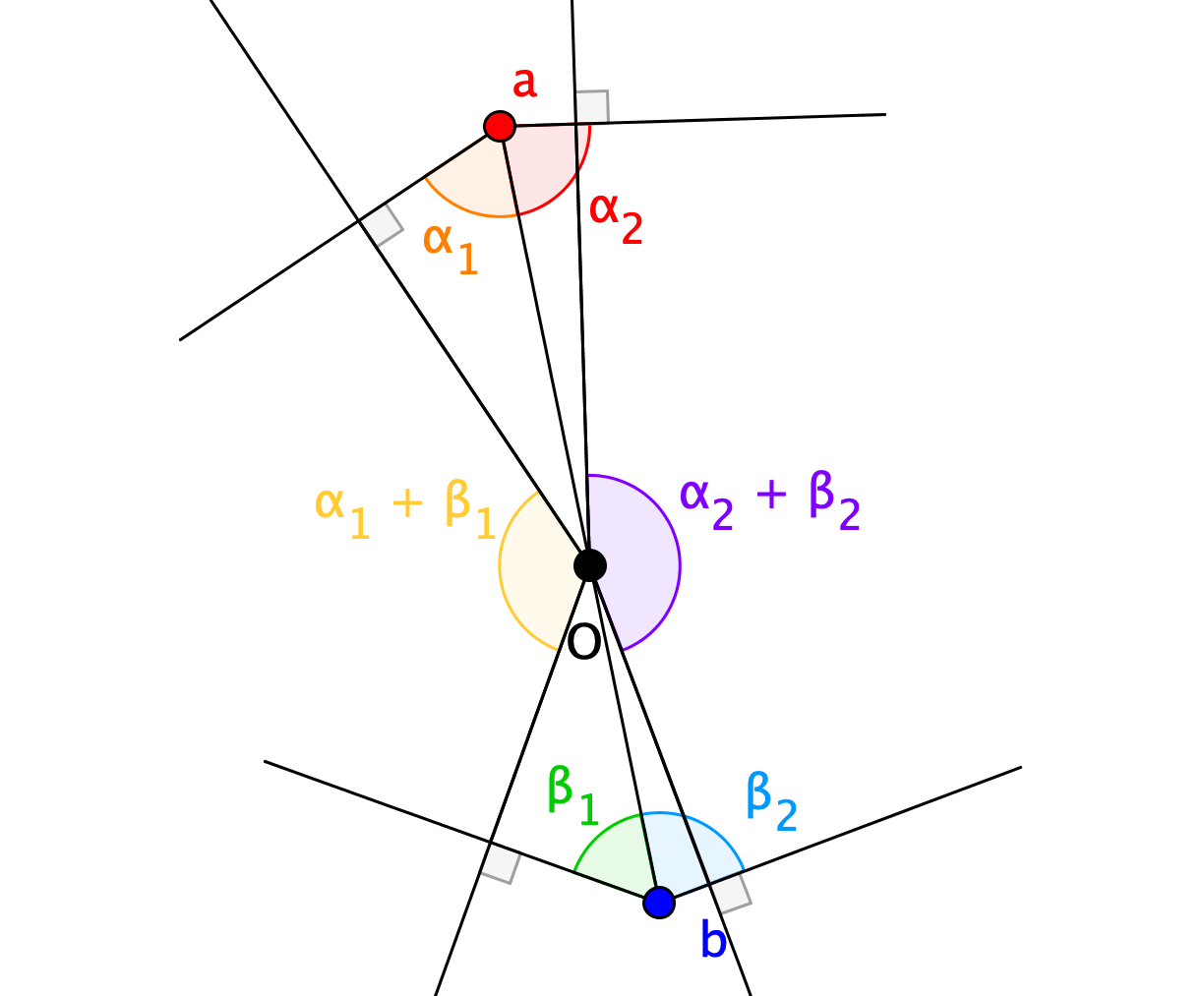}
        \caption{On the left, an illustration on the circle representation of a polygon. On the right, the notations used for the proof.}
        \label{fig:exchange corners}
    \end{figure}

    Note that the open arcs corresponding to corner $a$ consists of the possible directions of unit vectors $\vec v$ which make $a$ a minimal (or maximal) element of the corresponding order Lemma~\ref{lem:Orders}. Thus if a diameter joins two arcs, then the corners are sweep swappable, thus the corresponding contours are in the same solitaire orbit for all patterns.
    
    %
    Now notice that if there are no opposite points (which corresponds to a pair of parallel edges) on the circle, then any border can be crossed by crossing the circle twice. One pair of opposite points is not a problem because we can simply move along the circle in the opposite direction.\qed
\end{proof}

In the case of $\Z^2$, we can in fact describe a process that allows to transform one contour into another it cannot be swapped with.

\begin{theorem}
\label{thm:AllInSameOrbit}
    In dimension 2, all the contours of a pattern are in the same orbit.
\end{theorem}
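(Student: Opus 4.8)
The plan is to reduce the statement to a question about \emph{adjacent} corners and then treat two cases according to whether the shared edge has a parallel partner. Since solitaire moves are invertible, ``lying in the same orbit'' is an equivalence relation on the corners of $S$; write $c \sim c'$ when $C_{S,c}(P) \rightarrow^* C_{S,c'}(P)$. By Proposition~\ref{prop:AnyTwo} all contours of the fixed pattern $P$ already have the same cardinality, so $\sim$ is the only remaining obstruction. By Lemma~\ref{lem:Orders} the corners of $S$ are exactly the vertices of its convex hull, hence cyclically ordered, with consecutive corners sharing an edge. Because the corners form a single cycle, it suffices to prove $c \sim c'$ whenever $c, c'$ are the two endpoints of a common edge $E$.

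First I would dispose of every edge $E$ that has \emph{no} parallel partner, using only sweep swaps. Recall the circle representation from the proof of Lemma~\ref{lem:DErp}: corners correspond to open arcs and edges to points, and two corners are sweep swappable (hence $\sim$-equivalent by Lemma~\ref{lem:swap}) as soon as some diameter has both endpoints in their open arcs. Let $\alpha$ be the angle of the point representing $E$. If $E$ has no parallel edge, its antipode $\alpha+\pi$ lies in the interior of a single arc, say that of a corner $c''$. A diameter at angle $\alpha-\epsilon$ then has one endpoint in the arc of $c$ and the other, near $\alpha+\pi$, in the arc of $c''$, giving $c \sim c''$; a diameter at angle $\alpha+\epsilon$ gives $c' \sim c''$ in the same way. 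Hence $c \sim c'$, so the endpoints of every non-parallel edge are connected, and the theorem is reduced to crossing pairs of parallel edges.

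The main step, and the real obstacle, is to connect the two endpoints $c, c'$ of an edge $E$ whose parallel partner $E'$ occupies the antipodal point $\alpha+\pi$. Now the rotating-diameter argument breaks: at angle $\alpha$ both endpoints of the diameter hit edge points simultaneously, and the sweep-swap classes split $c$ from $c'$ exactly as in the parallelogram. Here I would give a direct solitaire construction generalizing the line-shifting of Lemma~\ref{lem:movecross} for the square. Choosing (via Lemma~\ref{lem:Orders}) a bi-invariant order whose primary direction is the common outward normal $\vec u$ of $E$ and $E'$, the edges $E$ and $E'$ become the sets of $<$-maximal and $<$-minimal elements of $S$; they are thus ``flat'' in the edge direction $\vec d$ and provide support on both sides in the $\vec u$-direction. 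One then checks that $C_{S,c}(P)$ and $C_{S,c'}(P)$ agree away from a line parallel to $E$, and that this line can be translated step by step along $\vec d$ by $S$-solitaire moves, each move pivoting on the bulk of $P$ that is made available precisely by the two parallel edges — exactly as the vertical line of a cross is shifted in Figure~\ref{fig:move cross}. Verifying that every intermediate configuration is a legal move (that a pivot set of size $|S|-1$ is always present) is where the geometry of the two parallel edges is essential, and I expect this bookkeeping to require the most care.

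Finally I would combine the two cases. Every edge of the convex hull of $S$ connects its two endpoints under $\sim$: edges without a parallel partner by the sweep argument of the second paragraph, and edges with one by the sliding construction of the third. Since the corners form a cycle in which consecutive corners are adjacent, the relation $\sim$ has a single class, so all contours of $P$ lie in one solitaire orbit, proving the theorem.
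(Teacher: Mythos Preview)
Your approach is essentially the paper's: use sweep swaps (Lemma~\ref{lem:swap} and the circle picture of Lemma~\ref{lem:DErp}) wherever possible, and supply a direct sliding construction, generalising the cross-shifting of Lemma~\ref{lem:movecross}, to cross a diameter created by a pair of parallel edges. The organisational difference is that you reduce to \emph{adjacent} corners sharing an edge and then slide to connect the two endpoints of that edge, whereas the paper's sliding step connects a corner of $e$ to the same-side corner of the parallel edge $e'$; both choices let you cross the problematic diameter and combine with Lemma~\ref{lem:DErp} to finish.

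One concrete point to fix in your sketch: for adjacent corners $c,c'$ on an edge $E$ with direction $\vec d$, the sets $P\setminus C_{S,c}(P)$ and $P\setminus C_{S,c'}(P)$ are translates of each other by $\vec d$, so the two contours differ by two strips along $\vec u$ (perpendicular to $E$), not by ``a line parallel to $E$''. The sliding you want is exactly moving one such $\vec u$-strip along $\vec d$ to the position of the other, which is indeed the square mechanism; just correct the stated direction before writing out the move-by-move verification.
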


\begin{proof}
If $S$ has two parallel edges $e$ and $e'$, let $c$ and $c'$ be the extremities of respectively $e$ and $e'$ on the same side (such that the line from $c$ to $c'$ is not a diameter). Then the contours $C$ and $C'$ with respect to $c$ and $c'$ can be exchanged as follows. In $C$, as the other end of $e'$, $b'$, is diametrically opposed to $c$, an $S$-solitaire step can be done there. Follow edge $e'$ from $b'$ to $c'$ and at each step move the point in corner $b'$ to $c$. Then follow $e'$ from $c'$ to $b'$ and move the point in corner $c'$ to $b'$. Repeat at each line until you get $C$.

If looked at in the circle representation, this process allows to exchange corners whose section can be joined by a line parallel to a diameter, thus allow to cross it. Combined with Lemma~\ref{lem:DErp}, it follows that all contours are in the same orbit. \qed
\end{proof}

The process described in the previous proof is the one that we used to move a line along another with the square shape. This is why studying this other example brings dynamics the triangle didn't reveal and in a sense those two examples capture all the processes to convert a contour into another.

\begin{remark}
Aside from the fact that small shapes present nice filling decompositions, another reason why things are easier with small shape is that their solitaire moves can be understand more intuitively. For instance the triangle solitaire can ``rotate a point around another'' while the square solitaire ``slides points along lines''. As the size of the shape grows, it becomes harder both to understand what the movements are and how to get to a configuration where a particular move can be done. 
\end{remark}






\section{Quantitative and algorithmic aspects of convex solitaires}
\label{sec:Algo}

\subsection{Complexity of the identification of the orbit of a pattern}

The characterisation of the orbits through filling and excess provides a polynomial time algorithm to identify to which orbit a given pattern belongs for the triangle and square solitaires. 

The algorithm is the following for the triangle shape:
\begin{alg}[Identify orbit]
\label{algo:ident orbit}
Data: pattern $P$. Result: the canonical representative of the orbit of $P$.
\begin{enumerate}
    \item Fill the pattern.
    \item Divide the filling into triangles $\vec{v_1} + T_{k_1}, \ldots, \vec{v_r} + T_{k_r}$.
    \item Count the excess in each triangle, the canonical representative of the orbit of the pattern is $\bigcup_{i = 1}^r \vec{v_i} + P_{k_i, e(P \cap (\vec{v_i}+T_{k_i}))}$. 
\end{enumerate}
\end{alg}

It can easily be adapted to the square shape, and more generally to shapes on $\Z^2$ that have a nice solitaire theory. This includes all convex $3$ points triangles and $4$ points convex parallelograms by twisting the triangle and square cases. Experiments suggest that some $5$ points shapes such as the trapezoid $\{(0,0), (0,1), (0,2), (1,0), (1,1)\}$ are also nice.


This algorithm has a total time complexity of $O(n^2)$ where $n = |P|$: The first two steps are linear in the number of points in $\varphi(P)$ and $|\varphi(P)| \leqslant \frac{n(n+1)}2$ for the triangle shape so they run in time $O(n^2)$. Step 3 is then linear so the total time complexity of the algorithm is $O(n^2)$.

For shapes other than the triangle, it still holds for $S$-polygons that $|P| \leqslant|C_S(P)|^2$ where $C_S(P)$ is a contour of $\varphi_S(P)$ with respect to $S$ so the bound remains the same. 

\subsection{Number of steps needed to put a pattern in normal form}

One can use the argument in Lemma~\ref{lem:ExtendSuperline} and Lemma~\ref{lem:merging} to successively merge superlines following the process of merging triangles in the filling process. 

A single step of adding a new element into an existing superline takes at most $O(n^2)$ steps: the process of turning one edge into another using the solitaire process clearly takes $O(n^2)$ steps, and adding an element once we have the correct edge takes $O(n)$ steps. Thus, after $O(n^3)$ steps we have turned any pattern of cardinality $n$ into a union of superlines.

It suffices to show that a superline can be turned into the canonical representative $P_{n,k}$ in $O((n+k)^3)$ steps.

For this, the argument in Lemma~\ref{lem:Ordering} can be unraveled to give a practical procedure for fetching the excess one element at a time, at all times keeping the excess lined up, i.e.\ so that at all times the pattern is of the form $P_{n,k} \sqcup R$.

One can use the procedure in Figure~\ref{fig:push excess} to move all excess to the left on the bottom line, and the one in Figure~\ref{fig:push excess right} to move excess from the bottom line to the line above, if it is not yet full. Figure~\ref{fig:fetch more excess} illustrates how one can fetch elements if excess is already lined up.
    
    \begin{figure}[ht]
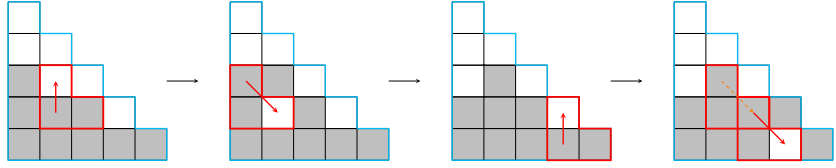

        \centering
        \includestandalone[width=0.9\textwidth]{Figures/Triangle/fig_push_excess_twolines}
        \caption{Pushing excess to the right.} 
        \label{fig:push excess right}
    \end{figure}
    \spacc{}

    \begin{figure}[ht]
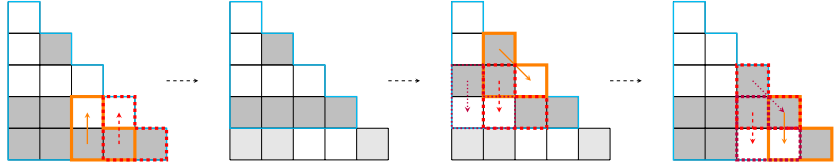

        \centering
        \includestandalone[width=0.9\textwidth]{Figures/Triangle/fig_fetch_excess2}
        \caption{Fetching an excess point with some excess already lined up.}
        \label{fig:fetch more excess}
    \end{figure}


\begin{alg}[Transform a pattern to its canonical form]
Data: pattern $P$. Result: a sequence of solitaire steps that turns $P$ into the canonical representative of its orbit.
\label{alg:path}
\hfill
\begin{enumerate}
    \item Merge the different components and form superlines using the process described in Lemma~\ref{lem:merging} 
    \item Fetch the excess with the process described in Lemma~\ref{lem:Ordering}. 
\end{enumerate}
\end{alg}

\begin{figure}[ht]
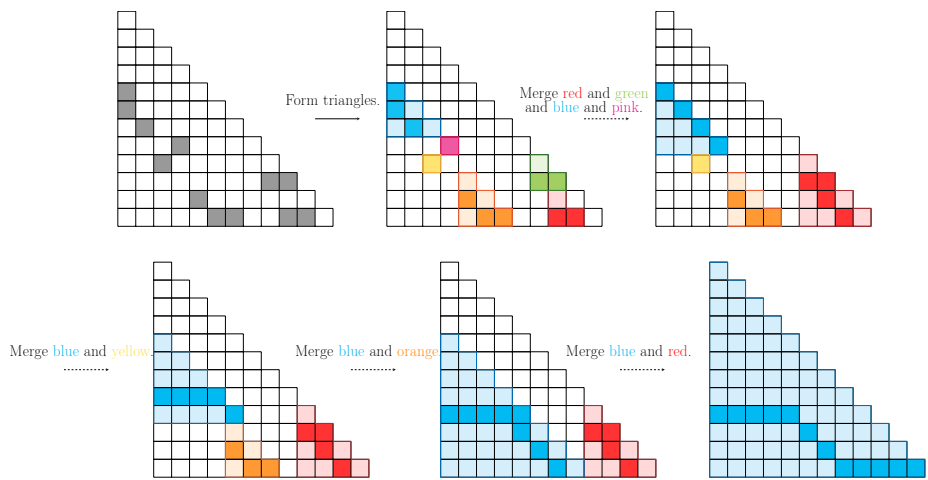

    \centering
    \includestandalone[width =\textwidth]{Figures/Triangle/return}
    \caption{An example of how to get back to the line from a random element of its orbit.}
    \label{fig:return}
\end{figure}

For the square shape, the same algorithm can be applied by replacing superlines by supercrosses and noticing than an excess point can very easily be moved along a line.

Naively implemented, this algorithm takes $O(n^2 (n + k))$, where $k$ denotes the excess: The first step takes $O(n^3)$, since each merging takes $O(n^2)$ time and we merge at most $n$ times. The second step takes $O(n^2 k)$ if we fetch the $k$ many excess points one by one, as each fetch takes $O(n^2)$.

One can in fact achieve $O(n^2 + nk)$. Such a process for the triangle (resp.\ square) is as follows:
\begin{enumerate}
    \item Form a vertical line (resp. a bottom left cross).
    \item Move the line from left to right, and after moving it by one column, use it to move down all the excess point to its left to the bottom of there column. You now have a horizontal line (resp.\ a bottom right cross) with piles of excess on it.
    \item Move the excess points to the right or left to form horizontal lines on top of the bottom one.
\end{enumerate}

Step one requires $O(n^2)$ moves. Step 2 requires again $O(n^2)$ moves for the line movement from left to right and at most $n$ moves to move each excess point down since one point cannot go down by more than $n$ so a total of $O(kn)$ moves. Step 3 takes $O(kn)$ moves for the same reason.

\begin{theorem}
\label{thm:diameter bound}
    For the triangle and the square solitaire, the orbit of a pattern with $n$ elements has diameter at most $O(n^3)$.
\end{theorem}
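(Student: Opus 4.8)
The plan is to obtain the bound as an immediate corollary of the normal-form algorithm developed above, exploiting that solitaire moves are reversible. The key observation is that every pattern in a given orbit has the same cardinality $n$ (solitaire preserves cardinality) and the same canonical representative $c$, namely the union of normal forms $P_{n_i,k_i}$ furnished by Theorem~\ref{thm:orbits_charac} (resp.\ the $L_{a_i,b_i,k_i}$ of Theorem~\ref{thm : square orbit} for the square). Since the orbit graph is undirected, the move-distance $d(\cdot,\cdot)$ is symmetric, so for any two patterns $P,Q$ in the orbit the triangle inequality gives $d(P,Q)\leqslant d(P,c)+d(c,Q)=d(P,c)+d(Q,c)$. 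Thus it suffices to show that $d(P,c)=O(n^3)$ uniformly in $P$, which bounds the diameter by $2\max_R d(R,c)=O(n^3)$.

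To bound $d(P,c)$ I would carry out the two phases of Algorithm~\ref{alg:path} and count moves. In the first phase one forms a union of non-touching superlines (resp.\ supercrosses) by successively merging components as in Lemmas~\ref{lem:ExtendSuperline} and~\ref{lem:merging}. Each elementary operation -- inserting a single element into a superline -- costs an $O(n^2)$ edge rotation followed by an $O(n)$ slide, hence $O(n^2)$; and since merges never increase total line length (Lemma~\ref{lem:fill_shape}), only $O(n)$ such insertions occur in total, giving $O(n^3)$ for this phase. In the second phase one transports the excess of each component into its canonical shape; the cost for a component of size $m_i$ with excess $k_i$ is $O(m_i^2+m_ik_i)=O(m_i^2)$ by the improved procedure described above, and $\sum_i m_i^2\leqslant(\sum_i m_i)^2=n^2$, so this phase costs only $O(n^2)$. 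Summing the two phases yields $d(P,c)=O(n^3)$.

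The argument is identical for the square, replacing superlines by supercrosses, the normal forms $P_{n,k}$ by the $L$-shapes $L_{a,b,k}$, and invoking Lemma~\ref{lem:movecross} in place of the line-transport lemmas; all elementary operations retain their $O(n^2)$ cost. I expect the one genuinely delicate point to be the move-count of the first phase: a naive cascade of merges could in principle re-process the same elements repeatedly, so the crux is to confirm -- as in the analysis preceding the theorem -- that the total number of elementary insertions remains $O(n)$, so that forming the superlines costs $O(n^3)$ and not more. Granting this, the diameter bound follows purely from reversibility and the triangle inequality.
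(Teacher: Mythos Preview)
Your proposal is correct and follows essentially the same approach as the paper: the theorem is obtained directly from the move-count analysis of Algorithm~\ref{alg:path} in the preceding discussion, with Phase~1 (forming superlines/supercrosses) costing $O(n^3)$ via $O(n)$ elementary insertions of cost $O(n^2)$ each, and Phase~2 (normalizing excess) costing $O(n^2)$ via the improved procedure. Your explicit invocation of reversibility and the triangle inequality through the canonical representative is exactly the implicit structure of the paper's argument, and your identification of the ``delicate point'' (that the total number of insertions stays $O(n)$ despite cascading merges) matches the paper's one-line justification that we merge at most $n$ times.
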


This is in fact optimal for the line orbit in the triangle solitaire.

\begin{theorem}
\label{thm:diameter}
    The diameter of the orbit of the line of length $n$, seen as a graph, is $\Theta(n^3)$.
\end{theorem}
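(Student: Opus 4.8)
The upper bound $d(A,B)=O(n^3)$ for any two elements of the orbit is exactly Theorem~\ref{thm:diameter bound}, so the whole content of the statement is the matching lower bound: I must exhibit two fill matrices $A,B\in\orb(L_n)$ whose distance in the solitaire move graph is $\Omega(n^3)$. First I would record the kinematics that any lower bound has to respect. By Lemma~\ref{lem:ContainedInFilling} every configuration in $\orb(L_n)$ is a size-$n$ subset of $T_n$ with filling closure $T_n$, so the entire process stays inside the fixed triangle $T_n$, a region of diameter $\Theta(n)$. A single triangle move displaces exactly one marble by one of the six triangular-lattice vectors $\pm(1,0),\pm(0,1),\pm(1,-1)$, and — this is the essential point — such a move is legal only when the displaced marble shares a translate of $T$ with a second marble (the hole being the third cell). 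Hence a marble can never move unless it currently has a partner in its triangular-lattice neighbourhood; lone marbles are frozen.

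Before attacking $\Omega(n^3)$ I would explain why the naive arguments only give $\Omega(n^2)$, since this dictates the shape of the real proof. Any additive potential $\Phi(P)=\sum_{p\in P}w(p)$ changes by at most its (triangular-lattice) Lipschitz constant per move, and by Kantorovich duality $|\Phi(A)-\Phi(B)|$ is bounded by the transportation distance between $A$ and $B$; with only $n$ marbles inside a region of diameter $O(n)$, this distance is $O(n^2)$. So every additive potential, and equivalently every flow/earth-mover argument, is capped at $\Omega(n^2)$ — which is the correct order for turning one edge of $T_n$ into another (Proposition~\ref{prop:lines}), and is the ``easy'' lower bound alluded to in the text. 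This is precisely where our sparse board differs from the classical $n\times n$ generalised $15$-puzzle: there one has $\Theta(n^2)$ tiles each movable by $\Theta(n)$, so transport alone yields a cubic bound, whereas here transport stops at $n^2$. The extra factor of $n$ must therefore be extracted from the \emph{partner constraint}, which forces a bounded set of marbles to act as couriers that must repeatedly ferry across the triangle to ``unfreeze'' marbles elsewhere, accumulating $\Omega(n^3)$ total travel even though the net displacement (transportation cost) is only $O(n^2)$.

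Concretely, the plan is to choose $A$ and $B$ to be two extremal fill matrices — for instance one in which the marbles are concentrated so as to be poorly partnered near one corner of $T_n$, and one concentrated near the opposite corner, chosen so that routing any marble from the first region to the second requires a courier to traverse $\Theta(n)$ cells, assist, and return, $\Theta(n)$ times. I would then design a non-additive monovariant $\Phi$ (a pairwise functional, e.g.\ a weighted count of ``misordered'' or ``unsupported'' marble pairs relative to the two lexicographic orders on $T_n$) satisfying two properties: (i) $|\Phi(A)-\Phi(B)|=\Omega(n^3)$, and (ii) each legal move changes $\Phi$ by only $O(1)$, the point being that the partner constraint restricts a single move to disturb only a bounded number of the monitored pairs. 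Dividing (i) by (ii) yields $d(A,B)=\Omega(n^3)$, which combined with Theorem~\ref{thm:diameter bound} gives $\Theta(n^3)$. The main obstacle is exactly the construction and verification of this monovariant: any $O(1)$-Lipschitz \emph{additive} invariant is provably bounded by the $O(n^2)$ transportation distance, so $\Phi$ must genuinely be non-additive and must be tied to the partner constraint, and one must check the $O(1)$ per-move bound for all six move vectors simultaneously — equivalently, one must formalise the statement that $\Omega(n)$ marbles are each forced to shuttle $\Omega(n)$ times over distance $\Omega(n)$ on the way from $A$ to $B$.
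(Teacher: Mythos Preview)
Your proposal is a strategy outline rather than a proof: the entire lower bound rests on producing the non-additive monovariant $\Phi$, and you have not constructed it. Worse, the specific shape you suggest --- a pairwise functional $\Phi(P)=\sum_{p,q\in P}w(p,q)$ --- runs into the same ceiling as the additive potentials you correctly rule out. Moving one marble alters $n-1$ of the $\binom n2$ pair terms; if each term is $O(1)$-Lipschitz in each argument the per-move change is $O(n)$, while the total range over $T_n$ is $O(n\cdot n^2)=O(n^3)$, yielding only $n^3/n=\Omega(n^2)$ again. Scaling the weights does not help: any homogeneous rescaling multiplies range and Lipschitz constant equally. So ``pairwise'' is not by itself enough to escape the transportation obstruction; you would need a genuinely higher-order or history-dependent quantity, and you have not supplied one.

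The paper does not use a potential function at all. It builds an explicit recursive family $P_0,P_1,\dots$ (each $P_{k+1}$ obtained from $P_k$ by placing three isolated points near the three corners of a triangle roughly twice as large) and argues directly about any sequence of moves that normalises $P_{k+1}$. The key device is a \emph{marking argument}: start with the lone corner point marked, and whenever an unmarked marble shares a column with a marked one, mark it. Since a marble can only advance into a new column by meeting a partner already there, the first marked marble to reach column $i$ must find an unmarked marble in that column, which then becomes marked; hence by the time the corner point has crossed $\Theta(n)$ columns, $\Theta(n)$ distinct marbles have been in the relevant subtriangle. Repeating for the three corners forces $\Theta(n)$ marbles to migrate between subtriangles $\Theta(n)$ apart, costing $\Theta(n^2)$ moves at that level; summing over the $\Theta(n)$ recursive levels gives $\Omega(n^3)$. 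This is exactly the ``courier'' intuition you describe, but realised through tracking rather than a Lipschitz invariant --- and the explicit recursive pattern is what makes the tracking clean.
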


\begin{proof}
    We are going to build an infinite family of patterns that require $\Omega(n^3)$ steps to get back to the line.
    
    Let $P_0$ be the empty pattern. $P_{n+1}$ is inductively built by extending $P_n$ as described in Figure \ref{fig:far pattern} where the grey triangle is the triangle in which  $P_n$'s orbit is confined.
    \begin{figure}[ht]
        \centering
        \includegraphics[height = 2.5cm]{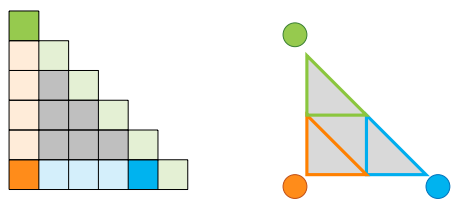}
        \caption{Left: The extension of $P_n$ into $P_{n+1}$. Right: A schematic representation of $P_{n+1}$ used in the proof.}
        \label{fig:far pattern}
    \end{figure}
    \spacc{}
    In pattern $P_{n+1}$, $\Omega(n^2)$ steps are required to fetch the three colored points. 
    
    Indeed, first notice that up to renaming points, the blue point has to move left to fetch the orange one, then the orange one will have to move up to fetch the green one and finally the green one will have to go down to prepare for the next extension. Now let us analyse the movement of the blue point, starting from the first moment it touches another triangle. To move a point left, one need a point in the column at its left. We'll prove that this means that at some point in the process, half of the points of the pattern have to be in the left half of the triangle.
    
    Mark the blue point, and whenever an unmarked point is in the same column as a marked one, mark it. Consider column $i$ from the right, and the first marked point $x$ to reach it. Clearly the point $x$ moves from column $i-1$ to column $i$, and this requires us to have an unmarked point in column $i$ to allow this. This point was unmarked, and is now marked. Therefore, when the blue point has reached column $\frac{3n}2$ (the middle column), there are at least $\frac{3n}2$ marked points, all of which were in the blue triangle in Figure~\ref{fig:far pattern} at some point during the journey of the blue point.
    
    The same reasoning on the orange and green point gives that at some point of the process, between the first moment the blue point moves, until the point where we are ready to move the blue point of the next level, $\frac{3n}2$ points were in the orange triangle and the same amount in the green one. As the pattern $P_n$ only has $3n$ points, at least $\frac{3n}2$ need to be moved from one subtriangle to another, therefore there is a pair of triangles that will share at least $\frac{n}2$ points. Those at least $\frac{n}2$ points will need to be moved by a mean distance of at least $\frac{n}4$, which requires at least $\frac{n^2}8$ steps. Thus pattern $P_n$ requires $\Omega(n^3)$ moves to be transformed into a line.
\qed\end{proof}

\begin{conjecture}
The orbit of a contour of cardinality $n$ of an $S$-polygon has diameter $\Theta(n^3)$
\end{conjecture}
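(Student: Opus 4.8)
The plan is to prove matching $O(n^3)$ and $\Omega(n^3)$ bounds, reusing the architecture of Section~\ref{sec:Algo} but with the contour $C_{S,c}(P)$ playing the role that the line and the cross played for the triangle and the square. For the upper bound the template is exactly Algorithm~\ref{alg:path}: first reduce an arbitrary element of the orbit to a single super-contour by merging, then normalise that contour. The two quantitative inputs I would isolate are a contour-to-contour routing cost and a single-merge cost, both of which I expect to be $O(n^2)$. Since an arbitrary no-excess pattern generating the $S$-polygon decomposes into at most $O(n)$ pieces, at most $O(n)$ merges are needed, each of cost $O(n^2)$, while the final normalisation of a single contour costs only $O(n^2)$; this yields $O(n^3)$ overall. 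The routing cost is already implicit in Theorem~\ref{thm:AllInSameOrbit}: moving between two contours of a fixed $S$-polygon is done by sweep swaps and the parallel-edge process, each of which walks a point along an edge and therefore costs $O(n^2)$ moves.

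The substantive new ingredient for the upper bound is a merging lemma for general convex $S$, generalising Lemma~\ref{lem:merging}: if two super-contours have touching filling closures and their union fills a single $S$-polygon, then their union contains a single super-contour in its solitaire orbit, reachable in $O(n^2)$ moves. Here I would lean on the hypothesis that the ambient filling closure is a genuine $S$-polygon $H$, which is exactly what ``contour of an $S$-polygon'' guarantees and which rules out the pathological non-merging configurations of Section~\ref{sec:Plane}, whose filling closures are \emph{not} $S$-polygons. Concretely, I would fix a corner $c$ of $S$ and use Lemma~\ref{lem:fill in hull} to argue that, in the associated sweep order, the two contours can be amalgamated edge by edge, exactly as the triangle and square cases merge lines and crosses; the bookkeeping is heavier but structurally identical. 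Establishing this cleanly for all convex $S$ is the main obstacle for the upper bound, precisely because merging is the step where general shapes are known to misbehave.

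For the lower bound I would imitate the cascade of Theorem~\ref{thm:diameter}. Fix a corner $c$ of $S$ and an opposite edge, and build $P_{m+1}$ by stacking a fresh $S$-polygon of linear size below $P_m$ in the direction of $c$, arranged so that returning to a single canonical contour forces a sequence of $\Omega(n)$ transfers of points across a bottleneck between consecutive stacked polygons. The core estimate is a pushing invariant generalising the ``mark points as they enter a shared column'' argument: because of Lemma~\ref{lem:fill in hull}, a point can be advanced towards $c$ only if a supporting point already occupies the next lattice line in that direction, so advancing the leading point across half of a polygon forces $\Omega(n)$ points to accumulate on that side. Each such accumulation moves points a mean lattice distance $\Omega(n)$, costing $\Omega(n^2)$ moves per level, and with $\Omega(n)$ levels this gives the desired $\Omega(n^3)$.

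The chief difficulty, beyond the merging lemma, is making the pushing invariant precise for an arbitrary corner: for the triangle the relevant ``line to the left'' is canonical, whereas for general $S$ the supporting direction depends on the chosen corner and the local edge geometry. I expect that the right formulation is in terms of the bi-invariant order realising $c$ as the minimum (Lemma~\ref{lem:Orders}): advancing a point downward in this order requires an immediately smaller supporting point, and counting marked points by the order gives the $\Omega(n)$ concentration. Combining the generalised merging lemma with this generalised pushing invariant would yield $\Theta(n^3)$; the whole argument is contingent on the shape admitting a sufficiently nice merging behaviour, which is exactly the gap that keeps the statement a conjecture rather than a theorem.
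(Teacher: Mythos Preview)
This statement is a \emph{conjecture} in the paper, not a theorem; there is no proof to compare against. The paper proves the $O(n^3)$ upper bound only for the triangle and the square (Theorem~\ref{thm:diameter bound}) and the $\Omega(n^3)$ lower bound only for the triangle (Theorem~\ref{thm:diameter}). Immediately after stating the conjecture, the paper proposes a candidate far pattern for the square (Figure~\ref{fig:far square}) and explicitly says ``We have not proved that this is optimal, but suspect that it is. The proof does not seem to directly generalize, however.''

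Your plan is a sensible blueprint and you are honest about its status: you correctly flag that the merging lemma for general convex $S$ is the crux of the upper bound, and that the pushing invariant needs a shape-independent formulation for the lower bound. Two points of comparison with what the paper actually contains. First, your upper-bound scheme presupposes that every element of the contour's orbit decomposes into $O(n)$ mergeable pieces; for general $S$ the paper does not establish a normal form or even a nice solitaire theory, so this decomposition is itself part of the open problem, not just the cost of a single merge. Second, for the lower bound the paper's own assessment is more pessimistic than yours: it does not claim the marking argument generalises via a bi-invariant order, it says the proof ``does not seem to directly generalize''. Your order-based reformulation is a reasonable line of attack, but the step ``advancing a point downward in this order requires an immediately smaller supporting point'' is exactly what breaks once $|S|>3$, since a single solitaire move at $g$ can rely on $|S|-1$ points spread over $gS$ rather than on one point in an adjacent column. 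So both halves of your sketch identify the right obstacles but do not overcome them; that is consistent with the statement remaining a conjecture.
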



A good candidate for patterns far from a contour are the ones built by adding points around a pattern so that each point extends the filling closure of the pattern by a line, and adding the points on successive sides so that the contour has to be changed between the mergings. Such a pattern for the square is shown in Figure ~\ref{fig:far square}. The natural method to put it in normal form takes $\Theta(n^3)$ moves. We have not proved that this is optimal, but suspect that it is. 
The proof does not seem to directly generalize, however. 

\begin{figure}[ht]
    \centering
    \includegraphics[height = 3cm]{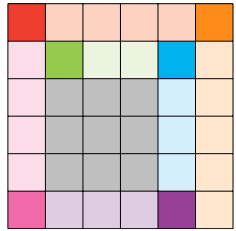}
    \caption{A pattern we conjecture is at $\Theta(n^3)$ steps from the cross. $P_n$ (gray) is extended into $P_{n+1}$ by adding the green, blue, purple and pink points, red and orange are here to show how it keeps growing.}
    \label{fig:far square}
\end{figure}

\subsection{Distance between two contours}

\begin{proposition}
    For any shape $S$, the transformation from one contour to another cannot be done in $o(n^2)$ steps, for some $S$-polygons of cardinality $n$.
\end{proposition}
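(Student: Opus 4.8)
The plan is to prove a transportation-type lower bound: every solitaire move displaces only a bounded amount of ``mass'', whereas two suitably chosen contours differ by a quadratic amount of displacement. Throughout, $n$ denotes the (common, by Proposition~\ref{prop:AnyTwo}) cardinality of a contour, in keeping with the convention of the preceding conjecture; the polygons I use will have $\Theta(n^2)$ cells and $\Theta(n)$-point contours.

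First I would record the displacement observation. An $S$-solitaire move sends $P$ to $P'$ where, for some $g$, the two sets agree outside $gS$ and differ only in that one point $x \in gS$ is replaced by another point $y \in gS$; hence $y-x \in S-S$. Consequently, for any linear functional $\ell(z)=\langle \vec w, z\rangle$ on $\Z^2$, the quantity $\Phi_\ell(P)=\sum_{z\in P}\ell(z)$ changes by at most $D_\ell:=\max_{\delta\in S-S}|\langle \vec w,\delta\rangle|$, a constant depending only on $S$ and $\vec w$. Therefore any sequence of solitaire moves turning a contour $C$ into a contour $C'$ has length at least $|\Phi_\ell(C)-\Phi_\ell(C')|/D_\ell$ (equivalently, $\ell$ is a dual certificate for the $1$-Wasserstein distance between $C$ and $C'$, and the number of moves is at least this distance divided by $\mathrm{diam}(S)$). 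It thus suffices to exhibit, for each shape $S$, an $S$-polygon together with two contours $C,C'$ and a functional $\ell$ for which $|\Phi_\ell(C)-\Phi_\ell(C')|=\Omega(n^2)$.

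For the construction I would take $P$ to be a large dilate of $\mathrm{conv}(S)$ (scaling every edge by a factor $k$), which is an $S$-polygon whose contours have $n=\Theta(k)$ points. Fix a generic direction $\vec w$ (orthogonal to no edge of $S$) and let $c=\arg\max_{S}\ell$ and $c'=\arg\min_{S}\ell$ be the two extreme corners, with $V,V'$ the corresponding (opposite) vertices of $P$. The key geometric fact is that, up to an $O(1)$ boundary layer, the contour $C_{S,c}(P)$ consists exactly of the boundary points of $P$ whose outward normal does \emph{not} lie in the normal cone of $c$, i.e.\ of all edges of $P$ except the two meeting at $V$ (a point $x$ fails to satisfy $(x-c)+S\subset P$ precisely when $\langle\vec\nu,s-c\rangle>0$ for some $s\in S$, where $\vec\nu$ is the outer normal), and symmetrically for $c'$. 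Writing $A_V$ for the two edges meeting at $V$, this gives $\Phi_\ell(C_{S,c}(P))-\Phi_\ell(C_{S,c'}(P))=\Phi_\ell(A_{V'})-\Phi_\ell(A_V)+O(n)$. Since $V$ is the $\ell$-maximal and $V'$ the $\ell$-minimal vertex, the two edge-segments carry $\Theta(n)$ points whose $\ell$-values sit at opposite ends of an interval of length $\Theta(n)$, so this difference is $\Theta(n^2)$. The explicit prototypes confirm the picture: for the triangle the three contours are the three edges and a generic functional yields gap $\Theta(n^2)$ (e.g.\ left edge versus diagonal), and for the square the four contours are the L-shapes and $\ell(a,b)=a+b$ gives gap $\Theta(n^2)$ between the lower-left and upper-right L's.

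The hard part is making the leading-order estimate uniform over all convex $S$. In the limit $k\to\infty$ the quantity $\Phi_\ell(A_{V'})-\Phi_\ell(A_V)$ equals $c_S\,k^2+o(k^2)$, where $c_S$ is the difference of the $\ell$-weighted lengths of the two boundary arcs of $\mathrm{conv}(S)$ incident to the $\ell$-minimal and the $\ell$-maximal vertex; the real obstacle is to certify $c_S\neq 0$. I expect to argue this from the fact that these two arcs lie at strictly opposite ends of the $\ell$-range (the arc at $V$ sits at $\ell$-values bounded below by that of its neighbouring vertices, the arc at $V'$ bounded above), so one arc is $\ell$-weighted strictly above the other and no cancellation to lower order can occur. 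A safe fallback, should the generic direction misbehave for some degenerate $S$, is to choose $\vec w$ orthogonal to a single edge of $S$: then an entire contour-edge of $P$ is pinned at the extreme level of $\ell$, forcing a clean gap exactly as in the triangle computation. Either route reduces the statement to the displacement inequality of the second paragraph.
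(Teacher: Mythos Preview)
Your approach and the paper's share the same opening move: one solitaire step displaces a single point by at most $\mathrm{diam}(S)$, so the number of steps is bounded below by a transport cost between $C$ and $C'$. The substantive difference is in which transport quantity you lower-bound.

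You work with a single linear potential $\Phi_\ell(P)=\sum_{z\in P}\ell(z)$, i.e.\ one Kantorovich dual certificate. This is equivalent to showing that the two contours have centers of mass differing by $\Theta(n)$, which is the constant $c_S\neq 0$ you isolate. You correctly flag this as the hard step, and neither of your two sketches closes it. The ``opposite ends of the $\ell$-range'' heuristic only gives disjoint $\ell$-ranges for $A_V$ and $A_{V'}$ when the polygon has at least five vertices; for triangles and quadrilaterals the ranges overlap, and even when they do not, equal cardinality $|A_V|=|A_{V'}|$ alone does not prevent cancellation, because the four edges involved can have very different lattice-point densities. The edge-orthogonal fallback has the same issue. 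I believe $c_S\neq 0$ is true, but it needs a real argument (e.g.\ a monotone-coupling or majorization statement along the two boundary arcs), not just the observation that $V$ and $V'$ are extremal.

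The paper works on the primal side with the full matching cost $\Delta(A,B)=\min_{f}\sum_a d(a,f(a))$ rather than a linear dual, and the payoff is precisely that no barycenter comparison is needed. The extra observation is that an optimal bijection may be taken to fix $C\cap C'$ (by the triangle inequality), so $\Delta(C,C')=\Delta(C\setminus C',\,C'\setminus C)$. Since $C$ and $C'$ agree on all of $\partial P$ except the edges at $V$ and at $V'$, one is comparing two sets of $\Theta(n)$ points; the paper then simply arranges the polygon family so that non-consecutive edges are at Euclidean distance $\geqslant n/c$ (and consecutive ones are handled as in the triangle case), which forces $\Delta\geqslant \Theta(n)\cdot\Theta(n)$ by the crudest possible estimate. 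This is strictly stronger than any single linear dual and uniform in $S$ without further work.

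So: your route is genuinely different and more elegant when it works (one scalar invariant instead of a matching), but as written it has a gap at exactly the point you identified. The paper's route trades that elegance for robustness by staying on the primal side and exploiting that the two contours share most of their points.
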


\begin{proof}
    We define a metric on the set of finite subsets of $\mathbb{Z}^2$ with a given cardinality by $\Delta(A, B) = \min_{f : A \sim B} \sum_{a \in A} d(a, f(a))$ where $d$ is the Euclidean distance on $\mathbb{Z}^2.$ As any move with the solitaire moves exactly one point, by a distance at most the diameter of $S$, the number of moves needed to transform pattern $A$ into pattern $B$ is $\Omega(\Delta(A, B))$.
    
    Let us first consider the triangle. Let $n \in \mathbb{N}$, denote $H_n = [|0; n-1|] \times {0}$ and $V_n = {0} \times [|0, n-1|]$ respectively the horizontal and the vertical line of length $n$ with origin 0. $$\Delta(H_n, V_n) = \min_{f : H_n \sim V_n} \sum_{x \in H_n} d(x, f(x)) = \min_{\sigma \in \mathfrak{S}} \sum_{k = 0}^{n-1} \sqrt{k^2 + \sigma(k)^2} \geqslant \sum_{k = 0}^{n-1} k = \frac{n(n-1)}2$$.

Let $S$ be now a general convex shape, and suppose $S$ has at least 4 sides (the case of 3 edges being similar).
    Notice that because of the triangular inequality, any bijection that reaches $\Delta(A, B)$ must fix every point of $A\cap B$. Consider a family of $S$-polygons $P_n$ for $n \in I \subset \mathbb{N}$ infinite such that the contours of $P_n$ have size $n$ and there is a constant $c$ such that every edge of $P_n$ has length at least $\frac{n}c$ and two opposite edges have distance at least $\frac{n}c$. One can be built by choosing any $S$-polygon, then taking the family of polygons obtained by multiplying the lengths of its edges by an integer.
    
    Let $C$ and $C'$ be two contours of $P_n$. $P_n$ is convex, so the hypothesis on the length and distance of edges implies that any two non consecutive edges of $P$ are at distance at least $\frac{n}{c}$ of each other.
    \begin{itemize}
        \item If $C$ and $C'$ differ by exactly one edge, then there has to be three consecutive edges of $P_n$, $e, f, e'$ such that $e \in C \setminus C'$, $e' \in C' \setminus C$ and $f\notin C \cup C'$. Since $e$ and $e'$ are not consecutive so $\Delta(C, C') \geqslant \Delta(e, e') \geqslant d(e, e')\min(|e|, |e'|) \geqslant \left(\frac{n}{c}\right)^2$. 
        \item Else, $C$ and $C'$ differ by two edges, $e, f \in C$ and $e', f' \in C'$. 
        If they are not consecutive, the inequality is obtained just as above. 
        \item If they are consecutive, for example $f$ follows $f'$, then by the triangular case $\Delta(f, f') \geqslant \alpha\left(\frac{n}{c}\right)^2$ with $\alpha > 0$, by the remark above $\Delta(e, f'), \Delta(e', f) \geqslant \left(\frac{n}{c}\right)^2$ and combining both (depending on whether $e$ and $e'$ are consecutive or not), $\Delta(e, e') \geqslant \alpha'\left(\frac{n}{c}\right)^2$. Thus $\Delta(C, C') \geqslant \Delta(e\cup f, e'\cup f') \geqslant \alpha'' \left(\frac{n}{c}\right)^2$
    \end{itemize}
\qed\end{proof}

This shows in particular that the paths of Proposition~\ref{prop:lines} have the minimal length up to a constant:

\begin{corollary}
    For the triangle solitaire, the distance between two distinct edges of $T_n$ is $\Omega(n^2)$.
\end{corollary}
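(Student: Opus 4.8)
The plan is to deduce the corollary directly from the preceding proposition, whose proof already contains the essential estimate, and then to close the gap to \emph{all} pairs of edges using the rotational symmetry of the triangle solitaire.

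First I would recall that the three edges of $T_n$ are exactly its contours: the horizontal edge $H_n = L_n$, the vertical edge $V_n = \{0\}\times\{0,\ldots,n-1\}$, and the diagonal edge $D_n$, any two of which meet at a common corner of $T_n$. The metric bound from the proof of the preceding proposition applies verbatim: since each triangle move displaces a single point by Euclidean distance at most $\mathrm{diam}(T)$, a constant, the number of solitaire moves required to pass from a pattern $A$ to a pattern $B$ is at least $\Delta(A,B)/\mathrm{diam}(T)$, where $\Delta(A,B)=\min_{f:A\sim B}\sum_{a\in A}d(a,f(a))$. It therefore suffices to show $\Delta(E,E') = \Omega(n^2)$ for any two distinct edges $E,E'$ of $T_n$. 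For the pair $(H_n,V_n)$ this is precisely the computation carried out above: a bijection $H_n\sim V_n$ is a permutation $\sigma$ sending $(k,0)$ to $(0,\sigma(k))$, and $\sqrt{k^2+\sigma(k)^2}\geq k$ gives $\Delta(H_n,V_n)\geq \sum_{k=0}^{n-1}k = n(n-1)/2 = \Omega(n^2)$.

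To cover the two remaining pairs uniformly I would invoke the order-$3$ symmetry $M=\left(\begin{smallmatrix}-1&-1\\1&0\end{smallmatrix}\right)$. One checks that $M(T)$ is a translate of $T$, so $M$ sends translates of $T$ to translates of $T$ and is thus an automorphism of the triangle solitaire carrying solitaire moves to solitaire moves; hence graph distance is invariant under applying $M$ to both arguments, and (for the same reason) under translation. A direct computation gives $M(T_n)=T_n-(n-1,0)$ and shows that $M$ cyclically permutes the three edge directions, so it sends $H_n,V_n,D_n$ to translates of $D_n,H_n,V_n$. Consequently the three pairwise distances between edges of $T_n$ all coincide, and each equals $\Omega(n^2)$ by the previous paragraph, which is the claim.

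The main point to check carefully is that $M(T_n)$ is genuinely a \emph{translate} of $T_n$ (not merely a congruent copy), so that distances between edges of $T_n$ are compared to distances between edges of the same triangle $T_n$; the computation $M(T_n)=T_n-(n-1,0)$ settles this. If one prefers to bypass the symmetry, the bound $\Delta(E,E')=\Omega(n^2)$ can be obtained directly for every pair: two distinct edges leave their common corner $c$ along directions whose angle lies in $[45^\circ,90^\circ]$, so for the $\Theta(n)$ points $x\in E$ with $d(x,c)\geq n/2$, any image $f(x)$ satisfies $d(x,f(x))=\Omega(n)$ -- by the reverse triangle inequality if $f(x)$ lies within $n/4$ of $c$, and by the law of cosines (with $\cos\theta\leq\tfrac{\sqrt2}{2}$) otherwise -- forcing $\Delta(E,E')=\Omega(n^2)$.
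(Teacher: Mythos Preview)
Your proof is correct and follows the same approach as the paper: the corollary is deduced from the preceding proposition, whose proof already contains the key estimate $\Delta(H_n,V_n)\geq n(n-1)/2$ together with the observation that a single triangle move changes $\Delta$ by at most a constant. The paper simply states that the corollary follows, leaving the extension to the other two edge pairs implicit; you make this explicit via the rotational automorphism $M$ (and even supply a direct alternative), which is a welcome bit of extra care but not a genuinely different route.
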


In particular, while we would like to think of the lines as the ``center'' of the line orbit, they are not unless we consider $O(n^2)$ to be a small distance (which makes some sense because the diameter $\Omega(n^3)$ is much larger). A better candidate for that would be the patterns that are invariant by rotation by 120 degrees. Those patterns are depicted in Figure~\ref{fig:triang center} for $n = 3,4$. We added the lines directed by $-1$ to emphasize the 120 degree symmetry.

\vspace{-0.3cm}
\begin{figure}[ht]
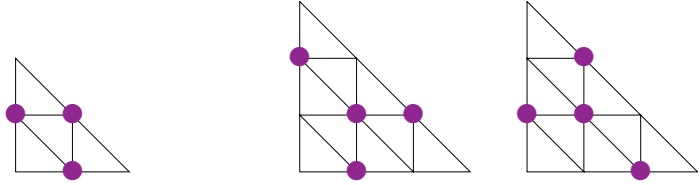

    \centering
    \includestandalone[height=2.5cm]{Figures/Triangle/center}
    \caption{The symmetric patterns of $L_3$ on the left and $L_4$ on the right.}
    \label{fig:triang center}
\end{figure}

\section{A few results on the free group}
\label{sec:Free}

We denote by $F_k$ the free group on $k$ generators and represent it by its Caley graph, i.e. the graph with vertex set $F_k$ and edges between two elements $x$ and $y$ if and only if there is a generator $a$ such that $x = ay$ or $y = ax$. It is the $2k$-regular infinite tree. 

\subsection{Convex shapes on the free group have finite filling}

A shape $S$ on the free group is \emph{connected} if it is a connected component of the Caley graph. 

In what follow, we take $k = 2$ for the sake of simplicity but the results also hold for greater $k$. 
We denote $a$ and $b$ the generators of $F_2$.

\begin{theorem} If $S$ is connected, then $S$ has finite filling if and only if $|S| = 1$ or $S$ is not contained in a right coset of $\langle a \rangle$ or $\langle b \rangle$.
\end{theorem}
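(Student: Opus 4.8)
The plan is to prove the two implications separately, the reverse (``only if'') direction being the short one. Throughout, recall that finite filling is invariant under translating $S$, so we may assume $e \in S$, and that the coset condition has a clean geometric meaning: since $S$ is connected (a finite subtree of the Cayley tree), $S$ is contained in a right coset of $\langle a\rangle$ or $\langle b\rangle$ exactly when all of its edges have the same color, i.e.\ $S$ uses only one generator. Thus the hypothesis ``$|S|\ge 2$ and $S$ is not contained in such a coset'' is equivalent to ``$S$ contains both an $a$-edge and a $b$-edge''; note that this forces $|S|\ge 3$.

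For the ``only if'' direction I would argue by contraposition. If $|S| \ge 2$ and $S \subset \langle a \rangle g$ (or $\langle b\rangle g$), then taking the cyclic group generated by $a$, which has infinite order, exhibits $S$ as a linear shape in the sense of the earlier definition. The lemma on linear shapes then produces a finite set with infinite $S$-filling closure, so $S$ does not have finite filling. (Connectedness is not even needed here.)

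The substance is the ``if'' direction. Abelianize: let $\sigma_a,\sigma_b : F_2 \to \Z$ be the exponent-sum homomorphisms and $\bar S = \{(\sigma_a(s),\sigma_b(s)) \mid s\in S\}$. Because $S$ contains edges of both colors, $\bar S$ is not contained in an affine line, so its convex hull is a genuine polygon. For each edge of this polygon, with integer outward normal $(p,q)$, the homomorphism $\ell = p\sigma_a + q\sigma_b$ attains its maximum over $S$ at (at least) the two endpoints of that edge. The key point is that \emph{any} homomorphism $\ell : F_2 \to \Z$ attaining its maximum on $S$ at two or more points yields a filling-closed half-space $\{x : \ell(x)\le c\}$: in a filling move at $g$ the values are $\ell(g)+\ell(s)$, the single missing point is some $gs_0$, and every other $gs$ lies in $P$; since the maximum is attained at least twice, some other maximizer $gs_1$ is present with $\ell(g)+\ell(s_1)\le c$, whence $\ell(gs_0)\le \ell(gs_1)\le c$, so the filled point never escapes. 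Taking one such barrier per polygon edge, with each constant at least the corresponding maximum over $P$, confines the abelianization of $\varphi(P)$ to a bounded lattice polygon.

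The main obstacle is that abelianization-boundedness does not by itself give finiteness, since the kernel of $(\sigma_a,\sigma_b)$ is the infinite commutator subgroup (for instance the balanced words $[a,b]^n$ all abelianize to $0$). To finish I would also bound the filling closure in the tree metric. One handle is that every newly filled point lies within $\mathrm{diam}(S)$ of the convex hull (spanned subtree) of the points already present: the at-least-two present points of the relevant translate $gS$ span a geodesic lying in that hull, and the whole translate, being an isometric copy of the subtree $S$, stays within $\mathrm{diam}(S)$ of it. It then remains to rule out that the hull keeps growing along balanced, abelianization-neutral directions; equivalently, assuming $\varphi(P)$ infinite, König's lemma yields a geodesic ray toward an end $\xi$, and I would seek a filling-closed half-tree separating a tail of this ray from $P$, built from the branching of $S$ (a two-point-maximum phenomenon transverse to $\xi$). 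Justifying the existence of such a transverse barrier — reconciling it with the fact that a single half-tree can never bound a set in a tree on its own — is where I expect the real work to lie.
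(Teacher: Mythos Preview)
Your ``only if'' direction is fine. The ``if'' direction, however, does not close. The abelianization barrier is set up correctly and does bound $(\sigma_a,\sigma_b)(\varphi(P))$ inside a finite lattice polygon, but as you yourself note, this leaves the entire commutator-subgroup direction unbounded. Your final paragraph is a sketch of a hope rather than an argument: you posit a half-tree barrier transverse to a K\"onig ray without constructing one, and in the same breath observe that a single half-tree cannot be filling-closed on its own. Nothing in the proposal bridges this, so the proof is genuinely incomplete.

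The paper's argument bypasses abelianization entirely and exploits the tree structure directly. Fix any $b$-edge $e$ of the Cayley tree with $P$ entirely on one side; call the empty side $E$. While $E$ holds fewer than $|S|-1$ filled nodes, any filling move that adds a node in $E$ must use a translate $gS$ straddling $e$ (it needs $|S|-1$ present nodes, not all available in $E$, and $gS$ is connected). The edge of $S$ landing on $e$ is then a $b$-edge of $S$ and determines $g$; distinct added nodes force distinct such $g$'s, hence distinct $b$-edges of $S$. Since the subtree $S$ has only $|S|-1$ edges in total, pushing $|S|-1$ nodes into $E$ would require every edge of $S$ to be a $b$-edge, i.e.\ $S$ a $b$-interval. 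The symmetric argument with an $a$-edge forces $S$ to be an $a$-interval. If $S$ is neither, at most $|S|-2$ nodes can ever cross any single edge not already straddled by $P$, which confines $\varphi(P)$ to a bounded neighborhood of the geodesic hull of $P$. This edge-counting argument is exactly the ``transverse barrier'' you were searching for, and it comes for free from connectedness --- no $\Z$-valued homomorphism is needed.
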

\begin{proof} 
    The forward implication is straightforward.

    For the backward implication, consider a finite initial pattern $P$. Take any $b$-edge $e$ in the Cayley graph, such that $P$ has nodes only on one side of it. Let $E \subset F_2$ be the empty side. Suppose we are able to get infinitely many nodes into $E$. To do this you have to first position $S$ so that you have exactly one node in $E$ in $gS$, then you have to position $S$ to get exactly one new node in $E$, and so on. Observe that:
\begin{enumerate}
    \item As long as you have less than $|S|-1$ nodes in $E$, to add a node in $E$ you have to position $S$ so that you use the edge $e$.
    \item All the edges of $S$ that you so use have to be distinct, when you add distinct nodes.
\end{enumerate}
    All in all, $S$ has to have at least $|S|-1$ vertical edges. So it is just a vertical (= $b$-directional) interval.

    Similarly, unless $S$ is a horizontal interval you cannot send information to an empty side of an $a$-edge.

    So if $S$ is neither, then you can at most fill the geodesic closure of $A$, and then at most $|S|-1$ nodes away from its nodes.
\end{proof}

We believe that the free group has the same ``finitely filling shapes'' as $\Z^d$, i.e. those are the non-linear ones, but we do not have a proof yet.

\begin{conjecture}
    Let $S \subset F_2$ be a shape, $S$ has finite filling if and only if it is not contained in a coset of a cyclic subgroup $\langle g \rangle$.
\end{conjecture}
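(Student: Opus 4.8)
The plan is to dispatch the two directions separately: the forward (``only if'') implication is an immediate consequence of the earlier analysis of linear shapes, while all the work goes into the backward implication, which I will prove by exploiting the tree geometry of $F_2$ and a counting argument on how often the filling process can cross a single edge. Throughout I use that the Cayley graph of $F_2$ is a tree, that the translation $x \mapsto gx$ used in the solitaire and filling acts by graph automorphisms preserving the label ($a$ or $b$) and direction of each edge, and that translates of connected sets are connected (so the induced subgraph on any $gS$ is a subtree, with exactly $|gS|-1$ edges).

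\textbf{Forward direction.} I argue by contraposition. Suppose $|S| \geq 2$ and $S$ is contained in a coset of $\langle a \rangle$ (the $\langle b\rangle$ case is symmetric). Then $S$ is \emph{linear} in the sense defined earlier, since a coset of $\langle a \rangle$ has the form $a'\langle a \rangle c'$ with $a$ of infinite order. The earlier lemma on linear shapes then furnishes a finite set with infinite $S$-filling closure, so $S$ does not have finite filling. As singletons trivially have finite filling, this shows that finite filling forces $|S|=1$ or that $S$ avoids every coset of $\langle a \rangle$ and $\langle b \rangle$.

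\textbf{Backward direction: the crossing bound.} Now fix a connected $S$ with $|S|\ge 2$ not contained in any coset of $\langle a\rangle$ or $\langle b\rangle$, and a finite pattern $P$. The separation property I rely on is that deleting one edge $e$ splits $F_2$ into two components; if a connected set $gS$ meets both components then, since the unique tree-path between the two sides runs through $e$, the set $gS$ contains both endpoints of $e$ (it \emph{uses} $e$), and $g^{-1}e$ is an edge of $S$ of the same label and direction as $e$. Fix a $b$-edge $e$ and call the side of $F_2 \setminus\{e\}$ disjoint from $P$ the empty side $E$. I claim at most $N_b$ points ever enter $E$, where $N_b$ is the number of $b$-edges of $S$. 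Indeed: (i) while the current pattern has fewer than $|S|-1$ points in $E$, a filling move adding a point of $E$ cannot have its translate contained in $E$ (that would require $|S|-1$ occupied points already in $E$), so the translate straddles $e$ and uses it; and (ii) a straddling move at $g$ aligns the $b$-edge $g^{-1}e$ of $S$ with $e$ respecting direction, and since $F_2$ acts freely, $g$ is uniquely determined by that edge of $S$ (the orientation being forced by the $b$-direction), so distinct straddling moves consume distinct $b$-edges of $S$ and each translate, once filled, is never reusable. Hence straddling moves add at most $N_b$ points to $E$; and since $S$ is a subtree with $|S|-1$ edges not all of which are $b$-edges (as $S$ is not contained in a $\langle b\rangle$-coset), we have $N_b \le |S|-2 < |S|-1$, so the threshold in (i) is never exceeded and \emph{every} point entering $E$ does so by a straddling move. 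Thus $|\varphi_S(P)\cap E| \le N_b \le |S|-2$, and by confluence this bound is independent of the filling order. The symmetric statement holds across $a$-edges.

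\textbf{Globalization.} Finally, let $H$ be the convex hull of $P$ in the tree, i.e.\ the smallest subtree containing $P$; it is finite since $P$ is finite, and it has only finitely many boundary edges (edges with exactly one endpoint in $H$), as every vertex has degree $4$. The complement $F_2 \setminus H$ is the disjoint union, over boundary edges $e$, of the empty sides $E_e$, each disjoint from $P$. Applying the crossing bound to each boundary edge gives $|\varphi_S(P) \cap E_e| \le |S|-2$, so $|\varphi_S(P)| \le |H| + (\text{number of boundary edges})\cdot(|S|-2) < \infty$, proving finite filling.

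\textbf{Main obstacle.} I expect the delicate point to be the precise form of observation (ii): verifying that a straddling translate is uniquely pinned down by the $b$-edge of $S$ it aligns with $e$ (the freeness-and-direction bookkeeping that ensures each undirected $b$-edge is consumed at most once, so the bound is $N_b$ rather than $2N_b$), together with cleanly converting the per-edge count into global finiteness while respecting confluence. The combinatorial glue is the identification of ``not contained in a coset of $\langle a\rangle$ or $\langle b\rangle$'' with ``$N_a, N_b \le |S|-2$'', which is exactly what drives both the $b$-edge and $a$-edge crossing bounds below the threshold $|S|-1$.
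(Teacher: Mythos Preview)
The statement you are attempting to prove is listed in the paper as a \emph{conjecture}, with the authors explicitly writing ``we do not have a proof yet.'' What the paper \emph{does} prove, in the theorem immediately preceding this conjecture, is the special case where $S$ is connected---and that proof is essentially identical to yours (the same edge-crossing count: while fewer than $|S|-1$ points sit on the empty side, every point-adding move must straddle the separating edge, and each such move consumes a distinct $b$-edge of $S$, of which there are at most $|S|-2$).

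Your argument relies on connectivity in an essential way. You state at the outset that ``translates of connected sets are connected (so the induced subgraph on any $gS$ is a subtree, with exactly $|gS|-1$ edges),'' and in the crossing bound you use that ``if a connected set $gS$ meets both components then\ldots the set $gS$ contains both endpoints of $e$.'' For disconnected $S$ this inference fails: a translate $gS$ can have points on both sides of an edge $e$ without containing either endpoint of $e$ (e.g.\ $S=\{e,a,b^2\}$ straddles the $b$-edge at the identity without using it), so the bookkeeping that pins each straddling move to a distinct $b$-edge of $S$ collapses, and with it the bound $N_b\le |S|-2$. Your forward direction is similarly tailored to the connected theorem (cosets of $\langle a\rangle$ and $\langle b\rangle$) rather than to arbitrary cyclic $\langle g\rangle$, though that gap is easily closed by invoking the general lemma on linear shapes. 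In short: you have reproduced the paper's proof of the connected case, not resolved the open conjecture for general shapes.
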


\subsection{Orbits of contours in the free group}

The free group admits many bi-invariant orders, indeed a whole Cantor set of orders \cite{DeNaRi14}. The best-known one is the one dubbed Magnus order in \cite{Sa22}. We omit the definition here,\footnote{In brief, the order comes from the lexicographic order, where the sequences used for the ordering comes from the upper central series, i.e.\ nilpotent quotients of the group.} but the Figure~\ref{fig:free order} gives the order of the ball of radius $2$.
This order can be used to perform a sweep swap between two contours of any shape.

\begin{figure}[ht]
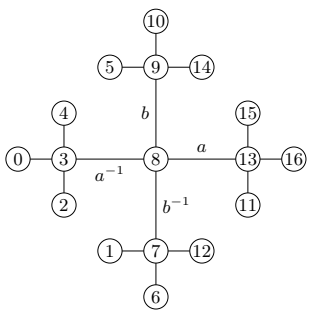

    \centering
    \includestandalone[height=4cm]{Figures/General/freeorder}
    \caption{The Magnus order on the ball of radius $2$ of the free group on two elements, with $0$ as the minimum and $16$ as the maximum}
    \label{fig:free order}
\end{figure}

For example, let $S = \{e, a, b, a^{-1}, b^{-1}\}$. Then the maximal element of $S$ is $a$ and the minimal element is $a^{-1}$. In the sweep swap from the $a$-contour to the $a^{-1}$-contour, we go down this order, and swap the maximal element of $gS$ with its minimal element whenever possible. This is shown in Figure~\ref{fig:free swap}. 

\begin{figure}[ht]
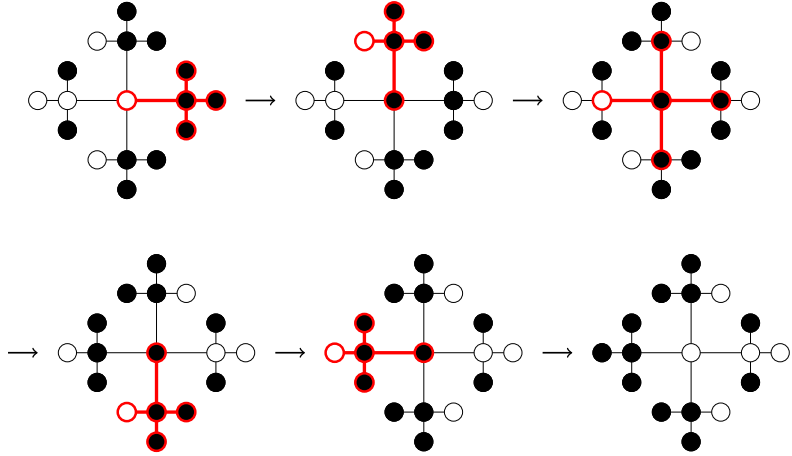

    \centering
    \includestandalone[height=6cm]{Figures/General/freeswap}
    \caption{Sweep swap from the $a$-contour to the $a^{-1}$-contour.}
    \label{fig:free swap}
\end{figure}

In particular, in any TEP subshift with the shape $S$, following the solitaire gives a bijection between the contents of the two contours.

Since the free group has many orderings, the following seems likely.

\begin{conjecture}
    For any shape $S$ on $F_k$ and any $S$-filling closed set $P$, all the $S$-contours of $P$ computed with respect to a corner, are in the same $S$-solitaire orbit. 
\end{conjecture}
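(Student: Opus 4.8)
The plan is to reduce the conjecture to a purely order-theoretic statement about corners, and then to exploit the abundance of bi-invariant orders on $F_k$. By the remark following Lemma~\ref{lem:swap}, all contours of $P$ lie in a single solitaire orbit as soon as the corners of $S$ form one equivalence class under the transitive closure of sweep swappability. Since sweep swappability is symmetric (reversing a bi-invariant order exchanges $\min$ and $\max$, and the reversed order is again bi-invariant), this is equivalent to connectivity of the graph $\Gamma$ whose vertices are the corners of $S$ and whose edges join sweep-swappable pairs. Thus it suffices to prove that $\Gamma$ is connected.

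First I would set up a workable description of corners. A bi-invariant order on $F_k$ refines the order induced by its leading data: composing the abelianization $F_k \to \Z^k$ with a generic linear functional $\ell : \Z^k \to \R$ gives a preorder, and any bi-invariant order must break ties consistently with a bi-invariant order on the relevant kernel. This is the free-group analogue of Lemma~\ref{lem:Orders}, and can be made precise through the Magnus expansion and the lower central series (cf.\ the Magnus order of \cite{DeNaRi14}). The upshot is that a corner $c$ is, first of all, an element whose abelianized image $\bar c$ is an extreme point of $\mathrm{conv}(\bar S) \subset \R^k$, or one that wins a tie among elements sharing that extreme image by virtue of higher-order terms.

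Next I would establish the edges of $\Gamma$. Given two corners $c, c'$ whose abelianized images are distinct extreme points of $\mathrm{conv}(\bar S)$ that can be simultaneously exposed, i.e.\ there is a functional $\ell$ with $\ell(\bar c)$ strictly minimal and $\ell(\bar c')$ strictly maximal on $\bar S$, one directly obtains a bi-invariant order with $c = \min S$ and $c' = \max S$, hence a $\Gamma$-edge by Lemma~\ref{lem:swap}; the worked example $S = \{e, a, b, a^{-1}, b^{-1}\}$ in Figure~\ref{fig:free swap} is exactly this situation. The remaining corners — those that cannot be paired by a single functional — are the free-group counterpart of the parallel-edge corners handled in the plane by Theorem~\ref{thm:AllInSameOrbit}, and I would connect them either by chaining several such edges or by importing the explicit ``follow the edge and push the points across'' sweeping construction from that proof, transported to the Cayley tree.

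The hard part, and the reason this remains a conjecture, is twofold. First, one must classify the bi-invariant orders on $F_k$ finely enough to know all achievable $(\min, \max)$ pairs on a given finite $S$; unlike the clean inductive description on $\Z^d$ in Lemma~\ref{lem:Orders}, the order space here is a Cantor set with no comparably explicit parametrization, so controlling the extrema of an arbitrary order restricted to $S$ is delicate. Second, one must handle the ``ties'', namely distinct elements of $S$ with equal abelianization (such as $ab$ and $ba$), which are precisely the configurations where leading-functional arguments fail and where a separate, geometry-free swapping argument on the tree is needed. Proving that $\Gamma$ stays connected through all such degenerate cases is the main obstacle.
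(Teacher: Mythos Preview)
The statement is a \emph{conjecture}; the paper does not prove it. The paper's only comment is the sentence immediately following the conjecture: by Lemma~\ref{lem:swap} it would suffice to show that any two corners of $S$ are joined by a chain of sweep-swappable pairs. That is exactly your reduction to connectivity of the graph $\Gamma$, so your plan and the paper's suggested approach coincide.

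Your elaboration through the abelianization $F_k \to \Z^k$ and the identification of the two obstacles (no clean parametrization of bi-invariant orders on $F_k$ analogous to Lemma~\ref{lem:Orders}; ties among elements with equal abelianized image) goes beyond what the paper says, but is a reasonable expansion of the same line of thought. Note, however, that your appeal to Theorem~\ref{thm:AllInSameOrbit} for the ``parallel-edge'' corners is only suggestive: that argument is genuinely two-dimensional (it uses the circle representation of a planar polygon and an explicit edge-following move), and you have not indicated how to transport it to the Cayley tree of $F_k$. Since neither you nor the paper supplies a proof, there is no further comparison to make; your write-up is an honest statement of an approach and its gaps, not a proof.
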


Here, by corner we mean again a maximal (or minimal) element with respect to a left-invariant order. By Lemma~\ref{lem:swap}, it would suffice to show that for any two corners  $x, y$ of a set $S$, we can find a sequence of corners $x_1= x, x_2, \ldots, x_k =y$ such that $x_i, x_{i+1}$ are sweep swappable, i.e.\ some left-invariant order has minimal element $x_i$ and maximal element $x_{i+1}$ in $S$.

In \cite{Sa22}, a family of convex sets called \emph{tree convex sets} is defined for subsets of a free group. This gives another natural notion of corner for a finite set. We do not at present know if this coincides with the notion of corner as defined through orders.

\subsection{Triangle solitaire on the free group}

The tree structure of the free group limits the size of the orbit. Intuitively, contrary to the solitaire on $\Z^d$, it is no longer possible to bring points from different area and use them together to make new moves. We illustrate this by studying the orbit of the line for the triangle solitaire on $F_2$.

Let $S = \{e, a, b\}$ with $a$ and $b$ the generators of $F_2$. We call $\cyc{a}$ the horizontal direction and $\cyc{b}$ the vertical one. Let $L_n = \{e, a, a^2, \ldots a^{n-1}\}$ the horizontal line of length $n$. Notice that $\varphi(L_n) = L_n \cup L_{n-1}$, i.e. the filling only adds one line on top on the original one. This is because contrary to $\Z^2$, $a$ and $b$ do not commute so we cannot use $b$ and $ab$ to add a point at $b^2$.

Now consider the line orbit. From $L_n$, solitaire moves can move a point at $a^i$ to either $a^ib$ or $a^{i-1}b$ (except $e$ can only be moved to $b$ and $a^{n-1}$ only to $a^{n-2}b$). If such a move is done, for example $a^i \rightarrow a^ib$, then the moves $a^{i+1} \rightarrow a^ib$ and $a^{i-1} \rightarrow a^ib$ are no longer possible, all the other moves remaining possible, and two new moves become possible: $a^{i-1} \rightarrow a^i$ and $a^{i+1} \rightarrow a^i$, but doing one of those would lead to the same result as replacing the first move by $a^{i\pm1} \rightarrow a^i$. Therefore, an element of the line orbit is obtained by choosing for each point if we move it up, left up or not.

\begin{lemma}
    Encoding $E$ for the point was not moved, $B$ for movement along $b$ and $A$ for movement along $a^{-1}b$, the line orbit under the triangle solitaire on the free group is described by the set of word of length $n$ on $\{E, A, B\}$ with no subword $BA$, starting with $E$ or $B$ and ending with $E$ or $A$. 
    
    This can be rewritten as a regular language equation:
    \[\orb(L_n) = ((\{a\} + \{ab\} + \{a, ab\}\cup a(\{ab\}\cup a)^*\cup a)\cup a)^*(\{a\} + \{a, ab\}\cup a(\{ab\}\cup a)^*\varnothing)\]
    where $(X\cup a)^k Y = X \cup aX \cup a^2X \cup \ldots \cup a^kY$.
\end{lemma}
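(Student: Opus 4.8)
The plan is to first pin down exactly which solitaire moves can occur anywhere in the line orbit, since this is what makes the free group so much more rigid than $\Z^2$. By Lemma~\ref{lem:ContainedInFilling} every $Q\in\orb(L_n)$ lies in $\varphi(L_n)=\{a^i\mid 0\le i\le n-1\}\cup\{a^ib\mid 0\le i\le n-2\}$, a ``bottom row'' of $n$ cells and a ``top row'' of $n-1$ cells, and $|Q|=n$ since moves preserve cardinality. The key structural claim I would prove is that the only translates $gS$ with $|gS\cap\varphi(L_n)|\ge 2$ are the triangles $\Delta_i:=a^iS=\{a^i,a^{i+1},a^ib\}$ for $0\le i\le n-2$. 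This is a short computation: the pairwise ratios $x^{-1}y$ of distinct $x,y\in\{g,ga,gb\}$ all lie in $\{a^{\pm1},b^{\pm1},a^{-1}b,b^{-1}a\}$, so two bottom cells must be consecutive, a bottom cell $a^i$ and a top cell $a^jb$ force $j\in\{i,i-1\}$, and two distinct top cells $a^ib,a^jb$ have ratio $b^{-1}a^{j-i}b$ of reduced length $\ge 3$ and hence never share a translate. Thus the entire dynamics lives on the ``strip'' $\Delta_0,\dots,\Delta_{n-2}$, where consecutive triangles share only the bottom cell $a^{i+1}$ and each top cell $a^ib$ is private to $\Delta_i$.

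Next I would set up the encoding rigorously. To a word $w=w_0\cdots w_{n-1}$ in the claimed language, assign the configuration $C(w)$ with $a^i$ occupied iff $w_i=E$ and $a^ib$ occupied iff $w_i=B$ or $w_{i+1}=A$. The forbidden factor $BA$ is precisely the condition stopping two points from landing on the same cell $a^ib$; the prefix restriction $w_0\in\{E,B\}$ rules out the impossible target $a^{-1}b\notin\varphi(L_n)$, and the suffix restriction $w_{n-1}\in\{E,A\}$ rules out $a^{n-1}b\notin\varphi(L_n)$. Hence $w\mapsto C(w)$ is a bijection from the language onto a family $\mathcal V$ of $n$-element subsets of $\varphi(L_n)$, with $C(E^n)=L_n$.

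The inclusion $\orb(L_n)\subset\mathcal V$ I would get by showing $\mathcal V$ is closed under solitaire moves. Every applicable move occurs in some $\Delta_i$ by the structural claim, and the occupancy of $\{a^i,a^{i+1},a^ib\}$ is determined by the pair $(w_i,w_{i+1})$, so this reduces to a finite check over the eight admissible pairs: a move is possible exactly for $(E,E)$, $(E,A)$, $(B,E)$ (the pairs giving a single hole in $\Delta_i$), and in each case both outcomes are again admissible pairs (for instance $(E,E)$ goes to $(B,E)$ or $(E,A)$, and $(E,A)$ goes to $(B,E)$ by sliding along the bottom), with no move ever creating a $BA$ factor or breaking a boundary constraint. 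For the reverse inclusion $\mathcal V\subset\orb(L_n)$, given a target word $w$ I would realize $C(w)$ by performing, for each non-$E$ position, the single lift of $a^i$ to its destination (a $B$-move in $\Delta_i$, or an $A$-move in $\Delta_{i-1}$). The delicate point — and the step I expect to be the main obstacle — is scheduling: a $B$-lift at $i$ needs $a^{i+1}$ still present and an $A$-lift at $i$ needs $a^{i-1}$ still present, imposing ``lift $i$ before lift $i{+}1$'' (for $B$) and ``lift $i$ before lift $i{-}1$'' (for $A$). These precedence constraints link only adjacent positions, so a cycle could only be the $2$-cycle forcing simultaneously $w_i=B$ and $w_{i+1}=A$, i.e.\ the forbidden factor $BA$; thus the constraints are acyclic and admit a topological order, and performing the lifts in that order each step sees exactly one hole (its private, still-empty top cell) together with the required bottom neighbour, so it is a legal move.

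Finally, the regular-expression form in the statement is merely a transcription of ``length-$n$ words over $\{E,A,B\}$ avoiding $BA$, beginning in $\{E,B\}$ and ending in $\{E,A\}$'' into an expression for the patterns themselves, read off the evident three-state automaton; I would obtain it by routine state elimination, substituting each accepted letter by the corresponding group element. The conceptual core is the structural fact confining all motion to the triangle strip, which is exactly where the tree geometry of $F_2$ departs from $\Z^2$; the acyclicity of the scheduling, guaranteed precisely by $BA$-avoidance, is the one genuinely subtle verification.
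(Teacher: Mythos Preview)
Your proof is correct and follows essentially the same idea as the paper, which argues informally in the paragraph preceding the lemma that from $L_n$ each point can only be lifted ``up'' or ``up-left'', with the $BA$ obstruction coming from two points targeting the same top cell. Your treatment is considerably more rigorous: where the paper reasons forward from $L_n$ and hand-waves that back-and-forth moves produce nothing new, you instead (i) pin down that the only usable translates of $S$ are the $\Delta_i$, (ii) verify by a finite case check that the encoded family $\mathcal V$ is solitaire-closed, and (iii) prove reachability via a scheduling argument whose acyclicity is exactly $BA$-avoidance --- this last observation makes explicit the mechanism that the paper leaves implicit in its remark about redundant moves.
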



\begin{remark}
    This is equal to the line orbit under the triangle solitaire on $\Z^2$ when restrained to two horizontal lines. This particular case is regular and was studied in \cite{OEISfillmatrices}. An automaton recognising it is depicted Figure~\ref{fig:auto2}. 
\end{remark}

\begin{figure}[ht]
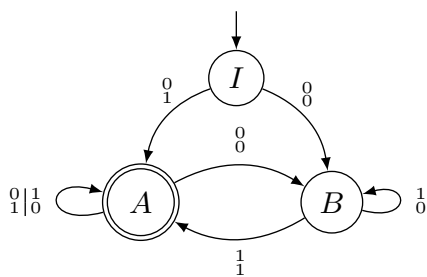

    \centering
    \includestandalone[width=0.5\textwidth]{Figures/Triangle/auto2}
    \caption{An automaton recognising $\orb(L_n)$. The automaton read the pattern from right to left, considering the content of $a^k$ and $a^kb$ as a letter with $1$s standing for points and $0$s for empty cells. }
    \label{fig:auto2}
\end{figure}

\begin{corollary}
    The size of the line orbit on the free group is $$|\orb(L_n)| = \frac{1}{2^n\sqrt{5}}\left( (3+\sqrt{5})^n-(3-\sqrt{5})^n \right).$$
\end{corollary}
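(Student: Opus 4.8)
The plan is to treat this purely as a word-counting problem. By the preceding lemma, $|\orb(L_n)|$ equals the number of words $w \in \{E,A,B\}^n$ that contain no factor $BA$, begin with a letter in $\{E,B\}$, and end with a letter in $\{E,A\}$. Rather than manipulate the regular expression directly, I would count these words by a last-letter (transfer-matrix) recurrence, which yields a clean second-order recurrence whose closed form is exactly the claimed formula.

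First I would introduce, for the prefixes of length $k$ that respect the start condition and avoid the factor $BA$, the three counts $e_k, a_k, b_k$ of such prefixes ending in $E$, $A$, $B$ respectively. The forbidden factor says precisely that an $A$ may not follow a $B$, so appending one letter gives the transitions $e_k = e_{k-1}+a_{k-1}+b_{k-1}$, $a_k = e_{k-1}+a_{k-1}$, and $b_k = e_{k-1}+a_{k-1}+b_{k-1}$, with the start condition encoded by $e_1 = b_1 = 1$ and $a_1 = 0$. The quantity we want is $|\orb(L_n)| = e_n + a_n$, since the end condition merely forbids a final $B$.

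The key simplification is that $e_k$ and $b_k$ satisfy identical recurrences with identical initial data, so $e_k = b_k$ for all $k$. Writing $c_k := e_k = b_k$, the system collapses to $c_k = 2c_{k-1}+a_{k-1}$ and $a_k = c_{k-1}+a_{k-1}$, i.e. iteration by the matrix $\left(\begin{smallmatrix}2&1\\1&1\end{smallmatrix}\right)$, whose characteristic polynomial is $\lambda^2-3\lambda+1$. By Cayley--Hamilton every coordinate of the iterate, and in particular $s_n := c_n + a_n = |\orb(L_n)|$, obeys the recurrence $s_n = 3s_{n-1}-s_{n-2}$. Checking $s_1 = 1$, $s_2 = 3$, and extending backwards to $s_0 = 0$ then pins down the solution.

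Finally I would solve the recurrence: the roots of $\lambda^2-3\lambda+1$ are $\lambda_\pm = \tfrac{3\pm\sqrt5}{2}$ with $\lambda_+-\lambda_- = \sqrt5$, so $s_n = \tfrac{1}{\sqrt5}(\lambda_+^n - \lambda_-^n) = \tfrac{1}{2^n\sqrt5}\big((3+\sqrt5)^n-(3-\sqrt5)^n\big)$, as claimed. I expect no serious obstacle; the only points needing care are the correct simultaneous bookkeeping of the two boundary conditions and the forbidden factor, and spotting the symmetry $e_k = b_k$ that halves the work. As a sanity check one may note that $\tfrac{3\pm\sqrt5}{2}$ are the squares of the golden ratio and its conjugate, whence $s_n = F_{2n}$, the even-indexed Fibonacci numbers $1,3,8,\dots$, which also confirms the small cases $n=1,2,3$.
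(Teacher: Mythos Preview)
Your argument is correct. The paper does not actually write out a proof of this corollary: it is stated immediately after the regular-language description of $\orb(L_n)$, together with a remark that the same set was studied in \cite{OEISfillmatrices} and an automaton recognising it (Figure~\ref{fig:auto2}). Your transfer-matrix computation is precisely the standard way to extract the growth rate from that automaton, so your approach is the natural expansion of what the paper leaves implicit. The symmetry $e_k=b_k$ you exploit corresponds to the fact that the two states reached by reading the first letter behave identically thereafter, and the identification $s_n=F_{2n}$ is a nice bonus the paper does not mention.
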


Notice that this is very small compared to the line orbit on $\Z^2$.

\section{Connection to TEP and permutive subshifts}
\label{sec:Permutive}

\subsection{TEP and examples}

Recall that for $G$ a group, $C \Subset S \Subset G$ and $A$ a finite alphabet, a $(C,S)$-TEP set of patterns is a set $\mathcal{T} \subset A^S$ such that for each $c\in C$, for all $\pi : S\setminus \{c\} \to A$, there is a unique $p \in T$ such that $p_{|S\setminus\{c\}} = \pi$, and a $(C,S)$-TEP subshift is a set $X$ of configuration $x \in A^G$ such that for all $g \in G$, $x_{|gS} \in \mathcal{T}$ for some $(C,S)$-TEP set of patterns.

TEP stands for ``totally extremally permutive'', a term we explain in the following section. The TEP subshifts (with a specific choice of $C$ for a given shape $S$, discussed below) were introduced in \cite{Sa22} by the first author. Here, we develop the connection to solitaire in slightly more generality. (In \cite{Sa22}, the definition of TEP is in fact generalized in another direction, to ``$k$-TEP subshifts'', but they do not seem to admit a notion of solitaire.)

Next we recall a result from \cite{Sa22} which is one of the main  motivations for studying TEP subshifts.

\begin{definition}
Let $G$ be a group. A \emph{convex geometry} on $G$ is a set $\C$ of finite subsets of $G$ such that:
\begin{enumerate}[label=(\roman*)]
    \item $\C$ contains $\emptyset$.
    \item $\C$ is closed under finite intersections.
    \item $G$ can be written as an increasing union of sets from $\C$.
    \item If $C, D \in \C$ and $C \subsetneq D$, then we can find $g \in D \setminus C$ such that $C \cup \{g\} \in \C$.
\end{enumerate}
This last property is called the \emph{corner addition property}. We say $\C$ is \emph{invariant} if $C \in \C \implies \forall g \in G :gC \in\C$ and \emph{midpointed} if $gh^{-1}, gh \in C \wedge C \in \C \implies g \in C$, where $g,h \in G$. The \emph{corners} of $S \Subset G$ with respect to $\C$ are the elements $s \in S$ such that $S \setminus \{c\} \in \C$.
\end{definition}

On the group $\Z^d$, the main example of a midpointed invariant convex geometry is the family of intersections of real convex sets with $\Z^d$, already discussed in previous sections. In \cite{Sa22}, convex geometries are constructed on many other groups as well, in particular on the free group we have the geometry of tree convex sets (whose definition we omit here).


One of the main result of \cite{Sa22} is the following:

\begin{theorem}
Let $G$ be a group and $\C$ a midpointed invariant convex geometry on $G$. Let $S \Subset G$ and $C$ the corners of $S$ with respect to $\C$. If $X$ is a $(C, S)$-TEP-subshift (for example any $S$-TEP subshift) on the group $G$ defined by a TEP set $T \subset A^S$, then whenever $D \in \C$, a pattern $p \in A^D$ appears in $X$ if and only if $p$ is locally valid, meaning whenever $gS \subset D$ we have $g^{-1}x|_S \in P$.
\end{theorem}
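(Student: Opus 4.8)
The ``only if'' direction is immediate and deserves only a sentence: if $p$ appears in $X$ there is $x \in X$ with $x|_D = p$, and since $X$ is the $(C,S)$-TEP subshift defined by $T$, every translate satisfies $g^{-1}(x|_{gS}) \in T$; restricting to those $g$ with $gS \subset D$ gives exactly local validity of $p$. All the content is in the converse, and the plan is to extend a locally valid $p \in A^D$ to a genuine configuration of $X$ by filling in cells one at a time, using the corner addition property of $\C$ to choose the order and the TEP property to pin down each new value.

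First I would produce an increasing chain $D = E_0 \subsetneq E_1 \subsetneq \cdots$ of members of $\C$ with $\bigcup_n E_n = G$ and $E_{n+1} = E_n \cup \{g_n\}$, where $g_n$ is a single corner of $E_{n+1}$ (so that $E_n = E_{n+1} \setminus \{g_n\} \in \C$). Property (iii) of a convex geometry exhausts $G$ by convex sets, and the corner addition property (iv) lets me refine the chain into single-cell steps while keeping every $E_n$ convex. Alongside the chain I would build locally valid patterns $p_n \in A^{E_n}$ with $p_0 = p$ and $p_{n+1}|_{E_n} = p_n$. Granting this, the configuration $x = \bigcup_n p_n$ satisfies $x|_D = p$, and since every translate $gS$ eventually lies in some $E_n$ on which $p_n$ is locally valid, $x$ obeys the defining local rule of $X$ everywhere, i.e.\ $x \in X$; this is precisely what it means for $p$ to appear in $X$.

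The inductive step is the heart. Passing from $E_n$ to $E_{n+1} = E_n \cup \{g_n\}$, the $S$-translates that newly become contained are exactly those $hS$ with $g_n \in hS \subset E_{n+1}$; all others already lay inside $E_n$ and restrict into $T$ by the induction hypothesis. The key claim, which is where the \emph{invariant} and \emph{midpointed} hypotheses on $\C$ are used, is that for any such translate the cell $g_n$ is a \emph{corner} of $hS$, meaning $h^{-1}g_n \in C$, and moreover that this completed translate is unique. The way I would argue it is via the order characterization of corners: a corner $g_n$ of the convex set $E_{n+1}$ is the extreme cell of $E_{n+1}$ in a suitable bi-invariant direction (this is where invariance and midpointedness convert the abstract corner into a genuine ``outermost'' cell), so any $hS \subset E_{n+1}$ containing $g_n$ must have $g_n$ as its extreme element; since a shape has a unique extreme element in a fixed direction, $h$ is forced and $h^{-1}g_n$ is the corresponding corner of $S$. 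With this in hand, TEP applied at the corner $c = h^{-1}g_n$ determines a unique symbol $a \in A$ extending the already-fixed contents of $h(S \setminus \{c\}) \subset E_n$ so that the pattern restricts into $T$; I set $p_{n+1}(g_n) = a$ (and choose $a$ arbitrarily if no translate is completed). Either way $p_{n+1}$ is locally valid, closing the induction.

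I expect the geometric claim of the previous paragraph — that a corner of the convex set $E_{n+1}$ is seen as a corner $c \in C$ of \emph{every} shape-translate it completes, and that this translate is unique — to be the main obstacle. This is exactly the step that must turn the properties of a midpointed invariant convex geometry into a statement tying the corners of $\C$ to the corner set $C$ of the shape $S$, and it is essentially the content of the relevant convex-geometry lemmas of \cite{Sa22} (compare \cite[Lemma~28]{Sa22}, invoked in the proof of Proposition~\ref{prop:AnyTwo}). The remaining ingredients — building and refining the chain, preserving local validity at each step, and passing to the direct limit — are routine once this claim is available.
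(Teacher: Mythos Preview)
The paper does not give its own proof of this theorem: it is quoted as ``one of the main results of \cite{Sa22}'' and left unproved here. So there is nothing in the present paper to compare your argument against; the relevant comparison is with \cite{Sa22}, and your outline is indeed the argument used there --- exhaust $G$ by convex sets starting from $D$, refine to single-cell increments via corner addition, and at each step use the TEP property at the new corner to determine (or freely choose) the new symbol, then pass to the limit.

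One genuine caution about the heuristic you offer for the key geometric step. Your proposed route --- ``a corner $g_n$ of $E_{n+1}$ is the extreme cell in a suitable bi-invariant direction'' --- is not available at this level of generality: a midpointed invariant convex geometry need not arise from a bi-invariant order, and the group need not be orderable at all. The actual mechanism in \cite{Sa22} uses invariance and midpointedness directly, not via orders. Roughly: by invariance one may translate so that $h = e$, and then if $s = g_n$ were not a corner of $S$ one shows, using the midpointed axiom, that the convex closure of $S \setminus \{s\}$ (which sits inside the convex set $E_n$) already contains $s$, contradicting $g_n \notin E_n$. You correctly flag this as the crux and correctly defer to \cite{Sa22}; just be aware that the order picture you sketch is intuition for the $\Z^d$ case rather than the general proof, and the same applies to your uniqueness-of-translate claim (which does hold, but not for the order-based reason you give).
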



In other words, in TEP subshifts, \emph{locally valid} patterns (meaning patterns whose $S$-shaped subpatterns are in $T$ up to a shift) on convex domains are \emph{globally valid}, meaning extend to a configuration of $X$.

\begin{remark}
This theorem shows that TEP subshifts are a ``syntactically defined'' class of SFTs which have decidable languages (indeed uniformly in the defining family of forbidden patterns $A^S \setminus P$), assuming the convex geometry is nice enough. This is in contrast to general SFTs, which are known to have undecidable languages in general. Given an SFT, even whether it is empty is a well-known classical undecidable problem \cite{Be66}.
\end{remark}

\subsection{Permutivity}

Much of the theory makes sense under the weaker assumption of permutivity, which we define below.

\begin{definition}
Let $G$ be a group, and let $C \Subset S \Subset G$. Let $A$ be a finite alphabet and $X \subset A^G$ be a closed set. We say $X$ is $(C, S)$-permutive if for each $c \in C$ and $g \in G$ there exists a function $f_g : A^{S \setminus \{c\}} \to A$ such that for every $g \in G$, $x \in X$ and $p \in A^S$ we have
\[  g^{-1}x|_S = p \implies f_g(p|_{S \setminus \{c\}}) = p_c. \]
An $(S, S)$-permutive subshift is called simply $S$-permutive.
\end{definition}

In the term TEP (totally extremally permutive), ``permutivity'' refers to permutivity, ``extremal'' to the fact that $C$ is thought of as the set of ``extremal vertices'' or ``corners'' of $S$. ``Total'' refers to the fact that the restriction to $S \setminus \{c\}$ is always $A^{S \setminus \{c\}}$ when $c \in C$, which is not required in the previous definition of permutivity.

A set $X$ that is permutive in the previous sense may not be a subshift. When all the functions are the same, i.e.\ $\forall g \in G: f_g = f$, the condition simplifies to
\[ \forall p \in X|_S: f(p|_{S \setminus \{c\}}) = p_c, \]
which is a property of TEP subshifts (but the assumption about surjectivity of projection to $S \setminus \{c\}$ is missing).

A special case of permutivity was studied by \cite{FrKr20} under the name polygonal subshifts. This special case is $S \Subset \Z^d$ with $C$ the set of corners of $S$ in the geometric sense, and $X$ is a subshift. (Note that for the solitaire theory to cover this family, it is again important that we study general $(C, S)$-solitaire instead of just the case $C = S$.)

The term \emph{permutive} comes from the following lemma (whose proof is straightforward and omitted).

\begin{lemma}
Let $G$ be a group, and let $C \Subset S \Subset G$. Let $A$ be a finite alphabet and $X \subset A^G$ be a closed set. If $X$ is $(C, S)$-permutive, then for any $g \in G$ and distinct $c, c' \in C$, writing $T = G \setminus \{gc, gc'\}$, we have for all $x \in A^T$ a bijection $\pi : A \to A$ such that
\[ x \sqcup a^{gc} \in X|_{T \cup \{gc\} }\iff x \sqcup \pi(a)^{gc'} \in X|_{T \cup \{gc'\}}. \]
Furthermore, the permutation $\pi$ only depends on $x|_{gS \setminus \{gc, gc'\}}$.
\end{lemma}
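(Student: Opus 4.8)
The plan is to extract, from the permutivity data, two functions describing how the symbols at $gc$ and $gc'$ determine one another once the rest of the window $gS$ is fixed, to show these functions restrict to mutually inverse bijections between the sets of \emph{locally realizable} symbols, and finally to extend the result to a genuine bijection of all of $A$.

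First I would fix $g \in G$, the distinct corners $c, c' \in C$, and a pattern $x \in A^T$, abbreviating the \emph{window context} $\bar x = x|_{gS \setminus \{gc, gc'\}}$. Permutivity supplies functions $f_{g,c} : A^{S \setminus \{c\}} \to A$ and $f_{g,c'} : A^{S \setminus \{c'\}} \to A$ pinning down the contents of $gc$ and $gc'$ in every configuration of $X$. Since $S \setminus \{c'\} = \{c\} \cup (S \setminus \{c,c'\})$, I define $\phi : A \to A$ by letting $\phi(a)$ be the value of $f_{g,c'}$ on the pattern over $S \setminus \{c'\}$ that reads $a$ at $c$ and agrees with $\bar x$ elsewhere; symmetrically I define $\psi : A \to A$ from $f_{g,c}$, feeding it its variable symbol at $c'$. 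Both $\phi$ and $\psi$ depend only on $\bar x$, which is exactly the dependence allowed by the ``furthermore'' clause.

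Next I would introduce the window-realizable sets $U = \{a \in A \mid \exists y \in X,\ y|_{gS \setminus \{gc,gc'\}} = \bar x,\ y_{gc} = a\}$ and, symmetrically, $W$ for the position $gc'$. The core computation is that $\phi$ and $\psi$ restrict to mutually inverse bijections $U \to W$ and $W \to U$: given $a \in U$ with witness $y$ and $b := y_{gc'}$, permutivity at $c'$ forces $b = \phi(a)$ and permutivity at $c$ forces $a = \psi(b)$, so $\psi(\phi(a)) = a$ and $\phi(U) \subseteq W$; the symmetric argument gives $\phi(\psi(b)) = b$ and $\psi(W) \subseteq U$. In particular $|U| = |W|$, so I may extend $\phi|_U$ to a bijection $\pi : A \to A$ by adjoining any bijection $A \setminus U \to A \setminus W$, chosen canonically (say via a fixed linear order on $A$) so that $\pi$ still depends only on $\bar x$.

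Finally I would verify the equivalence. Writing $L_x$ (resp.\ $R_x$) for the set of symbols $a$ with $x \sqcup a^{gc} \in X|_{T \cup \{gc\}}$ (resp.\ $x \sqcup a^{gc'} \in X|_{T \cup \{gc'\}}$), any witnessing configuration matches $\bar x$ on the window, so $L_x \subseteq U$ and $R_x \subseteq W$, and the same permutivity computation yields $\phi(L_x) \subseteq R_x$ and $\psi(R_x) \subseteq L_x$. Then $a \in L_x \iff \pi(a) \in R_x$ follows by three cases: if $a \in L_x$ then $\pi(a) = \phi(a) \in R_x$; if $a \in U \setminus L_x$ then $\phi(a) \in R_x$ would give $a = \psi(\phi(a)) \in L_x$, a contradiction; and if $a \notin U$ then $a \notin L_x$ while $\pi(a) \in A \setminus W \subseteq A \setminus R_x$. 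I expect the main obstacle to be exactly this bookkeeping: because $X$ is only assumed closed, with the $f_{g,c}$ unconstrained off the patterns actually occurring in $X$, the functions $\phi, \psi$ need not be bijections of all of $A$, so one cannot simply set $\pi = \phi$; the real work lies in isolating the realizable sets $U, W$ on which permutivity genuinely forces invertibility and then extending to a global bijection without disturbing either the equivalence or the window-context-only dependence.
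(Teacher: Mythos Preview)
Your argument is correct. The paper itself omits the proof entirely, calling it ``straightforward,'' so there is no proof to compare against; your write-up is in fact more careful than what the authors evidently had in mind. In particular, you are right to flag that the permutivity functions $f_{g,c}$ are only constrained on patterns that actually occur in $X$, so the induced maps $\phi,\psi:A\to A$ need not be global bijections and one must isolate the realizable sets $U,W$, check that $\phi,\psi$ restrict to mutual inverses there, and then extend arbitrarily on the complement while preserving the dependence on $\bar x$ only. The final three-case verification of the displayed equivalence is clean; the key point, which you handle correctly, is that $L_x\subseteq U$ and $R_x\subseteq W$ because any witnessing configuration in $X$ must in particular match $\bar x$ on the window, after which the permutivity identities do the rest.
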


\subsection{Independent sets}
\label{sec:indep}

\begin{definition}
Let $G$ be a group, and let $X \subset A^G$ be a closed set. The \emph{topological independence sets} $\TI(X)$ are the sets $T \in \pow(G)$ satisfying $X|_T = A^T$. 

If $\mu$ is a Borel probability measure on $X$, then a finite set $T \Subset G$ is \emph{equidistributed} if the $\mu([p]) = \mu([q])$ for any $p, q \in A^T$. The \emph{$\mu$-independence sets} $\I_\mu(X)$ are the closure of the equidistributed finite sets $T \Subset G$ in the product topology of $\pow(G)$.
\end{definition}

Note that as stated, both types of independence sets depend on the alphabet $A$. One may always use the effective alphabet (symbols that actually appear in configurations) unless stated otherwise. Note that the assumption on $\mu$ in particular implies that if $T$ is equidistributed then all patterns $A^T$ are in the support of the marginal distribution of $\mu$ on $T$, in other words, $\TI_\mu(X) \subset \TI(X)$ for any Borel probability measure $\mu$ on $X$.

For simplicity, we concentrate mostly on sets of topological independence, but shift-invariant measures are of great interest in symbolic dynamics, so we include them in Theorem~\ref{thm:TISolitaire} below.

\begin{example}
    Consider the triangle shape $\{(0,0), (0,1), (1,0)\}$, an abelian group alphabet, and the TEP rule where the value at $(0,1)$ must be the sum of the values in the other positions, on $\Z^2$. Consider the set $T = \{(0,0), (2, 0), (0,2)\}$. It is not independent for the group $A' = \Z/2\Z$ with addition modulo $2$; one can check by a calculation that the values must sum to $0$.
    
    However, it is independent for $A = \Z/3\Z$ (still with addition). Using the notation depicted in Figure~\ref{fig:indep}, let $a,b,c$ be arbitrary. The rule gives $a = x+y$, $x = b+z$ and $y = c+z$ so $2z = a-b-c$. Now remember that in $\Z/3\Z$, we have $2=-1$. This gives $z = c+b-a$, then $x = c-b-a$ and $y=b-c-a$ which does not contradict $a=x+y$.  \qee
\end{example}
\vspace*{-0.5cm}
\begin{figure}[ht]
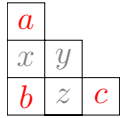

    \centering
    \includestandalone[height = 1.5cm]{Figures/TEP/indep}
    \caption{The red letters correspond to elements of $T$, which is an independent set for $A = \Z/3\Z$ but not $A' = \Z/2\Z$.}
\label{fig:indep}
\end{figure}

\begin{theorem}
\label{thm:TISolitaire}
Let $G$ be a group, let $C \Subset S \Subset G$ be as above, and let $X \subset A^G$ be a $(C, S)$-permutive closed set. Then the sets of topological independence $\TI(X)$ are closed under the $(C, S)$-solitaire. If $\mu$ is a Borel probability measure on $X$, then the $\mu$-independence sets are closed under $(C, S)$-solitaire. Both sets are down sets under inclusion, and are topologically closed subsets of $\pow(G)$. If $X$ is shift-invariant, then $\TI(X), I_\mu(X)$ are shift-invariant sets.
\end{theorem}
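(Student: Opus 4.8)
The plan is to reduce the whole statement to the analysis of a single $(C,S)$-solitaire move together with one local computation powered by the permutivity lemma. So suppose $P \rightarrow_{C,S} Q$ is a move at $g$, with $gc \in P \setminus Q$ and $gc' \in Q \setminus P$ for distinct $c,c' \in C$, and set $R = P \cap Q$. Since $|P \cap gS| = |S|-1$ and $gc' \notin P$, the unique point of $gS$ missing from $P$ is $gc'$; hence $gS \setminus \{gc'\} \subset P$, symmetrically $gS \setminus \{gc\} \subset Q$, and therefore $gS \setminus \{gc,gc'\} \subset R$. Fixing the contents $w$ on $gS \setminus \{gc,gc'\}$, permutivity at $c'$ writes the value at $gc'$ of any $x \in X$ as $\psi_w(x(gc))$ (because $gc \in gS\setminus\{gc'\}$ determines $gc'$), while permutivity at $c$ writes the value at $gc$ as $\phi_w(x(gc'))$. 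The two identities give $\phi_w(\psi_w(a)) = a$ for every value $a$ realizable at $gc$ with surroundings $w$; since in each application below all of $A$ is so realizable, $\psi_w$ is a bijection with inverse $\phi_w$ — this is precisely the swap bijection of the permutivity lemma.

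For topological independence, fix $q \in A^Q$, put $w = q|_{gS \setminus \{gc,gc'\}}$, and define $p \in A^P$ by $p|_R = q|_R$ and $p(gc) = \psi_w^{-1}(q(gc'))$. Independence of $P$ yields $x \in X$ with $x|_P = p$; then $x|_R = q|_R$, and since $gS \setminus \{gc'\} \subset P$ is fixed by $p$, permutivity at $c'$ forces $x(gc') = \psi_w(p(gc)) = q(gc')$, so $x|_Q = q$. As $q$ is arbitrary, $X|_Q = A^Q$, i.e. $Q \in \TI(X)$. Only finitely many cells enter the bijection, so this works verbatim for infinite independence sets, using $X|_P = A^P$ in full.

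For the measure, I would first show a single move preserves equidistributed finite sets. The point is that for $x \in X$ the value at the missing corner is determined \emph{everywhere}, not merely $\mu$-a.e.: for $p' \in A^Q$ every $x$ with $x|_Q = p'$ has $x(gc) = f^c_g(p'|_{gS\setminus\{gc\}}) =: a(p')$, so as subsets of $X$ one has the exact equality $[p'] = [p' \sqcup a(p')^{gc}]$, and similarly $[p] = [p \sqcup f^{c'}_g(p|_{gS\setminus\{gc'\}})^{gc'}]$ for $p \in A^P$. Reducing both families to cylinders over $R \cup \{gc,gc'\}$ and using $\phi_w = \psi_w^{-1}$, the change of variable $b = \psi_{w_r}(a)$ (with $w_r = r|_{gS\setminus\{gc,gc'\}}$) identifies $\{[p'] : p' \in A^Q\}$ and $\{[p] : p \in A^P\}$ as the \emph{same} collection of cylinders; equidistribution of $P$ makes all their measures equal to $|A|^{-|P|} = |A|^{-|Q|}$, so $Q$ is equidistributed. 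To pass to the closure $I_\mu(X)$, I would invoke the group-action viewpoint: each move is the action of a transposition in $\mathrm{Sym}(gC)$, a homeomorphism of $\pow(G)$ which either fixes a set (when the move does not apply) or, by the above, sends an equidistributed finite set to an equidistributed finite set; a homeomorphism $h$ with $h(D)\subset D$ satisfies $h(\overline D) = \overline{h(D)} \subset \overline D$, so $I_\mu(X)$ is closed under solitaire.

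The remaining properties are routine. Both families are down sets: restricting a full pattern, resp.\ a uniform marginal, to a subset again gives a full pattern, resp.\ a uniform marginal, so $\TI(X)$ and the equidistributed finite sets are downward closed; and the closure of a downward-closed family $D$ of finite sets is downward closed (given $A \in \overline D$ and $B \subset A$, for a basic neighbourhood fixing a finite window $F$ choose $A' \in D$ agreeing with $A$ on $F$ and pass to $A' \setminus (F \setminus B) \subset A'$, which lies in $D$ and matches $B$ on $F$). For topological closedness, $I_\mu(X)$ is a closure by definition, while $\TI(X)$ is closed by compactness: if $T_n \to T$ with $T_n \in \TI(X)$, then each $p \in A^T$ is realized on every finite window by some $x \in X$, and compactness of $X$ produces a global realization. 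Finally, if $X$ (and, for $I_\mu$, $\mu$) is shift-invariant, then shifting a full pattern, resp.\ an equidistributed set, by $h \in G$ gives another such object since $hX = X$ and $\mu$ is preserved, and shifts are homeomorphisms of $\pow(G)$, so both families are shift-invariant. I expect the main obstacle to be the measure step: getting the cylinder bookkeeping right so that the deterministic missing-corner value collapses $[p']$ and $[p]$ onto the same cylinders over $R \cup \{gc,gc'\}$, matching measures through $\psi_w = \phi_w^{-1}$, and then correctly transporting preservation from finite equidistributed sets to their closure via continuity of the solitaire action.
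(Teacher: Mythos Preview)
Your proof is correct and follows essentially the same route as the paper: set up a single solitaire move at $g$, use permutivity at $c$ and $c'$ to obtain a bijection between the possible contents at $gc$ and $gc'$ once the rest of $gS$ is fixed, and deduce that $X|_P$ and $X|_Q$ are in natural bijection. Your treatment is in fact somewhat more explicit than the paper's in two places: you write down the inverse element $p$ concretely (the paper argues more abstractly that the bijection forces $E'=A$), and for the measure case you make the exact cylinder identity $[p'] = [p' \sqcup a(p')^{gc}]$ explicit and then pass to the closure $I_\mu(X)$ via the homeomorphism viewpoint on solitaire moves, whereas the paper simply notes that $T'$ is a limit of the shifted approximants $Q\setminus\{gc\}\cup\{gc'\}$. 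Both arguments are equivalent; yours just unpacks the bookkeeping more carefully.
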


\begin{proof}
We start by proving the last three claims. First, the set $\TI(X)$ is obviously down, and $\I_\mu(X)$ is down because it is the closure of finite $\mu$-independent sets, which in turn are down because for $T' \subset T \Subset G$, the measure on $A^{T'}$ is determined by the one on $A^T$ by the natural projection from $A^T$ to $A^{T'}$, where all fibers have equal cardinality.

Topological closedness of $\I_\mu(X)$ is true by definition. For $\TI(X)$ it suffices to show that $X|_P = A^P$ if and only if $X|_Q = A^Q$ for any $Q \Subset P$. But this is immediate from the fact that $\pi : X \to X|P$ is a continuous map between two closed subsets of Cantor space (which are compact Hausdorff).

If $X$ is shift-invariant, then shift-invariance of $\TI(X)$ and $\I_\mu(X)$ are trivial.

Now we show closure under solitaire. Let $T \subset G$ be a set of topological independence, meaning $X|_T = A^T$. Suppose $T \rightarrow_{C, S} T'$ meaning
\[ |T \cap gS| = |T' \cap gS| = |S| - 1 \wedge T \triangle T' \in \pow_2(gC). \]
Let $T \setminus T' = \{gc\}$, $T' \setminus T = \{gc'\}$
By the definition of $(C, S)$-permutivity of $X$, there are functions $f = f_{g,c} : A^{S \setminus \{c\}} \to A$ and $f' = f_{g,c'} : A^{S \setminus \{c'\}} \to A$, such that for every $g \in G$ we have
\[ g^{-1}x|_S = p \implies f(p|_{S \setminus \{c\}}) = p_c \]
and
\[ g^{-1}x|_S = p \implies f'(p|_{S \setminus \{c'\}}) = p_{c'}. \]

Let $U = T \cap T'$ and $V = T \cup T'$. Note that $T \setminus U, T' \setminus U$ are singleton sets contained in $gC$, and similarly for $V \setminus T, V \setminus T'$. Now, fix $x|_U \in A^U$, let $E \subset A$ be the set of symbols $a \in A$ such that $x|_U \sqcup a^{gc} \in X|T$, and $E' \subset A$ the set of symbols $a \in A$ such that $x|_U \sqcup a^{gc'} \in X|T'$. Clearly $E = A$ by the assumption that $T \in \TI(X)$. Our task is precisely to show that $E' = A$.

We give a bijection between these sets. For this, observe that for $x \in X$, the restriction $x|T$ uniquely determines $x|V$ and thus $x|T'$ (using the property of $f'$), and the restriction $x|T'$ uniquely determines $x|T$. In particular, there is a bijection between $X|T$ and $X|T'$, which can only change symbols in the symmetric difference of $T, T'$. This is only possible if indeed $E' = A$ (since $E = A$), concluding the proof for the sets of topological independence.

If $\mu$ is a probability measure, the proof is similar. Namely, suppose $T$ is a limit of finite patterns $Q$ such that $\mu$ is uniform on the finite patterns $X|_Q$. Then fix $Q \supset gS$ and let $\mathcal{P} = X|_Q$. We have that the distribution on the legal patterns $\mathcal{P}' = X|_Q \setminus \{gc\} \cup \{gc'\}$ is also uniform, because as above we have a bijection between $\mathcal{P}$ and $\mathcal{P}'$. Now $T'$ is a limit of such patterns $Q \setminus \{gc\} \cup \{gc'\} = T'$. \qed
\end{proof}


The solitaire also has applications also to sets that are not independence sets for a TEP subshift:

\begin{remark}
If $\mu$ is a Borel probability measure on a $(C, S)$-permutive $X$, then the set of $P \Subset G$ such that $\mu$ is equidistributed on the patterns $X|_P$ (without assuming $X|_P = A^P$!) is also closed under the solitaire process, by a similar proof as above.
\end{remark}




The fact the topological independence subshift is closed under solitaire is the original motivation for the study of the solitaire process. The $S$-TEP subshifts are an interesting class of subshifts (generalizing the Ledrappier subshift and spacetime subshifts of bipermutive cellular automata), and the sets of topological independence are interesting class of subshifts of the group (which in the Ledrappier case forms a matroid \cite{Sa22}).


\subsection{Spanning sets}
\label{sec:spanning}

We now define a kind of dual notion to topological independent sets.

\begin{definition}
Let $G$ be a group, $D \subset G$, and $X \subset A^G$ closed. Then $P \subset G$ is a \emph{$D$-spanning} if there is a function $f : X|_P \to X|_D$ such that for all $x \in X$, $x|_D = f(x|_P)$.
\end{definition}

We use the term ``spanning'' as an analog of ``independence'', roughly as the terms are used in linear algebra and matroid theory. However, the $G$-spanning sets are often called \emph{expansive} or \emph{coding} sets. In the case $G = \Z^d$ an \emph{expansive subspace} $H \leqslant \Z^d$ is precisely one such that $H + B_r \cap \Z^d$ is $G$-spanning \cite{BoLi97}, where $B_r$ is the Euclidean ball of radius $r$. For the cellular automata minded it might also be natural to call this the \emph{determined set}, as we have deterministic (local) rules for it in the following sense:

\begin{lemma}
Let $G$ be a group, and let $X \subset A^G$ be a closed set. Let $P$ be a $D$-spanning set. Then for all $g \in D$ there exists a finite set $P_g \Subset P$ and a function $f_g : X|_{P_g} \to A$ such that for all $x \in X$, we have $x_g = f_g(x|_{P_g})$.
\end{lemma}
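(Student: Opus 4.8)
The plan is to run a compactness argument. Since $A$ is finite, the full shift $A^G$ is compact, and as $X$ is closed it is compact as well; the $D$-spanning hypothesis gives a global dependence of $x_g$ on $x|_P$, and the task is to upgrade this to a dependence on only finitely many coordinates.

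First I would isolate the function extracting the value at $g$. Since $P$ is $D$-spanning, there is $f : X|_P \to X|_D$ with $x|_D = f(x|_P)$ for all $x \in X$. As $g \in D$, define $h : X|_P \to A$ by $h(y) = f(y)_g$; then by construction $x_g = h(x|_P)$ for every $x \in X$, and $h$ is well defined precisely because $f$ is a genuine function of the restriction $x|_P$ (in particular, any two $x, x'$ with $x|_P = x'|_P$ satisfy $x_g = x'_g$).

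Next I would check that $h$ is continuous, where $X|_P$ carries the subspace topology from $A^P$. The projection $\pi_g : X \to A$, $x \mapsto x_g$, is continuous into the finite discrete set $A$, so $\pi_g^{-1}(a)$ is clopen in the compact space $X$, hence compact, for each $a \in A$. Writing $\rho : X \to X|_P$ for the continuous surjective restriction map, one has $h^{-1}(a) = \rho(\pi_g^{-1}(a))$, a continuous image of a compact set, hence compact and therefore closed in $X|_P$. Since $A$ is finite and the sets $h^{-1}(a)$ partition $X|_P$, each is also open, so $h$ is continuous.

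The only real content is the final step: a continuous map from a compact subspace of $A^P$ to a finite discrete space depends on finitely many coordinates. Concretely, each clopen fiber $h^{-1}(a)$ is compact and open, so it is covered by finitely many cylinder sets $C$ (each depending on a finite coordinate set) with $C \cap X|_P \subset h^{-1}(a)$. Collecting all coordinates appearing across the finitely many fibers $a \in A$ yields a finite set $P_g \Subset P$. If $y, y' \in X|_P$ agree on $P_g$ and $h(y) = a$, then $y$ lies in one of the chosen cylinders $C$ for $a$; as $C$ depends only on coordinates in $P_g$ and $y'$ agrees with $y$ there, also $y' \in C \cap X|_P \subset h^{-1}(a)$, so $h(y') = a$. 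Hence $h$ factors as $h = f_g \circ \mathrm{pr}$ through the projection $\mathrm{pr} : X|_P \to X|_{P_g}$, defining $f_g : X|_{P_g} \to A$, and $x_g = h(x|_P) = f_g(x|_{P_g})$ for all $x \in X$, as required. I expect no deep obstacle here; the crux is this finite-dependence step, and everything else is bookkeeping with the definitions and compactness.
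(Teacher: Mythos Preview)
Your argument is correct and is essentially the same compactness/Curtis--Hedlund--Lyndon argument as the paper's. The only cosmetic difference is that the paper establishes continuity of the whole map $f : X|_P \to X|_D$ in one stroke via the closed graph theorem (the graph $\{(x|_P, x|_D) : x \in X\}$ is a continuous image of the compact set $X$, hence compact, hence closed), whereas you work coordinate-by-coordinate and show directly that each fiber $h^{-1}(a)$ is a continuous image of a compact set; both routes land on the same finite-dependence step.
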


\begin{proof}
Suppose there is a function $f : X|_P \to X|_D$ such that for all $x \in X$, $x|_D = f(x|_P)$. Note that the graph of this function is just $Y = \{(x|_P, x|_D) \;|\; x \in X\}$, which is the continuous image of the compact space $X$, thus $Y$ is compact. A function with compact Hausdorff codomain and closed graph is continuous (this is one of the closed graph theorems from general topology \cite{Ke17}), thus we conclude that $f$ is continuous.

Now the claim is proved as the Curtis-Hedlund-Lyndon theorem: the clopen partition $\{[a]_g \;|\, a \in A\}$ has clopen preimage in $f$, and inclusion in a clopen set depends on only finitely many coordinates.
\qed\end{proof}

It is obvious that if $P$ is both $Q$-spanning and $R$-spanning, then it is $(Q \cup R)$-spanning. It is also clear that the set of sets $Q$ for which $P$ is $Q$-spanning is closed in the product topology of $\pow(G)$. This gives rise to the following definition.

\begin{definition}
Let $G$ be a group, $P \subset G$, and $X \subset A^G$ closed. The \emph{spanned set} $\psi_X(P)$ of $P$ is the maximal subset $D$ of $G$ such that $P$ is $D$-spanning.
\end{definition}

\begin{lemma}
Spanned sets form a closure system.
\end{lemma}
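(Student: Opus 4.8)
The plan is to show that the map $\psi_X$ is a closure operator on $\pow(G)$; the claim then follows immediately from the general fact recalled in Section~\ref{sec:Def}, that the closed sets (fixed points) of a closure operator form a closure system. Indeed, the spanned sets are by definition the sets in the image of $\psi_X$, and for any closure operator the image coincides with the set of fixed points: each $\psi_X(P)$ is a fixed point by idempotency ($\psi_X(\psi_X(P)) = \psi_X(P)$), and conversely every fixed point is its own image.

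The key tool is \emph{transitivity} of spanning: if $P$ is $Q$-spanning via $f : X|_P \to X|_Q$ and $Q$ is $D$-spanning via $h : X|_Q \to X|_D$, then $P$ is $D$-spanning via $h \circ f$, since $x|_D = h(x|_Q) = h(f(x|_P))$ for all $x \in X$. It is also convenient to record the pointwise reformulation: $P$ is $D$-spanning if and only if for every $g \in D$ the value $x_g$ is determined by $x|_P$ (formally, $x|_P = y|_P \implies x_g = y_g$ for all $x, y \in X$). The forward direction is just restriction, and the backward direction assembles the coordinatewise values into a single function $X|_P \to X|_D$. In particular $\psi_X(P) = \{g \in G \mid x|_P \text{ determines } x_g\}$.

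With these in hand, the three axioms are routine. Extensivity $P \subset \psi_X(P)$ holds because for $g \in P$ the value $x_g$ is literally a coordinate of $x|_P$. For monotonicity, suppose $P \subset Q$ and $P$ is $D$-spanning; since $x|_Q$ determines $x|_P$, which determines $x|_D$, the set $Q$ is also $D$-spanning, and taking $D = \psi_X(P)$ gives $\psi_X(P) \subset \psi_X(Q)$ by maximality of $\psi_X(Q)$. For idempotency, $\psi_X(P) \subset \psi_X(\psi_X(P))$ is extensivity, and for the reverse inclusion note that $P$ is $\psi_X(P)$-spanning (by the maximality built into the definition of $\psi_X(P)$) and $\psi_X(P)$ is $\psi_X(\psi_X(P))$-spanning (by definition), so transitivity makes $P$ itself $\psi_X(\psi_X(P))$-spanning, whence $\psi_X(\psi_X(P)) \subset \psi_X(P)$ again by maximality.

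The only point requiring a little care — and the step I would treat as the main, if modest, obstacle — is the backward direction of the pointwise reformulation: checking that determination at each individual $g \in D$ genuinely assembles into a function $X|_P \to X|_D$ rather than a mere relation. This is where one uses that the common value of $x_g$ over all $x \in X$ with a fixed value of $x|_P$ is well defined, and that the resulting assignment in fact lands in $X|_D$. Everything else is a formal consequence of transitivity together with the maximality in the definition of $\psi_X$, and then the closure-system conclusion is inherited verbatim from the general theory of closure operators.
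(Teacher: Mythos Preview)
Your proof is correct and follows exactly the same approach as the paper: verify that $\psi_X$ is a closure operator by checking extensivity, monotonicity, and idempotency, then invoke the general fact that fixed points of a closure operator form a closure system. The paper's own proof simply asserts that all three properties are ``obvious from the semantics of the operator (and also easy to verify from the formal definition),'' whereas you spell out the transitivity argument for idempotency and the pointwise reformulation of spanning; these details are fine but not something the paper felt needed stating.
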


\begin{proof}
By definition, we need to show $P \subset \psi_X(P)$ (extensivity), $P \subset P' \implies \psi_X(P) \subset \psi_X(P')$ (monotonicity) and $\psi_X(\psi_X(P)) = \psi_X(P)$ (idempotency). All of these properties are obvious from the semantics of the operator (and also easy to verify from the formal definition).
\qed\end{proof}

Sets $P$ such that $\psi_X(P) = P$ are called \emph{spanning-closed}. Our particular interest is in spanning sets in TEP subshifts. The following is immediate from the definitions, but we give a detailed proof.

\begin{lemma}
\label{lem:SpannedContainsFilled}
Let $G$ be a group, let $C \Subset S \Subset G$ and let $X \subset A^G$ closed and $(C, S)$-permutive. Let $P \Subset G$. Then $P$ is a $\varphi(P)$-spanning set. In other words, the spanned set always contains the filling closure, or in a formula $\psi_X(P) \supset \varphi(P)$ for all $P \subset G$.
\end{lemma}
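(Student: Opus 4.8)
The plan is to show that the property ``$x|_P$ determines $x|_D$'' propagates along filling moves, and then to pass to the union defining $\varphi(P)$. Recall first that $P$ is $D$-spanning exactly when $x|_P = y|_P \implies x|_D = y|_D$ for all $x, y \in X$: the function $f$ in the definition is then simply $f(x|_P) := x|_D$, which is well-defined precisely by this implication. I will work with this reformulation throughout, since it is manifestly closed under arbitrary unions.

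First I would prove the one-step claim: if $P$ is $D$-spanning and $D \fillstep_{C,S} D'$ is a single filling move at some $g \in G$, then $P$ is $D'$-spanning. By Definition~\ref{def:Filling} we have $|D \cap gS| = |S|-1$, $D' = D \cup gC$, and $D \subsetneq D'$. The unique element of $gS \setminus D$, call it $gc$, must lie in $gC$ (otherwise $gC \subset D$ and $D' = D$), so $c \in C$ is a corner and in fact $D' = D \cup \{gc\}$. Now $(C,S)$-permutivity provides a function $f_g$ with $x_{gc} = f_g\big(x|_{gS \setminus \{gc\}}\big)$ for all $x \in X$ (under the identification $x|_{gS} \leftrightarrow g^{-1}x|_S$). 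Since $gS \setminus \{gc\} \subset D$, the value $x_{gc}$ is a function of $x|_D$, which is itself a function of $x|_P$ because $P$ is $D$-spanning. Composing, $x|_{D'}$ is a function of $x|_P$, i.e.\ $P$ is $D'$-spanning.

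Next I would iterate. Starting from the trivial fact that $P$ is $P$-spanning (via the identity function), a straightforward induction on the length of a filling chain $P = D_0 \fillstep D_1 \fillstep \cdots \fillstep D_n = Q$ shows, using the one-step claim at each step, that $P$ is $Q$-spanning for every $Q$ with $P \fillstep^*_{C,S} Q$. Finally, since $\varphi_{C,S}(P) = \bigcup\{Q \mid P \fillstep^*_{C,S} Q\}$, every $d \in \varphi(P)$ lies either in $P$ or in some reachable $Q$; in both cases $x|_P$ determines $x_d$. Hence $x|_P$ determines $x|_{\varphi(P)}$, which is exactly the statement that $P$ is $\varphi(P)$-spanning.

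The argument is essentially bookkeeping, and the only point requiring genuine care is the \emph{direction} of permutivity: one must check that the hole filled by a (genuine) filling move is always a corner $gc$ with $c \in C$, so that the determinacy function $f_g$ supplied by $(C,S)$-permutivity applies to precisely that position. A secondary subtlety is that $\varphi(P)$ may well be infinite, so I deliberately avoid any appeal to termination of the filling process and instead rely on the ``determines'' reformulation together with the union description of $\varphi(P)$, each reachable $Q$ being obtained by only finitely many moves.
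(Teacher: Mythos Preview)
Your proof is correct and follows essentially the same approach as the paper: both propagate determinacy along the filling chain using the function $f_g$ supplied by $(C,S)$-permutivity, after noting that the missing element of $gS$ must lie in $gC$. Your treatment is slightly more careful in that you explicitly handle the possibility that $\varphi(P)$ is infinite by using the ``$x|_P$ determines $x_d$'' reformulation and the union description of $\varphi(P)$, whereas the paper simply follows the chain $P_0, P_1, \ldots$ without discussing termination.
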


\begin{proof}
By definition of the filling process, we produce $\varphi(P)$ from $P$ by constructing a sequence of sets $P_0 = P, P_1, \ldots$ such that $P_{i+1}$ is obtained from $P_i$ by taking $g_i$ such that $|g_iS \cap P_i| = |S|-1$ and $|g_iC \cap P_i| = |C|-1$, and letting $P_{i+1} = P_i \cup \{g_i c_i\}$ where $g_i c_i \notin P_i$.

Now let $x \in X$ be arbitrary. We prove that $x|_{\varphi(P)}$ is uniquely determined by $x|_P$ by following the chain $P_i$. First, $x|_{P_0} = x|_P$ is trivially determined by $x|_P$. Now suppose $x|_{P_i}$ is determined already.

By the definition of $(C, S)$-permutivity, there is a function $f_{g_i} : A^{S \setminus \{c_i\}} \to A$ such that
\[ g_i^{-1} x|_S = p \implies f_g(p|_S \setminus \{c_i\}) = p_{c_i}. \]
Note that $g_i^{-1} x|_S$ contains the same data as $x|_{g_iS}$, and $p_{c_i} = x|_{g_ic_i}$. so we are precisely saying that the pattern $x|_{P_{i+1}}$ is uniquely determined by $x|_{g_iS}$, in particular by $x|_{g_iS \setminus \{g_ic_i\}}$. The set $g_iS \setminus \{g_ic_i\}$ is contained in $P_i$ since because $|g_iS \cap P_i| = |S|-1$ and  $g_i c_i \notin P_i$. Therefore, $x|_{P_{i+1}}$ is determined by $x|_{P_i}$. 
\qed\end{proof}

\begin{lemma}
Let $G$ be a group, let $C \Subset S \Subset G$ and let $X \subset A^G$ closed and $(C, S)$-permutive. Then $(C, S)$-solitaire preserves the spanned set.
\end{lemma}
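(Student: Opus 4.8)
The plan is to reduce to a single solitaire move and then exploit the two facts already in place: that the spanned set always contains the filling closure (Lemma~\ref{lem:SpannedContainsFilled}), and that $\psi_X$ is a closure operator, hence extensive, monotone and idempotent. Since $\rightarrow^*$ is the transitive closure of single moves and equality is transitive, it suffices to show that whenever $P \rightarrow_{C,S} Q$ we have $\psi_X(P) = \psi_X(Q)$. So I would fix such a move, witnessed at some $g \in G$, and write $P \setminus Q = \{gc\}$ and $Q \setminus P = \{gc'\}$ with distinct $c, c' \in C$.

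The first step is to observe, exactly as in the proof of Lemma~\ref{lem:ContainedInFilling}, that $P \fillstep P \cup Q$ by a filling move at $g$. Indeed $|P \cap gS| = |S|-1$, and the unique element of $gS$ missing from $P$ is $gc'$, so $P \cup gC = P \cup \{gc'\} = P \cup Q$. Consequently $Q \subset P \cup Q \subset \varphi(P)$, and Lemma~\ref{lem:SpannedContainsFilled} gives $\varphi(P) \subset \psi_X(P)$; hence $P \cup Q \subset \psi_X(P)$.

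Now I would apply monotonicity and idempotency of $\psi_X$: from $Q \subset P \cup Q \subset \psi_X(P)$ it follows that $\psi_X(Q) \subset \psi_X(\psi_X(P)) = \psi_X(P)$. The reverse inclusion comes by symmetry. Since solitaire moves are invertible, $Q \rightarrow_{C,S} P$ is also a move, so running the identical argument with the roles of $P$ and $Q$ interchanged yields $P \subset \varphi(Q) \subset \psi_X(Q)$ and therefore $\psi_X(P) \subset \psi_X(Q)$. Combining the two inclusions gives $\psi_X(P) = \psi_X(Q)$, which (iterating over the moves of an arbitrary $P \rightarrow^* Q$) is the claim.

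I expect essentially no real obstacle here: the argument is a transcription of the filling-closure invariance (Lemma~\ref{lem:ContainedInFilling}) once Lemma~\ref{lem:SpannedContainsFilled} is available. The only point requiring care is that the inclusion $\psi_X(P) \subset \psi_X(Q)$ cannot be read off from monotonicity of $\psi_X$ directly, since it is \emph{not} the case that $P \subset Q$; one genuinely needs the symmetric filling argument applied to the inverse move, which is where the invertibility of solitaire moves is used.
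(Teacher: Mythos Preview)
Your proof is correct and follows essentially the same approach as the paper's. The only cosmetic difference is that the paper cites the already-established invariance $\varphi(P)=\varphi(Q)$ (Lemma~\ref{lem:ContainedInFilling}) and then writes the single chain $\psi_X(P)=\psi_X(\psi_X(P))\supset\psi_X(\varphi(P))\supset\psi_X(Q)$, whereas you re-derive the inclusion $Q\subset\varphi(P)$ by hand from the filling move at $g$; the logical content is identical.
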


\begin{proof}
Suppose $P \rightarrow Q$. We know that solitaire preserves the filling closure, $\varphi(P) = \varphi(Q)$. Then we calculate
\[ \psi_X(P) = \psi_X(\psi_X(P)) \supset \psi_X(\varphi(P)) \supset \psi_X(Q). \]
The other inclusion is symmetric. 
\qed\end{proof}

\begin{lemma}
Let $G$ be a group, let $C \Subset S \Subset G$ and let $X \subset A^G$ closed and $(C, S)$-permutive. Then every spanning-closed set is filling-closed.
\end{lemma}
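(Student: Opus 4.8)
The plan is to derive this directly from Lemma~\ref{lem:SpannedContainsFilled} together with the fact that $\varphi$ is a closure operator, so essentially no new argument is required. The whole statement is a formal consequence of the containment $\varphi(P) \subset \psi_X(P)$ already established.

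First I would recall that $\varphi$ is extensive (being a closure operator), so $P \subset \varphi(P)$ holds for every $P \subset G$. This gives one of the two inclusions we want for free, independently of any hypothesis on $P$.

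Next I would invoke Lemma~\ref{lem:SpannedContainsFilled}, which asserts $\varphi(P) \subset \psi_X(P)$ for all $P \subset G$ under the standing $(C,S)$-permutivity assumption. Now suppose $P$ is spanning-closed, meaning $\psi_X(P) = P$. Chaining the two facts gives $\varphi(P) \subset \psi_X(P) = P$, and combining this with extensivity $P \subset \varphi(P)$ yields $\varphi(P) = P$, i.e. $P$ is filling-closed.

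I do not expect any genuine obstacle here: all of the mathematical content is carried by Lemma~\ref{lem:SpannedContainsFilled}, and the present statement is a purely formal consequence of having one closure operator pointwise dominated by another. Indeed the argument proves a general principle worth stating at that level: whenever $\tau_1(P) \subset \tau_2(P)$ for all $P$ and both operators are extensive, every $\tau_2$-closed set is $\tau_1$-closed, so the pointwise-smaller operator $\tau_1 = \varphi$ has the larger family of closed sets. The only thing to double-check when writing it up is that the ambient hypotheses of Lemma~\ref{lem:SpannedContainsFilled} ($C \Subset S \Subset G$ and $(C,S)$-permutivity of $X$) match those of the present lemma, which they do.
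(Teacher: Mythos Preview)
Your proposal is correct and matches the paper's own proof essentially line for line: the paper also assumes $\psi_X(P) = P$, chains $P \subset \varphi(P) \subset \psi_X(P) = P$ using extensivity and Lemma~\ref{lem:SpannedContainsFilled}, and concludes $\varphi(P) = P$. Your added remark about the general principle (a pointwise-smaller extensive operator has more closed sets) is a nice gloss but not in the paper.
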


\begin{proof}
Suppose $\psi_X(P) = P$. Then Lemma~\ref{lem:SpannedContainsFilled} we have $P \subset \varphi(P) \subset \psi_X(P) = P$ so $\varphi(P) = P$. 
\qed\end{proof}

It is tempting to conjecture that in natural situations, the spanned sets of a TEP subshift would in fact correspond to filling closures. We will see in Section~\ref{sec:Ledrappier} that this fails in the very classical Ledrappier subshift (which is the spacetime subshift of the two-neighbor XOR CA).

There is an obvious connection between the cardinalities of independent and spanning sets in a general subshift:

\begin{definition}
Let $G$ be a group, and let $X \subset A^G$ be a closed set. For $P \Subset G$, write $\rank_{\mathrm{span}}(P)$ for the minimal cardinality of a set $R \subset P$ such that $\psi_X(R) \supset P$. Write $\rank_{\mathrm{indep}}(P)$ for the maximal cardinality of a set $Q \subset P$ such that $X|_Q = A^Q$.
\end{definition}

\begin{proposition}
Let $G$ be a group, and let $X \subset A^G$ closed set. Then
\[ \rank_{\mathrm{indep}}(P) \leqslant \rank_{\mathrm{span}}(P) \]
\end{proposition}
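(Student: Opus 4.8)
The plan is to reduce the inequality to a counting comparison between the two finite sets of restricted patterns $X|_Q$ and $X|_R$ attached to an optimal independent subset $Q$ and an optimal spanning subset $R$ of $P$. First I would fix $Q \subset P$ realizing $\rank_{\mathrm{indep}}(P)$, so that $X|_Q = A^Q$ and $|Q| = \rank_{\mathrm{indep}}(P)$, and fix $R \subset P$ realizing $\rank_{\mathrm{span}}(P)$, so that $\psi_X(R) \supset P$ and $|R| = \rank_{\mathrm{span}}(P)$. Such an $R$ exists because $\psi_X(P) \supset P$ by extensivity, so $P$ itself is always a candidate; both $Q$ and $R$ are finite, so all cardinalities below are finite integers.

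The core step is to exploit the spanning hypothesis to produce a surjection $X|_R \to X|_Q$. Since $\psi_X(R) \supset P \supset Q$, the set $R$ is $Q$-spanning: there is a function $f : X|_R \to X|_{\psi_X(R)}$ with $x|_{\psi_X(R)} = f(x|_R)$ for all $x \in X$, and composing with restriction to $Q$ gives $g : X|_R \to X|_Q$ with $g(x|_R) = x|_Q$. This map $g$ is surjective, because any pattern $p \in X|_Q$ equals $x|_Q$ for some $x \in X$, hence $p = g(x|_R)$ lies in the image. Therefore $|X|_Q| \leqslant |X|_R|$.

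The final step is the cardinality comparison. Independence of $Q$ gives $|X|_Q| = |A^Q| = |A|^{|Q|}$, while trivially $X|_R \subset A^R$ yields $|X|_R| \leqslant |A|^{|R|}$. Combining these with the previous inequality gives $|A|^{|Q|} \leqslant |A|^{|R|}$. Since we may assume $|A| \geqslant 2$ (passing to the effective alphabet, with the one-symbol case being degenerate), the function $n \mapsto |A|^n$ is strictly increasing, so we conclude $|Q| \leqslant |R|$, that is, $\rank_{\mathrm{indep}}(P) \leqslant \rank_{\mathrm{span}}(P)$.

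I expect the only genuinely delicate points to be the bookkeeping that $R$ is $Q$-spanning (rather than merely $\psi_X(R)$-spanning) and the surjectivity of the factored map $g$; everything else is a one-line finite count. The other thing to watch is the nondegeneracy assumption $|A| \geqslant 2$, without which the strict monotonicity of $n \mapsto |A|^n$ — and indeed the statement itself — can fail, since over a one-symbol alphabet every set is both independent and spanned by the empty set.
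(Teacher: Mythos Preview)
Your proof is correct and follows essentially the same counting argument as the paper: both compare $|A|^{|Q|} = |X|_Q|$ against $|X|_R| \leqslant |A|^{|R|}$ via the determination of $x|_Q$ by $x|_R$. The paper factors the surjection through $X|_P$ rather than going directly to $X|_Q$, but this is cosmetic; your extra remarks on surjectivity and the $|A| \geqslant 2$ caveat are welcome clarifications the paper leaves implicit.
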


\begin{proof}
The inequality $\rank_{\mathrm{indep}}(P) \leqslant \rank_{\mathrm{span}}(P)$ holds in general (Without assuming permutivity). Namely, suppose $R$ is such that $\psi_X(R) \supset P$. This means that $x_{|R}$ determines $x_{|P}$ for all $x \in X$, so in particular $|X_{|R}| \geqslant |X_{|P}|$. If $X|_Q = A^Q$ and $Q \subset P$, then certainly $|X_{|P}| \geqslant |A^Q|$, so we must have $|X_{|R}| \geqslant |A^Q|$ and taking base-$|A|$ logarithms we get $|R| \geqslant |Q|$.

This means every spanning set is larger than any independent set, in particular the minimal cardinality of a spanning set is larger than the maximal cardinality of and independent set, proving the inequality. 
\qed\end{proof}

Similarly, there is a connection between the filling rank and the spanning rank.

\begin{proposition}
Let $G$ be a group, $C \Subset S \Subset G$, and let $X \subset A^G$ be a $(C, S)$-permutive closed set. If $P$ is closed under filling, then $\rank_{\mathrm{indep}}(P) \leqslant \rank(P)$, where $\rank(P)$ is computed with respect to $(C, S)$-solitaire.
\end{proposition}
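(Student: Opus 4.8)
The plan is to chain the two comparison results already established with Lemma~\ref{lem:SpannedContainsFilled}, so that the proposition reduces to the inequality $\rank_{\mathrm{indep}}(P) \leqslant \rank_{\mathrm{span}}(P)$ proved in the previous proposition. The only genuinely new ingredient I need is that, when $P$ is filling-closed, the filling rank $\rank(P)$ dominates the spanning rank $\rank_{\mathrm{span}}(P)$.

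First I would take a set $R \Subset G$ witnessing the filling rank, i.e.\ with $|R| = \rank(P)$ and $\varphi(R) = \varphi(P)$. Since $P$ is filling-closed, $\varphi(P) = P$, so $\varphi(R) = P$; and by extensivity of the closure operator, $R \subset \varphi(R) = P$, so in fact $R$ is a subset of $P$. This is precisely the step where filling-closedness of $P$ is used: without it, a minimal filling-generating set need not lie inside $P$, and the comparison with $\rank_{\mathrm{span}}(P)$ (which ranges over subsets of $P$) would not be available.

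Next, Lemma~\ref{lem:SpannedContainsFilled} gives $\psi_X(R) \supset \varphi(R) = P$, so $R$ is a subset of $P$ that is $P$-spanning. By the definition of $\rank_{\mathrm{span}}(P)$ as the minimal cardinality of such a spanning subset of $P$, this yields $\rank_{\mathrm{span}}(P) \leqslant |R| = \rank(P)$. Combining with the previous proposition produces the chain $\rank_{\mathrm{indep}}(P) \leqslant \rank_{\mathrm{span}}(P) \leqslant \rank(P)$, which is the desired conclusion.

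I do not expect a real obstacle here: the argument is essentially bookkeeping among the three notions of rank. The two subtleties worth flagging are (i) that permutivity is invoked only through Lemma~\ref{lem:SpannedContainsFilled}, which is what converts a filling-generating set into a spanning set, and (ii) that filling-closedness of $P$ is exactly the hypothesis that lets us choose the rank witness $R$ to lie inside $P$, so that it is admissible in the definition of $\rank_{\mathrm{span}}(P)$.
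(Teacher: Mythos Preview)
Your proposal is correct and follows essentially the same approach as the paper: take a minimal filling-generating set $R$, observe via extensivity that $R \subset P$ when $P$ is filling-closed, use Lemma~\ref{lem:SpannedContainsFilled} to conclude that $R$ is $P$-spanning, and then chain $\rank_{\mathrm{indep}}(P) \leqslant \rank_{\mathrm{span}}(P) \leqslant |R| = \rank(P)$. Your write-up is in fact more explicit than the paper's (which leaves the final chaining with the previous proposition implicit), and you correctly identify where each hypothesis is used.
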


\begin{proof}
Let $R \subset P$ satisfy $\varphi(R) = P$. Then in particular $R$ is a $P$-spanning set contained in $P$. Thus its cardinality is at least as large as that of a minimal $P$-spanning set.
\qed\end{proof}

Alternatively, we can say that the filling process is only one way to find spanning sets, and thus we expect that minimal spanning sets of a pattern can be smaller than the minimal sets that fill it.

\subsection{Bases and filling bases}
\label{sec:Bases}

\begin{definition}
Let $P \Subset G$ and let $X \subset A^G$ be closed. We say $T \Subset P$ is a \emph{topological (resp.\ $\mu$-) basis} of $P$ (with respect to $X$) if it is topologically (resp.\ $\mu$-)independent and $P$-spanning. 
\end{definition}

We concentrate on topological bases, but by using the TEP measure from \cite{Sa22} one has analogous results for the $\mu$-version.


While the bases of a TEP subshift are of great interest, there is not much we can say about them in general. Indeed, even in TEP subshifts with the triangle shape, the family of bases depends on the subshift. The following related definition is more amenable to analysis through the solitaire, and we will show that for the triangle shape on the plane, it does not depend on the subshift.

\begin{definition}
Let $P \Subset G$ be a filling closed set, and let $X \subset A^G$ be closed. We say $T \Subset P$ is a \emph{filling basis} of $P$ if it is independent and its filling closure equals $P$.
\end{definition}

In the conference version \cite{SaSc23}, we called ``filling bases'' as ``bases'', but we feel now that it is better to use the generic term for a concept that is not TEP or solitaire specific.

Now, the following is obvious:

\begin{lemma}
Any filling basis for a filling closed set is a basis for it.
\end{lemma}

\begin{lemma}
\label{lem:BasesPreserved}
Let $P$ be a (filling) basis for $F = \varphi(P)$. Then every pattern in its solitaire orbit is also a (filling) basis for $F$.
\end{lemma}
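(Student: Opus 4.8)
The plan is to observe that a (filling) basis is characterised by exactly two properties — independence together with a spanning/filling condition — and that each of these is individually preserved by the solitaire process by a result already established above. The lemma then follows by combining the relevant preservation statements, handling the two cases (basis versus filling basis) by the same independence step and differing only in the second condition.

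First I would dispose of independence, which is common to both notions. If $P$ is a (filling) basis for $F$, then in particular $X|_P = A^P$, i.e.\ $P \in \TI(X)$. By Theorem~\ref{thm:TISolitaire}, $\TI(X)$ is closed under $(C,S)$-solitaire, so for any $Q \in \orb(P)$ we also have $Q \in \TI(X)$, i.e.\ $Q$ is independent. (If one works with a measure $\mu$ instead, the same theorem gives closure of $\I_\mu(X)$, so the $\mu$-version of the statement goes through verbatim.)

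For the filling basis case it then remains only to check $\varphi(Q) = F$. But $Q \in \orb(P)$ means $P \rightarrow^* Q$, and Lemma~\ref{lem:ContainedInFilling} states that the filling closure is a solitaire invariant, so $\varphi(Q) = \varphi(P) = F$; together with independence this shows $Q$ is a filling basis for $F$. For the (topological) basis case it remains only to check that $Q$ is $F$-spanning. Since $P$ is $F$-spanning we have $F \subset \psi_X(P)$, and by the lemma asserting that $(C,S)$-solitaire preserves the spanned set we get $\psi_X(Q) = \psi_X(P)$, hence $F \subset \psi_X(Q)$; with independence this shows $Q$ is a basis for $F$.

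The main point is that there is essentially no obstacle left at this stage: all of the content sits in the three preservation statements — Theorem~\ref{thm:TISolitaire} for independence, Lemma~\ref{lem:ContainedInFilling} for filling closures, and the spanned-set preservation lemma — each of which is already proved above. The only care needed is to run the second half of the argument in two parallel branches so that the parenthetical ``(filling)'' is treated uniformly, which is why I would state the independence step once and then branch according to whether the spanning condition is phrased via $\varphi$ or via $\psi_X$.
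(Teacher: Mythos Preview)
Your proposal is correct and is essentially identical to the paper's proof, which reads in full: ``Independent sets are closed under solitaire, filling closure is preserved under solitaire, and $F$-spanning is preserved under solitaire.'' You have simply spelled out the three invoked facts with explicit references (Theorem~\ref{thm:TISolitaire}, Lemma~\ref{lem:ContainedInFilling}, and the spanned-set preservation lemma), which is exactly what the paper is gesturing at.
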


\begin{proof}
Independent sets are closed under solitaire, filling closure is preserved under solitaire, and $F$-spanning is preserved under solitaire.  \qed
\end{proof}

We prove that in TEP subshifts, every convex set has a filling basis (in particular, a basis).

\begin{theorem}
Every convex set $P$ in a TEP subshift contains a filling basis.
\end{theorem}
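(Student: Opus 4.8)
The plan is to take the contour of $P$ with respect to a corner and show it is a filling basis. Concretely, let $c \in C$ be a corner of $S$; by definition there is a bi-invariant order $<$ on $G$ for which $c = \max S$. Set $T := C_{S,c}(P) = \{x \in P \mid xc^{-1}S \not\subseteq P\}$. The TEP machinery already supplies half of what is needed: by \cite{Sa22} (the fact quoted in the proof of Proposition~\ref{prop:AnyTwo}), for a convex $P$ the contour $T$ is both topologically independent and $P$-spanning, so $T$ is automatically a basis of $P$. It therefore remains only to verify the filling condition $\varphi(T) = P$, which promotes the basis to a filling basis.

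The heart of the argument is to show $P \subseteq \varphi(T)$, which I would do by a filling sweep along $<$. Write $P \setminus T = \{x \in P \mid xc^{-1}S \subseteq P\}$ and enumerate these points in $<$-increasing order. For such an $x$, put $g = xc^{-1}$, so that $gS \subseteq P$ and $gc = x$; since $c = \max S$ and $<$ is bi-invariant, every other point $gs$ with $s \in S \setminus \{c\}$ satisfies $gs < x$ and lies in $P$. Processing the points of $P \setminus T$ from smallest to largest, when we reach $x$ all of $g(S \setminus \{c\})$ is already present, since each such point is either in the initial contour $T$ or is a point of $P \setminus T$ strictly below $x$ and hence already filled, while $x$ itself is still absent. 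Thus $|gS \cap (\text{current set})| = |S| - 1$ with the unique missing element $x = gc \in gC$, so a single filling move adds $x$. By induction every point of $P \setminus T$ is eventually added, giving $P \subseteq \varphi(T)$; combined with $T \subseteq P$ this yields $\varphi(T) = \varphi(P)$, and in particular $\varphi(T) = P$ once $P$ is filling closed, exhibiting $T$ as a filling basis of $P$.

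The main subtlety is ensuring that the single element $c$ can play both roles required of it: it must be a corner with respect to the convex geometry $\C$ (this is what \cite{Sa22} uses to guarantee that the contour is independent and $P$-spanning) and simultaneously the maximum of $S$ for a bi-invariant order (this is what drives the filling sweep). In the settings of interest, namely $\Z^d$ with the real convex geometry and the free groups, the two notions of corner coincide by Lemma~\ref{lem:Orders} and the accompanying remark, so any $c \in C$ works; in full generality one should verify that a $\C$-corner is always realizable as an order-maximum, and this is where I expect to spend the most care. A second, more cosmetic point is that a filling basis is only defined for a filling-closed set, so the statement is really about convex sets that happen to be filling closed — a convex set such as a single edge need not be filling closed. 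For a general convex $P$ the construction still produces an independent $T \subseteq P$ with $\varphi(T) = \varphi(P)$, which is a filling basis of $\varphi(P)$ and a basis of $P$, matching the parenthetical claim.
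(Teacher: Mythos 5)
Your sweep argument is sound as far as it goes: ordering $P \setminus T$ increasingly and filling each $x = gc$ once all of $g(S\setminus\{c\})$ (which lies strictly below $x$ by invariance of the order) is present is exactly the mechanism of Lemma~\ref{lem:swap}, and it does show $\varphi\bigl(C_{S,c}(P)\bigr) = \varphi(P)$. But the gap you flag at the end is a genuine one, not a cosmetic one, and it is precisely the point where your route diverges from what the theorem asserts. The theorem is stated for an arbitrary group $G$ carrying a midpointed invariant convex geometry $\C$, with $C$ the set of $\C$-corners of $S$; nothing in that setting guarantees a bi-invariant (or even left-invariant) total order on $G$ for which some $\C$-corner is the maximum of $S$. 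Groups with torsion admit no such order at all --- the lamplighter group used as a running example in the paper is explicitly noted not to be left-orderable --- and even on orderable groups an abstract convex geometry need not be induced by orders, so a $\C$-corner need not be an order-maximum. What your argument actually proves is the \emph{subsequent} theorem in this section (``for left-orderable $G$, the $S$-contour of every convex set is a filling basis''), not the present one. The same caveat applies to your black-box use of the independence and spanning of the contour: that input is itself an order-based result from \cite{Sa22}.

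The paper's proof avoids orders entirely. It iterates the corner addition property of $\C$ to build $P$ as a chain $\emptyset = F_0 \subset F_1 \subset \cdots \subset F_k = P$ with singleton increments, and uses the dichotomy from \cite{Sa22} that at each step the new cell is either freely choosable (these cells form the independent set) or uniquely determined, the latter case being exactly an applicable filling move. The resulting filling basis is in general not a contour and depends on the chosen chain, but the argument needs only the convex geometry. If you want to salvage your version, you would need either to restrict the statement to orderable groups (as the paper does in the companion theorem), or to prove that every $\C$-corner of $S$ arises as the maximum for some invariant order --- and the latter is false in general.
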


\begin{proof}
Let $G$ be the group, $\C$ the convex geometry, $P \in A^S$ be the $(C, S)$-TEP set (so $C$ are the corners of $S$ with respect to $\C$).

Let $F$ be finite convex set. By iterating the corner addition property from $\emptyset$, adding elements from $F$, we obtain an increasing sequence of convex sets $F_0 = \emptyset, F_1, \ldots, F_k = F$ where $k = |F|$ and $F_{i+1} \setminus F_i$ is a singleton. It is shown in \cite{Sa22} that for any pattern $D \in A^{F_i}$ the set of locally valid patterns on $A^{F_{i+1}}$ which extend $D$ are either completely free (any symbol can be chosen at $g \in F_{i+1} \setminus F_i$), or are uniquely determined (and there is a possible choice).

Let $g_1, \ldots, g_\ell$, $\ell \leq k$, be the elements where we have a free choice. Then no matter how we make the free choices, we end up with a locally valid pattern on $F$. By the previous theorem, these patterns are globally valid, so the set $\{g_1, \ldots, g_\ell\}$ is independent.

When we add a corner whose contents cannot be freely chosen, the reason for this is precisely that a filling move can be applied. Thus, the $g_i$ form a filling basis.
\end{proof}

Recall that if $S \Subset G$ and and $c \in S$ is a corner of $S$. The $S$-contour with respect to $c$ of a pattern $P$  is $C_{S,c}(P) := \{x \in P \;|\; xc^{-1}S  \not\subset P \}$. The following theorem is immediate from the proof of Lemma 5.21 in \cite{Sa22} and is another source of bases in the orderable case.

\begin{theorem}
Let $G$ be any left-orderable group, let $X$ be an $(C, S)$-TEP subshift with $C$ the corners of $S$. Then the $S$-contour of every convex set is a filling basis for it.
\end{theorem}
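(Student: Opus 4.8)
The plan is to prove the two defining properties of a filling basis separately, namely that the contour $C := C_{S,c}(P)$ has filling closure $P$, and that it is topologically independent. Both will come from ``sweeps'' along a left-invariant order, so the only real work is book-keeping. First I would fix a left-invariant order $<$ realizing the corner $c$ as the maximum of $S$, and translate $S$ on the left so that $c = e_G$; this affects neither solitaire, filling, nor contours, and it gives $C = \{x \in P : xS \not\subset P\}$ and $P \setminus C = \{x \in P : xS \subset P\}$. Left-invariance makes $x$ the $<$-largest element of $xS$, so every other element $xs$ with $s \neq e$ is strictly below $x$.

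For the filling direction, I would enumerate $P \setminus C$ in increasing order $x_1 < x_2 < \cdots < x_m$ and fill from $C$ in this order. When the process reaches $x_j$, the set $x_j S \setminus \{x_j\}$ lies entirely in $P$ and consists only of elements $< x_j$, each of which is either in $C$ or equals some $x_i$ with $i < j$ and hence already present; therefore a single $(C,S)$-filling move at the translate $x_j S$ fills precisely the hole $x_j$. By induction this yields $\varphi(C) \supseteq P$, and since a convex set is filling closed (by the half-space argument of Lemma~\ref{lem:fill in hull} and Proposition~\ref{prop:ZdFFP}, carried out in the ambient convex geometry), one concludes $\varphi(C) = P$. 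For independence I would run the same sweep at the level of symbols: given $\pi \in A^{C}$, extend it to $p \in A^{P}$ by setting, for each $x_j \in P \setminus C$ in increasing order, $p(x_j) = f_{x_j}(p|_{x_jS \setminus \{x_j\}})$, where $f_{x_j}$ is the permutivity function at the corner $e = c$; this is well-defined because the arguments were assigned earlier in the sweep. The resulting $p$ is \emph{locally valid}: every translate $gS \subset P$ has $<$-maximum $g \in P \setminus C$, and $p(g)$ was defined exactly so that $g^{-1}p|_S$ is allowed, while distinct translates have distinct maxima, so each local constraint is enforced by exactly one assignment. By the local-to-global theorem for TEP subshifts on convex domains (the cited Theorem), $p$ extends to a configuration of $X$, whence $X|_C = A^C$ and $C$ is independent. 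Combining the two sweeps, $C$ is independent with $\varphi(C) = P$, i.e.\ a filling basis.

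The main obstacle I expect is reconciling the two notions of ``corner'' that the statement silently merges: $c$ is used as the maximum of a \emph{left-invariant order} (to define the contour and to drive both sweeps), whereas the permutivity functions $f_g$ are furnished by the TEP structure only at the \emph{convex-geometry corners} $C$. The argument needs that an order-maximum of $S$ is always such a corner, which holds on $\Z^d$ by Lemma~\ref{lem:Orders} and is precisely the compatibility implicit in the phrase ``$C$ the corners of $S$'' together with left-orderability; this is also where the footnote's remark (that a left-invariant, rather than bi-invariant, order suffices here) must be used. A secondary point is purely definitional: a filling basis is defined only for filling-closed $P$, so strictly one proves the statement for convex sets that are filling closed, and the filling-closedness of convex sets in the ambient geometry is exactly what supplies the inclusion $\varphi(C) \subseteq P$.
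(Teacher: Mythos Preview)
Your proposal is correct and faithfully reconstructs the sweep argument from \cite[Lemma~5.21]{Sa22}, which is all the paper's own proof does (it is a one-line citation). The two obstacles you flag---the coincidence of order-maxima with convex-geometry corners, and the filling-closedness of convex sets---are exactly the hypotheses that the paper leaves packed inside that citation, and your handling of them is adequate for the intended setting.
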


Since solitaire preserves bases, one can now compute many bases by simply applying solitaire moves to a basis, for instance the contour of a convex set, which is easy to compute. 

\begin{example}
We continue the lamplighter example with the same shape $S = \{e, a, ab^{-1}\}$. Note that this group is not torsion-free ($ab^{-1}$ is of order $2$), so it is not left-orderable.

It is easy to show that in any TEP subshift, the bottom elements (those with the lamplighter at position $0$, labelled $1$ to $8$ in Figure~\ref{fig:Lamplighter2}) actually form a filling basis: we showed that the filling closure contains the tetrahedron (and in fact it contains nothing more). 

A simple ad hoc argument shows that indeed any contents is possible in this set (in that it extends to a global configuration), i.e.\ that the set is independent. First, to fill the tetrahedron from any given values of the bottom nodes, we simply note that the shape deterministically gives unique values for each node (in that it applies exactly once with the top node $a$ translated to any particular non-bottom node).

To extend to a global configuration, we note that geometrically the group's full Cayley graph is obtained by repeatedly extending the tetrahedron upward (by adding a copy of the existing tetrahedron ``behind'' the present one, and joining them by new top nodes); and extending it downward by adding a copy on the right, and joining them by new bottom vertices. For the copies, we can always simply copy the existing values. When extending upward, one can again use determinism, and to extend downward one can pick every second value arbitrarily, and use the TEP rule to fill the rest.

TEP subshifts with this shape all allow the same solitaire to be played, but they can have somewhat different properties dynamically. If we use an abelian alphabet $A$ and require that $gS$-contents sum to $0$ for all $g$, then it is easy to see that all cosets of $\bigoplus_{i < 0} \Z_2$ are constant (these are indicated in Figure~\ref{fig:Lamplighter2}, and geometrically mean moving to ``horizontal neighbors''). However, this is no longer true if we use a noncommutative group structure, let alone in a general TEP subshift. \qee
\end{example}

\begin{figure}[ht]
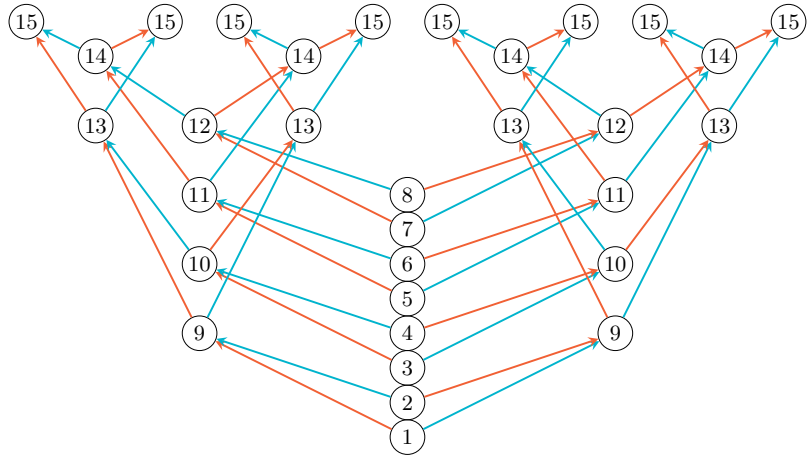

    \centering
    \includestandalone[height = 6cm]{Figures/General/lamp_tetrahedron}
    \caption{A tetrahedron in the lamplighter group, with nodes on the same ``row'' (coset of $\bigoplus_{i < 0} \Z_2$) labelled the same.}
\label{fig:Lamplighter2}
\end{figure}

\subsection{Computing translations between patterns memorilessly}

\begin{definition}
Let $N = \{0, \ldots, n-1\}$ and let $A$ be an alphabet. A \emph{simple $k$-permutation} of $A^N$ is a permutation $\pi$ that only reads and modifies at most $k$ elements of $N$. More precisely, it is one for which there exists $\pi' : A^M \to A^M$ with $M \subset N$ and $|M| \leqslant k$, such that
\[ \pi(x)_i = \begin{cases}
x_i & \mbox{ if } i \notin M \\
\pi'(x|_M)_i & \mbox{ if } i \in M.
\end{cases}
\]
If $|A| \geqslant 3$, then a \emph{simple permutation} refers to a simple $2$-permutation. If $|A| = 2$, then it refers to a simple $3$-permutation. 
\end{definition}

The reason that in the case $|A| = 2$ we use simple $3$-permutations is that in this case simple $2$-permutations are affine for the natural vector space structure, and thus do not generate all permutations.

If $X$ is a TEP-subshift and $P, Q$ are bases for the same set, then there is a natural bijection $\pi : X|_P \to X|_Q$, namely the one with graph $\{(x|_P, x|_Q) \;| \; x \in X\}$.

\begin{lemma}
\label{lem:SimplePermutations}
Let $P \rightarrow^n Q$, then for any identifications of $P$ and $Q$ with $N = \{0, \ldots, |P|-1\}$, the permutation of $A^N$ corresponding to the natural bijection $\pi : X|_P \to X|_Q$ can be computed with $O(n + |P| \log |P|)$ many simple permutations.
\end{lemma}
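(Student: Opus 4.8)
The plan is to factor the natural bijection $\pi$ through the given sequence of solitaire moves, show that each move contributes only $O(1)$ simple permutations, and absorb the two prescribed identifications into a single coordinate relabelling at the end.

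First I would fix a move-sequence $P = P_0 \rightarrow P_1 \rightarrow \cdots \rightarrow P_n = Q$ realising $P \rightarrow^n Q$. Since $P$ is a basis and bases are preserved by solitaire (Lemma~\ref{lem:BasesPreserved}), every $P_i$ is a basis for the common set $F = \varphi(P)$; in particular $X|_{P_i} = A^{P_i}$. Let $\beta_i : X|_{P_{i-1}} \to X|_{P_i}$ be the natural bijection, with graph $\{(x|_{P_{i-1}}, x|_{P_i}) : x \in X\}$. Because $\pi$ and all the $\beta_i$ are cut out by restrictions of a single global configuration, these bijections compose, so $\pi = \beta_n \circ \cdots \circ \beta_1$. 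I would then introduce a \emph{token labelling}: regard the $m := |P|$ cells as carrying labels $0,\dots,m-1$, where a move at $g$ deleting $gc$ and creating $gc'$ transports the label from $gc$ to $gc'$. This presents every $\beta_i$ as a self-map of $A^{\{0,\dots,m-1\}}$ on one fixed index set.

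Next I would analyse a single move. For a move at $g$ with $P_{i-1}\setminus P_i = \{gc\}$, $P_i \setminus P_{i-1} = \{gc'\}$ and $U = P_{i-1}\cap P_i$, the bijection $\beta_i$ agrees with its input on $U$, while its value at $gc'$ equals $f_{g,c'}(y|_{gS\setminus\{gc'\}})$, where crucially $gS\setminus\{gc'\}\subset P_{i-1}$ because $gc'$ is the unique cell of $gS$ missing from $P_{i-1}$. So in token coordinates $\beta_i$ fixes every coordinate outside $M_i := gS\cap P_{i-1}$ and acts on the $|M_i| = |S|-1$ coordinates of $M_i$ by keeping the non-moving tokens fixed and recomputing the moving one. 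By the symbol-permutation property of $(C,S)$-permutive sets (the lemma stated just after the definition of permutivity), fixing the values on $gS\setminus\{gc,gc'\}$ makes the value at $gc'$ a bijection of the value at $gc$; hence $\beta_i$ restricts to a bijection of $A^{M_i}$, i.e.\ it is a simple $(|S|-1)$-permutation. For the triangle $|S|-1 = 2$, so $\beta_i$ is \emph{directly} a simple permutation (a $2$-permutation is a $3$-permutation, hence simple even when $|A|=2$). In general, when $|S|\geq 5$ the block $M_i$ has at least three coordinates, so $\beta_i$ — being a permutation of the constant-size set $A^{M_i}$ — is a product of a constant number $K = K(S,A)$ of simple permutations acting within $M_i$.

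Finally I would assemble the count. Composing gives $\pi$ as a product of at most $Kn = O(n)$ simple permutations on the token-indexed copy of $A^N$. It remains to reconcile the token labelling with the prescribed identifications: choosing the initial token labels to match the identification of $P$ with $N$, the output is indexed by the final token positions, which differ from the prescribed identification of $Q$ with $N$ by a coordinate permutation $\rho \in \mathrm{Sym}(\{0,\dots,m-1\})$. Each coordinate transposition is a simple $2$-permutation, and expressing $\rho$ through a sorting network uses $O(m\log m) = O(|P|\log|P|)$ of them (in fact a cycle decomposition already gives $O(|P|)$, comfortably within the bound). Hence $\pi$ is a product of $O(n) + O(|P|\log|P|) = O(n + |P|\log|P|)$ simple permutations. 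The main obstacle is the per-move verification — that the natural bijection is literally the identity on the shared cells and a bijection on the single moved coordinate — which is exactly where permutivity and the inclusion $gS\setminus\{gc'\}\subset P_{i-1}$ are used; the remaining bookkeeping (tokens versus the two fixed identifications, and isolating the relabelling as one coordinate permutation) is routine but must be tracked carefully to land the stated bound.
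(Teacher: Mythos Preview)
Your proposal is correct and follows essentially the same route as the paper: track a token labelling through the move sequence so that each solitaire step is a bounded-width permutation on a fixed index set, then absorb the mismatch with the prescribed identification of $Q$ by a sorting step of $O(|P|\log|P|)$ coordinate transpositions. The paper is terser---it just asserts each step is a simple $|S|$-permutation and cites an external lemma for the reduction to simple permutations---while you give the slightly sharper width $|S|-1$ and argue the constant-size decomposition directly, but the structure is the same.
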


\begin{proof}
We first show that this is true for simple $|S|$-permutations.

We start by showing that this is true for some identifications between $P$ and $N$, and between $Q$ and $N$. Let $P_0 = P \rightarrow P_1 \rightarrow \cdots \rightarrow P_n = Q$ be a sequence of solitaire moves.

Start with any identification between $P_0$ and $N$, and when applying a solitaire move, copy the identification for all cells except the ones in the symmetric difference. When we use this identification of $P_{i+1}$ and $N$, it is clear that the operation of determining the contents of $P_{i+1}$ from those of $P_i$ is a $k$-simple permutation.

Finally, for any other identification between $Q$ and $N$, it suffices to sort this final result, for which $|Q| \log |Q|$ simple $2$-permutations suffice.

To show that simple $|S|$-permutations can be turned into simple permutations, one can apply Lemma 3.2 of \cite{Sa22a}. \qed
\end{proof}

\subsection{Bipermutive cellular automata and the Ledrappier subshift}
\label{sec:Ledrappier}

A standard source of TEP subshifts with the triangle shape are bipermutive cellular automata. Recall that a \emph{cellular automaton} (here, one-dimensional) is a function $f : A^\Z \to A^\Z$ (for finite alphabet $A$) that is continuous and shift-commuting.

A cellular automaton always has a \emph{local rule} $F : A^{\{\ell,\ldots, r\}} \to A$ such that $f(x)_i = F(x|_{\{i+\ell, \ldots, i + r\}})$ for all $x \in A^\Z, i \in \Z$. We say a cellular automaton is \emph{bipermutive} if, taking $\ell$ maximal and $r$ minimal, we have $\ell < r$ and $F$ satisfies that $F(au) \neq F(bu)$ and $F(ua) \neq F(ub)$ whenever $u \in A^{r-\ell}$ and $a, b \in A$ are distinct letters. (Here, we identify words and patterns with an interval shape in the natural way.)

\begin{lemma}
Let $f : A^\Z \to A^\Z$ be a bipermutive cellular automaton and let $F : A^{\{\ell,\ldots, r\}} \to A$ be a local rule for it with $\ell$ maximal and $r$ minimal. Then
\[ X = \{x \in A^{\Z^2} \;|\; x_{(a, b+1)} = F(x_{(a+\ell, b)}, \ldots x_{(a+r, b)}) \} \]
is a $(C, S)$-TEP subshift, where
\[ S = (\{\ell, \ldots, r\} \times \{0\}) \cup \{(0,1)\} \]
and 
\[ C = \{(\ell,0), (r, 0), (0,1)\}. \]
\end{lemma}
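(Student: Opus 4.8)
The plan is to verify the definition of $(C,S)$-TEP directly, after first recognizing $X$ as an SFT with domain $S$. Since $\Z^2$ is abelian, for $g=(a,b)$ we have $gS = g+S = \{(a+\ell,b),\ldots,(a+r,b)\}\cup\{(a,b+1)\}$, so the defining condition $x_{(a,b+1)} = F(x_{(a+\ell,b)},\ldots,x_{(a+r,b)})$ says exactly that $x|_{gS}$ lies in
\[ \mathcal{T} = \{p\in A^S : p_{(0,1)} = F(p_{(\ell,0)},\ldots,p_{(r,0)})\}. \]
Thus $X$ is precisely the SFT generated by the allowed set $\mathcal{T}$ on the domain $S$, and one checks $C = \{(\ell,0),(r,0),(0,1)\}\subset S$ as required. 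It then remains to check the unique-extension property at each of the three corners $c\in C$: for every assignment $\pi$ of symbols to $S\setminus\{c\}$, exactly one value at $c$ completes $\pi$ to a pattern of $\mathcal{T}$.

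First I would handle the top corner $c=(0,1)$. Here $S\setminus\{c\}$ is the entire bottom row, so $\pi$ fixes $p_{(\ell,0)},\ldots,p_{(r,0)}$, and the unique admissible value at $(0,1)$ is $F(p_{(\ell,0)},\ldots,p_{(r,0)})$. This uses only that $F$ is a function (the determinism of the CA), and no permutivity at all.

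Next I would treat the left corner $c=(\ell,0)$. Now $\pi$ fixes the top symbol $p_{(0,1)}$ together with the word $u=(p_{(\ell+1,0)},\ldots,p_{(r,0)})\in A^{r-\ell}$, and completing $\pi$ to $\mathcal{T}$ amounts to choosing $t\in A$ with $F(t\,u) = p_{(0,1)}$, where $t$ occupies position $\ell$. By left-permutivity the map $t\mapsto F(t\,u)$ is injective, hence a bijection since $A$ is finite, so exactly one such $t$ exists. The right corner $c=(r,0)$ is symmetric and uses right-permutivity: $\pi$ fixes $p_{(0,1)}$ and the word $u=(p_{(\ell,0)},\ldots,p_{(r-1,0)})$, and $t\mapsto F(u\,t)$ is a bijection, yielding a unique solution of $F(u\,t)=p_{(0,1)}$. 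Since these three corners exhaust $C$, the TEP property follows.

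I do not expect a genuine obstacle here; the proof is a direct unwinding of definitions. The only points to keep straight are matching the two directions of permutivity to the two bottom corners while using plain determinism of $F$ at the top corner, and the elementary remark that an injective self-map of the finite set $A$ is automatically surjective. I would also note explicitly that the interior bottom cells $(i,0)$ with $\ell < i < r$ are deliberately left out of $C$: the local rule need not be permutive in those coordinates, and the TEP definition only demands unique completion at cells of $C$, which is exactly why $\ell$ and $r$ are taken to be the extremal (maximal, resp. minimal) offsets so that the extremal coordinates are genuinely permutive.
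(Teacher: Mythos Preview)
Your proof is correct and follows essentially the same approach as the paper: both verify the unique-extension property at the three corners by using determinism of $F$ at $(0,1)$ and bipermutivity at $(\ell,0)$ and $(r,0)$, together with the observation that an injective self-map of a finite set is bijective. Your version is simply more explicit about setting up $X$ as an SFT with domain $S$ and about handling each corner separately.
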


\begin{proof}
    This result follows from the observation that bipermutivity implies that $a$ is determined by the pair $(u, F(ua))$ and by the pair $(u, F(bu))$; and that for each $u \in A^{r-\ell}$, $F(ua)$ and $F(au)$ can take any values (since an injective function between finite sets of equal cardinality is surjective).
\end{proof}

\begin{remark}
\label{rem:HorizontalSurjective}
The subshift $X$ can be defined for any cellular automaton, and is called the \emph{spacetime subshift}. It follows from the theory in \cite{Sa22} that every element of $A^\Z$ appears on rows of this subshift in the TEP case. More generally, this holds for surjective cellular automata. For general cellular automata, the set of configurations appearing on the rows is precisely the limit set of the cellular automaton.
\end{remark}

The theory developed in Section~\ref{sec:Triangle} applies to all bipermutive cellular automata with $r - \ell = 1$, as in these cases (up to shearing) the shape $S$ above is precisely the triangle, and $C = S$.

A TEP subshift of special interest, and having the triangle shape, is the \emph{Ledrappier subshift}, which is the spacetime subshift of the XOR cellular automaton on alphabet $A = \{0,1\}$, defined by $f(x)_i = x_i \oplus x_{i+1}$ where $\oplus$ is addition modulo $2$. Equivalently, this is the TEP subshift where the allowed $T$-patterns are $p \in A^T$ such that $\sum p = 0$ where addition is again modulo $2$. 

This example shows that spanned sets can be larger than filled sets.

\begin{proposition}
There exists a triangular TEP subshift $X$ such that we can find $n$, and $P \subset T_n$ such that $\psi_X(P) = T_n$ but $|T_n| - |\varphi(P)|$ is arbitrarily large. Indeed, the Ledrappier subshift is such.
\end{proposition}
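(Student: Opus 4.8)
The plan is to turn the statement into a linear-algebra computation over $\mathbb{F}_2$ and then write down a completely explicit $P$. In the Ledrappier subshift the $T$-rule forces $x_{(a,b+1)} = x_{(a,b)} + x_{(a+1,b)}$, so a configuration on $T_n$ is freely and uniquely determined by its bottom row $L_n$; hence $X|_{T_n} \cong \mathbb{F}_2^n$ via the bottom row, and each cell $(a,b)\in T_n$ realizes the linear functional $v_{a,b} = \sum_k \binom{b}{k}\, e_{a+k}$ (Pascal's triangle mod $2$, where $e_i$ is the $i$-th bottom coordinate). Under this identification $\psi_X(P) = T_n$ holds if and only if $\{v_p : p \in P\}$ spans $(\mathbb{F}_2^n)^{*}$, i.e.\ has rank $n$ (since the bottom row determines everything, determining the bottom row determines all of $T_n$). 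So I must produce a $P\subset T_n$ whose functionals have full rank $n$ but whose filling closure omits $\Omega(n^2)$ cells.

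I would take $n = 2^{j+1}$ and let $P$ be the union of three pieces: (i) the bottom line $L_{2^j}$; (ii) the horizontal segment $\{(a, 2^j) : 1 \le a \le 2^j-1\}$; and (iii) the single point $(2^j, 1)$. The point of using height $2^j$ is Lucas' theorem: $\binom{2^j}{k}\equiv 0 \pmod 2$ for $0<k<2^j$, so $v_{a,2^j} = e_a + e_{a+2^j}$. Then piece (i) realizes $e_0,\dots,e_{2^j-1}$; each point of (ii) realizes $e_a + e_{a+2^j}$, which combined with (i) recovers $e_{2^j+1},\dots,e_{n-1}$; and (iii) realizes $e_{2^j}+e_{2^j+1}$, recovering the unique still-missing coordinate $e_{2^j}$. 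Thus all $n$ standard functionals lie in the span, the rank is $n$, and $\psi_X(P)=T_n$. Note $|P| = 2^j + (2^j-1) + 1 = n$.

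The remaining work is to show the filling closure stays a disjoint union of the three pieces' closures, which are $T_{2^j}$ (heights $0..2^j-1$), a translated triangle $U = \varphi(\{(a,2^j)\})$ growing upward into heights $2^j..n-1$, and the isolated point $(2^j,1)$. The main obstacle is exactly the tension between full rank and a small, non-cascading filling: covering a contiguous coordinate block forces a triangle whose apex sits right where the next block begins, and a single stray adjacency makes the filling cascade to all of $T_n$. The construction threads this by (a) starting the upper segment at column $1$ rather than $0$, so that at the critical seam between heights $2^j-1$ and $2^j$ the lower triangle $T_{2^j}$ has only its apex $(0,2^j-1)$ while $U$ begins at column $1$, the closest pair differing by $(1,1)$ and hence being non-adjacent; and (b) placing the anchor $(2^j,1)$ on the anti-diagonal $a+b = 2^j+1$ and at height $1$, so it lies at triangular-lattice distance $\ge 2$ from every cell of $T_{2^j}$ (anti-diagonal gap) and of $U$ (height gap). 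Once pairwise non-touching is verified, no filling move ever straddles two pieces, so $\varphi(P) = T_{2^j}\sqcup U \sqcup\{(2^j,1)\}$, giving $|\varphi(P)| = \binom{2^j+1}{2} + \binom{2^j}{2} + 1 = 2^{2j}+1 = \tfrac{n^2}{4}+1$, whereas $|T_n| = \tfrac{n(n+1)}{2}$, so the gap is $\tfrac{n^2}{4}+\tfrac{n}{2}-1 \to \infty$ as $j\to\infty$, establishing the claim.
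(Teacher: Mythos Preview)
Your argument is correct (with the minor caveat that pieces (ii) and (iii) touch when $j=1$, since $(2,1)$ and $(1,2)$ are triangular-lattice neighbors; you need $j\geq 2$ for the height gap to exceed one, but this is harmless for an asymptotic statement).

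Your route is genuinely different from the paper's. The paper observes that the Ledrappier subshift, restricted to the index-$4$ subgroup $2\Z^2$, is again permutive for the dilated triangle $2T$ (essentially the freshman's dream $p(x,y)^2=p(x^2,y^2)$ in characteristic $2$). From this it gets a very short construction: $P=L_{2n}\cup\{(2n{+}1,0),(0,2n{+}1)\}$ inside $T_{2n+2}$. The line spans $T_{2n}$, the two extra corner points then let the $2T$-rule propagate along the outer diagonal of $T_{2n+2}$, and ordinary $T$-filling completes the rest; meanwhile the genuine $T$-filling closure of $P$ is only $T_{2n}$ plus the two isolated points. You instead work directly with the Pascal-triangle functionals and Lucas' theorem at scale $2^j$, and hand-build a rank-$n$ family whose filling decomposes into three non-touching pieces. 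Both proofs rest on the same characteristic-$2$ self-similarity; the paper exploits it structurally (a single extra deduction rule) and obtains a simpler $P$, while you exploit it computationally (explicit coordinates in $(\mathbb F_2^n)^*$), which is more self-contained and makes the rank condition completely transparent.
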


\begin{proof}
Observe that the restriction of the Ledrappier subshift to the subgroup $2\Z^2$ of index $4$ is $2T$-permutive. This can be verified by a direct computation ($x_{0,2} = x_{0,1}\oplus x_{1,1} = x_{0,0} \oplus x_{0,1} \oplus x_{0,1}\oplus x_{0,2} = x_{0,0} \oplus x_{0,2}$ and symmetries). (Abstractly, we may deduce it from the fact the subshift is algebraic: we can see configurations as infinite series in polynomials in two commuting variables with coefficients in $\Z_2$, and $T$ corresponds to an annihilating polynomial $p$. Then $p(x, y)^2 = p(x^2, y^2)$ because the field has characteristic $2$, see e.g.\ \cite{KaSz20}.)

It follows that $P = L_{2n} \cup \{(2n+1, 0), (0,2n+1)\}$ spans $T_{2n+2}$ : $L_{2n}$ spans $T_{2n}$, then one can add the diagonal edge of $T_{2n+2}$ using $2T$ and the last diagonal can then be added with regular filling steps.

On the other hand, for sufficiently large $n$, the filling closure of $P$ is $T_{2n} \cup \{(2n+1, 0), (0,2n+1)\}$, as this set is closed under $T$-filling. The difference in cardinality can be arbitrarily large.
\qed\end{proof}

\begin{figure}[ht]
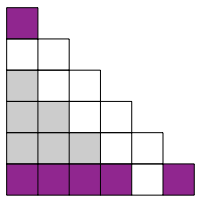

    \centering
    \includestandalone[height=2.5cm]{Figures/TEP/span_not_fill}
    \caption{A $T_6$-spanning pattern that does not fill $T_6$. The filling of the pattern is shown in grey.}
    \label{fig:span not fill}
\end{figure}

\begin{remark}
    Notice that the spanning sets depend on the TEP interpretation and not only on the shape $S$. For example, keeping the triangle shape but taking the $S_3$-TEP subshift shown in Figure~\ref{fig:S3triangle}, one can no longer deduce $x_{0,2}$ from $x_{0,0}$ and $x_{2,0}$ since we no longer have $x_{1,0} \circ x_{1,0} = x_{1,0}$.
\end{remark}

\subsection{Filling bases for triangular TEP subshifts}

We continue with TEP subshifts with the triangle shape, and show that the filling bases of a triangle (with respect to a TEP subshift with the triangle shape) correspond to the line orbit.


Let $X_R$ be a TEP subshift for some $R \subset A^T$ and fix some $n \in \N$. Recall from \cite{Sa22} that the pattern $q \in A^{T_n}$ appears in $X_R$ (i.e.\ is equal to the restriction of some $x \in X_R$ to $T_n$) if and only if it does not explicitly contain a translate of a pattern in $A^T \setminus R$.


\begin{theorem}
\label{thm:Bases}
The following are equivalent for $P \subset T_n$, and a TEP subshift $X \subset A^{\Z^2}$ with the triangle shape:
\begin{enumerate}
    \item $P$ is a filling basis (with respect to $X$),
    \item $P$ fills (i.e.\ has filling closure $T_n$ and no excess),
    \item $P$ is in the line orbit.
\end{enumerate}
\end{theorem}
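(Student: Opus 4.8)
The plan is to prove the equivalences by first settling \mbox{(2) $\Leftrightarrow$ (3)} from the orbit classification already obtained, and then closing the loop \mbox{(3) $\Rightarrow$ (1) $\Rightarrow$ (2)}. The equivalence of (2) and (3) is immediate: by Corollary~\ref{cor:excess_orbit}, taking $k = 0$ and recalling that $P_{n,0} = L_n$, a pattern $P$ lies in $\orb(L_n)$ if and only if $\varphi(P) = T_n$ and $e(P) = 0$, which is exactly the statement that $P$ fills $T_n$ with no excess. So nothing beyond citing the classification is needed here.

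For \mbox{(3) $\Rightarrow$ (1)}, I would exhibit $L_n$ itself as a filling basis of $T_n$ and transport this along the solitaire orbit. Since $\Z^2$ is left-orderable and $T_n$ is convex, the contour theorem (the $S$-contour of every convex set is a filling basis for it) applies. A direct check shows that $L_n$ is precisely the $T$-contour of $T_n$ with respect to the top corner $(0,1)$: writing $c = (0,1)$, for a point $x = (a,b)$ with $b \geqslant 1$ one has $x c^{-1} T = x + (0,-1) + T \subset T_n$, whereas for $b = 0$ the translate leaves $T_n$, so the contour is exactly the bottom edge $L_n$. Hence $L_n$ is a filling basis of $T_n$, and since filling bases are preserved under solitaire moves by Lemma~\ref{lem:BasesPreserved}, every $P \in \orb(L_n)$ is again a filling basis of $T_n$. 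This gives (1).

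For \mbox{(1) $\Rightarrow$ (2)}, suppose $P$ is a filling basis. By definition this already forces $\varphi(P) = T_n$, so the only remaining point is $e(P) = 0$, equivalently $|P| = \rank(\varphi(P))$. The fill decomposition of $T_n$ is $T_n$ itself, so $\rank(T_n) = n$, whence $|P| \geqslant \rank(\varphi(P)) = n$. For the reverse inequality I would use that $T_n$ is filling closed, so the rank inequality $\rank_{\mathrm{indep}}(T_n) \leqslant \rank(T_n) = n$ applies; since $P$ is independent and $P \subset T_n$, this yields $|P| \leqslant \rank_{\mathrm{indep}}(T_n) \leqslant n$. Therefore $|P| = n$ and $e(P) = |P| - \rank(\varphi(P)) = 0$, which is condition (2).

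The crux of the argument is really the bound $|P| \leqslant n$ for an independent $P \subset T_n$, i.e.\ that an independent set cannot exceed the filling rank. If one prefers not to invoke the abstract rank inequality, this can be obtained by a direct count: $L_n$ being a basis forces the restriction map $X|_{T_n} \to X|_{L_n}$ to be a bijection, so $|X|_{T_n}| = |A|^{n}$; then independence of $P$ together with the surjectivity of the restriction $X|_{T_n} \to X|_P$ gives $|A|^{|P|} = |X|_P| \leqslant |X|_{T_n}| = |A|^{n}$, so $|P| \leqslant n$. Everything else is bookkeeping that leans on the already-established orbit classification and on the contour theorem, so I expect no further obstacles.
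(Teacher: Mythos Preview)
Your proof is correct and follows essentially the same structure as the paper's: (2)$\Leftrightarrow$(3) from the orbit classification, (3)$\Rightarrow$(1) by showing $L_n$ is a filling basis and invoking Lemma~\ref{lem:BasesPreserved}, and (1)$\Rightarrow$(2) by the counting argument $|X|_{T_n}| = |A|^n$ forcing $|P|\leqslant n$. The only noteworthy difference is that for (3)$\Rightarrow$(1) the paper argues that $L_n$ is independent via the cellular-automaton interpretation (Remark~\ref{rem:HorizontalSurjective}), whereas you obtain it from the contour theorem by identifying $L_n$ as the $T$-contour of $T_n$ with respect to $(0,1)$; both are valid and equally short.
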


\begin{proof}
The equivalence of the last two items is a special case of Corollary~\ref{cor:line_orbit}. We show that (1) implies (2) and (3) implies (1). 

 Suppose thus first that $P$ is a filling basis. Then by definition $\varphi(P) = T_n$. Since the line is a filling basis, it is $T_n$-spanning, and thus $X|_{T_n}$ cannot have cardinality larger than $|A|^n$. Thus we cannot have more than $n$ elements in $P$ since $P$ is independent. 
 Now, we know that any filling set with at most $n$ element is in the line orbit.

The line is an independent set by Remark~\ref{rem:HorizontalSurjective}, and of course fills, thus it is a filling basis. The property of being a filling basis is preserved under solitaire (Lemma~\ref{lem:BasesPreserved}), thus every pattern in the line orbit is a filling basis.
\qed\end{proof}

Note that the first item talks about a specific (but arbitrary) TEP subshift, since in the definition of the filling basis we require that $X|_T = A^T$ for the specific subshift. The other two items only talk about the abstract solitaire process, i.e.\ the set of bases is independent of $A$ and $R$.

The solitaire process allows us to translate patterns on one basis to ones on another, more space efficiently than the direct method suggests, indeed we give a polynomial time in-place algorithm for this.

We now specialize Lemma~\ref{lem:SimplePermutations} to the triangle case. If $P, Q \subset T_n$ are bases, any pattern $p \in A^P$ uniquely determines a pattern in $q \in A^Q$ in the natural way, by deducing the unique extension of one pattern to $T_n$ and then restricting to the domain of the other. If we biject $P$ and $Q$ with $N = \{1, \ldots, n\}$ we obtain a bijection $\varphi : A^n \to A^n$.

Recall that when $|A| \geqslant 3$, a simple permutation of $A^n$ is one that ignores all but two cells. If the cells are $1, 2$, this means that for some $\hat\pi \in \mathrm{Sym}(A^2)$ we have
$\pi(a_1 a_2 a_3 a_4 \cdots a_n)_i = \hat\pi(a_1 a_2) a_3 a_4 \cdots a_n$
for all $a_1a_2\cdots a_n \in A^n$. In general, one conjugates by a reordering the cells. If $|A| = 2$, a simple permutation may look at three cells.

\begin{theorem}
\label{thm:Memoryless}
The bijection $\varphi$ can be computed with $O(n^3)$ simple permutations.
\end{theorem}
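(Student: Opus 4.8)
The plan is to reduce the statement directly to Lemma~\ref{lem:SimplePermutations} together with the cubic diameter bound of Theorem~\ref{thm:diameter bound}. First I would observe that since $P$ and $Q$ are filling bases of $T_n$, Theorem~\ref{thm:Bases} tells us that both lie in the line orbit $\orb(L_n)$; in particular there is a sequence of solitaire moves $P = P_0 \rightarrow P_1 \rightarrow \cdots \rightarrow P_m = Q$ connecting them. Moreover, the bijection $\varphi$ defined just before the statement (deduce the unique extension to $T_n$ from the $P$-contents, then restrict to the domain of $Q$) is literally the natural bijection $\pi : X|_P \to X|_Q$ to which Lemma~\ref{lem:SimplePermutations} applies, once $P$ and $Q$ are identified with $N = \{1, \ldots, n\}$.

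The one quantity I need to control is the length $m$ of such a connecting path. Since solitaire moves preserve cardinality and $L_n$ has $n$ elements, both $P$ and $Q$ are $n$-element patterns lying in a single solitaire orbit. Hence Theorem~\ref{thm:diameter bound}, which bounds the diameter of the orbit of an $n$-element pattern for the triangle solitaire by $O(n^3)$, guarantees that we may take $m = O(n^3)$. Concretely, one can use Algorithm~\ref{alg:path} to route $P$ to its canonical representative and then append the reverse of the routing of $Q$, which realizes such a path of cubic length explicitly.

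It then remains only to substitute into Lemma~\ref{lem:SimplePermutations}: with a connecting path of length $m = O(n^3)$ and $|P| = n$, the lemma computes the permutation of $A^n$ corresponding to $\varphi$ using $O(m + |P|\log|P|) = O(n^3 + n\log n) = O(n^3)$ simple permutations, which is exactly the claimed bound. I do not expect a genuine obstacle here, since all the substantive work is already carried by the identification of filling bases with the line orbit (Theorem~\ref{thm:Bases}), the cubic diameter estimate (Theorem~\ref{thm:diameter bound}), and the translation of a solitaire path into simple permutations (Lemma~\ref{lem:SimplePermutations}). The only point deserving care is the bookkeeping of identifications: one must check that the identification of $P$ with $N$, propagated along the path as in the proof of Lemma~\ref{lem:SimplePermutations} and then sorted to match the prescribed identification of $Q$, indeed yields $\varphi$ itself rather than merely a permutation conjugate to it — but this is precisely what the extra $|P|\log|P|$ sorting term in that lemma accounts for, and it is absorbed into $O(n^3)$.
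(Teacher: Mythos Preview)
Your proposal is correct and follows exactly the paper's approach: the paper's proof simply states that this is a special case of Lemma~\ref{lem:SimplePermutations} together with the observation $n\log n \leqslant n^3$, which implicitly relies on the cubic diameter bound and on both bases lying in the line orbit. You have merely made these implicit ingredients (Theorem~\ref{thm:Bases} and Theorem~\ref{thm:diameter bound}) explicit.
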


\begin{proof}
This is a special case of Lemma~\ref{lem:SimplePermutations}, observing that $n \log n \leqslant n^3$.
\qed\end{proof}

\section{Prospects for future work}
\label{sec:future}

\subsection{On the triangle solitaire}

While we know the diameter of the line orbit (for the triangle shape), and some bounds on its cardinality, we do not have a good understanding of its shape.

\begin{problem}
Describe the shape of the line orbit as a graph, with solitaire moves as edges.
\end{problem}

\begin{question}
What does a random element of the line orbit look like? What is its expected distance from the line? Can the uniform distribution on the line orbit be sampled efficiently?
\end{question}

Although we have no particularly strong evidence one way or another, we conjecture a nice formula for the size of the line orbit:

\begin{conjecture}
    There are constants $\frac2e \leqslant c \leqslant e$ and $d$ such that $|\gamma(L_n)| = \Theta \left(\left(\frac{n}c\right)^{n+d}\right)$.
\end{conjecture}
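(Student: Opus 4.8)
The plan is to sharpen the two existing bounds on $|\orb(L_n)|$ (the number of fill matrices) until they meet. Writing the target in logarithmic form, the conjecture is equivalent to the asymptotic expansion
\[
\log|\orb(L_n)| = n\log n + \alpha\, n + \beta \log n + O(1),
\]
where $\alpha = -\log c$ and $\beta = d$. The stack family ($3n!-3$ patterns) gives $\alpha \geqslant -1$ (i.e.\ $c \leqslant e$), and the inclusion $\orb(L_n) \subset A_n$ together with the Lambert-$W$ count of $|A_n|$ gives $\alpha \leqslant 1-\log 2$ (i.e.\ $c \geqslant 2/e$); the remaining gap is of size $(e^2/2)^n$ up to polynomial factors, so the entire content of the conjecture is to show that $\alpha$ exists as a genuine constant strictly inside the open interval $(-1,\,1-\log 2)$ and to identify the $\Theta(\log n)$ correction.

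My primary approach is analytic combinatorics on the exact enumeration of fill matrices (the OEIS sequence of \cite{OEISfillmatrices}). Reading a fill matrix row by row from the bottom, one records as a state the \emph{filling frontier}, i.e.\ which cells of the current top boundary are already forced by the rows below. The two-row case of Section~\ref{sec:Free} shows that this frontier dynamics is governed by a finite automaton (Figure~\ref{fig:auto2}), and the hope is that the full row-by-row transfer yields a tractable functional equation for the exponential generating function $F(z) = \sum_n |\orb(L_n)|\, z^n/n!$. The claimed form $(n/c)^{n+d}$ translates exactly into $|\orb(L_n)|/n! \sim K\,\rho^{-n} n^{\sigma}$ with $\rho = c/e$ and $\sigma = d-\tfrac12$, so the conjecture would follow from a standard singularity analysis of $F$: locating its dominant singularity $\rho$ gives $c = e\rho$, and reading off the local exponent gives $d$.

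A complementary route is to close the gap by elementary counting. On the lower side, the stack family can be enlarged: using the excess-transport machinery (Lemma~\ref{lem:Ordering}) one can describe larger explicit subfamilies of the line orbit (e.g.\ nested or layered stacks) whose cardinality can be computed in closed form, pushing $\alpha$ upward from $-1$. On the upper side, the constraint defining $A_n$ (at most $j$ points in the $j$ rightmost columns) is only the ``no right-column excess'' necessary condition; imposing the analogous necessary conditions in all three corner directions simultaneously, and more generally forbidding any genuinely non-fillable sub-configuration, should reduce the count below $|A_n|$ and push the upper value of $\alpha$ downward from $1-\log 2$. Sandwiching the two refined estimates would establish existence of $\alpha$ even without the full generating-function machinery.

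The main obstacle, in either route, is that \emph{filling is a global condition}: being a fill matrix is not a product of independent local column or row constraints, so there is no finite transfer matrix, the frontier state space grows with $n$, and --- crucially --- there is no obvious subadditivity, since $\orb(L_{m+n})$ lives in the single triangle $T_{m+n}$ and does not factor through $T_m$ and $T_n$. Consequently even the existence of the limit $\alpha$ is not a soft Fekete-type statement. I expect the genuine crux to be the determination of $c$: both endpoints $c=e$ and $c=2/e$ are provably off by an exponential factor, so the answer is a nontrivial interior constant, and pinning it down amounts to a large-deviations computation for the entropy of uniformly random fill matrices. A realistic intermediate target is to prove that $\alpha$ exists and that the second-order term is $\Theta(\log n)$ --- thereby confirming the \emph{form} of the conjecture --- while leaving the exact value of $c$ (and hence the admissible subinterval of $[2/e,e]$) to a separate, more refined analysis.
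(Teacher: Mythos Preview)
The statement you are attempting to prove is a \emph{conjecture} in the paper, not a theorem: the authors explicitly write ``Although we have no particularly strong evidence one way or another, we conjecture a nice formula for the size of the line orbit'' and offer no proof. There is therefore nothing in the paper to compare your attempt against. Your proposal is honest about this --- it is a research plan rather than a proof, and you yourself identify the central obstruction (no finite transfer matrix, no subadditivity, hence even the existence of $\alpha$ is not a Fekete-type freebie) and explicitly downgrade the goal to an ``intermediate target'' at the end.

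That said, as a plan it does not yet contain a genuine idea that would close the gap. The analytic-combinatorics route hinges on the frontier dynamics yielding a ``tractable functional equation'' for the EGF, but you give no argument for why the state space (which you correctly note grows with $n$) should collapse to something analyzable; the two-row automaton of Figure~\ref{fig:auto2} is encouraging but says nothing about the full row-by-row transfer. The elementary sandwiching route is similarly speculative: enlarging the stack family and tightening $A_n$ by imposing all three corner constraints are natural moves, but you provide no estimate suggesting the resulting bounds would actually converge to the same $\alpha$, rather than merely shrinking the interval. In short, your write-up is a fair survey of where the difficulty lies, but it does not contain a proof, nor a concrete lemma whose truth would imply the conjecture --- which is consistent with the fact that the paper leaves this open.
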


\subsection{On general groups and shapes}

\begin{question}
For which groups $G$, and subsets $C \Subset S \Subset G$, do we have a nice solitaire theory?
\end{question}

\begin{question}
Does $G = \Z^2$ (or more generally $\Z^d$) have nice solitaire theory for all $C = S$?
\end{question}

An interesting class of shapes are the simplices $\{\vec v \geq 0 \;|\; \sum \vec v_i \leq 1\}$ in $\Z^d$. We do not even know whether $G = \Z^3$ has a nice solitaire theory for the pyramid $\{(0,0,0), (1,0,0), (0,1,0), (0,0,1)\}$.

\begin{question}
Which groups $G$ have the finite filling (resp.\ solitaire) property?
\end{question}

\subsection{Other questions}

We introduced in Section~\ref{sec:Excess} the notion of an excess set. What can be said about the family of excess sets as a set system? How can we determine the maximum cardinality of an excess set? We do not know the answers even for the triangle case.

We also mention the following problem:

\begin{problem}
Describe the bases of triangles for the Ledrappier subshift, in the sense of Section~\ref{sec:Bases}. Can they be described by a variant of the solitaire process?
\end{problem}

\section*{Acknowledgements}

The authors thank their reviewers for their comments that allowed to improved greatly the article.

 \bibliographystyle{splncs04}
 \bibliography{biblio}

\end{document}